\documentclass{amsart}

\usepackage{fullpage}
\usepackage[utf8]{inputenc}
\usepackage[T1]{fontenc}
\usepackage{graphicx}
\usepackage{amsmath,amsfonts,amssymb,mathtools}
\usepackage{stmaryrd}
\usepackage{graphicx}
\usepackage{mathrsfs,dsfont}
\usepackage{amsmath,amsthm,amsthm}
\usepackage{soul}

\usepackage{hyperref}

\usepackage{enumerate}

\usepackage{color}

\newcommand{\E}{\mathbb{E}}
\newcommand{\R}{\mathbb{R}}
\newcommand{\N}{\mathbb{N}}
\newcommand{\D}{\mathcal{D}} 

\newcommand{\calA}{\mathcal{A}}
\newcommand{\calB}{\mathcal{B}}
\newcommand{\calC}{\mathcal{C}}
\newcommand{\calD}{\mathcal{D}}

\newtheorem{theo}{Theorem}[section]

\newtheorem{rem}[theo]{Remark}
\newtheorem{prop}[theo]{Property}
\newtheorem{propo}[theo]{Proposition}
\newtheorem{lemma}[theo]{Lemma}

\newtheorem{hyp}[theo]{Assumption}

\newcommand{\blue}{\textcolor{blue}}
\begin{document}

\title{
Kolmogorov equations and weak order analysis for SPDEs with nonlinear diffusion coefficient}

\author{Charles-Edouard Br\'ehier}
\address{Univ Lyon, CNRS, Université Claude Bernard Lyon 1, UMR5208, Institut Camille Jordan, F-69622 Villeurbanne, France}
\email{brehier@math.univ-lyon1.fr}

\author{Arnaud Debussche}
\address{Univ Rennes, CNRS, IRMAR - UMR 6625, F-
35000 Rennes, France}
\email{arnaud.debussche@ens-rennes.fr}

\date{}

\keywords{Stochastic Partial Differential Equations, Kolmogorov equations in infinite dimensions, two-sided stochastic integrals, Malliavin calculus, weak convergence rates}
\subjclass{60H15;35R15;60H35}

\thanks{This work was started during the Fall semester 2015, while A. Debussche was in residence at the Mathematical Sciences Research Institute in Berkeley, California, supported by the National Science Foundation under Grant No. DMS-1440140. A. Debussche benefits from the support of the French government “Investissements d’Avenir” program ANR-11-LABX- 0020-01. The authors warmly thank the anonymous referee for the very useful comments which helped to improve our manuscript.
}

\begin{abstract}
We provide new regularity results for the solutions of the Kolmogorov equation associated to a SPDE with nonlinear diffusion coefficient and a Burgers type nonlinearity.
This generalizes previous results in the simpler cases of additive or affine noise. The basic tool is a discrete version of a 
two sided stochastic integral which allows a new formulation for the derivatives of these solutions. 
We show that this can be used to generalize the weak order analysis performed in \cite{Debussche:11}. 
The tools we develop are very general and can be used to study many other examples of applications.
\end{abstract}

\maketitle

\section{Introduction}

The Kolmogorov equation associated to a stochastic equation is a fundamental object. It is important to have a good understanding of this equation since many properties of the stochastic 
equation can be derived. For instance, it may be used to obtain uniqueness results - in the weak or strong sense - using ideas initially 
developped by Stroock and Varadhan \cite{stroock-varadhan} or the so-called "It\^o Tanaka" trick widely used by F. Flandoli and co-
authors, see for instance \cite{flandoli-gubinelli-priola}. Also, it is the basic tool in the weak order analysis of stochastic equations, see 
\cite{Talay:86}.

For Stochastic Partial Differential Equations (SPDEs), the associated 
Kolmogorov equation is not a standard object since it is a partial differential equations for an unknown depending on time and on an infinite dimensional variable. In the case of an additive noise, it has been the object of several studies, see \cite{Cerrai:01}, \cite{DPZ3}, \cite{DaPrato:04}, \cite{Krylov_Rockner_Zabczyck:99}, \cite{Rockner_Sobol:06} and the references therein.
But for general diffusion coefficients, very little is known. In \cite{DaPrato:07}, strict solutions are constructed but the assumptions are 
extremely strong and the result is of little interest in the applications. 

In this work, we consider a parabolic semilinear Stochastic Partial Differential Equation (SPDE) of the following form:
\begin{equation}\label{eq:SPDE_intro}
dX_t=AX_tdt+G(X_t)dt+\sigma(X_t)dW_t,
\end{equation}
where $W$ is a cylindrical Wiener process on a separable infinite dimensional Hilbert space $H$. Typically, $H$ is the space of square integrable functions on an open, bounded, interval in $\R$ so that the SPDE is driven by a space time white noise.

We wish to study regularity properties of the solutions of the associated Kolmogorov equation. The main application we have in 
mind is the weak order analysis of a Euler scheme applied to \eqref{eq:SPDE_intro}.
This has been the subject of many articles in the last decade, see \cite{Andersson_Larsson:16}, 
\cite{Brehier:14}, \cite{BrehierKopec:13}, \cite{Debussche:11}, \cite{Debussche_Printems:09}, \cite{Geissert_Kovacs_Larsson:09}, \cite{Hausenblas:03}, \cite{Kovacs_Larsson_Lindgren:12}, \cite{Kovacs_Larsson_Lindgren:13}, \cite{Tambue_Ngnotchouye:16}, \cite{Wang:16}, \cite{Wang_Gan:13}. In all these articles, the method is a  generalization of the finite dimensional proof initially used in \cite{Talay:86} (see also the monographs~\cite{Kloeden_Platen:92} and~\cite{Milstein_Tretyakov:04} for further references) and based on the Kolmogorov equation associated to \eqref{eq:SPDE_intro}. These results are restricted to the case of a $\sigma$ satisfying very strong assumptions.

\medskip

Thus our first aim is to obtain new regularity estimates 
on the transition semigroup $(P_t)_{t\ge}$. When \eqref{eq:SPDE_intro} has a unique solution (which is the case in the present article), denoted by $(X(t,x))_{t\ge 0}$, it is defined by 
\begin{equation}\label{eq:u_intro}
u(t,x)=P_t\varphi(x)=\E(\varphi(X(t,x))),
\end{equation}
where $\varphi$ is a bounded borelian function on $H$. The function $u$ formally satisfies the Kolmogorov equation:
\begin{equation}\label{eq:Kolmogorov_intro}
\frac{du}{dt}(t,x)=\frac12{\rm Tr} \left(\sigma(x)\sigma^*(x)D^2u(t,x)\right) +\langle Ax+G(x),Du(t,x)\rangle,\quad u(0,x)=\varphi(x).
\end{equation}
As usual, we have identified the first order derivative of $u$ with respect to $x$ and its gradient in $H$ and the second order derivative with the Hessian. The inner product in 
$H$ is denoted by $\langle\cdot,\cdot\rangle$.

\bigskip

Our arguments are general and can be applied in various situations. However, in order to concentrate on the new arguments, we consider a prototype example. Namely, 
we take three functions $\tilde{F}_1,\tilde{F}_2,\sigma:\R\to\R$, and consider the following stochastic partial differential equation on the interval $(0,1)$ with 
Dirichlet boundary conditions and driven by a space time white noise:
$$
\left\{
\begin{array}{l}
dX= (\partial_{\xi\xi} X + +\tilde{F}_1(X)+ \partial_\xi \tilde{F}_2(X))dt+\tilde\sigma(X)dW,\; t>0, \; x\in (0,1),\\
X(0,t)=X(1,t)=0,\\
X(\xi,0)=x(\xi).
\end{array}
\right.
$$
The initial data $x$ is given in $L^2(0,1)$ and $W$ is a cylindrical Wiener process (see \cite{DPZ}). This equation can be rewritten in the abstract form \eqref{eq:SPDE_intro} classically. Indeed, we define $H=L^2(0,1)$ with norm $|\cdot|$, $A=\partial_{\xi\xi}$ on the domain $D(A)=H^2(0,1)\cap H^1_0(0,1)$, and the Nemytskii operators:
\begin{equation*}
F_i(x)=\tilde{F}_i\bigl(x(\cdot)\bigr),\;\sigma(x)h=\tilde\sigma\bigl(x(\cdot)\bigr)h(\cdot),\; x\in H,\; h\in H.
\end{equation*}

Assuming that $\tilde{F}_1$, $\tilde{F}_2$, and $\tilde \sigma$ are bounded, this defines $F_1,F_2:H\to H$ and $\sigma:H\to {\mathcal L}(H)$, where ${\mathcal L}(H)$ is the space of bounded linear operators on $H$. Below, we assume that  $\tilde{F}_1$, $\tilde{F}_2$ and $\tilde \sigma$ are functions of class $C^3$, which are bounded and have bounded derivatives. However, it is well-known that $F_1$, $F_2$ and $\sigma$ do not inherit these regularity properties on $H$. The control of their derivatives requires the use of $L^p$ norms.

Finally, setting $B=\partial_\xi$ on $H^1(0,1)$ and $G=F_1+BF_2$, we obtain an equation in the abstract form \eqref{eq:SPDE_intro} above.

Global existence and uniqueness of a solution $X\in L^2(\Omega;C([0,T];H))$ follow from standard arguments (see \cite{DPZ} for instance). Indeed, we have boundedness and Lispchitz continuity properties on the coefficients $G$ and $\sigma$. Thus the transition semigroup can be defined by the formula \eqref{eq:u_intro}.

\bigskip

The regularity results which are required for the numerical analysis and which we obtain in this article have roughly the following form, under appropriate assumptions on $\varphi$: for $t\in(0,T)$
\begin{equation}\label{eq:DuD2u_intro}
\begin{gathered}
|Du(t,x)\cdot h|\le C(T,\varphi) t^{-\alpha} |(-A)^{-\alpha}h|,\\
|D^2u(t,x)\cdot (h,k)| \le C(T,\varphi) t^{-(\beta+\gamma)}  |(-A)^{-\beta}h||(-A)^{-\gamma}k|,
\end{gathered}
\end{equation}
where $(-A)^{-\alpha}$ denotes a negative power (for $\alpha>0$) of the linear operator $-A$. We do not make precise which $L^p$ norms appear on the right-hand side in~\eqref{eq:DuD2u_intro}. Precise and rigorous statements are given in Section~\ref{sec:results_Kolmogorov}.

Note that these regularity results are natural. They hold for instance in the case $G=0$, $\sigma=0$ for any $\alpha,\beta,\gamma\ge 0$ thanks to the regularization properties of the heat semi-group. Using elementary arguments (differentiation inside the expectation, control of the derivative processes using It\^o formula and Gronwall inequalities), see for instance~\cite{Andersson_Hefter_Jentzen_Kurniawan:16}, \cite{Debussche:11}, one can consider the case 
when the diffusion coefficient $\sigma$ is constant - additive noise case. Then the estimate above holds for $\alpha\in[0,1)$, and $\beta,\gamma\in[0,1)$ such that $\beta+\gamma<1$. The case of an affine $\sigma$ 
is also treated in the above references but then we impose $\alpha,\beta,\gamma\in[0,\frac12)$. When the diffusion coefficient $\sigma$ is nonlinear (the so-called multiplicative noise case), the results obtained so far in the literature are not satisfactory:  the extra restriction $\beta+\gamma<\frac12$ is imposed. This is not sufficient for the applications. For the weak order analysis, we need to take
$\beta+\gamma$ arbitrarily close to $1$.

Also, the right hand side of~\eqref{eq:Kolmogorov_intro} is well defined only if one is able to get \eqref{eq:DuD2u_intro} for $\alpha\in [0,1)$, $\beta,\gamma\in [0,\frac12)$ with 
$\beta+\gamma>\frac12$. This is important to prove existence of strict solutions to this Kolmogorov equation and thus to generalize results available in the case of additive noise. 

In this article, we introduce a new approach to obtain such results. Our first main contribution in this article is to prove that in~\eqref{eq:DuD2u_intro} one may take $\alpha\in[0,1)$ and $\beta,\gamma\in[0,\frac12)$, in the multiplicative noise case, for SPDEs of the type of~\eqref{eq:SPDE_intro}.


{At a formal level}, our strategy is based on new expressions for the first and the second order derivatives of $u$ which are obtained thanks to {the Malliavin duality formula}.  {These formulas are}
written in terms of some two-sided stochastic integrals, with anticipating integrands. {Several notions of anticipating integral exist}: see for instance~\cite{Alos_Nualart_Viens:00}, \cite{Leon_Nualart:98}, \cite{Nualart_Viens:00} where the definition of such integrals is motivated by similar reasons to ours. 
The two-sided integrals which we would need are similar to those developed in~\cite{Nualart_Pardoux:88}, \cite{Pardoux:87}, \cite{Pardoux_Protter:87}, but we need to consider more general types of integrands.

We have not found the construction of the two-sided integrals we need in the literature. Although interesting in itself, their rigorous and general construction would considerably lengthen the article; this is left for future works. 

We have chosen a different approach: we consider time discretized versions of the problem, and at the end  pass to the limit in estimates. {The advantage is that we do not need to provide the construction of the two-sided integral since at the
discrete level it  is straightforward. The drawback is that all our estimates are made on the discretized processes and computations 
are sometimes technical.}

Nonetheless, we give a formal derivation of the formulas for the first and second derivatives of $u$ in section \ref{sec:formal}.  We hope that this helps the reader to 
understand  our ideas. Also, this allows to describe the type of integrals which would be required to have a direct proof in continuous time. Again such a proof, which would simplify some technical estimates such as those in section \ref{propo:aux}, requires the rigorous construction of a two sided integral and we chose to avoid this.  Thus, results presented in section \ref{sec:formal} remain at a formal level.

\bigskip

Once new regularity estimates on the solutions of Kolmogorov equations are obtained, our second contribution is to address the weak order analysis of the following Euler scheme applied to \eqref{eq:SPDE_intro}:
$$
X_{n+1}-X_n=\Delta t\bigl(AX_{n+1}+G(X_n)\bigr)dt+\sigma(X_n)\bigl(W\bigl((n+1)\Delta t\bigr)-W(n\Delta t)\bigr),\; X_0=x,
$$
where $\Delta t$ is the time step. We prove that the weak rate of convergence is equal to $\frac12$: for arbitrarily small $\kappa\in(0,\frac12)$,
\begin{equation*}
\big|\E\varphi\bigl(X(N\Delta t)\bigr)-\varphi\bigl(X_N\bigr)\big|\le C_\kappa(T,\varphi,x)\Delta t^{\frac12-\kappa},
\end{equation*}
where the integer $N$ is such that $N\Delta t=T$, for arbitrary but fixed $T\in(0,\infty)$.

The value $\frac12$ for the weak order convergence is natural: indeed, it is possible to show that (for an appropriate norm $\|\cdot\|$) one has the strong convergence rate $\frac14$: $\E\|X(N\Delta t)-X_N\|\le C_\kappa(T,x)\Delta t^{\frac14-\frac\kappa2}$.

Like in~\cite{Brehier:14},~\cite{BrehierKopec:13}, in the case of ergodic SPDEs, the analysis can be extended on arbitrarily large time intervals, with a uniform control of the error. This yields error estimates concerning the approximation of invariant distributions. In fact, under appropriate conditions on the Lipschitz constants of the nonlinear coefficients, one can include factors of the type $\exp(-ct)$, with $c>0$, on the right-hand sides of the equations in~\eqref{eq:DuD2u_intro}; alternatively, these regularity estimates are transfered to the solutions of associated Poisson equations. We do not consider this question further in this article.

We generalize the proof of~\cite{Debussche:11}, and of subsequent articles, which was done under the artificial assumptions that $F:H\to H$ and $\sigma:H\to {\mathcal L}(H)$ are of class $C^2$, with bounded derivatives, and that the second order derivative of $\sigma$ satisfies a very restrictive assumption. As already explained above, the new regularity estimates on the solutions of Kolmogorov equation obtained in the first part of the article are fundamental. Here we treat diffusion coefficients of Nemytskii type, and drift coefficients which are sums of Nemytskii and Burgers type nonlinearities. Treating Burgers type nonlinearities is one of the novelties, and one of the main source of technical difficulties, of this work. Even if the decomposition of the error and ideas in the control of the terms are similar to~\cite{Debussche:11}, we need to consider all the terms again since the functional setting is different.

\medskip
Another approach, using the concept of mild It\^o processes, see~\cite{Cox_Jentzen_Kurniawan_Pusnik:16},~\cite{DaPrato_Jentzen_Rockner:10}, has been recently studied to provide weak convergence rates for SPDEs~\eqref{eq:SPDE_intro} with multiplicative noise, for several examples of numerical schemes: see~\cite{ConusJentzenKurniawan:14}, \cite{Hefter_Jentzen_Kurniawan:16}, \cite{JacobeJentzenWelti:15}, \cite{JentzenKurniawan:15}. 
In particular,  in~\cite{Hefter_Jentzen_Kurniawan:16}, a similar result as ours is obtained when the Burgers type nonlinearity is absent ($F_2=0$). This requires also to work in a Banach spaces setting, with an appropriate type of mild It\^o formula~\cite{Cox_Jentzen_Kurniawan_Pusnik:16}. It is not clear that this can be extended to the case $F_2\ne 0$. 
Moreover, we believe that our way of treating the discretization error is more natural and somewhat simpler. We also mention that the regularity requirements are weaker in our work.

 Also, in \cite{Andersson_Kruse_Larsson:16}, a completely different approach is used; but up to now, this covers only additive noise, {\it i.e.} the case when $\sigma$ is constant.

\medskip
{Using Malliavin calculus techniques to get weak convergence rates for numerical approximations is standard in the literature of finite dimensional Stochastic Differential Equations: see for instance~\cite{BallyTalay:96} and~\cite{ClementKohatsu-HigaLamberton:06}. As already emphasized in~\cite{Debussche:11}, Malliavin calculus techniques are used in a completely different manner in this article. Note that the approach of~\cite{ClementKohatsu-HigaLamberton:06} has been extended in the infinite dimensional setting in~\cite{Andersson_Kruse_Larsson:16}. However, the approach of~\cite{BallyTalay:96} can not be applied for SPDEs, as proved by~\cite{Brehier:17}: weak convergence rates for SPDEs heavily depend on the regularity of the test function.}

\bigskip

In future works, we plan to analyze the weak error associated to spatial discretization, using Finite Elements, like in~\cite{Andersson_Larsson:16}. Note that the analysis of the weak error may also be generalized to other examples of time discretization schemes, such as exponential Euler schemes, like in~\cite{Tambue_Ngnotchouye:16},~\cite{Wang:16} for instance.

We have chosen to consider SPDEs~\eqref{eq:SPDE_intro} of one type, namely with Nemytskii diffusion coefficients, and Nemytskii and Burgers type nonlinear drift coefficients, driven by space-time white noise, in dimension $1$. We believe that natural generalizations hold true, for instance for equations in dimension $2$ or $3$, with appropriate noise. Moreover, considering coefficients with unbounded derivatives, with polynomial growth assumptions, is also an important subject, which we have not chosen to treat; indeed it would have required to deal with additional technical difficulties, resulting in hiding the fundamental ideas of our approach.

On a more theoretical point of view, we leave for future work the important question of the construction in continuous time of the two-side stochastic integrals used in the proof of the new regularity results for the solutions of Kolmogorov equations. It may also be interesting to generalize these estimates to higher order derivatives. Finally, we believe that these results and the strategy of proof will have other applications, beyond analysis of weak convergence rates.

\bigskip

This article is organized as follows. The functional setting is made precise in Sections~\ref{sec:setting} and~\ref{sec:setting_coeffs}. Section~\ref{sec:results} contains the statements of our main results, on the regularity of the solution of the Kolmogorov equation (Section~\ref{sec:results_Kolmogorov}), then on the weak rate of convergence of the Euler approximation (Section~\ref{sec:results_num}). Detailed proofs are given in Section~\ref{sec:proof_reg} and in Section~\ref{sec:proof_num} respectively.

\section{Setting}\label{sec:setting}

We use the notation $\N^\star=\left\{1,2,\ldots\right\}$ for the set of (positive) integers.

Throughout the article, $c$ or $C$ denote generic positive constants, which may change from line to line. We do not always precise the various parameters they depend on. When necessary, we write $C=C_{\ldots}(\ldots)$ to emphasize the dependence on some parameters, by convention it is locally bounded on the domains where the parameters live.

\subsection{Functional spaces and stochastic integration}

In all the article, given two Banach spaces $E_1$ and $E_2$, $C^k_b(E_1;E_2)$, or $C^k_b(E_1)$ when $E_1=E_2$, is the space of bounded $C^k$ functions from $E_1$ to $E_2$ with bounded derivatives up to order 
$k$. Also $\mathcal{L}(E_1;E_2)$ denotes the space of bounded linear operators from $E_1$ to $E_2$. If $E_1=E_2$, we set $\mathcal{L}(E_1)=\mathcal{L}(E_1;E_1)$.

The SPDE~\eqref{eq:SPDE_intro} is considered as taking values in the separable Hilbert space $H=L^2(0,1)$, with norm (resp. inner product) denoted by $|\cdot|$ (resp. $\langle \cdot,\cdot \rangle$). We will also extensively use the Banach spaces $L^p(0,1)$, for $p\in[1,\infty]$; the $L^p$ norm is denoted by $|\cdot|_{L^p}$.

\bigskip

When $K$ is a separable Hilbert space, the trace operator is denoted by ${\rm Tr}(\cdot)$; recall that ${\rm Tr \Psi}$ is well defined when $\Psi\in\mathcal{L}(K)$ is nuclear (\cite{Gohberg_Krein}).

We recall that if $\Psi\in\mathcal{L}(K)$ is a nuclear operator and $L\in\mathcal{L}(K)$ is a bounded linear mapping, then $L\Psi$ and $\Psi L$ are nuclear operators, and ${\rm Tr} L\Psi={\rm Tr}\Psi L$.

Let $H_1,\, H_2$ be two separable Hilbert spaces. For $L\in\mathcal{L}(H_1,H_2)$, we denote by $L^\star$ its adjoint. We now introduce the space $\mathcal{L}_2(H_1;H_2)$ of Hilbert-Schmidt operators from $H_1$ to $H_2$: a linear mapping $\Phi\in\mathcal{L}(H_1;H_2)$ is an Hilbert-Schmidt operator if $\Phi^\star\Phi\in\mathcal{L}(H_1,H_1)$ is nuclear, and the associated norm $\|\cdot\|_{\mathcal{L}_2(H_1,H_2)}$ satisfies
$\|\Phi\|_{\mathcal{L}_2(H_1;H_2)}=\|\Phi^*\|_{\mathcal{L}_2(H_2;H_1)}=({\rm Tr}\, \Phi\Phi^*)^{\frac12}$. We use the notation $\mathcal{L}_2(H_1)=\mathcal{L}_2(H_1;H_1)$.

For a function $\psi\in C^1(H;\R)$, we often identify the first order derivative and the gradient: $\langle D\psi(x),h\rangle= D\psi(x)\cdot h$, for $x,h\in H$. Similarly,   if $\psi\in C^2(H;\R)$, we
often identify the second order derivative and the Hessian:  $\langle D^2\psi(x)h,k\rangle= D^2\psi(x)\cdot (h,k)$, for $x,h,k\in H$
\bigskip

We are now in position to present basic elements about stochastic It\^o integrals on Hilbert spaces, see~\cite{DPZ} for further properties. The cylindrical Wiener process on $H$ is defined by
\begin{equation}\label{eq:Wiener_def}
W(t)=\sum_{i\in\N^*}\beta_i(t)f_i,
\end{equation}
where $\bigl(\beta_i\bigr)_{i\in \N^*}$ is a sequence of independent standard scalar Wiener processes on 	a
filtered probability space satisfying the usual conditions $\bigl(\Omega,\mathcal{F},(\mathcal{F}_t)_{t\ge 0},\mathbb{P}\bigr)$ and $\bigl(f_i\bigr)_{i\in \N^*}$ is a complete orthonormal system of $H$. 

It is standard that this representation does not depend on the choice of the complete orthonormal system of $H$. Moreover, it is well-known that $W(t)$ as defined by~\eqref{eq:Wiener_def} does not take values in $H$; however, the series is convergent in any larger Hilbert space $K$, such that the embedding from $H$ into $K$ is an Hilbert-Schmidt operator. 

Given a predictable process $\Phi\in L^2(\Omega\times (0,T);{\mathcal L}_2(H;K))$, the integral $\int_0^T \Phi(s)dW(s)$ is a well defined It\^o integral with values in 
the Hilbert space $K$. Moreover, It\^o isometry reads: 
$$
\E\left(\|\int_0^T \Phi(s)dW(s)\|_K^2\right)=\E\left(\int_0^T\|\Phi(s)\|_{\mathcal L_2(H;K)}^2 ds \right).
$$

\bigskip

In the sequel, we will need to control $L^p$ norms of stochastic integrals, for $p\in[2,\infty)$, for processes $\Phi$ with values in $\mathcal L(H;E)$, where $E=L^p(0,1)$ is a separable Banach space. The space $\mathcal{L}_2(H,K)$ of Hilbert-Schmidt operators is then replaced by the space $R(H,E)$ of $\gamma$- radonifying operators: a linear operator $\Psi\in\mathcal{L}(H,E)$ is a $\gamma$-radonifying operator, if the image by $\Phi$ of the canonical gaussian distribution on $H$ extends to a Borel probability measure on $E$. The space $R(H;E)$ is equipped with the norm $\|\cdot\|_{R(H,E)}$ defined by
$$
\|\Phi\|_{R(H;E)}^2=\tilde{\E}\big| \sum_{i\in\N^*} \gamma_i \Phi f_i\big|^2,
$$
where $(\gamma_i)_{i\in \N^*}$ is a sequence of independent standard (mean $0$ and variance $1$) Gaussian random variables, defined on a probability space $(\tilde\Omega,\tilde{\mathcal F}, \tilde{ \mathbb P})$, with expectation operator denoted by $\tilde{\E}$, and $(f_i)_{i\in \N^*}$ is a complete orthonormal system. The expression of $\|\Phi\|_{R(H;E)}$ does not depend on the choice of these elements. We refer for instance to~\cite{Brzezniak:97,VanNeerven_Veraar_Weis:07,VanNeerven_Veraar_Weis:08} for further properties.

An important tool which is used frequently in the sequel is the left and right ideal property for $\gamma$-radonifying operators: for every separable Hilbert spaces $K,\mathcal{K}$ and for every Banach spaces $E=L^p(0,1)$, $\mathcal{E}=L^q(0,1)$, with $p,q\in[2,\infty)$, for every $L_1\in \mathcal{L}(E,\mathcal{E})$, $\Psi\in R(K,E)$ and $L_2\in\mathcal{L}(\mathcal{K},K)$, one has $L_1\Psi L_2\in R(\mathcal{K},\mathcal{E})$,
\begin{equation}\label{eq:ideal}
\|L_1\Psi L_2\|_{R(\mathcal{K},\mathcal{E})}\le \|L_1\|_{\mathcal{L}(E,\mathcal{E})} \|\Psi\|_{R(K,E)} \|L_2\|_{\mathcal{L}(\mathcal{K},K)}.
\end{equation}

For $E=L^p(0,1)$ with $p\in[2,\infty)$, the following generalization of It\^o isometry holds true, in terms of an inequality only: for predictable processes $\Phi\in L^2(\Omega\times (0,T);R(H;E))$, the It\^o integral $\int_0^T \Phi(s)dW(s)$ can be defined, with values in $E$, and there exists $c_E\in(0,\infty)$, depending only on the space $E$, such that 
\begin{equation}\label{eq:Ito_gamma}
\E\left(\|\int_0^T \Phi(s)dW(s)\|_E^2\right)\le c_E \E\left(\int_0^T\|\Phi(s)\|_{ R(H,E)}^2 ds \right).
\end{equation}

Finally, generalizations of Burkholder-Davies-Gundy inequalities are also available and will be used throughout the article.

To simplify the notation, we often write $L^p$ instead of $L^p(0,1)$.

\subsection{Elements of Malliavin calculus}

We recall basic definitions regarding Malliavin calculus, which is a key tool for the analysis provided below; especially, we define the Malliavin derivative, and state the duality formula which will be used. We simply aim at giving the main notation; for a comprehensive treatment of Malliavin calculus, we refer to the classical monograph~\cite{Nualart:06}.

Malliavin calculus techniques will be required for both contributions of this article: first the proof of new regularity estimates for the solution of Kolmogorov equations associated to SPDEs with nonlinear diffusion coefficient, and second the analysis of weak convergence rates for the numerical discretization of the SPDE. For the first part, we will only use discrete time versions of all objects, which are based on standard integration by parts in the weighted $L_\rho^2$ spaces, where $\rho$ is the Gaussian density. The full generality of Malliavin calculus, in continuous time, is mainly needed in the second part.

Given a smooth real-valued function $G$ on 
$H^n$ and $\psi_1,\dots, \psi_n\in L^2(0, T; H)$, the Malliavin derivative of the smooth random variable $G\bigl(\int_0^T\langle\psi_1(r),dW(r)\rangle,\dots, \int_0^T\langle\psi_n(r),dW(r)\rangle\bigr)$, at time $s$, in the direction $h\in H$, is defined as
$$\begin{array}{l}
\displaystyle {\mathcal D}_s^hG\bigl(\int_0^T\langle\psi_1(r),dW(r)\rangle,\dots, \int_0^T\langle\psi_n(r),dW(r)\rangle\bigr)\\
\displaystyle= \sum_{i=1}^n \partial_i G\bigl(\int_0^T\langle\psi_1(r),dW(r)\rangle,\dots, \int_0^T\langle\psi_n(r),dW(r)\rangle\bigr) \langle\psi_i(s),h\rangle.
\end{array}
$$
We also define the process ${\mathcal D}G$ by $\langle{\mathcal D}G (s), h\rangle={\mathcal D}_s^hG$. It can be shown that ${\mathcal D}$ defines a closable operator 
with values in $L^2(\Omega\times (0,T);H)$, and we denote by $\mathbb D^{1,2}$ the closure of the set of smooth random variables for  the norm
$$
\|G\|_{\mathbb D^{1,2}}=\Bigl(\E(|G|^2+\int_0^T |{\mathcal D}_sG|^2ds)\Bigr)^{\frac12}.
$$   
We define similarly the Malliavin derivative of random variables taking values in H. If $G=  \sum_i G_ie_i \in L^2(\Omega,H)$ with $G_i \in \mathbb D^{1,2}$  for all
$i\in \N^\star$ and   $\sum_i \int_0^T |\mathcal D^s G_i|^2ds<\infty$,
we set 
$$
{\mathcal D}_s^hG = \sum_i {\mathcal D}_s^hG_ie_i, \; {\mathcal D}_sG =\sum_i {\mathcal D}_sG_ie_i. 
$$
The chain rule is valid: if $u\in C_b^1(\R)$ and $G\in \mathbb D^{1,2}$, then $u(G) \in \mathbb D^{1,2}$ and
${\mathcal D}(u(G )) = u'(G ){\mathcal D}G$.

For $G\in \mathbb D^{1,2}$ and $\psi\in L^2(\Omega\times(0, T); H)$, such that $\psi(t)\in \mathbb D^{1,2}$ for all $t \in  [0,T]$, and such that $\int_0^T  \int_0^T |{\mathcal D}_s\psi(t)|^2dsdt < \infty$, we have the {Malliavin calculus duality formula} 
$$
 \E \left(G\int_0^T (\psi(s),dW(s))\right)= \E\left(\int_0^T ({\mathcal D}_sG,\psi(s))ds \right)= \sum_i \E\left(\int_0^T {\mathcal D}_s^{e_i} G\,(\psi(s),e_i)ds\right),
 $$
where the stochastic integral is in general a Skohorod integral. However, in this article, it corresponds with the It\^o integral since we only need to consider the Skohorod integral of adapted processes. Moreover, the duality formula above holds for $G\in {\mathbb D}^{1,2}$ and $\psi\in L^2(\Omega\times (0,T);H)$ when $\psi$ is an adapted process.

Recall that if $G$ is $\mathcal F_t$ measurable, then ${\mathcal D}_sG= 0$ for $s \ge t$.

Finally, we use the following formula, {as a consequence of the duality formula above,} see Lemma 2.1 in~\cite{Debussche:11}: let $G \in {\mathbb D}^{1,2}$, $u\in C_b^2(H)$ and $\psi\in L^2(\Omega\times(0,T),{\mathcal L}_2(H))$ be an adapted process, then
$$
 \begin{array}{ll}
\displaystyle \E\left( Du(G)\cdot \int_0^T\psi(s)dW(s)\right) &\displaystyle=\sum_i \E\left( \int_0^T D^2u(G)\cdot \left(D_s^{e_i}G,\psi(s)e_i\right)ds\right)\\
& \displaystyle = \E\left(\int_0^T{\rm Tr}\left(\psi^*(s)D^2u(G){\mathcal D}_sG\right)ds\right).\\
\end{array}
 $$

\section{{Assumptions and properties of coefficients}}\label{sec:setting_coeffs}

In this section, we give definitions and properties of the coefficients $A$, $G=F_1+BF_2$, and $\sigma$, which appear in~\eqref{eq:SPDE_intro}. {In addition, Section~\ref{sec:Sobolev} presents results concerning Sobolev norms.}

\subsection{{The linear operator $A$}}

The operator $A$ is an unbounded linear operator on $H=L^2(0,1)$: it is defined as the Laplace operator on $(0,1)$, with homogeneous Dirichlet boundary conditions, on the domain $D(A)=H^2(0,1)\cap H^1_0(0,1)$. It satisfies Property~\ref{ass:A} below.

\begin{prop}\label{ass:A}
For $i\in \N^\star$, define $e_i=\sqrt{2}\sin\bigl(i\pi \cdot\bigr)$ and $\lambda_i=(i\pi)^2$. Then
\begin{itemize}
\item $\bigl(e_i\bigr)_{i\in \N^*}$ is a complete orthornormal system of $H$, and, for all $i\in\N^*$,
\begin{equation*}
Ae_i=-\lambda_ie_i.
\end{equation*}
\item For any $\alpha\in \R$, $\sum_{i=1}^{\infty}\lambda_{i}^{-\alpha}<\infty$ if and only if $\alpha>\frac12$.
\item the family of eigenvectors is equibounded in $L^\infty$: $\sup_{i\in \N^\star}|e_i|_{L^\infty}<\infty$.
\end{itemize}
\end{prop}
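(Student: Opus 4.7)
The three assertions are classical, so the plan is simply to verify them cleanly and in the order stated.

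\textbf{Step 1 (eigenbasis).} I would first check directly that each $e_i=\sqrt{2}\sin(i\pi\cdot)$ belongs to $D(A)=H^2(0,1)\cap H^1_0(0,1)$, since $e_i(0)=e_i(1)=0$ and $e_i\in C^\infty([0,1])$, and that $-e_i''=(i\pi)^2 e_i=\lambda_i e_i$, so $Ae_i=-\lambda_i e_i$. Orthonormality is the elementary identity $\int_0^1 2\sin(i\pi \xi)\sin(j\pi \xi)\,d\xi=\delta_{ij}$, obtained from product-to-sum. For completeness of $(e_i)_{i\in\N^\star}$ in $H=L^2(0,1)$, the cleanest route is to quote Sturm--Liouville theory (the Dirichlet Laplacian on a bounded interval has compact resolvent and a self-adjoint functional calculus, hence an orthonormal basis of eigenvectors, and the spectrum is exactly $\{-\lambda_i\}$ since all solutions of $-u''=\lambda u$ vanishing at $0$ and $1$ have $\lambda=(i\pi)^2$). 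Alternatively, one notes that any $f\in L^2(0,1)$ can be extended by odd reflection to an element of $L^2(-1,1)$ whose Fourier sine series (a standard fact) converges to it in $L^2$.

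\textbf{Step 2 (summability).} Since $\lambda_i^{-\alpha}=\pi^{-2\alpha} i^{-2\alpha}$, one has
\begin{equation*}
\sum_{i=1}^\infty \lambda_i^{-\alpha}=\pi^{-2\alpha}\sum_{i=1}^\infty i^{-2\alpha},
\end{equation*}
and this Riemann series converges if and only if $2\alpha>1$, i.e.\ $\alpha>\tfrac12$.

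\textbf{Step 3 ($L^\infty$ bound).} This is immediate from $|\sin|\le 1$, which gives $|e_i|_{L^\infty}\le\sqrt{2}$ uniformly in $i$.

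None of these steps pose a real obstacle; the only point requiring a small argument beyond routine checks is the completeness in Step~1, which I would handle either by invoking the standard spectral theory of the Dirichlet Laplacian (compact self-adjoint inverse on $L^2(0,1)$) or, more elementarily, by odd reflection and classical Fourier series on $(-1,1)$. I would choose the spectral-theoretic justification, since it fits the functional-analytic flavour of the paper and makes the identification of all eigenvalues transparent.
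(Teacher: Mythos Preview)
Your proposal is correct, and each of the three steps is cleanly handled. The paper itself does not give a proof of this property at all: it is stated as a list of standard facts about the Dirichlet Laplacian on $(0,1)$ and used without further justification. Your verification, including the spectral-theoretic argument for completeness, is entirely adequate and in keeping with the functional-analytic setup of the paper.
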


In particular, for every $p\in[2,\infty]$, $\sup_{i\in \N^\star}|e_i|_{L^p}<\infty$. This equiboundedness property is crucial for many estimates which will be proved in this article.

For every $p\in(2,\infty)$, $A$ can also be seen as an unbounded  linear operator on $L^p(0,1)$, with domain $D_p(A)=\left\{x\in L^p(0,1) ; Ax\in L^p(0,1)\right\}$. Note the inclusion $D_p(A)\subset D_q(A)\subset D(A)$ for $p\ge q\ge 2$.

\bigskip

The operator $A$ generates an analytic semigroup $\bigl(e^{tA}\bigr)_{t\ge 0}$ on $L^p(0,1)$, for every $p\in[2,\infty)$, see for instance~\cite{Pazy}. In the case $p=2$, we have the following formula: $e^{tA}=\sum_{i=1}^{\infty}e^{-t\lambda_i}\langle \cdot, e_i\rangle e_i$ for every $x\in H$ and $t\ge 0$.

\bigskip

We use the standard construction of fractional powers $(-A)^{-\alpha}$ and $(-A)^{\alpha}$ of $A$, for $\alpha\in (0,1)$, see for instance~\cite{Pazy}:
\begin{gather*}
(-A)^{-\alpha}=\frac{\sin(\pi \alpha)}{\pi}\int_{0}^{\infty}t^{-\alpha}(tI-A)^{-1}dt,\\
(-A)^{\alpha}=\frac{\sin(\pi \alpha)}{\pi}\int_{0}^{\infty}t^{\alpha-1}(-A)(tI-A)^{-1}dt,
\end{gather*}
where $(-A)^{\alpha}$ is defined as an unbounded linear operator on $L^p(0,1)$, with domain $D_p\bigr((-A)^\alpha\bigr)$. Definitions are consistent when $p$ varies. In the case $p=2$, the construction is simple: indeed,
\begin{gather*}
(-A)^{-\alpha}x=\sum_{i\in \N^{\star}}\lambda_i^{-\alpha} \langle x,e_i\rangle e_i, \quad x\in H,\\
(-A)^{\alpha}x=\sum_{i\in \N^{\star}}\lambda_i^\alpha \langle x,e_i\rangle e_i, \quad x\in D_2\bigl((-A)^{\alpha}\bigr)=\left\{x\in H ; \sum_{i=1}^{\infty}\lambda_{i}^{2\alpha}\langle x,e_i\rangle^2<\infty\right\}.
\end{gather*}

We use the natural norms on $D_p((-A)^\alpha)$, denoted by $|(-A)^\alpha \cdot|_{L^p}$. 

\subsection{{Useful inequalities}\label{sec:Sobolev}}
{For $p\ne 2$}, the norm of $D_p((-A)^\alpha)$ does not in general coincide with the norm of the standard Sobolev spaces $W^{2\alpha,p}=W^{2\alpha,p}(0,1)$; see~\cite[Section~4.2.1]{Triebel} for their definitions. When $2\alpha$ is not an integer, we may use the norm defined in~\cite[Section~4.4.1, Remark 2]{Triebel}. {In this article, $\alpha\in (0,1/2)$ and in this case, this norm writes:}
\begin{equation}\label{e8}
|x|_{W^{2\alpha,p}}=|x|_{L^p}+\int_0^1\int_0^1 \frac{|x(\xi)-x(\eta)|^p}{|\xi-\eta|^{1+2\alpha p}} d\xi d\eta. 
\end{equation}

{It is useful to compare the two scales of spaces $D_p((-A)^\alpha)$ and $W^{2\alpha,p}$}. {Below we use a series of results from \cite{Triebel}. Let us choose  $\epsilon>0$, by the definition in section~4.2.1, Theorem 1, section 4.3.1 which asserts that 2.4.2 (16) holds, we have:
$$
W^{2\alpha-\epsilon,p}=B^{2\alpha-\epsilon}_{p,p}= (L^p,W^{2,p})_{\alpha-\frac\epsilon2,p}.
$$
where $B^{2\alpha-\epsilon}_{p,p}$ is the Besov space and $(\cdot,\cdot)_{\theta,p}$ denotes the interpolation spaces. Then, we use 1.3.3 (e), the equality 
$W^{2,p}=D((-A))$ and 1.15.2 (d) to obtain:
$$
(L^p,W^{2,p})_{\alpha-\frac\epsilon2,p}\subset (L^p,W^{2,p})_{\alpha, 1}\subset D((-A)^\alpha).
$$
It follows
$$
W^{2\alpha-\epsilon,p} \subset D((-A)^\alpha).
$$
The same arguments imply
$$
D((-A)^\alpha) \subset W^{2\alpha+\epsilon,p}.
$$}
(Note that for $p=2$, we can take $\epsilon=0$ and we have in fact  $D((-A)^\alpha) \subset W^{2\alpha,2}$.)

We deduce the following inequalities:
\begin{equation}\label{eq:Sobolev-domain}
|x|_{W^{2\alpha-\epsilon,p}}\le c_{\alpha,\epsilon,p} |(-A)^\alpha x|_{L^p}, \; x\in D_p((-A)^\alpha) \quad;\quad |(-A)^\alpha x|_{L^p}\le c_{\alpha,\epsilon,p}|x|_{W^{2\alpha+\epsilon,p}},
\; x\in W^{2\alpha+\epsilon,p},
\end{equation}
for $c_{\alpha,\epsilon,p}\in(0,\infty)$. 

\medskip

{We also need inequalities for composition and products in these spaces. Let us consider a Lipschitz continuous function $g:\R\to \R$. It satisfies:
$$
|g(t)|\le L(1+|t|),\quad |g(t)-g(s)|\le L|t-s|,\quad t,s\in \R,
$$
for some constant $L$. It follows for $x\in W^{2\alpha,p}$, $\alpha <\frac12$:
$$
|g(x)|_{L^p}\le L(1+|x|_{L^p}),\quad \int_0^1\int_0^1 \frac{|g(x(\xi))-g(x(\eta))|^p}{|\xi-\eta|^{1+2\alpha p}} d\xi d\eta
\le L \int_0^1\int_0^1 \frac{|x(\xi)-x(\eta)|^p}{|\xi-\eta|^{1+2\alpha p}} d\xi d\eta
$$
Recalling  the definition \eqref{e8} of the norm on $W^{2\alpha,p}$}  we get the following inequality: for $\alpha<\frac12$ and $\epsilon>0$, any $x\in D_p\bigl((-A)^{\alpha+\epsilon}\bigr)$, and any Lipschitz continuous function $g:\R\to \R$,
\begin{equation}\label{eq:Sobolev-Lipschitz}
\big|(-A)^{\alpha}g(x)\big|_{L^p}\le c_{\alpha,\epsilon,p}|g(x)|_{W^{2\alpha+\epsilon,p}}\le c_{\alpha,\epsilon,p,g}\big(1+|x|_{W^{2\alpha+\epsilon,p}}\bigr)\le 
c_{\alpha,\epsilon,p,g}\big(1+|(-A)^{\alpha+\epsilon}x|_{L^p}\bigr).
\end{equation}
{Also, by H\"older inequality and \eqref{e8}}, for $\alpha<\frac12$, and $x\in W^{2\alpha,q},y\in W^{2\alpha,r}$ such that $\frac1p=\frac1q+\frac1r$, one has 
\begin{equation}\label{eq:product_1}
|xy|_{W^{2\alpha,p}}\le c_{\alpha,q,r}\bigl(|x|_{L^{q}}|y|_{W^{2\alpha,r}}+|x|_{W^{2\alpha,q}} |y|_{L^{r}}\bigr)\le c_{\alpha,q,r}|x|_{W^{2\alpha,q}}|y|_{W^{2\alpha,r}}.
\end{equation}
Using then~\eqref{eq:Sobolev-domain}, \eqref{eq:product_1} yields that for $\alpha\in (0,\frac12),\; \epsilon >0,\; \frac1p=\frac1q+\frac1r$, and $x\in D_q\bigl((-A)^{\alpha+\epsilon}\bigr)$, $y\in D_r\bigl((-A)^{\alpha+\epsilon}\bigr)$, one has
\begin{equation}\label{eq:product_2}
|(-A)^{\alpha} xy|_{L^p}\le c_{\alpha,\epsilon,q,r}|(-A)^{\alpha+\epsilon}x|_{L^q}|(-A)^{\alpha+\epsilon}y|_{L^r}.
\end{equation}

\medskip
{Below, we also need to estimate products of two functions, one of which belongs to a space of negative regularity, in a space
$D((-A)^{-\alpha})$
 with $\alpha\in (0,\frac12)$. More precisely, given $x\in D((-A)^{-\alpha})$, we want to give a meaning to the product $xy$. For functions defined on the whole 
 space $\R$, this is classically treated thanks to paraproduct. In the case of the interval treated here, we provide an alternate argument to 
 do this. As in the case of $\R$, the sum of the regularity of $x$ and $y$ has to be positive and the product is defined only in spaces of negative regularity.}
 
{We use a duality argument. Let us first consider smooth $x$ and $y$, then for $z$ smooth. Let $\alpha\in (0,\frac12)$, $\epsilon>0,\; \frac1p=\frac1q+\frac1r$, $\frac1q+\frac1{q'}=1$; note that $\frac1{q'}=\frac1r+\frac1{p'}$:
$$
\langle xy,z\rangle
= \langle(-A)^{-\alpha}x, (-A)^{\alpha}(yz)\rangle
= \int_0^1 ((-A)^{-\alpha}x)(\xi) ((-A)^{\alpha}(yz))(\xi) d\xi\le |(-A)^{-\alpha}x|_{L^q} |(-A)^{\alpha}(yz)|_{L^{q'}}.
$$
From~\eqref{eq:Sobolev-domain} and~\eqref{eq:product_1}, we obtain
\begin{align*}
|(-A)^{\alpha}(yz)|_{L^{q'}}&\le c|yz|_{W^{2\alpha+\epsilon,q'}} \\
&\le c |y|_{W^{2\alpha+\epsilon,r}}|z|_{W^{2\alpha+\epsilon,p'}}\\
&\le c |(-A)^{\alpha+\epsilon}y|_{L^{r}}|(-A)^{\alpha+\epsilon}z|_{L^{p'}}.
\end{align*}
We deduce:
$$
\langle xy,z\rangle
\le c|(-A)^{-\alpha}x|_{L^q} |(-A)^{\alpha+\epsilon}y|_{L^{r}}|(-A)^{\alpha+\epsilon}z|_{L^{p'}}.
$$
Since $|(-A)^{-\alpha-\epsilon}(xy)|_{L^p}=  \sup\frac{ \langle xy,z\rangle}{|(-A)^{\alpha+\epsilon}z|_{L^{p'}}}$, we obtain:
\begin{equation}\label{eq:product_3}
|(-A)^{-\alpha-\epsilon}(xy)|_{L^p}\le c_{\alpha,\epsilon,q,r}|(-A)^{-\alpha}x|_{L^{q}}|(-A)^{\alpha+\epsilon}y|_{L^r}.
\end{equation}
} 
By density, this inequality remains true for all $x,y,z$ such that the right hand side is finite.
%

{Finally, for every $p\in[2,\infty)$, we have the Sobolev embedding: $L^p\subset W^{\frac12-\frac1p,2}$, see for instance~\cite[Theorem~7.58]{Adams}.  Since $D_2\bigl((-A)^{\frac{1}{4}-\frac{1}{2p}}\bigr)$ is a closed
subspace of $W^{\frac12-\frac1p,2}$ and the norms $|\cdot|_{W^{2\alpha,2}}$ and $|(-A)^{\alpha}\cdot|_{L^2}$ are equivalent on $D_2\bigl((-A)^{\alpha}\bigr)$ (see \cite{Triebel}, Theorem 1.18.10), we deduce:
\begin{equation}\label{eq:lem_Sobolev}
|x|_{L^p}\le C(p) \big|(-A)^{\frac{1}{4}-\frac{1}{2p}}x|_{L^2}.
\end{equation}
}
%

\subsection{{Nonlinear terms $G$ and $\sigma$}}

The drift $G$ is the sum of a Nemytskii and of a Burgers type nonlinearities: $G=F_1+BF_2$, where $Bx=\partial_\xi x\in L^p(0,1)$ for $x\in W^{1,p}(0,1)$, and where $F_1$ and $F_2$ are Nemytskii coefficients. Precisely, let $\tilde{F}_1,\tilde{F}_2\in\mathcal{C}_b^3(\R)$ be two real-valued functions. We assume that they are bounded to simplify the presentation, but this {could easily be relaxed}.  Then we set, for every $x\in L^p$, with $p\in[1,\infty]$, $F_i(x)(\cdot)=\tilde{F}_i\bigl(x(\cdot)\bigr)$, for $i\in\left\{1,2\right\}$.

Straightforward applications of H\"older inequality yield Property~\ref{ass:F} below.

\begin{prop}\label{ass:F}
Let $F\in\left\{F_1,F_2\right\}$.

For every $p\in[1,\infty]$, there exists $C_p\in(0,\infty)$ such that for every $x\in L^2, h\in L^p$
\begin{equation*}
|F(x)|_{L^p}\le C_p \quad,\quad |F'(x).h|_{L^p}\le C_p|h|_{L^p};
\end{equation*}
moreover, if $q_1,q_2,r_1,r_2,r_3\in[1,\infty]$ are such that $\frac{1}{q_1}+\frac{1}{q_2}=\frac{1}{p}$ and $\frac{1}{r_1}+\frac{1}{r_2}+\frac{1}{r_3}=\frac{1}{p}$, there exists $C_{p}(q_1,q_2)$ and $C_p(r_1,r_2,r_3)$ such that for every $x\in L^2$
\begin{equation*}
\begin{gathered}
|F^{(2)}(x).(h_1,h_2)|_{L^p}\le C_p(q_1,q_2)|h_1|_{L^{q_1}}|h_2|_{L^{q_2}}, \quad \forall~h_1\in L^{q_1},h_2\in L^{q_2}\\
|F^{(3)}(x).(h_1,h_2,h_3)|_{L^p}\le C_p(r_1,r_2,r_3)|h_1|_{L^{r_1}}|h_2|_{L^{r_2}}|h_3|_{L^{r_3}}, \quad \forall~h_1\in L^{r_1},h_2\in L^{r_2},h_3\in L^{r_3}.
\end{gathered}
\end{equation*}
\end{prop}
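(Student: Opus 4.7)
The plan is to reduce everything to pointwise identities for the Fr\'echet derivatives of the Nemytskii operator and then apply H\"older's inequality. Since $\tilde F\in C_b^3(\R)$, the functions $\tilde F,\tilde F',\tilde F'',\tilde F'''$ are bounded on $\R$, and for $F(x)(\xi)=\tilde F(x(\xi))$ a direct computation (valid on a dense subset of smooth $x$ and then extended by density/continuity) gives the pointwise formulas
\begin{equation*}
\bigl(F'(x)\cdot h\bigr)(\xi)=\tilde F'(x(\xi))h(\xi),\quad
\bigl(F^{(2)}(x)\cdot(h_1,h_2)\bigr)(\xi)=\tilde F''(x(\xi))h_1(\xi)h_2(\xi),
\end{equation*}
and
\begin{equation*}
\bigl(F^{(3)}(x)\cdot(h_1,h_2,h_3)\bigr)(\xi)=\tilde F'''(x(\xi))h_1(\xi)h_2(\xi)h_3(\xi),
\end{equation*}
for a.e.\ $\xi\in(0,1)$.

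First I would prove the two $L^p$ bounds. Since $(0,1)$ has finite measure and $\tilde F$ is bounded,
\begin{equation*}
|F(x)|_{L^p}\le |\tilde F|_\infty\,|\mathbf 1|_{L^p(0,1)}=:C_p,
\end{equation*}
which is finite for every $p\in[1,\infty]$ and independent of $x$. For the first derivative, the pointwise identity immediately yields
\begin{equation*}
|F'(x)\cdot h|_{L^p}\le |\tilde F'|_\infty\,|h|_{L^p},
\end{equation*}
which is the second claim.

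Next I would treat the higher-order derivatives using H\"older's inequality. For $q_1,q_2\in[1,\infty]$ with $\tfrac1{q_1}+\tfrac1{q_2}=\tfrac1p$, the generalized H\"older inequality gives $|h_1h_2|_{L^p}\le |h_1|_{L^{q_1}}|h_2|_{L^{q_2}}$, so
\begin{equation*}
|F^{(2)}(x)\cdot(h_1,h_2)|_{L^p}\le |\tilde F''|_\infty\,|h_1|_{L^{q_1}}|h_2|_{L^{q_2}}.
\end{equation*}
The same reasoning with the three-factor H\"older inequality, using $\tfrac1{r_1}+\tfrac1{r_2}+\tfrac1{r_3}=\tfrac1p$, yields the analogous bound for $F^{(3)}$ with constant $|\tilde F'''|_\infty$.

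There is essentially no obstacle beyond justifying the pointwise formulas for the Fr\'echet derivatives, and this is standard for Nemytskii operators built from $C_b^3$ functions: boundedness of $\tilde F$ and its derivatives makes the required integrability conditions automatic on each $L^p$, and differentiability in the Fr\'echet sense at a point $x\in L^2$ follows from a Taylor expansion of $\tilde F$ combined with H\"older estimates of the same flavor as above to control the remainders. The only mild point to note is that the argument $x$ is taken in $L^2$, which is harmless because neither the constants $C_p$, $C_p(q_1,q_2)$, $C_p(r_1,r_2,r_3)$ nor the right-hand sides depend on $|x|$, thanks to the boundedness of $\tilde F$ and all its derivatives.
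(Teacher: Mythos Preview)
Your proof is correct and matches the paper's approach exactly: the paper simply states that ``straightforward applications of H\"older inequality yield Property~\ref{ass:F},'' and your argument spells out precisely those applications after writing down the pointwise formulas for the derivatives of the Nemytskii operator. There is nothing to add.
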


In order to control terms of the form $BF_2(x)$, we will use the following property
\begin{equation}\label{eq:norm_AB}
\big|(-A)^{-\alpha}B(-A)^{-\beta} \big|_{\mathcal{L}(L^p)}<\infty, \mbox{ for } \alpha+\beta>\frac12.
\end{equation}
Indeed, this inequality is a direct consequence of~\eqref{eq:Sobolev-domain} when $\alpha=0$, and uses a duality argument when $\beta=0$. The general case
follows by an interpolation argument.

\bigskip

The diffusion coefficient $\sigma$ is a linear operator of Nemytskii type. Precisely, let $\tilde\sigma\in\mathcal{C}_b^3(\R)$ be a real-valued, bounded, function, with bounded derivatives up to order $3$. Then, for every $p\in[1,\infty]$, define $\bigl(\sigma(x)h\bigr)(\cdot)=\tilde \sigma\bigl(x(\cdot)\bigr)h(\cdot)$
for all $x,h\in L^p$.

\begin{prop}\label{ass:sigma}
For every $p,q\in[1,\infty]$, $\sigma:L^2\to \mathcal{L}(L^p,L^q)$ is of class $\mathcal{C}^3$. Moreover, the following conditions on the derivatives of $\sigma$ hold true.

For every $p\in[2,\infty]$, there exists $C_p\in(0,\infty)$ such that for every $x\in L^2$
\[
|\sigma(x)|_{\mathcal{L}(L^p)}\le C_p.
\]

For every $p\in(2,\infty)$, there exists $C_p\in(0,\infty)$ such that for every $x\in L^2$
\begin{eqnarray}
&&\big|(-A)^{-\frac{1}{2p}}\bigl(\sigma'(x).h\bigr)\big|_{\mathcal{L}(L^2)}\le C_p |h|_{L^p},~\forall~h\in L^p, \label{eq:estim_sigma_1} \\
&&\big|(-A)^{-\frac{1}{2p}}\bigl(\sigma''(x).(h,k)\bigr)\big|_{\mathcal{L}(L^2)}\le C_p |h|_{L^{2p}}|k|_{L^{2p}},~\forall~h,k\in L^{2p}, \label{eq:estim_sigma_2}\\
&&\big|(-A)^{-\frac{1}{2p}}\bigl(\sigma^{(3)}(x).(h,k_1,k_2)\bigr)\big|_{\mathcal{L}(L^2)}\le C_p |h|_{L^{2p}}|k_1|_{L^{4p}}|k_2|_{L^{4p}},~\forall~h\in L^{2p},k_1,k_2\in L^{4p} \label{eq:estim_sigma_3}.
\end{eqnarray}

Finally, for every $x\in L^2$ and $h\in L^p,k_1,k_2\in L^{2p}$
\begin{equation}\label{eq:sigma_star}
\sigma(x)^\star=\sigma(x) \quad,\quad \bigl(\sigma'(x).h\bigr) ^\star=\sigma'(x).h \quad,\quad \bigl(\sigma''(x).(k_1,k_2)\bigr)^{\star}=\sigma''(x).(k_1,k_2).
\end{equation}
\end{prop}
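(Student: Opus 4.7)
The plan is to exploit the fact that all of $\sigma(x)$, $\sigma'(x)\cdot h$, $\sigma''(x)\cdot(h,k)$, $\sigma^{(3)}(x)\cdot(h,k_1,k_2)$ are pointwise multiplication operators. Indeed, differentiating the Nemytskii map $x \mapsto \tilde\sigma(x(\cdot))\cdot$ formally yields that $\sigma'(x)\cdot h$ is multiplication by $\tilde\sigma'(x(\cdot))h(\cdot)$, $\sigma''(x)\cdot(h,k)$ is multiplication by $\tilde\sigma''(x(\cdot))h(\cdot)k(\cdot)$, and $\sigma^{(3)}(x)\cdot(h,k_1,k_2)$ is multiplication by $\tilde\sigma^{(3)}(x(\cdot))h(\cdot)k_1(\cdot)k_2(\cdot)$. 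The $C^3$ regularity of $\sigma:L^2\to \mathcal{L}(L^p;L^q)$ then follows from standard computations for Nemytskii operators together with the hypothesis $\tilde\sigma\in C_b^3(\R)$ and the boundedness of $\tilde\sigma$ and its derivatives. The self-adjointness relations \eqref{eq:sigma_star} are immediate consequences of the fact that the multiplication operator associated with a real-valued function is self-adjoint on $L^2$.

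The estimate $|\sigma(x)|_{\mathcal{L}(L^p)}\le C_p$ for $p\in[2,\infty]$ is then simply $|\tilde\sigma(x)\cdot y|_{L^p}\le |\tilde\sigma|_{L^\infty}|y|_{L^p}$. The three inequalities \eqref{eq:estim_sigma_1}, \eqref{eq:estim_sigma_2}, \eqref{eq:estim_sigma_3} all reduce, by testing against an arbitrary $y\in L^2$, to a bound of the form
\[
\big|(-A)^{-\frac{1}{2p}}(\tilde\sigma^{(k)}(x)\,h_1\cdots h_k\,y)\big|_{L^2}\le C_p |h_1|_{L^{p_1}}\cdots|h_k|_{L^{p_k}}|y|_{L^2},
\]
where H\"older's inequality gives $\tilde\sigma^{(k)}(x)h_1\cdots h_k y\in L^q$ with exponents chosen so that in every case $\tfrac{1}{q}=\tfrac{1}{p}+\tfrac{1}{2}$: namely $\tfrac{1}{p}+\tfrac{1}{2}$ for the first derivative, $\tfrac{1}{2p}+\tfrac{1}{2p}+\tfrac{1}{2}$ for the second, and $\tfrac{1}{2p}+\tfrac{1}{4p}+\tfrac{1}{4p}+\tfrac{1}{2}$ for the third.

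All three estimates then follow from the same key lemma: for any $p\in(2,\infty)$ and $f\in L^q(0,1)$ with $\tfrac{1}{q}=\tfrac{1}{p}+\tfrac{1}{2}$,
\[
\big|(-A)^{-\frac{1}{2p}} f\big|_{L^2}\le C_p\, |f|_{L^q}.
\]
To prove this, I would use the self-adjointness of $(-A)^{-1/(2p)}$ on $L^2$: for every $g\in L^2$,
\[
\big|\langle (-A)^{-\frac{1}{2p}} f,g\rangle\big|=\big|\langle f,(-A)^{-\frac{1}{2p}} g\rangle\big|\le |f|_{L^q}\,\big|(-A)^{-\frac{1}{2p}} g\big|_{L^{q'}},
\]
with $\tfrac{1}{q'}=1-\tfrac{1}{q}=\tfrac{1}{2}-\tfrac{1}{p}$, so that $q'\in[2,\infty)$. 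Applying the Sobolev embedding \eqref{eq:lem_Sobolev} with exponent $q'$ gives
\[
\big|(-A)^{-\frac{1}{2p}} g\big|_{L^{q'}}\le C(q')\big|(-A)^{\frac{1}{4}-\frac{1}{2q'}-\frac{1}{2p}} g\big|_{L^2},
\]
and a direct computation shows $\tfrac{1}{4}-\tfrac{1}{2q'}-\tfrac{1}{2p}=0$. Hence the right-hand side equals $C(q')|g|_{L^2}$ and taking the supremum over $g$ in the unit ball of $L^2$ yields the lemma.

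The main obstacle is really the book-keeping of H\"older exponents so that the same duality/Sobolev argument handles all three derivatives uniformly; the verification of $C^3$ regularity and of the symmetry relations is routine by comparison. Once the above key lemma is available, each of \eqref{eq:estim_sigma_1}--\eqref{eq:estim_sigma_3} is obtained in one line by combining H\"older's inequality (to place $\tilde\sigma^{(k)}(x)h_1\cdots h_k y$ in $L^q$) with the lemma.
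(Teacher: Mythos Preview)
Your proof is correct and follows essentially the same approach as the paper: both arguments test against $z\in L^2$, move $(-A)^{-1/(2p)}$ to the other side by self-adjointness, apply H\"older's inequality, and then use the Sobolev embedding~\eqref{eq:lem_Sobolev} together with the exponent identity $\tfrac14-\tfrac{1}{2q'}-\tfrac{1}{2p}=0$ (your $q'$ is the paper's $r$). The only cosmetic difference is that you isolate the mapping property $(-A)^{-1/(2p)}:L^q\to L^2$ as a separate lemma and apply it uniformly to all three derivative estimates, whereas the paper carries out the duality computation inline for~\eqref{eq:estim_sigma_1} and remarks that the other two are analogous.
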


We sketch the proof of~\eqref{eq:estim_sigma_1}, the two other estimates~\eqref{eq:estim_sigma_2} and~\eqref{eq:estim_sigma_3} are obtained in the same way. For every $y,z\in L^2$,
\begin{align*}
\langle (\sigma'(x).h)y,(-A)^{-\frac{1}{2p}}z\rangle&\le C|h|_{L^p}|y|_{L^2}|(-A)^{-\frac{1}{2p}}z|_{L^r}\\
&\le C|h|_{L^p}|y|_{L^2}|(-A)^{\frac{1}{4}-\frac{1}{2r}-\frac{1}{2p}}z|_{L^2},
\end{align*}
thanks to H\"older inequality, with $\frac1r+\frac1p=\frac12$, and inequality \eqref{eq:lem_Sobolev}.

When no confusion is possible, we will often use the notations $F_i$ for $\tilde{F}_i$, and $\sigma$ for $\tilde{\sigma}$.

\subsection{Test functions $\varphi$}\label{s2.4}

We now give the regularity assumptions on the test functions $\varphi$. Typically, $\varphi$ is only
 defined on $L^p(0,1)$, for some $p\in[2,\infty)$ and is not a $C^n$ function on $L^p$ for $n\ge 2$. It possesses derivatives only in restricted directions, that is in a smaller space which is in general $L^q$ for $q>p$. To state the assumption on the test functions allowed, we consider regularized version $\varphi_\delta$, 
 defined in Assumption~\ref{ass:phi} below.

\begin{hyp}\label{ass:phi} Let $p\in [2,\infty)$ and 
{$\varphi:L^p(0,1)\to \R$.} For every $\delta\in(0,1)$, define $\varphi_\delta(\cdot)=\varphi\bigl(e^{\delta A}\cdot)$. We assume that $\varphi_\delta$ is of class $\mathcal{C}^3$ on $H$, for every $\delta\in(0,1)$. Moreover, we assume that the derivatives satisfy the following conditions, uniformly with respect to $\delta\in(0,1)$: there exist $q\in[2,\infty)$, $K\in \N^\star \cup\left\{0\right\}$, and $C(p,q,K)\in(0,\infty)$ such that for every $x\in L^p$, and $h_1,h_2,h_3\in L^q$
\begin{equation}
\label{e20.1}
\big| D\varphi_\delta(x).h_1\big| \le C(p,q,K)\bigl(1+|x|_{L^{p}}\bigr)^{K}|h_1|_{L^{q}},
\end{equation}
\begin{equation}
\label{e20.2}
\big| D^2\varphi_\delta(x).(h_1,h_2)\big| \le C(p,q,K)\bigl(1+|x|_{L^{p}}\bigr)^{K}|h_1|_{L^{q}}|h_2|_{L^{q}},
\end{equation}
\begin{equation}
\label{e20.3}
\big| D^3\varphi_\delta(x).(h_1,h_2,h_3)\big| \le C(p,q,K)\bigl(1+|x|_{L^{p}}\bigr)^{K}|h_1|_{L^{q}}|h_2|_{L^{q}}|h_3|_{L^{q}}.
\end{equation}
\end{hyp}

Interesting examples of test functions $\varphi$ are constructed as follows. Let $\phi\in\mathcal{C}^3(\R)$ a function of class $\mathcal{C}^3$; we assume that the derivatives of $\phi$ have at most polynomial growth. Define
\begin{equation*}
\varphi(x)=\int_{0}^{1}\phi\bigl(x(\xi)\bigr)d\xi,
\end{equation*}
for $x\in L^n(0,1)$, where $n\in\N^\star$ is such that $\underset{x\in \R}\sup \frac{|\varphi'(x)|}{(1+|x|)^n}<\infty$.

Since derivatives of $\varphi$ take the form $D^{(n)}\varphi(x).\bigl(h_1,\ldots,h_n\bigr)=\phi^{(n)}\bigl(x(\cdot)\bigr)h_1(\cdot)\ldots h_n(\cdot)$, Assumption~\ref{ass:phi} is satisfied by applying H\"older inequality, with appropriately chosen parameters $p,q$.

If we assume that the derivatives of $\phi$ are bounded, we may choose $K=0$ and $p=2$; the estimate on the third order derivative requires to choose $q=3$.

\section{Main results}\label{sec:results}

We consider the stochastic evolution equation~\eqref{eq:SPDE_intro}, which we recall here:
\begin{equation}\label{eq:SPDE}
dX_t=AX_tdt+G(X_t)dt+\sigma(X_t)dW(t) , \quad X(0)=x,
\end{equation}
where $x\in H$ is an arbitrary initial condition.

For every time $T\in(0,\infty)$, equation~\eqref{eq:SPDE} admits a unique mild solution in $C([0,T];H)$, {\it i.e.} $X=\bigl(X_t\bigr)_{t\in[0,T]}$ is a $H$-valued continuous stochastic process such that for every $0\le t\le T$
\begin{equation}
X_t=e^{tA}x+\int_{0}^{t}e^{(t-s)A}G(X_s)ds+\int_{0}^{t}e^{(t-s)A}\sigma(X_s)dW(s),
\end{equation}
where the $H$-valued stochastic integral is interpreted in It\^o sense. We refer for instance to~\cite{DPZ} for a proof of this standard result.

To emphasize on the influence of the initial condition $x$, we often use the notation $X(t,x)$. However, in many computations we omit this dependence and write $X_t$ for simplicity.

A rigorous treatment of the problem is made easier by considering regularized coefficients $G_\delta$ and $\sigma_\delta$, for $\delta>0$, defined as follows:
\[
G_{\delta}=e^{\delta A}G\bigl(e^{\delta A}\cdot\bigr)=e^{\delta A}F_1\bigr(e^{\delta A}\cdot\bigr)+Be^{\delta A}F_2\bigl(e^{\delta A}\cdot\bigr) \quad,\quad \sigma_{\delta}=e^{\delta A}\sigma\bigl(e^{\delta A}\cdot\bigr)e^{\delta A}.
\]
It is straightforward to check that Properties~\ref{ass:F} and~\ref{ass:sigma} are preserved after regularization, with constants which are uniform with respect to $\delta$. Indeed,  $e^{\delta A}$ is bounded with norm equal to $1$, from $L^p$ to $L^p$, for every $p\in[1,\infty]$ and $\delta\in(0,1)$. 
Also  for $\delta>0$, $e^{\delta A}$ is a bounded operator from $L^2$ to $L^p$ for any $p>2$, and thus the regularized coefficients $F_\delta$ and $\sigma_\delta$ are $\mathcal{C}^3_b$ on $H$ (but with norm depending on $\delta$). Note that $B$ and $e^{\delta A}$ do not commute.

\begin{rem}
We cannot use standard regularization methods in our setting, such as spectral Galerkin projections, like in~\cite{Debussche:11}. Indeed, the associated projection operators are not uniformly bounded (with respect to dimension), in $L^p$ spaces for $p>2$.

The regularization we use in this article does not provide finite dimensional approximation of the process.

Alternatively, the not so different regularization proposed in~\cite{Hairer-Stuart-Voss:07} (see Lemma~3.1) may be used. It is based on an additional truncation of modes larger than $N(\delta)$, in the definition of $e^{\delta A}$, for a well-chosen integer $N(\delta)$.
\end{rem}

In the computations below, we often omit to mention the dependence on $\delta$. All the estimates we state and prove are uniform in $\delta$.

Working with regularized coefficients $F_\delta$ and $\sigma_\delta$, with $\delta\in(0,1)$, we introduce the regularized SPDE
\begin{equation}\label{eq:SPDE_reg}
dX_t^\delta=AX_t^\delta dt+G_\delta(X_t^\delta)dt+\sigma_\delta(X_t^\delta)dW(t) , \quad X^\delta(0)=x.
\end{equation}
When $\delta\to 0$, $X^\delta$ converges (in a suitable sense) to $X$. Consistently, the notation $X^0=X$ will be used.

For every $\delta\in(0,1)$, introduce the function $u_\delta:[0,T]\times L^2\to \R$, defined by
\begin{equation}\label{eq:u_N}
u_\delta(t,x)=\E\bigl[\varphi_\delta(X^\delta(t,x)\bigr)\bigr],
\end{equation}
and the function $u:[0,T]\times L^2\to \R$
\begin{equation}\label{eq:u}
u(t,x)=\E\bigl[\varphi(X(t,x)\bigr)\bigr].
\end{equation}
The function $u_\delta$, resp. $u$, is formally solution of the Kolmogorov equations associated to \eqref{eq:SPDE_reg}, resp. \eqref{eq:SPDE}. As already mentioned, the regularity results proved in this article could be used to prove that these functions are in fact strict solutions of these Kolmogorov equations.

Consistently, we use the notation $u_0=u$. Indeed, results on $u$ will be obtained from results proved for $\delta>0$ and passing to the limit $\delta \to 0$.

Thanks to~\cite{Andersson_Hefter_Jentzen_Kurniawan:16} or~\cite{Cerrai:01}, for every $\delta\in(0,1)$ and $t\ge 0$, $u_\delta(t,\cdot)$ is a function of class $\mathcal{C}^3$ on $L^2$.

\subsection{Regularity estimates on the derivatives of the Kolmogorov equation solution}\label{sec:results_Kolmogorov}

The first main results of this article are new estimates on the first and second order spatial derivatives of $u$.

For our results given below, we consider the setting of Section~\ref{sec:setting_coeffs} and Section~\ref{s2.4}. Note that all the results are valid for the parameter $q$, defined in Assumption~\ref{ass:phi}, satisfying $q\in[2,\infty)$. The proofs of the cases $q=2$ and $q\in(2,\infty)$ need to be treated separately. We only provide detailed proofs in the case $q\in(2,\infty)$. The case $q=2$ is easier.

\begin{theo}\label{theo:D1}
For every $\beta\in [0,1)$ and $T\in(0,\infty)$, there exists $C_{\beta}(T)$, such that for every $\delta\in [0,1)$, $t\in (0,T]$, $x\in L^p$ and $h\in L^{q}$
\begin{equation}\label{eq:theo_D13_1}
\big|Du_\delta(t,x).h\big|\le \frac{C_\beta(T)}{t^\beta}(1+|x|_{L^{\max(p,2q)}})^{K+1}|(-A)^{-\beta}h|_{L^{2q}}.
\end{equation}
\end{theo}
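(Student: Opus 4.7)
The plan is to reduce the theorem to a moment bound on the first-variation process and then close a singular Gronwall estimate in the $L^q$-norm, working throughout with the regularized coefficients and passing to $\delta=0$ only at the very end.

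First, I would fix $\delta\in(0,1)$ and differentiate under the expectation. Since $u_\delta(t,x)=\E[\varphi_\delta(X^\delta(t,x))]$ and both $\varphi_\delta$ and the flow $x\mapsto X^\delta(t,x)$ are smooth, we get
\begin{equation*}
Du_\delta(t,x)\cdot h=\E\bigl[D\varphi_\delta(X^\delta_t)\cdot\eta^{h}_t\bigr],
\end{equation*}
where $\eta^h=DX^\delta(t,x)\cdot h$ solves the linearized mild equation
\begin{equation*}
\eta^h_t=e^{tA}h+\int_0^t e^{(t-s)A}\bigl[F_1'(X_s)\eta^h_s+BF_2'(X_s)\eta^h_s\bigr]ds+\int_0^t e^{(t-s)A}\sigma'(X_s)\eta^h_s\,dW(s).
\end{equation*}
Assumption~\ref{ass:phi} and Cauchy--Schwarz then yield
\begin{equation*}
|Du_\delta(t,x)\cdot h|\le C\bigl(\E(1+|X^\delta_t|_{L^p})^{2K}\bigr)^{1/2}\bigl(\E|\eta^h_t|_{L^q}^{2}\bigr)^{1/2},
\end{equation*}
so everything reduces to bounding $\E|\eta^h_t|_{L^q}^2$ by $C\,t^{-2\beta}|(-A)^{-\beta}h|_{L^{2q}}^2$ times a suitable moment of $x$, combined with the standard moment estimate $\E|X^\delta_t|_{L^{\max(p,2q)}}^m\le C(1+|x|_{L^{\max(p,2q)}})^m$ (the appearance of $L^{2q}$ in the statement comes precisely from the bound on the stochastic integral below).

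The central step is to estimate the four pieces of $\eta^h_t$ in $L^q$. The homogeneous part is handled by analytic smoothing: $|e^{tA}h|_{L^q}\le Ct^{-\beta}|(-A)^{-\beta}h|_{L^q}\le Ct^{-\beta}|(-A)^{-\beta}h|_{L^{2q}}$ since $L^{2q}(0,1)\hookrightarrow L^q(0,1)$. The Nemytskii drift is controlled by Property~\ref{ass:F}: $|e^{(t-s)A}F_1'(X_s)\eta^h_s|_{L^q}\le C|\eta^h_s|_{L^q}$. For the Burgers piece I would factor $e^{(t-s)A}B=(-A)^{1/2+\epsilon}e^{(t-s)A}\cdot (-A)^{-1/2-\epsilon}B$, use~\eqref{eq:norm_AB} and the analytic estimate $|(-A)^{1/2+\epsilon}e^{(t-s)A}|_{\mathcal{L}(L^q)}\le C(t-s)^{-1/2-\epsilon}$ to obtain
\begin{equation*}
\Bigl|\int_0^t e^{(t-s)A}BF_2'(X_s)\eta^h_s\,ds\Bigr|_{L^q}\le C\int_0^t (t-s)^{-1/2-\epsilon}|\eta^h_s|_{L^q}ds.
\end{equation*}
For the stochastic integral, the right framework is the $\gamma$-radonifying Itô inequality~\eqref{eq:Ito_gamma}: using the factorization $e^{(t-s)A}=e^{(t-s)A/2}\cdot e^{(t-s)A/2}$, the left ideal property~\eqref{eq:ideal}, the estimate~\eqref{eq:estim_sigma_1} on $(-A)^{-1/(2p)}\sigma'(X_s)\cdot$, and the fact that $(-A)^{1/(2p)+1/4+\kappa}e^{(t-s)A/2}$ is $\gamma$-radonifying from $L^2$ to $L^q$ with norm bounded by $C(t-s)^{-1/(2p)-1/4-\kappa}$, one obtains an $R(H,L^q)$ bound of the form $C(t-s)^{-\gamma_2}|\eta^h_s|_{L^{2q}}$ with $\gamma_2<1/2$. (This is where the $L^{2q}$ norm on $h$ in the conclusion enters, and also where $|X_s|_{L^{2q}}$ moments get used.)

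Combining everything and squaring, the function $\phi(t):=t^{2\beta}\E|\eta^h_t|_{L^q}^{2}$ satisfies an integral inequality of the form
\begin{equation*}
\phi(t)\le C|(-A)^{-\beta}h|_{L^{2q}}^{2}\bigl(1+|x|_{L^{2q}}\bigr)^{2}+C\int_0^t\bigl(t^{2\beta}s^{-2\beta}\bigr)\bigl[1+(t-s)^{-1-2\epsilon}+(t-s)^{-2\gamma_2}\bigr]\phi(s)\,ds,
\end{equation*}
with all singularities integrable provided $\epsilon$ and $\kappa$ are chosen small and $\beta<1$. A singular Gronwall lemma then yields $\phi(t)\le C_\beta(T)|(-A)^{-\beta}h|_{L^{2q}}^{2}(1+|x|_{L^{2q}})^{2}$. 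Plugging back and using the moment bound on $X^\delta_t$ in $L^{\max(p,2q)}$ gives~\eqref{eq:theo_D13_1} with constants uniform in $\delta\in(0,1)$, so one concludes by letting $\delta\to 0$ and using $u_\delta\to u$ together with weak convergence of $X^\delta\to X$.

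The main obstacle I expect is Step~(d): assembling the $R(H,L^q)$ estimate for $e^{(t-s)A}\sigma'(X_s)\eta^h_s$ so that both (i) the exponent on $(t-s)$ stays strictly below $1/2$ (so the singular Gronwall closes) and (ii) the factor picked up on $\eta^h_s$ is $|\eta^h_s|_{L^{2q}}$ rather than some stronger norm (otherwise the induction on $L^q$-norms does not close). Juggling the fractional powers of $-A$ via~\eqref{eq:estim_sigma_1}, \eqref{eq:Sobolev-domain} and the analyticity of $e^{tA}$ on $L^q$, while keeping the constants $\delta$-independent, is the technical heart of the argument.
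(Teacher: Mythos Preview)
Your approach has a genuine gap that prevents it from reaching $\beta\in[\tfrac12,1)$, which is precisely the new content of the theorem. The obstruction is in the stochastic term: after applying the $\gamma$-radonifying It\^o inequality you get a contribution of the form
\[
\E\Bigl|\int_0^t e^{(t-s)A}\sigma'(X_s)\eta^h_s\,dW(s)\Bigr|_{L^q}^2\le C\int_0^t (t-s)^{-2\gamma_2}\,\E|\eta^h_s|_{L^{2q}}^2\,ds,
\]
and to close the Gronwall with the ansatz $\E|\eta^h_t|^2\lesssim t^{-2\beta}$ you need $\int_0^t (t-s)^{-2\gamma_2}s^{-2\beta}\,ds<\infty$, which forces $2\beta<1$. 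Equivalently, in your weighted formulation $\phi(t)=t^{2\beta}\E|\eta^h_t|^2$, the kernel $t^{2\beta}s^{-2\beta}(t-s)^{-2\gamma_2}$ is \emph{not} integrable near $s=0$ once $\beta\ge\tfrac12$; your claim that ``all singularities are integrable provided $\beta<1$'' is false. (Incidentally, the $(t-s)^{-1-2\epsilon}$ you wrote for the Burgers drift is also never integrable; the drift contributes a squared integral, not an integral of squares, but this is a side issue.) This is exactly the barrier the paper identifies in Section~5.1: It\^o isometry squares the integrand, and the resulting $s^{-2\beta}$ singularity caps the method at $\beta<\tfrac12$.

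The paper overcomes this by an entirely different mechanism. One writes $\eta^h_t=e^{tA}h+\tilde\eta^h_t$ and expresses $\tilde\eta^h_t$ through the random evolution operators $\Pi(t,s)$ as a (discrete) two-sided stochastic integral; then one applies the Malliavin duality formula to $\E[D\varphi(X_N)\cdot\tilde\eta^h_N]$, converting the stochastic integral into a Lebesgue-in-time integral and thereby replacing the squared singularity $s^{-2\beta}$ by $s^{-\beta}$, which is integrable for all $\beta<1$. Because the continuous-time two-sided integral is not available off the shelf, the argument is carried out for a time-discretization (Section~5.4--5.5), with an auxiliary regularization $e^{\tau A}$ and a final interpolation in $\tau$ (Section~5.6). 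Note also that the paper's constant depends on the \emph{second} derivative of $\varphi$ (via the duality step and the interpolation), whereas your direct approach would only use $D\varphi$; this is another symptom that a purely first-order Gronwall argument cannot deliver $\beta\ge\tfrac12$.
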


This result can be interpreted as a regularization property: for every $t>0$ and $\beta\in(0,1)$, we have $(-A)^{\beta}Du(t,x)\in L^{r}$, where $r$ is the conjugate exponent of $2q$, {\it i.e.} $\frac1{r}+\frac1{2q}=1$. For $t=0$, from Assumption \ref{ass:phi}, we formally have $Du(0,x)=D\varphi(x)\in L^r \subset L^{q'}$ where $q'$ is the conjugate exponent of $q'$. No information on $D\varphi$ in $D((-A)^\beta)$ is available.

Theorem~\ref{theo:D1} is not difficult for $\beta\in[0,\frac12)$ (see~\cite{Andersson_Hefter_Jentzen_Kurniawan:16},~\cite{Debussche:11}). Getting the result for $\beta\in[0,1)$ with standard arguments is possible only in the case of additive noise. We recall below in Section~\ref{sec:formal} where the limitation $\beta<\frac12$ comes from in direct approaches, when $\sigma$ is nonlinear. Then we give a formal description of our strategy of proof of Theorem~\ref{theo:D1} and introduce new arguments.

The constant $C_\beta(T)$ depends on $\varphi$ through the constants appearing in Assumption  \ref{ass:phi}. More precisely, it depends on the constant in the right hand side of \eqref{e20.1} and \eqref{e20.2}. It may seem surprising that we need information on the second differential of $\varphi$ to get an estimate on the first differential
of $u$. This is due to the final step of the proof where we use an interpolation argument to get rid of an extra smoothing parameter $\tau$ introduced below. We do not 
know whether this is optimal. 
\bigskip

We now turn to the result on $D^2u$, which is also a regularization property.

\begin{theo}\label{theo:D2}
For every $\beta,\gamma\in [0,\frac12)$ and $T\in(0,\infty)$, there exists $C_{\beta,\gamma}(T)$, such that for every $\delta\in [0,1)$, $t\in (0,T]$, $x\in L^p$ and $h_1,h_2\in L^{4q}$
\begin{equation}\label{eq:theo_D2}
\big|D^2u_\delta(t,x).\bigl(h_1,h_2\bigr)\big|\le \frac{C_{\beta,\gamma}(T)}{t^{\beta+\gamma}}\bigl(1+|x|_{L^{\max(p,2q)}}\bigr)^{K+1}|(-A)^{-\beta}h_1|_{L^{4q}}|(-A)^{-\gamma}h_2|_{L^{4q}}.
\end{equation}
\end{theo}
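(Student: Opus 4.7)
The proof will follow the same overall strategy as Theorem~\ref{theo:D1}, but pushes the Malliavin / two-sided integral representation one differentiation further, and combines it with Theorem~\ref{theo:D1} via the Markov property. Differentiating under the expectation twice and using the first and second variation processes $(\eta^h_s)_{s\ge 0}$ and $(\zeta^{h_1,h_2}_s)_{s\ge 0}$ associated with~\eqref{eq:SPDE_reg}, one has
$$
D^2 u_\delta(t,x).(h_1,h_2) = \E\bigl[D^2\varphi_\delta(X^{\delta,x}_t).(\eta^{h_1}_t, \eta^{h_2}_t)\bigr] + \E\bigl[D\varphi_\delta(X^{\delta,x}_t).\zeta^{h_1,h_2}_t\bigr].
$$
Invoking the Markov property at time $t/2$ and the chain rule, this rewrites as
$$
D^2 u_\delta(t,x).(h_1,h_2) = \E\bigl[D^2 u_\delta(t/2, X^{\delta,x}_{t/2}).(\eta^{h_1}_{t/2}, \eta^{h_2}_{t/2})\bigr] + \E\bigl[Du_\delta(t/2, X^{\delta,x}_{t/2}).\zeta^{h_1,h_2}_{t/2}\bigr],
$$
and the plan is to treat the two terms separately.

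For the second term, I would apply Theorem~\ref{theo:D1} with an exponent $\alpha\in(0,1)$ to be optimized, obtaining the bound $t^{-\alpha}\E[(1+|X_{t/2}|)^{K+1}|(-A)^{-\alpha}\zeta^{h_1,h_2}_{t/2}|_{L^{2q}}]$, and then estimate $(-A)^{-\alpha}\zeta^{h_1,h_2}_{t/2}$ via the mild formulation of the second variation equation. The critical input is the stochastic contribution $\int_0^{t/2} e^{(t/2-s)A}\sigma''(X_s).(\eta^{h_1}_s, \eta^{h_2}_s)\,dW(s)$, which I would bound by the generalized It\^o inequality~\eqref{eq:Ito_gamma}, the ideal property~\eqref{eq:ideal} and the estimate~\eqref{eq:estim_sigma_2}, combined with first-variation bounds $|\eta^h_s|_{L^{4q}}\le C s^{-\beta}|(-A)^{-\beta}h|_{L^{4q}}$ established along with Theorem~\ref{theo:D1}. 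Tuning $\alpha$ against the H\"older/Sobolev exponents yields a contribution of the correct order $t^{-(\beta+\gamma)}$, and the restriction $\beta,\gamma\in[0,1/2)$ arises precisely from the sharp value $\tfrac12$ in the radonifying estimates on $\sigma'$ and $\sigma''$.

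For the first term, a direct appeal to Theorem~\ref{theo:D1} is not available; instead I would reapply the Malliavin / two-sided integral representation that underlies the proof of Theorem~\ref{theo:D1} to $D u_\delta(t/2,y).k$, and differentiate this identity once more in the spatial variable $y$. This produces an expression for $D^2 u_\delta(t/2,y).(k_1,k_2)$ as a sum of terms containing $\varphi_\delta(X^{\delta,y}_{t/2})$, derivatives of $\sigma$ up to order two (handled via~\eqref{eq:estim_sigma_1}--\eqref{eq:estim_sigma_3}), first variations $\eta^{k_1},\eta^{k_2}$, and iterated anticipating stochastic integrals. After specializing $k_i=\eta^{h_i}_{t/2}$, taking expectations, and using Cauchy--Schwarz/H\"older (which is the source of the $L^{4q}$ norms on the right-hand side of~\eqref{eq:theo_D2}), each such term contributes at most $t^{-(\beta+\gamma)}|(-A)^{-\beta}h_1|_{L^{4q}}|(-A)^{-\gamma}h_2|_{L^{4q}}$.

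The main obstacle is, as for Theorem~\ref{theo:D1}, the lack of a rigorous continuous-time construction of the anticipating two-sided integrals appearing in the once-differentiated Malliavin formula; this is only amplified by differentiating a second time. I would sidestep this by working throughout on a time-discretized version of~\eqref{eq:SPDE_reg}, where the analogous identities reduce to finite sums and the integration-by-parts formulas are straightforward, proving all estimates uniformly in both $\delta$ and the discretization step, and passing to the limit in the bounds. A secondary difficulty is the careful bookkeeping of the Sobolev/H\"older exponents: several products of the form $\eta^{h_1}_s\eta^{h_2}_s$ must be placed in $L^{2q}$, which forces the use of the $L^{4q}$ norm on each factor and dictates the integrability index appearing in the statement.
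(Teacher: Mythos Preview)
Your treatment of the second term $\E\bigl[Du_\delta(t/2,X_{t/2}).\zeta^{h_1,h_2}_{t/2}\bigr]$ contains a genuine gap. Applying Theorem~\ref{theo:D1} with exponent $\alpha$ reduces the problem to estimating $\E\big|(-A)^{-\alpha}\zeta^{h_1,h_2}_{t/2}\big|_{L^{2q}}$, and you propose to bound the stochastic contribution $\int_0^{t/2}e^{(t/2-s)A}\sigma''(X_s).(\eta^{h_1}_s,\eta^{h_2}_s)\,dW(s)$ via the It\^o-type inequality~\eqref{eq:Ito_gamma}. But this squares the integrand: after using~\eqref{eq:estim_sigma_2} and the first-variation bound $|\eta^{h}_s|_{L^{4q}}\le Cs^{-\beta}|(-A)^{-\beta}h|_{L^{4q}}$, the time integral contains the factor $s^{-2(\beta+\gamma)}$, which is not integrable near $s=0$ once $\beta+\gamma\ge\tfrac12$. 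The smoothing provided by $(-A)^{-\alpha}$ acts only on the $e^{(t/2-s)A}$ factor and therefore tames the singularity at $s=t/2$; it does nothing for the singularity at $s=0$ carried by $\eta^{h_1}_s\eta^{h_2}_s$, which sits inside the Nemytskii operator $\sigma''$ and cannot be traded against negative Sobolev regularity. Thus your argument still produces only the restriction $\beta+\gamma<\tfrac12$, which is exactly the obstruction the theorem is meant to remove (cf.\ the discussion in Section~\ref{sec:formal}).

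The paper does \emph{not} use the Markov property to reduce to Theorem~\ref{theo:D1}; it applies the Malliavin duality / two-sided integral trick directly to the stochastic part of $\zeta$ itself (the term $\mathcal{E}_N^{h,k,2}$ in the discrete decomposition~\eqref{eq:d2u_discrete}--\eqref{eq:decompE}). This replaces the It\^o integral by a Lebesgue integral in time, so that only $s^{-(\beta+\gamma)}$ appears rather than its square, and the resulting integral $\int_0^t (t-s)^{-\frac12+\kappa}s^{-(\beta+\gamma)}\,ds$ is finite for all $\beta,\gamma\in[0,\tfrac12)$. In other words, the same mechanism you invoke for the first-order derivative must be applied again at the level of $\zeta$, not bypassed. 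A further ingredient you do not mention is the auxiliary regularization parameter $\tau$ (Propositions~\ref{propo:u_delta_tau_1}--\ref{propo:u_delta_tau_2}): the Malliavin-duality estimates at the discrete level come with a factor $\tau^{-\kappa}$, and an interpolation argument against Proposition~\ref{propo:aux} is needed to remove it; this step requires Proposition~\ref{theo:D3} on third derivatives.
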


Again, the novelty in Theorem~\ref{theo:D2} is the range  $[0,\frac12)$ for the parameters $\beta$ and $\gamma$. More precisely, we remove the restriction $\beta+\gamma<\frac12$, for which a direct proof works, see~\cite{Andersson_Hefter_Jentzen_Kurniawan:16},~\cite{Debussche:11}.

As above, the constant $C_{\beta,\gamma}(T)$ depends on $\varphi$ through the constants appearing in Assumption  \ref{ass:phi}. Now, it depends on the constant in the right hand side of \eqref{e20.1},  \eqref{e20.2} and \eqref{e20.3}. 

Another novelty is that we consider SPDEs with a spatial derivative in the nonlinear term. Moreover, Nemytskii type diffusion and nonlinear terms are allowed. This requires bounds depending on $L^q$ norms and not only on $L^2$ norms. 

\begin{rem}
The presence of $L^{2q}$ and $L^{4q}$ norms in the right-hand side of ~\eqref{eq:theo_D13_1} and ~\eqref{eq:theo_D2} is not optimal. A careful inspection of the proof reveals that norms on the right-hand side may be replaced with weaker $L^{q+\epsilon}$ and $L^{2q+\epsilon}$ norms, where $\epsilon$ is arbitrarily close to $0$. Moreover, at the price of increasing the singularity in $T$, one may use the Markov property to get estimates which depend on $L^r$ with much smaller $r$.
\end{rem}

The main motivation and application of Theorem~\ref{theo:D1} and Theorem~\ref{theo:D2} is the analysis of weak convergence rates for numerical discretizations of the SPDE~\eqref{eq:SPDE}. For that purpose, being able to choose both $\beta$ and $\gamma$ arbitrarily close to $\frac12$ is fundamental. Theorem~\ref{theo:D1} with $\beta\in [0,\frac12)$ is sufficient to consider the case with 
$F_2=0$, but we need $\beta$ close to $1$ to treat the Burger type nonlinearity $BF_2$.

In the additive noise case, it is possible to choose $\beta,\gamma\in[0,1)$, such that $\beta+\gamma<1$ in Theorem~\ref{theo:D2}. Then we may choose for instance $\beta\in[\frac12,1)$,
and this simplifies several arguments in the weak convergence analysis - and also in the argument presented below to give a meaning to the trace term in~\eqref{eq:Kolmogorov_intro}. We believe that 
the same strategy as for the proof of Theorem~\ref{theo:D1} can be adapted to prove that indeed the conclusion of Theorem~\ref{theo:D2} is still valid for $\beta,\gamma\in[0,1)$ with $\beta+\gamma<1$.
Substantial generalizations of the arguments are however required, and they will be studied in future works.

In addition to the analysis of weak convergence errors, Theorems~\ref{theo:D1} and~\ref{theo:D2} can be used to give a meaning to the different terms in the right-hand side of~\eqref{eq:Kolmogorov_intro}. 
First the terms $\langle Ax,Du(t,x)\rangle$ has a meaning as soon as $|(-A)^{1-\beta}x|_{L^q}<\infty$, for $\beta$ arbitrarily close to $1$. Choosing $\beta>\frac34$ is fundamental, since the solution $X(t,x)$ takes values in $D_q\bigl((-A)^\alpha\bigr)$ only for $\alpha<\frac14$. The term $\langle G(x),Du(t,x)\rangle=\langle F_1(x)+BF_2(x),Du(t,x)\rangle$ is well-defined also, choosing $\beta>\frac12$  
thanks to \eqref{eq:norm_AB}. The trace term is more delicate.
Thanks to Theorem~\ref{theo:D2}, for $\beta,\gamma\in [0,\frac12)$ and $x\in L^p$ we have
\[
{\rm Tr} \left(\sigma(x)\sigma^*(x)D^2u(t,x)\right)=\sum_{n} D^2u(t,x).(\sigma^2(x)e_n,e_n),
\]
and 
\begin{align*}
\sum_{n} \big|D^2u(t,x).(\sigma^2(x)e_n,e_n)\big|
&\le \frac{C_{\beta,\gamma}(T)}{t^{\beta+\gamma}}\bigl(1+|x|_{L^{p}}^{K}\bigr)\sum_{n}|(-A)^{-\beta}(\sigma^2(x)e_n)|_{L^{4q}}|(-A)^{-\gamma}e_n|_{L^{4q}}\\
& \le \frac{C_{\beta,\gamma}(T)}{t^{\beta+\gamma}}\bigl(1+|x|_{L^{p}}^{K}\bigr)\sum_{n}|(-A)^{-\beta}(\sigma^2(x)e_n)|_{L^{4q}}\lambda_n^{-\gamma},
\end{align*}
where we have used $\underset{n\in\N^\star}\sup|e_n|_{L^{4q}}<\infty$ thanks to Property~\ref{ass:A}.

Nevertheless, taking $\gamma<\frac12$ arbitrarily close to $\frac12$ and $\beta=0$ is not sufficient, since $\sum_{n\in\N^\star}\lambda_n^{-\gamma}=\infty$. To overcome this issue, we use~\eqref{eq:product_3}, then~\eqref{eq:Sobolev-Lipschitz}:
$$
|(-A)^{-\beta}(\sigma^2(x)e_n)|_{L^{4q}}\le c |(-A)^{\beta}\sigma^2(x)|_{L^{8q}}|(-A)^{-\beta+\epsilon}
e_n|_{L^{8q}}\le c (1+ |(-A)^{\beta+\epsilon}x|_{L^{8q}})|(-A)^{-\beta+\epsilon}
e_n|_{L^{8q}}.
$$
We choose $\gamma,\beta\in [0,\frac12)$ and $\epsilon >0$ such that $\gamma+\beta-\epsilon >\frac12$:
$$
\sum_{n} \big|D^2u(t,x).(\sigma^2(x)e_n,e_n)\big|\le \frac{C_{\beta,\gamma}(T)}{t^{\beta+\gamma}}\bigl(1+|x|_{L^{p}}^{K}\bigr)(1+ |(-A)^{\beta+\epsilon}x|_{L^{8q}})\sum_{n}\lambda_n^{-\gamma-\beta+\epsilon}.
$$
Note that it is possible to choose $\beta,\epsilon$ arbitrarily close to $0$. Therefore the trace term in 
\eqref{eq:Kolmogorov_intro} is meaningful as soon as $x\in D_{8q}((-A)^\alpha)$ for some $\alpha >0$. Again the exponant $8q$ is not optimal.

\bigskip

For completeness, we also state a regularity result on the third order derivatives of $u_\delta$. This result is useful to prove the two results above and in the analysis of the weak convergence rate for numerical approximations below. Contrary to Theorems~\ref{theo:D1} and~\ref{theo:D2}, since we consider a restrictive range for the parameters $\alpha,\beta,\gamma$, {\it i.e.} with the constraint $\alpha+\beta+\gamma<1/2$, standard arguments are
sufficient and the proof is left to the reader. The arguments used for Theorems~\ref{theo:D1} and~\ref{theo:D2} could be naturally extended to generalize Proposition~\ref{theo:D3}, under appropriate assumptions, as well as to higher order derivatives. We leave the study of such generalizations to future works.

\begin{propo}\label{theo:D3}

For every $\alpha,\beta,\gamma\in [0,\frac12)$ such that $\alpha+\beta+\gamma <\frac12$, and $T\in (0,\infty)$, there exists $C_{\beta}(T)$, such that for every 
$\delta\in (0,1)$,  $h_1,h_2,h_3\in L^{3q}$
\begin{equation}\label{eq:theo_D13_3}
\big|D^3u_\delta(t,x).(h_1,h_2,h_3)\big|\le \frac{C_{\alpha,\beta,\gamma}(T)}{t^{\alpha+\beta+\gamma}}(1+|x|_{L^p})^K|(-A)^{-\alpha}h_1|_{L^{3q}}|(-A)^{-\beta}h_2|_{L^{3q}}|(-A)^{-\gamma}h_3|_{L^{3q}}.
\end{equation}
\end{propo}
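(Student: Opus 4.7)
The plan is to differentiate $u_\delta(t,x)=\E\bigl[\varphi_\delta(X^\delta(t,x))\bigr]$ three times under the expectation, express $D^3u_\delta$ in terms of variation processes, and bound each resulting term with H\"older's inequality, Assumption~\ref{ass:phi}, moment estimates on the variations, and the smoothing of the semigroup $e^{tA}$ in $L^{3q}$. Since the constraint $\alpha+\beta+\gamma<\tfrac12$ keeps us outside the anticipating regime of Theorems~\ref{theo:D1} and~\ref{theo:D2}, no two-sided stochastic integral is required: a classical Gr\"onwall/BDG iteration will suffice.

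First I would introduce the first, second and third variation processes $\eta^h$, $\zeta^{h_1,h_2}$, $\xi^{h_1,h_2,h_3}$ as mild solutions of the successive linearizations of~\eqref{eq:SPDE_reg}; for instance
$$\eta^h_t = e^{tA}h+\int_0^t e^{(t-s)A}G'_\delta(X^\delta_s)\eta^h_s\,ds+\int_0^t e^{(t-s)A}\bigl(\sigma'_\delta(X^\delta_s).\eta^h_s\bigr)\,dW(s),$$
with analogous fixed-point equations for $\zeta$ and $\xi$ whose source terms involve $G^{(k)}_\delta,\sigma^{(k)}_\delta$ for $k=2,3$ and the lower-order variations. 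A Fa\`a di Bruno expansion then yields
\begin{align*}
D^3u_\delta(t,x).(h_1,h_2,h_3) &= \E\bigl[D^3\varphi_\delta(X^\delta_t).(\eta^{h_1}_t,\eta^{h_2}_t,\eta^{h_3}_t)\bigr]\\
&\quad + \sum_{\{i,j,k\}}\E\bigl[D^2\varphi_\delta(X^\delta_t).(\zeta^{h_i,h_j}_t,\eta^{h_k}_t)\bigr]+\E\bigl[D\varphi_\delta(X^\delta_t).\xi^{h_1,h_2,h_3}_t\bigr].
\end{align*}

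The core step is to establish, uniformly in $\delta\in(0,1)$ and for every $r\ge 2$ and $\alpha_i\in[0,\tfrac12)$ with $\sum_i\alpha_i<\tfrac12$, the moment bounds
$$\bigl(\E|\eta^{h_1}_t|_{L^{3q}}^r\bigr)^{1/r}\le \frac{C}{t^{\alpha_1}}|(-A)^{-\alpha_1}h_1|_{L^{3q}},$$
$$\bigl(\E|\zeta^{h_1,h_2}_t|_{L^{3q}}^r\bigr)^{1/r}\le \frac{C}{t^{\alpha_1+\alpha_2}}\prod_{i=1}^2|(-A)^{-\alpha_i}h_i|_{L^{3q}},\qquad \bigl(\E|\xi^{h_1,h_2,h_3}_t|_{L^{3q}}^r\bigr)^{1/r}\le \frac{C}{t^{\sum_i\alpha_i}}\prod_{i=1}^3|(-A)^{-\alpha_i}h_i|_{L^{3q}}.$$
These follow by iteration in the mild formulation, using $\|(-A)^\alpha e^{tA}\|_{\mathcal L(L^{3q})}\le Ct^{-\alpha}$, the inequality~\eqref{eq:Ito_gamma} together with BDG, Property~\ref{ass:F}, the estimates~\eqref{eq:estim_sigma_1}--\eqref{eq:estim_sigma_3} on the $\gamma$-radonifying norms of $\sigma_\delta^{(k)}$, and~\eqref{eq:norm_AB} to absorb the unbounded operator $B$ coming from the Burgers nonlinearity $BF_{2,\delta}$. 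The restriction $\sum_i\alpha_i<\tfrac12$ is precisely what is needed for the time integral $\int_0^t s^{-2\sum_i\alpha_i}\,ds$ arising from It\^o's isometry in the stochastic source of the $\xi$-equation to converge.

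Finally, combining these moment bounds with H\"older's inequality (exploiting $L^{3q}\subset L^q$ on the bounded interval $(0,1)$), the bounds~\eqref{e20.1}--\eqref{e20.3} on the derivatives of $\varphi_\delta$, and the standard uniform moment estimate $\E|X^\delta_t|_{L^p}^m\le C(1+|x|_{L^p})^m$, each of the three types of terms in the Fa\`a di Bruno expansion is bounded by $C_{\alpha,\beta,\gamma}(T)\,t^{-(\alpha+\beta+\gamma)}(1+|x|_{L^p})^K\prod_{i}|(-A)^{-\alpha_i}h_i|_{L^{3q}}$, which yields~\eqref{eq:theo_D13_3}. The main obstacle is the bookkeeping of the moment estimates: the iteration has to be performed in the order $\eta\to\zeta\to\xi$, since the source term of the $\xi$-equation contains both cubic products of first variations and bilinear products $\zeta\cdot\eta$, and the exponents appearing on the right-hand sides of~\eqref{eq:estim_sigma_2}--\eqref{eq:estim_sigma_3} dictate in which $L^r$-norms these products must be controlled in order to close the Gr\"onwall argument.
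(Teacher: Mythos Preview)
Your proposal is correct and follows precisely the ``standard arguments'' the paper explicitly invokes for this proposition (differentiation under the expectation, mild formulation of the variation processes, BDG and Gronwall in $L^{3q}$, with the constraint $\alpha+\beta+\gamma<\tfrac12$ ensuring integrability of the squared time singularity from the stochastic integrals). The paper itself leaves this proof to the reader, referring to Section~\ref{sec:formal} and to~\cite{Andersson_Hefter_Jentzen_Kurniawan:16},~\cite{Debussche:11} for the analogous restricted-range estimates on $Du$ and $D^2u$; your write-up is exactly the natural third-order extension of that argument.
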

The constant $C_{\alpha,\beta,\gamma}(T)$ depends on $\varphi$ through the constants appearing in Assumption  \ref{ass:phi}.

The results in Theorems~\ref{theo:D1},~\ref{theo:D2} are proved for the function $u_\delta$, defined by~\eqref{eq:u_N}, for $\delta\in(0,1)$. 
Thanks to the result on the third order derivatives of $u_\delta$, we may take the limit $\delta\to 0$ in Theorems~\ref{theo:D1} and~\ref{theo:D2}; this provides G\^ateau differentiability of first and second order of the function $u$, at points $x\in L^p$ and in directions $h_1,h_2\in L^q$.

%

If $\varphi$ is a $C^2$ function on $H$ satisfying Assumption \ref{ass:phi} with $p=2=q=2$, using standard arguments, we can prove similar estimates on $D^ku_\delta$, $k=1,2,3$ with $\beta=\gamma=0$ for $x\in H$ and $h,h_1,h_2,h_3\in H$. Thus in this case, we can prove that $u$ is a $C^2$ function on $H$.

\subsection{Weak convergence of numerical approximations}\label{sec:results_num}


As an application of the results of Section~\ref{sec:results_Kolmogorov}, we study the discretization of~\eqref{eq:SPDE} by the following semi-implicit Euler scheme (also known as the linear implicit Euler scheme). Let $T\in(0,\infty)$ be given, and let $\Delta t\in(0,T)$ denote the time-step size of the scheme, such that $N=\frac{T}{\Delta t}\in\N^\star$ is an integer.

Then for $n\in\left\{0,\ldots,N-1\right\}$, define
\begin{equation}\label{eq:scheme_result}
X_{n+1}-X_n=\Delta t\bigl(AX_{n+1}+G(X_n)\bigr)+\sigma(X_n)\bigl(W\bigl((n+1)\Delta t\bigr)-W(n\Delta t)\bigr),\; X_0=x.
\end{equation}
The nonlinear terms $G$ and $\sigma$ are treated explicitly (which is possible thanks to global Lipschitz continuity assumptions), whereas the linear operator $A$ is treated implicitly. Note that~\eqref{eq:scheme_num} can be rewritten in an explicit form
\[
X_{n+1}=S_{\Delta t}X_n+\Delta tS_{\Delta t}G(X_n)+S_{\Delta t}\sigma(X_n)\bigl(W\bigl((n+1)\Delta t\bigr)-W(n\Delta t)\bigr),
\]
where 
\begin{equation}\label{e32}
S_{\Delta t}=\bigl(I-\Delta t A\bigr)^{-1}. 
\end{equation}
This proves the well-posedness of the scheme, thanks to nice regularization properties of $S_{\Delta t}$, see Lemmas~\ref{lem:S_1} and~\ref{lem:S_2}.

The weak convergence result is given by Theorem~\ref{theo:num}; its proof is given in Section~\ref{sec:proof_num}. It generalizes the statement that the weak rate, equal to $\frac12$, is twice the strong order $\frac14$, which has been obtained for instance in~\cite{Printems:01}. Recall that the values of  $p,q$ and $K$ are determined by Assumption~\ref{ass:phi}. 
\begin{theo}\label{theo:num}

For every $\kappa\in(0,\frac12)$, $T\in(0,\infty)$ and every $\Delta t_0\in(0,1)$, there exists $C_\kappa(T,\Delta t_0,\varphi)$, such that for every $\Delta t\in(0,\Delta t_0)$, with $N=\frac{T}{\Delta t}\in\N^\star$, for every $x\in L^p\cap L^{\blue{8q}}$
\begin{equation}\label{eq:theo_num}
\big|\E\varphi\bigl(X(T)\bigr)-\E\varphi\bigl(X_N\bigr)\big|\le C_\kappa(T,\Delta t_0,\varphi)\bigl(1+|x|_{L^{\max(p,8q)}}\bigr)^{K+3}\Delta t^{\frac12-\kappa}.
\end{equation}
\end{theo}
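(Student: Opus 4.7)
The plan is to follow the Talay-type approach generalized to SPDEs, as in \cite{Debussche:11}, but exploiting the new regularity estimates of Theorems~\ref{theo:D1} and~\ref{theo:D2}. The starting point is the backward Kolmogorov function $u(t,x)=\E[\varphi(X(t,x))]$ and the standard telescoping identity
$$\E\varphi(X(T))-\E\varphi(X_N)=\sum_{n=0}^{N-1}\E\bigl[u(T-t_n,X_n)-u(T-t_{n+1},X_{n+1})\bigr],$$
with $t_n=n\Delta t$. To write each summand as an It\^o increment, I would introduce a continuous-time interpolation $\tilde X$ of the scheme defined on $[t_n,t_{n+1}]$ by
$$\tilde X(s)=S_{\Delta t}X_n+(s-t_n)S_{\Delta t}G(X_n)+S_{\Delta t}\sigma(X_n)(W(s)-W(t_n)),$$
so that $\tilde X(t_{n+1})=X_{n+1}$ and the only discrepancy with the scheme is the jump $\tilde X(t_n^+)-X_n=(S_{\Delta t}-I)X_n$ at the left endpoint, which contributes a controllable boundary term of order $\Delta t$. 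Applying It\^o's formula to $s\mapsto u(T-s,\tilde X(s))$ and using the (formal) Kolmogorov equation~\eqref{eq:Kolmogorov_intro} satisfied by $u$, the leading order drift terms cancel and the weak error reduces, on each subinterval, to three families: a drift error
$$\langle Du(T-s,\tilde X(s)),S_{\Delta t}G(X_n)-G(\tilde X(s))+(S_{\Delta t}-I)A\tilde X(s)\rangle,$$
a trace error of the form $\tfrac12\mathrm{Tr}\bigl((S_{\Delta t}\sigma(X_n))(S_{\Delta t}\sigma(X_n))^\star D^2u-\sigma(\tilde X)\sigma(\tilde X)^\star D^2u\bigr)$, and jump boundary terms.

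To bound these terms I would apply Theorems~\ref{theo:D1} and~\ref{theo:D2} with parameters $\beta,\gamma$ tuned to match the regularity of each increment. The subterms involving $F_1$ and the semigroup approximation $(S_{\Delta t}-I)(-A)^{-\alpha}=O(\Delta t^{\alpha})$ are controlled by Theorem~\ref{theo:D1} with $\beta$ close to $1/2$ plus Nemytskii estimates from Property~\ref{ass:F}. The critical Burgers term $BF_2(X_n)$ is handled using Theorem~\ref{theo:D1} with $\beta$ close to $1$ combined with~\eqref{eq:norm_AB}, namely $\|(-A)^{-\beta}B(-A)^{-\beta'}\|_{\mathcal L(L^p)}<\infty$ whenever $\beta+\beta'>\tfrac12$: this is precisely where the new range $\beta\in[0,1)$ in Theorem~\ref{theo:D1} becomes indispensable, going beyond~\cite{Debussche:11}. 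For the trace error, after reducing to the linearised difference $\sigma(X_n)-\sigma(\tilde X(s))$ and the pure semigroup part $(S_{\Delta t}^2-I)\sigma(X_n)\sigma(X_n)^\star$, I would invoke Theorem~\ref{theo:D2} with both $\beta$ and $\gamma$ close to $1/2$, together with the product-of-functions inequality~\eqref{eq:product_3} applied in the form $(-A)^{-\beta}[\sigma(x)^2e_n]$: splitting off a regular factor $(-A)^{\beta+\epsilon}\sigma(x)^2$ and a factor $(-A)^{-\beta+\epsilon}e_n$ allows one to make the series $\sum_n\lambda_n^{-(\beta+\gamma-\epsilon)}$ convergent, exactly as in the interpretation of the trace term in Section~\ref{sec:results_Kolmogorov}. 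Accounting for the singularities $(T-s)^{-(\beta+\gamma)}$ and for the $\Delta t^{\alpha}$ factors coming from semigroup differences and $L^2(\Omega;H)$-norms of stochastic increments (via~\eqref{eq:ideal}--\eqref{eq:Ito_gamma} and BDG), each subinterval contributes at most $\Delta t^{1-\kappa}$ times an integrable singularity in $(T-s)$; summing over $n$ and using $N\Delta t=T$ yields the stated rate $\Delta t^{1/2-\kappa}$.

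The three main obstacles will be the following. First, uniform-in-$\Delta t$ moment estimates of $X_n$ and $\tilde X(s)$ in $L^{\max(p,8q)}$, which rely on the smoothing and contraction properties of $S_{\Delta t}$ together with radonifying-norm estimates~\eqref{eq:ideal}--\eqref{eq:Ito_gamma} and Property~\ref{ass:sigma}; the exponent $K+3$ in~\eqref{eq:theo_num} reflects the accumulation of moments coming from Assumption~\ref{ass:phi}, from the bound of $|X_n-\tilde X(s)|$ in an $L^p$-norm, and from the third-order derivative estimates of Proposition~\ref{theo:D3} used in some remainder terms arising from expansions of $G$ and $\sigma$. Second, the Burgers nonlinearity is the genuine novelty: it forces one to distribute an almost full $(-A)^{-1}$ onto $Du$ via~\eqref{eq:norm_AB}, which is exactly what Theorem~\ref{theo:D1} enables, whereas previous references~\cite{Andersson_Hefter_Jentzen_Kurniawan:16, Debussche:11, Hefter_Jentzen_Kurniawan:16} are confined to $\beta<1/2$ and thereby to $F_2=0$. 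Third, the whole argument must be carried out on the regularised problem~\eqref{eq:SPDE_reg} and the function $u_\delta$, where $u_\delta\in\mathcal C^3$ justifies rigorously the It\^o expansion, and then passed to the limit $\delta\to 0$ using the uniform-in-$\delta$ bounds provided by Theorems~\ref{theo:D1},~\ref{theo:D2} and Proposition~\ref{theo:D3}, together with the convergence $X^\delta\to X$ in $L^2(\Omega;C([0,T];H))$.
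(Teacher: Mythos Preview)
Your high-level strategy—telescoping via $u$, continuous interpolation, It\^o expansion against the Kolmogorov equation, and appeal to Theorems~\ref{theo:D1}--\ref{theo:D2}—matches the paper. However, one essential ingredient is missing from your outline: the \emph{Malliavin calculus duality formula}. In the paper's decomposition (Section~\ref{sec:proof_num}), several error terms contain a stochastic integral paired with $Du$ or $D^2u$. For instance, inserting the discrete mild formula for $X_k$ into the $A$-error produces (the paper's $a_k^{1,3}$)
\[
\Delta t\,\E\Bigl\langle \sum_{\ell=0}^{k-1}A^2S_{\Delta t}^{k-\ell+1}\sigma(X_\ell)\Delta W_\ell,\,Du(T-t,\tilde X(t))\Bigr\rangle,
\]
and analogous stochastic integrals arise from the It\^o expansions of $G(\tilde X)-G(X_k)$ and $\sigma(\tilde X)^2-\sigma(X_k)^2$ (the paper's $b_k^{2,4}$, $b_k^{2,2,3}$, $c_k^{3,\calD}$, $c_k^{3,\calB_3}$). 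A direct Cauchy--Schwarz/BDG attack, as you suggest, yields second moments of operators of the type $A^2S_{\Delta t}^{k-\ell+1}$, which are too singular to sum. The paper instead applies the duality formula recalled in Section~2.2 (Lemma~2.1 of~\cite{Debussche:11}) to convert $\E\langle Du,\int\psi\,dW\rangle$ into $\E\int{\rm Tr}\bigl(\psi^\star D^2u\,\D_s\tilde X\bigr)\,ds$, and in some places pushes a further stochastic factor onto $D^3u$. Only after this step do Theorem~\ref{theo:D2}, Proposition~\ref{theo:D3}, and the auxiliary Lemma~\ref{lem:U_num} (controlling $\D_s\tilde X$) close the estimates. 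Your mention of Proposition~\ref{theo:D3} is right in spirit, but without the Malliavin step that actually produces the $D^2u$ and $D^3u$ expressions there is no route to it.

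Two smaller points. First, the paper's interpolation is $\tilde X(t)=X_n+(t-t_n)S_{\Delta t}AX_n+(t-t_n)S_{\Delta t}G(X_n)+S_{\Delta t}\sigma(X_n)(W(t)-W(t_n))$, so that $\tilde X(t_n)=X_n$ with \emph{no} jump; this makes $d\tilde X$ carry the term $S_{\Delta t}AX_n\,dt$, and the $a_k$-error is the difference $A\tilde X(t)-S_{\Delta t}AX_k$, not $(S_{\Delta t}-I)A\tilde X$ as you wrote. Your jump-based interpolation can be made to work, but the bookkeeping is different. Second, the paper isolates the $k=0$ summand $\E[u(T-\Delta t,X_1)-u(T,x)]$ and bounds it directly via Theorem~\ref{theo:D1}; this is needed because the subsequent estimates carry a factor $t_k^{-1+\kappa}$ (from Lemma~\ref{lem:S_1} applied to $S_{\Delta t}^k$) that is not defined at $k=0$.
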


The proof is a generalization of \cite{Debussche:11}, with several non trivial modifications, due to the assumptions made on the nonlinear drift term and diffusion coefficients. In this article, we work in $L^p$ spaces, and it seems that it is the first time that a weak convergence result is provided for SPDEs with Burgers type drift coefficients, {\it i.e.} with a spatial derivative in the drift nonlinear term. More importantly, our main contribution is the treatment of non constant diffusion coefficients $\sigma$ (the multiplicative noise case), under realistic assumptions. In particular, we drop the artificial assumption on $\sigma$ from~\cite{Debussche:11}.

As mentioned in the introduction, the approach using mild It\^o calculus, see~\cite{ConusJentzenKurniawan:14}, \cite{Hefter_Jentzen_Kurniawan:16}, \cite{JacobeJentzenWelti:15}, \cite{JentzenKurniawan:15}, has also recently been able to deal with such non constant diffusion coefficients. The main difference is in the way the discretization error is analyzed: our approach is in our opinion somewhat simpler, and closer to the standard approaches from finite dimensional cases. We require also lower regularity on the drift and diffusion terms.

Our proof is based on a decomposition of the error depending on the solution $u$ of the Kolmogorov equation. In particular, Theorem~\ref{theo:D1} (to handle Burgers type nonlinear drift coefficients), resp.~Theorem~\ref{theo:D2} (to handle nonlinear diffusion coefficients), removing the condition $\beta<\frac12$, resp. the condition $\beta+\gamma<\frac12$, are essential tools.

\section{Proofs of {Theorems~\ref{theo:D1} and~\ref{theo:D2}}}\label{sec:proof_reg}

{ The aim of this section is to provide the proofs of the new regularity results, Theorems~\ref{theo:D1} and~\ref{theo:D2}.

The key ideas of our original approach are explained in Section~\ref{sec:formal}, however only at a formal level: indeed the {stochastic integrals in~\eqref{eq:two-sided} below involve anticipative integrands and are not well defined so that \eqref{eq:two-sided} cannot be used. These integrals are in fact two-sided integrals and should be defined appropriately. This would considerably lengthen our article}. A discrete time approximation is used to make the analysis rigorous: it is introduced in Section~\ref{sec:discrete-approx}.

In addition to the auxiliary temporal discretization, with parameter $\Delta t$, another approximation is used, with parameter $\tau$. The most difficult part of the proof is to obtain the auxiliary regularity results which are stated in Section~\ref{sec:auxiliary}, for positive $\tau$. The proofs of these results are performed in three steps. First, the new expressions suggested by the formal arguments of Section~\ref{sec:formal} are rigorously derived at the discrete time level in Section~\ref{sec:derivatives}. The key ingredients are discrete time versions of the two-sided stochastic integrals, and of an appropriate Malliavin calculus duality formula. Second, Section~\ref{sec:control} is devoted to proving bounds for the terms appearing in these new expressions. Finally, it remains to pass to the limit $\Delta t\to 0$. 

The proof of Theorems~\ref{theo:D1} and~\ref{theo:D2} is then concluded in Section~\ref{sec:proof_tau}, getting rid of the auxiliary parameter $\tau$. Finally, technical auxiliary lemmas are proved in Section~\ref{sec:proof_aux_lemmas}.
}

\subsection{Formal arguments in continuous time}\label{sec:formal}

In this section, we explain how Theorems~\ref{theo:D1} and~\ref{theo:D2} {could be obtained if one first constructs suitable stochastic integrals}. We first recall the origins of the limitations on parameters $\beta$ and $\gamma$ in standard approaches. We then present the strategy of the proof, in particular what are the two-sided stochastic integrals that are required.

As explained in the introduction, we do not intend to give a rigorous meaning in the continuous time setting to the objects introduced below, and do not justify the computations. { As will be clear below, some expressions do not make sense as standard objects.} In order to simplify the presentation, since we want to focus on the difficulties due to the diffusion coefficient $\sigma$ being non constant, in this section we assume that $F_1=F_2=0$. Moreover, we work in an abstract setting: we assume that the diffusion coefficient $\sigma$ is a function on $H$ of class $\mathcal{C}^2$, with bounded derivatives -- this property not being true for the Nemystkii coefficients considered in this paper.  We also assume that the test function $\varphi$ is of class $\mathcal{C}_b^2$.

First, differentiating \eqref{eq:u}, we obtain for $h\in H$:
$$
Du(t,x).h = \E\bigl[ D\varphi(X(t,x)).\eta^{h,x}(t) \bigr]
$$
where $\eta^{h,x}(t)$ is the solution of 
\[
d\eta^{h,x}(t)= A\eta^{h,x}(t)dt + \sigma'(X(t,x)).\eta^{h,x}(t)dW(t), \eta^{h,x}(0)=h.
\]
Using the mild formulation of $\eta^{h,x}(t)$ and It\^o isometry,
\begin{align*}
\E|\eta^{h,x}(t)|^2&=\E\left| e^{tA}h + \int_0^t e^{(t-s)A}\sigma'(X(s,x)).\eta^{h,x}(s) dW(s)\right|^2\\
&=\big|e^{tA}h\big|^2+\int_0^t \big|e^{(t-s)A}\sigma'(X(s,x)).\eta^{h,x}(s)\big|_{\mathcal{L}_2(H)}^2 ds\\
&\le Ct^{-2\beta}|(-A)^{-\beta}h|^{2}+C\int_{0}^{t}(t-s)^{-\frac12-\kappa}\E|\eta^{h,x}(s)|^2 ds.
\end{align*}
Indeed, for $y,h\in H$ and $\kappa\in(0,\frac12)$
$$
\left|e^{A(t-s)} \sigma'(y).k\right|_{{\mathcal L}_2(H)}^2 \le \left|e^{A(t-s)}\right|_{{\mathcal L}_2(H)}^2\left| \sigma'(y).k\right|_{{\mathcal L}(H)}^2 \le C\sum_{i\in\N^\star} e^{-2\lambda_i(t-s)} |k|^2
\le c (t-s)^{-\frac12-\kappa} |k|^2,
$$
since $\sum_{i\in\N^\star}\lambda_i^{-\frac12-\kappa}<\infty$. Assuming that $2\beta<1$, and applying Gronwall Lemma, we get
\begin{equation*}
\sup_{t\in (0,T]} t^{2\beta} \E\left(\left|\eta^{h,x}(t)\right|^2\right) \le C |(-A)^{-\beta}h|^2,
\end{equation*}
which then yields the required regularity result, for $\beta\in[0,\frac12)$:
$$
|Du(t,x)\cdot h |\le c \| \varphi\|_{1}  t^{-\beta}|(-A)^{-\beta}h|.
$$
The limitation $\beta<\frac12$ in previous articles thus comes from the fact that  It\^o formula is used  to control the stochastic integral, and naturally squares appear in integrals. In the additive noise case, since $\sigma'=0$, no stochastic integral appears in the definition of $\eta^{h,x}(t)$, and thus choosing $\beta\in[0,1)$ is possible.

A similar difficulty appears for the second order derivative: differentiating twice~\eqref{eq:u}, for $h,k\in H$ yields
\[
D^2u(t,x).(h,k) = \E\left[ D^2\varphi(X(t,x)).\bigl(\eta^{h,x}(t),\eta^{k,x}(t)\bigr)+ 
D\varphi(X(t,x)).\zeta^{h,k,x}(t) \right],
\]
where $\zeta^{h,k,x}(t)$ is the solution of
\[
d\zeta^{h,k,x}(t)=A\zeta^{h,k,x}(t)dt 
+\sigma'(X(t,x)).\zeta^{h,k,x}(t)dW(t)+\sigma''(X(t,x)).\bigl(\eta^{h,x}(t),\eta^{k,x}(t)\bigr)dW(t),
\]
with the initial condition $\zeta^{h,k,x}(0)=0$. The issue lies again in the control of the stochastic integral: indeed, It\^o isometry for the mild formulation of the equation gives
\[
\E|\zeta^{h,k,x}(t)|^2\le C\int_{0}^{t}(t-s)^{-\frac12-\kappa}\bigl(\E|\zeta^{h,k,x}(s)|^2+\E\bigl[|\eta^{h,x}(s)|^2|\eta^{k,x}(s)|^2\bigr]\bigr)ds,
\]
and, generalizing the previous estimate on $\eta$ to handle the fourth moment, we have
\[
\E\bigl[|\eta^{h,x}(s)|^2|\eta^{k,x}(s)|^2\bigr]\le Cs^{-2\beta-2\gamma}|(-A)^{-\beta}h|^2|(-A)^{-\gamma}k|^2,
\]
and $\int_{0}^{t}(t-s)^{-\frac12-\kappa}s^{-2\beta-2\gamma}ds<\infty$ if and only if $\beta+\gamma<\frac12$. Under this condition, we obtain
\[
\big|D^2u(t,x).(h,k)\big|\le Ct^{-\beta-\gamma}|(-A)^{-\beta}h||(-A)^{-\gamma}k|.
\]

\bigskip

In order to overcome the limitations on $\beta$ and $\gamma$, we introduce new formulas for $Du$ and for $D^2u$. The idea is to use { the Malliavin calculus duality formula}, in order to replace stochastic It\^o integrals, which require square integrability in time, with { integrals with respect to Lebesgue measure}, which require only integrability in time.

First, define $\tilde{\eta}^{h,x}(t)=\eta^{h,x}(t)-e^{tA}h$, and write
\begin{equation}\label{e31}
Du(t,x).h=\E\bigl(D\varphi(X(t,x)).e^{tA}h+D\varphi(X(t,x)).\tilde{\eta}^{h,x}(t)\bigr).
\end{equation}
The first term on the right-hand side is easily bounded by $t^{-\beta}|(-A)^{-\beta}h|$, for $\beta\in[0,1)$. To control the second term, note that
\[
d\tilde{\eta}^{h,x}(t)=\Bigl(A\tilde{\eta}^{h,x}(t)dt+\sigma'(X(t,x)).\tilde{\eta}^{h,x}dW(t)\Bigr)+\sigma'(X(t,x)).e^{tA}h dW(t).
\]
Formally, $\zeta^{h,k,x}$ and $\tilde{\eta}^{h,x}$ are the solutions of the same type of equations, { and we would like to write the following identities} 
\begin{equation}\label{eq:two-sided}
\begin{gathered}
\tilde{\eta}^{h,x}(t)=\int_{0}^{t}\Pi(t,s)\sigma'(X(s,x)).e^{sA}h dW(s),\\
\zeta^{h,x}(t)= \int_0^t \Pi(t,s)\sigma''(X(s,x))\cdot (\eta^{h,x}(s),\eta^{h,x}(s)) 
 dW(s),
\end{gathered}
\end{equation}
where $\Pi(t,s)$ is the evolution operator associated with the linear equation
\[
dZ_{t,s}=AZ_{t,s}dt+\sigma'(X(t,x)).Z_{t,s}dW(t) \quad,\quad Z_{s,s}=z,
\]
{\it i.e.} $\Pi(t,s)z=Z_{t,s}$. 

{Formulas \eqref{eq:two-sided} are not well defined since the integrals contain anticipative integrands. Unfortunately, Skohorod integrals or  
 two-sided stochastic integrals do not work. If we use such integrals in \eqref{eq:two-sided}, the formula makes sense but does not provide a solution of the equations. 
 In fact, we can guess what would be the correct integral for our purpose. From the discrete formulas below, it should be:
 $$
 \lim_{\delta\to 0} \sum_{\ell}\Pi(t,t_{\ell+1})\sigma'(X(t_\ell,x)).e^{t_\ell A}h \left(W(t_\ell+1)-W(t_\ell)\right)
 $$
 for the first one and a similar expression for the other. As usual the limit is taken on subdivisions of $[0,t]$ with $\delta=\max (t_{\ell+1}-t_\ell) $ converging 
 to $0$.}

As explained in the introduction, it may be possible to adapt the arguments from~\cite{Alos_Nualart_Viens:00}, \cite{Nualart_Pardoux:88}, \cite{Nualart_Viens:00}, \cite{Pardoux:87} and~\cite{Pardoux_Protter:87} and give a rigorous meaning to~\eqref{eq:two-sided} {using such new integral}. This is not the strategy we follow; instead, we work on time-discrete approximations of the problem, for which every object is easily defined and only standard tools of stochastic analysis are used.

{Let us anyway go on with the formal argument and show why \eqref{eq:two-sided} is useful. We consider the second term in \eqref{e31} and rewrite using this formula:
$$
\E\bigl(D\varphi(X(t,x)).\tilde{\eta}^{h,x}(t)\bigr)= \E\left(D\varphi(X(t,x)). \int_{0}^{t}\Pi(t,s)\sigma'(X(s,x)).e^{sA}h dW(s)  \right).
$$
The next step is to apply a Malliavin duality formula. This would replace  
the right hand side above by  $\E\left(\int_0^t {\mathcal D}_s \left(D\varphi(X(t,x))\right).\Pi(t,s)\sigma'(X(s,x)).e^{sA}h ds  \right).
$
Thus, we are now dealing with a standard integral and do not need square integrability in time and a higher order singularity is allowed. Unfortunately, this is not correct: the integral has not been defined and no generalization of the duality formula has been proved. In fact, we may guess from the 
discrete computations below that the correct formula should contain an additional term and write:
$$
\begin{array}{ll}
\displaystyle \E\bigl(D\varphi(X(t,x)).\tilde{\eta}^{h,x}(t)\bigr)&\displaystyle = \E\bigg(\int_0^t {\mathcal D}_s \left(D\varphi(X(t,x))\right).\Pi(t,s)\sigma'(X(s,x)).e^{sA}h ds \\
&\displaystyle +\int_0^t D\varphi(X(t,x)). {\mathcal D}_s \left(\Pi(t,s)\right)\sigma'(X(s,x)).e^{sA}h ds  \bigg).
\end{array}
$$
Again, we have chosen to avoid the rigorous construction of the two-sided integral and the proof of the associate duality formula by working on discrete time 
approximations.}



\subsection{Discrete time approximation}\label{sec:discrete-approx}

In order to give a rigorous meaning to the arguments presented above in Section~\ref{sec:formal}, we replace the continuous time processes $\bigl(X^\delta(t)\bigr)_{t\in[0,T]}$, with $\delta \in[0,1)$, with discrete-time approximations. We use a numerical scheme, with time-step size $\Delta t=\frac{T}{N}\in(0,1)$, with $N\in\N^\star$. We prove regularity results for fixed $N$, with upper bounds not depending on $N$, and finally pass to the limit $N\to \infty$.

We also require an additional regularization parameter, $\tau\in(0,1)$. Some estimates depend on $\tau$; when it is the case, it will always be stated precisely.

The discrete-time processes are defined using the linear-implicit Euler scheme: for $0\le n\le N-1$
\begin{equation}\label{eq:scheme}
X_{n+1}^{\delta,\tau,\Delta t}=S_{\Delta t}X_{n}^{\delta,\tau,\Delta t}+\Delta t S_{\Delta t}G_\delta\bigl(X_{n}^{\delta,\tau,\Delta t}\bigr)+e^{\tau A}S_{\Delta t}\sigma_\delta\bigl(X_{n}^{\delta,\tau,\Delta t}\bigr)\Delta W_n,
\end{equation}
with the standard notation $\Delta W_n=W\bigl((n+1)\Delta t\bigr)-W\bigl(n\Delta t\bigr)$, and $S_{\Delta t}=(I-\Delta t A)^{-1}$. Note that we have added the regularization operator in the diffusion coefficient: $e^{\tau A}$. 

{Below we omit to write the dependance on $\delta,\tau,\Delta t$ and write $X_n$ instead of $X_{n}^{\delta,\tau,\Delta t}$. All constants are independent on 
$\delta,\tau,\Delta t$.} 
{ Moment estimates for $X_n$ are given by Lemma~\ref{lem:moments_X} below.}
\begin{lemma}\label{lem:moments_X}
For every $p\in[2,\infty)$, $\alpha\in[0,\frac14)$, $M\in\N^\star$ and $T\in(0,\infty)$, there exists $C(p,M,T)$, such that for every $n\in\left\{1,\ldots,N\right\}$ (with $N\Delta t=T$), and every $x\in D_p\bigl((-A)^\alpha\bigr)$
\begin{equation}\label{eq:lem_moments_X}
\begin{aligned}
\E|(-A)^{\alpha}X_n(x)|_{L^p}^{2M}&\le C(p,\alpha,M,T)\bigl(1+t_{n}^{-2M\alpha}|x|_{L^p}^{2M}\bigr),\\
\E|(-A)^{\alpha}X_n(x)|_{L^p}^{2M}&\le C(p,\alpha,M,T)\bigl(1+|(-A)^\alpha x|_{L^p}^{2M}\bigr).
\end{aligned}
\end{equation}
\end{lemma}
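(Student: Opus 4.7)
The plan is to iterate the scheme \eqref{eq:scheme} into its discrete mild form
\begin{equation*}
X_n = S_{\Delta t}^n x + \Delta t\sum_{k=0}^{n-1} S_{\Delta t}^{n-k}G_\delta(X_k) + \sum_{k=0}^{n-1} e^{\tau A}S_{\Delta t}^{n-k}\sigma_\delta(X_k)\Delta W_k,
\end{equation*}
apply $(-A)^\alpha$, estimate each of the three contributions separately using the smoothing properties of $S_{\Delta t}$ (Lemmas~\ref{lem:S_1} and~\ref{lem:S_2}), and close by a discrete Gronwall argument. For the initial part, $|(-A)^\alpha S_{\Delta t}^n x|_{L^p}$ is bounded either by $C t_n^{-\alpha}|x|_{L^p}$ (first estimate) or by $C|(-A)^\alpha x|_{L^p}$ (second estimate), depending on whether one moves $(-A)^\alpha$ through or absorbs it into the semigroup.

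For the drift, Property~\ref{ass:F} gives $|F_1(X_k)|_{L^p} + |F_2(X_k)|_{L^p} \le C$ uniformly in $\delta$, and the Burgers term is handled by writing $(-A)^\alpha S_{\Delta t}^{n-k}B = (-A)^{\alpha+\frac12+\epsilon}S_{\Delta t}^{n-k}\cdot(-A)^{-\frac12-\epsilon}B$, where the first factor has $\mathcal{L}(L^p)$-norm $\lesssim t_{n-k}^{-\alpha-\frac12-\epsilon}$ and the second is bounded by \eqref{eq:norm_AB}. Summing yields $\Delta t\sum_{k=0}^{n-1} t_{n-k}^{-\alpha-\frac12-\epsilon}\le C(T)$, finite since $\alpha<\frac14<\frac12-\epsilon$ for small $\epsilon$.

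For the stochastic term, apply the discrete Burkholder–Davis–Gundy inequality together with \eqref{eq:Ito_gamma} to bound the $L^{2M}(\Omega;L^p)$ moment by
\begin{equation*}
C\,\E\Bigl(\Delta t\sum_{k=0}^{n-1}\|(-A)^\alpha e^{\tau A}S_{\Delta t}^{n-k}\sigma_\delta(X_k)\|_{R(H;L^p)}^2\Bigr)^M.
\end{equation*}
Using the factorization through $L^2$ provided by \eqref{eq:lem_Sobolev}, the ideal property \eqref{eq:ideal}, and the uniform bound $|\sigma_\delta(X_k)|_{\mathcal{L}(L^p)}\le C$ from Property~\ref{ass:sigma}, one reduces to estimating $\|(-A)^{\alpha+\frac14-\frac{1}{2p}}e^{\tau A}S_{\Delta t}^{n-k}\|_{\mathcal{L}_2(H)}^2$, which by the spectral computation $\sum_i \lambda_i^{2\alpha+\frac12-\frac{1}{p}}e^{-2t_{n-k}\lambda_i}\lesssim t_{n-k}^{-2\alpha-1+\frac{1}{p}-\frac12}$ is of order $t_{n-k}^{-2\alpha-\frac12-\eta}$ for some small $\eta$. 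The sum $\Delta t\sum_k t_{n-k}^{-2\alpha-\frac12-\eta}$ is uniformly bounded in $n$ and $\Delta t$ provided $2\alpha+\frac12<1$, which is exactly the assumption $\alpha<\frac14$. Applying the discrete Gronwall lemma then closes both inequalities of \eqref{eq:lem_moments_X}.

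The main obstacle is the control of the $R(H;L^p)$ norm of the stochastic integrand uniformly in $\tau$ and $\Delta t$: one has to chain the Sobolev embedding \eqref{eq:lem_Sobolev}, the left/right ideal property, and the discrete-semigroup analogue of the standard $(tA)^\alpha e^{tA}$ spectral estimate, all while ensuring that the integrability exponent $2\alpha+\frac12+\eta<1$ is preserved. Everything else is routine; in particular the boundedness assumptions on $\tilde F_i$ and $\tilde\sigma$ make the Gronwall step trivial since the right-hand sides do not depend on $\E|X_k|_{L^p}^{2M}$, so no inductive loop is actually needed once the deterministic initial datum term is split off.
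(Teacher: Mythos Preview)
Your outline matches the paper's proof in the mild formulation, the treatment of the initial datum, and the drift estimate. The gap is in the stochastic term. You propose to pass from the $R(H;L^p)$ norm to the Hilbert--Schmidt norm on $H=L^2$ via the Sobolev embedding~\eqref{eq:lem_Sobolev}, reducing to $\|(-A)^{\alpha+\frac14-\frac{1}{2p}}S_{\Delta t}^{n-k}\|_{\mathcal{L}_2(H)}^2$. But the spectral computation of this quantity gives the order $t_{n-k}^{-2\alpha-1+\frac{1}{p}}$ (indeed $\sum_i \lambda_i^{s}(1+\lambda_i\Delta t)^{-2(n-k)}\sim t_{n-k}^{-s-\frac12}$ with $s=2\alpha+\frac12-\frac{1}{p}$), not $t_{n-k}^{-2\alpha-\frac12-\eta}$ as you claim. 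The summability condition $\Delta t\sum_k t_{n-k}^{-2\alpha-1+\frac{1}{p}}<\infty$ then forces $\alpha<\frac{1}{2p}$, which for $p>2$ is strictly smaller than the stated range $\alpha<\frac14$. So your argument covers the full range only when $p=2$.

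The paper avoids this loss by estimating $\|(-A)^{\alpha}S_{\Delta t}^{n-k}\|_{R(L^2,L^p)}$ directly via Lemma~\ref{lem:S_2}, whose proof exploits the equiboundedness $\sup_i|e_i|_{L^\infty}<\infty$ from Property~\ref{ass:A}: writing the $R(L^2,L^p)$ norm as a Gaussian moment and using that the $e_i$ are uniformly bounded pointwise reduces matters to the same scalar sum $\sum_i \lambda_i^{2\alpha}(1+\lambda_i\Delta t)^{-2(n-k)}\sim t_{n-k}^{-2\alpha-\frac12}$, now \emph{independent of $p$}. With the ideal property and $|\sigma(X_k)|_{\mathcal{L}(L^2)}\le C$ this gives the correct integrand $t_{n-k}^{-\frac12-2\alpha-2\kappa}$, summable exactly for $\alpha<\frac14$. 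In short: replace your Sobolev-embedding factorization by a direct appeal to Lemma~\ref{lem:S_2}.
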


{ The proof of Lemma~\ref{lem:moments_X} uses the two following results.} 
\begin{lemma}\label{lem:S_1}
For every $\beta\in[0,1)$ and $p\in[2,\infty)$, there exists $C(p,\beta)$ such that for every $n\in\N^\star$
\begin{equation*}
|(-A)^{\beta}S_{\Delta t}^{n}|_{\mathcal{L}(L^p)}\le \frac{C(p,\beta)}{t_{n}^{\beta}}.
\end{equation*}
\end{lemma}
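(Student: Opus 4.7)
The plan is to express $S_{\Delta t}^n$ as an integral against the analytic semigroup $(e^{sA})_{s \ge 0}$ on $L^p$, and then invoke the classical smoothing estimate $|(-A)^\beta e^{sA}|_{\mathcal{L}(L^p)} \le c(p,\beta) s^{-\beta}$ for $s > 0$ (see~\cite{Pazy}), which holds because $A$ generates a bounded analytic semigroup on $L^p(0,1)$ for every $p \in [2,\infty)$.

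Concretely, set $\mu = 1/\Delta t$, so that $S_{\Delta t}^n = \mu^n (\mu I - A)^{-n}$. Differentiating the resolvent formula $(\mu I - A)^{-1} = \int_0^\infty e^{-\mu s} e^{sA}\,ds$ a total of $n-1$ times with respect to $\mu$, and then performing the change of variables $s = u\Delta t$, one obtains the representation
$$
S_{\Delta t}^n = \frac{1}{(n-1)!}\int_0^\infty u^{n-1} e^{-u}\, e^{u \Delta t A}\,du.
$$
Applying $(-A)^\beta$ inside the integral and using the semigroup smoothing estimate then yields
$$
|(-A)^\beta S_{\Delta t}^n|_{\mathcal{L}(L^p)} \le \frac{c(p,\beta)}{(n-1)!\, \Delta t^\beta}\int_0^\infty u^{n-1-\beta} e^{-u}\,du = \frac{c(p,\beta)}{\Delta t^\beta}\cdot \frac{\Gamma(n-\beta)}{\Gamma(n)}.
$$

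To conclude, it remains to invoke the elementary bound $\Gamma(n-\beta)/\Gamma(n) \le C(\beta)\, n^{-\beta}$ for every $n \in \N^\star$ and $\beta \in [0,1)$. For $n=1$ this reduces to $\Gamma(1-\beta) < \infty$; for $n \ge 2$ it follows from the log-convexity of $\Gamma$, which gives $\Gamma(n-\beta) \le \Gamma(n-1)^\beta \Gamma(n)^{1-\beta}$, hence $\Gamma(n-\beta)/\Gamma(n) \le (n-1)^{-\beta} \le 2^\beta n^{-\beta}$. Combined with the previous display, this produces exactly the claimed bound $C(p,\beta)/(n\Delta t)^\beta = C(p,\beta)/t_n^\beta$.

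There is no genuine obstacle here: the only ingredients are the $L^p$-analyticity of the Dirichlet heat semigroup and a classical Gamma function estimate. An alternative would be to establish the estimate on $L^2$ by spectral calculus and to interpolate with the uniform bound of $S_{\Delta t}^n$ on $L^\infty$ (obtained from the maximum principle for the elliptic problem $(I - \Delta t A)u = f$), but the Laplace-integral approach is more direct and adapts cleanly to the presence of $(-A)^\beta$.
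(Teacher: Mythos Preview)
Your proof is correct. Both your argument and the paper's rest on the same core identity---the Laplace representation of the iterated resolvent,
\[
S_{\Delta t}^{n}=\frac{1}{(n-1)!}\int_{0}^{\infty}u^{n-1}e^{-u}e^{u\Delta t A}\,du,
\]
combined with the analytic smoothing bound $|(-A)^{\beta}e^{sA}|_{\mathcal{L}(L^p)}\le c(p,\beta)s^{-\beta}$. The difference is purely organizational: the paper first treats the endpoint $\beta=0$ by interpolating the contractivity of $S_{\Delta t}$ between $L^2$ and $L^\infty$, then treats $\beta=1$ via the Laplace integral (obtaining $|(-A)S_{\Delta t}^{n+1}|_{\mathcal{L}(L^p)}\le C/t_{n+1}$), and finally interpolates between these two endpoints using $|(-A)^{\beta}y|_{L^p}\le c_p|(-A)y|_{L^p}^{\beta}|y|_{L^p}^{1-\beta}$. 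Your route is more direct: you apply $(-A)^{\beta}$ inside the Laplace integral for arbitrary $\beta\in[0,1)$ in one stroke, and close with the Gamma-ratio bound $\Gamma(n-\beta)/\Gamma(n)\le C(\beta)n^{-\beta}$. This avoids both the $L^2/L^\infty$ interpolation for the $\beta=0$ case and the subsequent interpolation in $\beta$, at the modest price of invoking log-convexity of $\Gamma$. Either way works; yours is a little cleaner.
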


\begin{lemma}\label{lem:S_2}
For every $\beta\in[0,\frac34)$, $p\in[2,\infty)$, and $\kappa\in(0,\frac34-\beta)$, there exists $C_\kappa(p,\beta)$ such that for every $n\in\N^\star$
\begin{equation*}
\big|(-A)^{\beta}S_{\Delta t}^{n}\big|_{R(L^2,L^p)}\le \frac{C_\kappa(p,\beta)}{t_{n}^{\frac14+\beta+\kappa}}.
\end{equation*}
\end{lemma}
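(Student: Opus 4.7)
The strategy is to compute the $\gamma$-radonifying norm explicitly on the eigenbasis $(e_i)_{i \in \N^\star}$ of $-A$, using that $S_{\Delta t}^{n}$ and $(-A)^{\beta}$ are diagonalized in this basis: indeed
\[
(-A)^{\beta} S_{\Delta t}^{n} e_i = \lambda_i^{\beta} (1+\Delta t \lambda_i)^{-n} e_i.
\]
Plugging into the definition of the $R$-norm and using that $L^p$ has type $2$ for $p \in [2,\infty)$ (or equivalently, Kahane--Khintchine combined with Gaussian moment computations and Fubini applied pointwise in $\xi \in (0,1)$), I obtain
\[
\bigl\|(-A)^{\beta} S_{\Delta t}^{n} \bigr\|_{R(L^2,L^p)}^{2} \le C_p \, \Bigl| \Bigl(\sum_{i \in \N^\star} \lambda_i^{2\beta} (1+\Delta t \lambda_i)^{-2n} e_i^{\,2} \Bigr)^{1/2} \Bigr|_{L^p}^{2}.
\]

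Next I use the crucial equiboundedness property of Proposition~\ref{ass:A}, namely $\sup_i |e_i|_{L^\infty} < \infty$, to bound the integrand pointwise in $\xi$ by a deterministic constant times the scalar series $\sum_i \lambda_i^{2\beta}(1+\Delta t \lambda_i)^{-2n}$. Since the domain $(0,1)$ has finite measure, this yields
\[
\bigl\|(-A)^{\beta} S_{\Delta t}^{n} \bigr\|_{R(L^2,L^p)}^{2} \le C_p \sum_{i \in \N^\star} \lambda_i^{2\beta} (1+\Delta t \lambda_i)^{-2n}.
\]
It remains to establish the scalar estimate
\[
\sum_{i \in \N^\star} \lambda_i^{2\beta} (1+\Delta t \lambda_i)^{-2n} \le \frac{C_\kappa}{t_n^{1/2 + 2\beta + 2\kappa}},
\]
which after taking square roots gives exactly the claimed bound.

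To prove the scalar estimate, I first establish the elementary real-variable inequality
\[
\lambda^{\theta} (1+\Delta t \lambda)^{-2n} \le \frac{C_\theta}{t_n^{\theta}}, \quad \lambda \ge 0,
\]
valid for any $\theta \in [0,2n)$; this follows by computing the critical point $\lambda_\star = \theta/((2n-\theta)\Delta t)$ of $\lambda \mapsto \lambda^{\theta}(1+\Delta t\lambda)^{-2n}$ and evaluating. I then pick $\theta = 1/2 + 2\beta + 2\kappa$; since $\kappa < 3/4 - \beta$ one has $\theta < 2 \le 2n$ for all $n \ge 1$, so the inequality applies, giving
\[
\sum_{i} \lambda_i^{2\beta} (1+\Delta t \lambda_i)^{-2n} = \sum_i \lambda_i^{-1/2 - 2\kappa} \cdot \lambda_i^{\theta}(1+\Delta t \lambda_i)^{-2n} \le \frac{C_\theta}{t_n^{1/2 + 2\beta + 2\kappa}} \sum_i \lambda_i^{-1/2 - 2\kappa},
\]
and the last sum is finite by the second item of Proposition~\ref{ass:A} since $1/2 + 2\kappa > 1/2$.

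\textbf{Expected difficulties.} The proof is essentially a computation, with no conceptual obstacle: the only points requiring care are (i) justifying the Gaussian square-function inequality in $L^p$ uniformly in the ONB chosen, and (ii) verifying that the critical-point bound on $\lambda^\theta (1+\Delta t\lambda)^{-2n}$ is uniform in $n \ge 1$, which is the reason the restriction $\kappa < 3/4 - \beta$ appears (so as to guarantee $\theta < 2n$ even for $n = 1$). The appearance of the exponent $1/4 + \beta + \kappa$ is transparent: $1/4$ is the natural Hilbert--Schmidt singularity of the semigroup on $L^2$ (from $\sum \lambda_i^{-1/2 - 2\kappa}$), $\beta$ comes from the extra fractional power, and $\kappa > 0$ is an arbitrarily small loss needed to enforce summability.
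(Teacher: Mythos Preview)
Your proof is correct and follows essentially the same route as the paper: compute the $R(L^2,L^p)$-norm on the eigenbasis, use Gaussian moment comparison (Kahane--Khintchine / Fubini pointwise in $\xi$) to reduce to the square function, invoke the equiboundedness $\sup_i |e_i|_{L^\infty}<\infty$ from Property~\ref{ass:A} to pull the sum outside the $L^p$-norm, and then estimate the scalar series $\sum_i \lambda_i^{2\beta}(1+\Delta t\lambda_i)^{-2n}$. The only cosmetic difference is that the paper bounds this last series by writing it as $\sum_i \lambda_i^{-1/2-2\kappa}\,|(-A)^{1/4+\kappa+\beta}S_{\Delta t}^n e_i|_{L^2}^2$ and invoking Lemma~\ref{lem:S_1} (which requires $\frac14+\kappa+\beta<1$, i.e.\ exactly $\kappa<\frac34-\beta$), whereas you carry out the equivalent critical-point computation for $\lambda^\theta(1+\Delta t\lambda)^{-2n}$ directly; the content and the resulting constraint on $\kappa$ are identical.
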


Both results in the case $p=2$ are obtained by straightforward computations, thanks to expansions using the eigenbasis $\bigl(e_n\bigr)_{n\in\N^\star}$ of $A$. 
When $p\in(2,\infty)$, the arguments use properties of the analytic semigroup $\bigl(e^{tA}\bigr)_{t\ge 0}$ in $L^p$. The proofs are given below since these results are not standard in the literature for SPDEs. Arguments from~\cite{Pazy} are used. The results are in fact valid for $p\in(1,\infty)$.

\begin{proof}[Proof of Lemma~\ref{lem:S_1}]

The case $\beta=0$ follows from the two inequalities $|S_{\Delta t}|_{\mathcal{L}(L^2)}\le 1$ (which is proved using expansions in the Hilbert space $L^2$ with the complete orthonormal system $\bigl(e_k\bigr)_{k\in \N^\star}$) and $|S_{\Delta t}|_{\mathcal{L}(L^\infty)}\le 1$. By a standard interpolation argument, we thus have $\big|S_{\Delta t}^{n+1}\big|_{\mathcal{L}(L^p)}\le 1$ for every $p\in[2,\infty]$.

Define the resolvent $R(\lambda,A)=\int_{0}^{\infty}e^{-\lambda t}e^{tA}dt$, for $\lambda\in(0,\infty)$. Then $S_{\Delta t}=\frac{1}{\Delta t}R(\frac{1}{\Delta t},A)$. 
First, for $x\in L^p$, we set $y=S_{\Delta t}x$. Then $|y|_{L^q}\le |x|_{L^q}$, and $Ay= \frac{1}{\Delta t} (y-x)$. We thus obtain
$$
|Ay|_{L^q}=|AS_{\Delta t} x |_{L^q}\le \frac2{\Delta t} |x|_{L^q}.
$$

Second, when $n\in\N^\star$,
\begin{align*}
n! \big|(-A) R(\lambda,A)^{n+1}x\big|_{L^p}&=\big|\int_{0}^{\infty}t^n e^{-\lambda t}(-A) e^{tA}x dt\big|_{L^p}\\
&\le C(p,\beta)\int_{0}^{\infty}e^{-\lambda t}t^{n-1}dt |x|_{L^p}\\
&\le C(p,\beta) (n-1)! \lambda^{-n}|x|_{L^p}.
\end{align*}
This gives $\big|(-A)S_{\Delta t}^{n+1}\big|_{\mathcal{L}(L^p)}\le \frac{C(p,\beta)}{(n+1)\Delta t}$, for $n\in \N^\star$. Thus the result is proved 
for $\beta=1$. The case $\beta\in [0,1)$ follows by an interpolation argument {(see \cite{Triebel}, Theorem 1.15.3)}:
\[
\big|(-A)^{\beta}S_{\Delta t}^{n+1}x\big|_{L^p}\le c_p \big|(-A)S_{\Delta t}^{n+1}x\big|_{L^p}^{\beta}\big|S_{\Delta t}^{n+1}x\big|_{L^p}^{1-\beta}\le  \frac{C(p,\beta)}{\bigl((n+1)\Delta t\bigr)^{\beta}}|x|_{L^p}.
\]
This concludes the proof of Lemma~\ref{lem:S_1}.
\end{proof}

\begin{proof}[Proof of Lemma~\ref{lem:S_2}]

Let $\bigl(\tilde{\gamma}_k\bigr)_{k\in\N^\star}$ denote a sequence of independent standard real-valued Gaussian random variables, $\tilde{\gamma}_k\sim \mathcal{N}(0,1)$.

Then, using standard properties concerning moments of Gaussian random variables,
\begin{align*}
\big|(-A)^{\beta}S_{\Delta t}^{n}\big|_{R(L^2,L^p)}^2&=\E\big|\sum_k \gamma_k(-A)^\beta S_{\Delta t}^{n}e_k\big|_{L^p}^{2}\\
&\le \Bigl(\E\big|\sum_k \gamma_k (-A)^\beta S_{\Delta t}^{n}e_k\big|_{L^p}^{p}\Bigr)^{\frac2p}\\
&\le \Bigl(\int_{0}^{1}\E\big|\sum_k \lambda_k^\beta \frac{1}{(1+\lambda_k\Delta t)^n}e_k(\xi) \gamma_k\big|^p d\xi \Bigr)^{\frac2p}\\
&\le \Bigl(\int_{0}^{1}\bigl(\E\big|\sum_k \lambda_k^\beta \frac{1}{(1+\lambda_k\Delta t)^n}e_k(\xi) \gamma_k\big|^2\bigr)^{\frac{p}2} d\xi \Bigr)^{\frac2p}\\
&\le \Bigl(\int_{0}^{1}\bigl(\sum_k \lambda_k^{2\beta} \frac{1}{(1+\lambda_k\Delta t)^{2n}}e_k(\xi)^2\bigr)^{\frac{p}2} d\xi \Bigr)^{\frac2p}.
\end{align*}

Using Property~\ref{ass:A}, and the estimate
\[
\sum_k \lambda_k^{2\beta} \frac{1}{(1+\lambda_k\Delta t)^{2n}}{=} \sum_k \lambda_{k}^{-\frac12-2\kappa}\big|(-A)^{\frac14+\kappa+\beta}S_{\Delta t}^n e_k|_{L^2}^{2}\le C_\kappa t_{n}^{-\frac12-2\kappa-2\beta},
\]
which follows from Lemma~\ref{lem:S_1}, we get the result.

\end{proof}


\begin{proof}[Proof of Lemma~\ref{lem:moments_X}]
First, note that, for $0\le n\le N$,
\begin{equation}
X_n=S_{\Delta t}^{n}x+\Delta t\sum_{k=0}^{n-1}S_{\Delta t}^{n-k}BG(X_k)+\sum_{k=0}^{n-1}S_{\Delta t}^{n-k}e^{\tau A}\sigma(X_k)\Delta W_k.
\end{equation}

Thanks to Property~\ref{ass:F}, inequality~\eqref{eq:norm_AB}, and Lemmas~\ref{lem:S_1} and~\ref{lem:S_2}, we have for $\kappa>0$ such that $2\alpha+2\kappa<\frac12$:
\begin{align*}
\E|(-A)^\alpha X_n|_{L^p}^2&\le C|(-A)^{\alpha}S_{\Delta t}^{n}x|_{L^p}^{2}+C\bigl(\Delta t\sum_{k=0}^{n-1} |(-A)^{\alpha+\frac12+\kappa}S_{\Delta t}^{n-k}|_{\mathcal{L}(L^p)} \bigr)^2\\
&+C\E\big|\sum_{k=0}^{n-1}(-A)^{\alpha}S_{\Delta t}^{n-k}e^{\tau A}\sigma(X_k)\Delta W_k\big|_{L^p}^2\\
&\le Ct_{n}^{-2\alpha}|x|_{L^p}^{2}+C\bigl(\Delta t\sum_{\ell=0}^{n-1}t_{n-k}^{-\frac12-2\alpha-\kappa}\bigr)^2+C\Delta t\sum_{k=0}^{n-1}\E\big|(-A)^{\alpha}S_{\Delta t}^{n-k}e^{\tau A}\sigma(X_k)\big|_{R(L^2,L^p)}^{2}\\
&\le C(1+t_{n}^{-2\alpha}|x|_{L^p}^{2})+C\Delta t\sum_{k=0}^{n-1}\big|(-A)^{\alpha}S_{\Delta t}^{n-k}\big|_{R(L^2,L^p)}^{2}\E\big|e^{\tau A}\sigma(X_k)\big|_{\mathcal{L}(L^2)}^{2}\\
&\le C\bigl(1+t_{n}^{-2\alpha}|x|_{L^p}^{2}+\Delta t\sum_{k=0}^{n-1}t_{n-k}^{-\frac12-2\alpha-2\kappa}\bigr).
\end{align*}

This proves~\eqref{eq:lem_moments_X} in the case $M=1$. The case $M\ge 1$ and the second estimate of~\eqref{eq:lem_moments_X} are obtained with similar computations combined with standard arguments. This concludes the proof of Lemma~\ref{lem:moments_X}.
\end{proof}

{
\subsection{Regularity results for an auxiliary process}\label{sec:auxiliary}

The objective of this section is to state regularity results for the first and second order spatial derivatives of the function $u^{\delta,\tau}$ defined by
\begin{equation}\label{eq:u_aux_delta-tau}
u^{\delta,\tau}(t,x)=\E\bigl[\varphi_\delta\bigl(X^{\delta,\tau}(t,x)\bigr)\bigr],
\end{equation}
where $\tau$ is an auxiliary regularization parameter, and the process $X^{\delta,\tau}$ is solution of the SPDE
\begin{equation}\label{eq:SPDE_reg-tau}
dX_t^{\delta,\tau}=AX_t^{\delta,\tau} dt+G_\delta(X_t^{\delta,\tau})dt+e^{\tau A}\sigma_\delta(X_t^{\delta,\tau})dW(t) , \quad X^{\delta,\tau}(0)=x.
\end{equation}

Let us also define the function $u^{\delta,\tau,\Delta t}:\left\{0,\Delta t,\ldots,(N-1)\Delta t,N\Delta t=T\right\}\times H\to \R$, by
\begin{equation}\label{eq:u_aux}
u^{\delta,\tau,\Delta t}(n\Delta t,x)=\E\bigl[\varphi_\delta\bigl(X_{n}^{\delta,\tau,\Delta t}(x)\bigr)\bigr],
\end{equation} 
where $X_{n}^{\delta,\tau,\Delta t}(x)$ is the solution of~\eqref{eq:scheme} with initial condition $x$.

Note that when $\tau\to 0$, $X^{\delta,\tau}$ converges to $X^{\delta}$, for all $\delta >0$. In addition, the discrete time process defined by~\eqref{eq:scheme} is obtained by temporal discretization of $X^{\delta,\tau}$. Sections~\ref{sec:derivatives} and~\ref{sec:control} are devoted to proving new regularity estimates for $Du^{\delta,\tau,\Delta t}(N\Delta t,x)$ and $D^2u^{\delta,\tau,\Delta t}(N\Delta t,x)$, with $T=N\Delta t$. We omit writing these expressions, which contain many terms vanishing in the limit $\Delta t\to 0$. Indeed, passing to the limit $\Delta t\to 0$, the following regularity results for the auxiliary function $u^{\delta,\tau}$ are obtained.

\begin{propo}\label{propo:u_delta_tau_1}
For every $\beta\in [0,1)$ and $\kappa\in(0,1)$, there exists $C_{\beta,\kappa}(T)$, such that for every $\delta,\tau\in(0,1)$, $x\in L^p$ and $h\in L^{2q}$
\begin{equation}\label{eq:propo_u_delta_tau_1}
\big|Du^{\delta,\tau}(T,x).h\big|\le \frac{C_{\beta,\kappa}(T)}{\tau^{\kappa} T^{\beta}}\bigl(1+|x|_{L^{\max(p,2q)}}^{K+1}\bigr)|(-A)^{-\beta}h|_{L^{2q}}.
\end{equation}
\end{propo}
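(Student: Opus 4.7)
The plan is to obtain Proposition \ref{propo:u_delta_tau_1} as the limit $\Delta t\to 0$ of an analogous bound for the discretized function $u^{\delta,\tau,\Delta t}(N\Delta t,\cdot)$ defined in~\eqref{eq:u_aux}, with $N\Delta t=T$, established uniformly in $\Delta t\in(0,1)$. The reason to work on the Euler scheme is that the two-sided stochastic integrals appearing in the formal identities of Section~\ref{sec:formal} reduce, at the discrete level, to finite sums indexed by the grid; so the anticipative contribution $\tilde{\eta}^{h,x}$ and the associated Malliavin duality argument become entirely rigorous.

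First I would write $Du^{\delta,\tau,\Delta t}(N\Delta t,x).h = I_1^{\Delta t}+I_2^{\Delta t}$, mimicking~\eqref{e31} where $I_1^{\Delta t}$ comes from the contribution of $S_{\Delta t}^{N}h$ (the discrete analogue of $e^{tA}h$) and $I_2^{\Delta t}$ from the rest $\tilde{\eta}_{N}^{h,x}$ of the tangent process. For $I_1^{\Delta t}$, Assumption~\ref{ass:phi} (inequality~\eqref{e20.1}), Lemma~\ref{lem:S_1} applied with exponent $\beta\in[0,1)$, the bound $|e^{\delta A}\cdot|_{\mathcal{L}(L^{2q})}\le 1$, and Lemma~\ref{lem:moments_X} for the moments of $X_N^{\delta,\tau,\Delta t}$ in $L^{\max(p,2q)}$ yield directly the bound
\begin{equation*}
|I_1^{\Delta t}|\le \frac{C_\beta(T)}{T^\beta}\bigl(1+|x|_{L^{\max(p,2q)}}^{K+1}\bigr)|(-A)^{-\beta}h|_{L^{2q}},
\end{equation*}
with no $\tau^{-\kappa}$ factor needed for this piece.

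The substantive work is for $I_2^{\Delta t}$. I would first derive at the discrete level the rigorous analogue of~\eqref{eq:two-sided}: since each $\Delta W_n$ is genuinely integrated, one can expand $\tilde{\eta}_N^{h,x}$ by iterating the discrete variation-of-constants formula, obtaining an explicit sum $\sum_{\ell}\Pi_{N,\ell+1}^{\Delta t}\bigl(\sigma_\delta'(X_\ell)\cdot S_{\Delta t}^\ell h\bigr)e^{\tau A}\Delta W_\ell$ where $\Pi_{N,\ell+1}^{\Delta t}$ is the discrete evolution operator of the linearized scheme. Then I would apply the Malliavin duality formula recalled in Section~2.2 to the expectation $\E[D\varphi_\delta(X_N)\cdot\tilde{\eta}_N^{h,x}]$; the Malliavin derivative falls both on $D\varphi_\delta(X_N)$ (producing a term involving $D^2\varphi_\delta$ and $\mathcal{D}_sX_N$) and on the operator $\Pi_{N,\ell+1}^{\Delta t}$ (via its dependence on the past of $W$). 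This is where Section~\ref{sec:derivatives} of the paper is invoked. The crucial gain is that the resulting expression is a finite sum whose continuous counterpart is a Bochner integral in time, not an Itô integral: the singularities that arise can therefore be handled with a single, not squared, exponent.

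To control the resulting sum uniformly in $\Delta t$, I would use Assumption~\ref{ass:phi} (inequalities~\eqref{e20.1}--\eqref{e20.2}) together with Property~\ref{ass:sigma} (in particular~\eqref{eq:estim_sigma_1}), the ideal property~\eqref{eq:ideal} for $\gamma$-radonifying operators, Lemmas~\ref{lem:S_1}--\ref{lem:S_2} to convert negative powers of $(-A)$ into summable time singularities, and Lemma~\ref{lem:moments_X} for moments of $X_n^{\delta,\tau,\Delta t}$. The discrete sum takes the schematic form $\Delta t\sum_{\ell=0}^{N-1} t_\ell^{-\beta}(T-t_\ell)^{-\frac12-\kappa}|(-A)^{-\beta}h|_{L^{2q}}$ up to the moment factor $(1+|x|_{L^{\max(p,2q)}})^{K+1}$; the regularizer $e^{\tau A}$ at the noise is precisely what absorbs the remaining $\frac12+\kappa$ singularity via $\|e^{\tau A}\sigma_\delta(\cdot)\|_{R(L^2,L^{2q})}\lesssim \tau^{-\kappa/2}$ type bounds, producing the announced $\tau^{-\kappa}$ factor. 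The sum is then a Riemann approximation of a convergent Beta integral, giving the claimed $T^{-\beta}$ singularity.

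The main obstacle will be the rigorous derivation of the new expression for $I_2^{\Delta t}$ (which Section~\ref{sec:derivatives} is designed to handle) and, once all bounds are uniform in $\Delta t$, the passage to the limit $\Delta t\to 0$. For the limit I would use continuity of $\Delta t\mapsto X^{\delta,\tau,\Delta t}$ together with the regularity of $\varphi_\delta$ on $H$ to identify $\lim_{\Delta t\to 0} Du^{\delta,\tau,\Delta t}(N\Delta t,x).h$ with $Du^{\delta,\tau}(T,x).h$ in the Gâteaux sense, so that the uniform estimate is preserved. Only after this passage is Proposition~\ref{theo:D3} needed, through its role in ensuring the necessary equicontinuity of the derivatives; the final $\tau$-dependent but $\Delta t$-uniform estimate is exactly~\eqref{eq:propo_u_delta_tau_1}.
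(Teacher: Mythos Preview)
Your overall strategy matches the paper's: discretize, split the tangent process as $S_{\Delta t}^N h + \tilde{\eta}_N^h$, apply Malliavin duality to the stochastic part of $\tilde{\eta}_N^h$, and pass to the limit $\Delta t\to 0$. However, two points in your sketch are not correct as written and would block precisely the range $\beta\in[\tfrac12,1)$ that the proposition is about.

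First, your schematic sum $\Delta t\sum_{\ell} t_\ell^{-\beta}(T-t_\ell)^{-\frac12-\kappa}$ diverges for $\beta\ge\tfrac12-\kappa$, so it does not improve on the naive approach. In the paper, after duality the time singularity in $T-t_\ell$ is $-\tfrac12+\kappa$, not $-\tfrac12-\kappa$. This comes from Lemma~\ref{lem:Pi}, which gives $(\E|\Pi_{N-1:\ell+1}z|_{L^q}^{2M})^{1/2M}\le C\,t_{N-\ell-1}^{-\alpha}|(-A)^{-\alpha}z|_{L^q}$ for any $\alpha<\tfrac12$; after duality, $\Pi$ acts on a \emph{deterministic} vector, so this lemma applies and no squaring of the singularity occurs. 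You also need Lemmas~\ref{lem:Mall_1} and~\ref{lem:Mall_2} to control $\D_s X_N$ and $\D_s\Pi_{N-1:\ell+1}$ in the two terms produced by the chain rule; these are not mentioned.

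Second, the role of $e^{\tau A}$ is not to bound a $\gamma$-radonifying norm of $\sigma$: after duality there is no stochastic integral left, hence no $R(L^2,L^{2q})$ norm. The duality produces a sum over a basis $(e_j)_j$; using the self-adjointness~\eqref{eq:sigma_star} one performs the swap~\eqref{eq:transfo_ij} so that $\Pi_{N-1:\ell+1}$ acts on $e^{\tau A}S_{\Delta t}e_j$. Lemma~\ref{lem:Pi} then yields a factor $|(-A)^{-\frac12+\kappa}e^{\tau A}S_{\Delta t}e_j|_{L^q}$; since $\lambda_j^{-\frac12+\kappa}$ is not summable, one spends $e^{\tau A}$ to extract an extra $\lambda_j^{-2\kappa}$ at the cost of $\tau^{-2\kappa}$, making $\sum_j\lambda_j^{-\frac12-\kappa}<\infty$. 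This is the actual origin of the $\tau^{-\kappa}$ factor. Finally, note that $\tilde\eta_N^h$ also has a drift piece $\tilde\eta_N^{h,1}$ (from $G'$), treated in the paper without duality but using~\eqref{eq:norm_AB},~\eqref{eq:product_2} and~\eqref{eq:Sobolev-Lipschitz} to handle the Burgers part $BF_2$; this is absent from your $I_2^{\Delta t}$ discussion.
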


\begin{propo}\label{propo:u_delta_tau_2}
For every $\beta,\gamma\in [0,\frac12)$ and $\kappa\in(0,1)$, there exists $C_{\beta,\gamma,\kappa}(T)$ such that for every $\delta,\tau \in(0,1)$, $x\in L^p$ and $h,k\in L^{4q}$
\begin{equation}\label{eq:propo_u_delta_tau_2}
\big|D^2u^{\delta,\tau}(T,x).\bigl(h,k\bigr)\big|\le \frac{C_{\beta,\gamma,\kappa}(T)}{\tau^{\kappa} T^{\beta+\gamma}}\bigl(1+|x|_{L^{\max(p,2q)}}^{K+1}\bigr)|(-A)^{-\beta}h|_{L^{4q}}|(-A)^{-\gamma}h|_{L^{4q}}.
\end{equation}
\end{propo}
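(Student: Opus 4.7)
The plan is to establish Proposition~\ref{propo:u_delta_tau_2} by first working at the time-discrete level with the scheme~\eqref{eq:scheme} (where every object is classical) and then passing to the limit $\Delta t \to 0$ in the bound. Differentiating twice the identity $u^{\delta,\tau,\Delta t}(N\Delta t,x)=\E[\varphi_\delta(X_N(x))]$ gives
\[
D^2u^{\delta,\tau,\Delta t}(N\Delta t,x).(h,k)=\E\bigl[D^2\varphi_\delta(X_N).(\eta_N^h,\eta_N^k)\bigr]+\E\bigl[D\varphi_\delta(X_N).\zeta_N^{h,k}\bigr],
\]
where $\eta^h_n$ and $\zeta^{h,k}_n$ are the first and second variation processes of the discrete scheme. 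Following the strategy of Section~\ref{sec:formal}, I would split each $\eta^h_n = P_n h + \tilde\eta_n^h$ with $P_n$ built from $S_{\Delta t}$ and $G'_\delta$ (the purely deterministic part, for which Lemma~\ref{lem:S_1} already provides the needed $(-A)^{-\beta}$ smoothing), so that $\tilde\eta^h_n$ is given by a discrete It\^o integral with integrand $e^{\tau A}S_{\Delta t}(\sigma'_\delta(X_k).P_k h)\Delta W_k$ plus a Picard-type correction.

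Next, I would insert these decompositions in the two terms of the expansion and apply the discrete Malliavin duality formula (recalled in Section~\ref{sec:setting}) to transform the $\Delta W_k$ factors into integrals with respect to the time grid. For the $D^2\varphi_\delta$ term, the cross piece $(e^{TA}h,\tilde\eta^k)$ and the symmetric one contribute $\E\int_0^T D^3\varphi_\delta(X_N).(e^{TA}h, P_{N,s}\cdot,\ldots)\,ds$ type expressions after applying Malliavin duality on one factor, while the $(\tilde\eta^h,\tilde\eta^k)$ piece requires duality to be applied twice and generates a bilinear discrete-time double integral. For the $\zeta_N^{h,k}$ term, a further application of the duality exchanges the innermost $dW$ against a derivative of $D\varphi_\delta(X_N)$ composed with the discrete "evolution operator" $\Pi_{N,k}$, producing again a time integral (not a stochastic one), this time of a term of the form $\E[D^2\varphi_\delta(X_N).(\Pi_{N,k}\cdot,\ldots)\,\sigma''_\delta(X_k).(\eta^h_k,\eta^k_k)]\,\Delta t$, together with a contribution in $\sigma'_\delta\sigma'_\delta$ coming from the noncommutation between Malliavin derivative and $\Pi$.

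The estimation step is then pure bookkeeping: all $D^j\varphi_\delta$ factors are controlled through Assumption~\ref{ass:phi}, the derivatives of $\sigma_\delta$ via Property~\ref{ass:sigma}, products in fractional Sobolev spaces via~\eqref{eq:product_1}-\eqref{eq:product_3} and the ideal property~\eqref{eq:ideal}, the smoothing factors $(-A)^{-\beta}h$ and $(-A)^{-\gamma}k$ are extracted using Lemmas~\ref{lem:S_1}-\ref{lem:S_2}, and moments of $X_n$ in $L^{2q}$ are controlled by Lemma~\ref{lem:moments_X}. The role of the regularizer $e^{\tau A}$ in the diffusion is crucial: the price of converting $\sigma'(X)$ into a Hilbert--Schmidt / radonifying operator through~\eqref{eq:estim_sigma_1} is the loss of $(-A)^{1/(2p)}$, which is absorbed by $e^{\tau A}$ at the cost of the factor $\tau^{-\kappa}$ in~\eqref{eq:propo_u_delta_tau_2}. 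After each stochastic integral has been eliminated in favour of a time integral, the only remaining temporal singularities are of the form $t_{n-k}^{-(1/2+2\beta+\kappa)}$ from Lemma~\ref{lem:S_2}; because duality has replaced an $L^2(0,T;ds)$ norm by an $L^1(0,T;ds)$ one, these are integrable for any $\beta,\gamma<1/2$, with no artificial $\beta+\gamma<1/2$ constraint.

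The main obstacle, which I would handle most carefully, is the bilinear piece $\E[D^2\varphi_\delta(X_N).(\tilde\eta^h_N,\tilde\eta^k_N)]$: it is here that two independent Malliavin integrations by parts must be combined, producing the most intricate discrete double sum whose convergent bound simultaneously must (i) be linear in $|(-A)^{-\beta}h|_{L^{4q}}|(-A)^{-\gamma}k|_{L^{4q}}$, (ii) be uniform in $\Delta t$, and (iii) exhibit exactly one factor $\tau^{-\kappa}$. Passing these estimates through in $L^{4q}$ rather than $L^2$ is what forces the $L^{4q}$ norms on the right-hand side and calls for the Sobolev embeddings~\eqref{eq:Sobolev-domain} and~\eqref{eq:lem_Sobolev}. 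Once the estimate is established for $u^{\delta,\tau,\Delta t}$ uniformly in $\Delta t$, a standard convergence argument for the scheme (using Lemma~\ref{lem:moments_X} and the $\mathcal{C}^3$-regularity of $u^{\delta,\tau,\Delta t}$ via Proposition~\ref{theo:D3}) lets one pass to the limit $\Delta t \to 0$ and conclude~\eqref{eq:propo_u_delta_tau_2}.
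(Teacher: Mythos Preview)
Your overall framework---work at the discrete level with the scheme~\eqref{eq:scheme}, replace stochastic integrals by time integrals via Malliavin duality, then let $\Delta t\to 0$---is exactly the paper's. But your plan diverges from the paper in two places that matter.

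First, you misidentify the main obstacle. In the paper the term $\mathcal{E}_N^{h,k,0}=\E\bigl[D^2\varphi_\delta(X_N).(\eta_N^h,\eta_N^k)\bigr]$ is the \emph{easy} one: since $\eta_N^h=\Pi_{N-1:0}h$, Lemma~\ref{lem:Pi} directly gives $\bigl(\E|\eta_N^h|_{L^{2q}}^{2M}\bigr)^{1/2M}\le C\,T^{-\beta}|(-A)^{-\beta}h|_{L^{2q}}$ for any $\beta<\frac12$, and combining with Assumption~\ref{ass:phi} yields the bound without any integration by parts and without decomposing $\eta_N^h$. Your proposed double Malliavin duality on the bilinear piece $(\tilde\eta^h,\tilde\eta^k)$ is unnecessary, and as stated it is problematic: two successive applications of the chain rule to $D^2\varphi_\delta(X_N)$ produce $D^4\varphi_\delta$ contributions, whereas Assumption~\ref{ass:phi} only controls derivatives up to order three.

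Second, the genuine work in the paper is on $\mathcal{E}_N^{h,k,2}$, the stochastic part of $\zeta_N^{h,k}$ written via the discrete two-sided integral~\eqref{eq:two-sided_discrete}. There a \emph{single} Malliavin duality is applied, and the decisive trick---which your proposal does not mention---is the self-adjointness $\bigl(\sigma''(x).(h,k)\bigr)^\star=\sigma''(x).(h,k)$ from~\eqref{eq:sigma_star}, used through the index-swap identity~\eqref{eq:transfo_ij}. This swap separates the $e_j$ from the $(\eta_\ell^h,\eta_\ell^k)$ factor and sends it into $\Pi_{N-1:\ell+1}e^{\tau A}S_{\Delta t}e_j$, so that Lemma~\ref{lem:Pi} and the regularizer produce $\tau^{-2\kappa}\lambda_j^{-1/2-\kappa}$, which is summable. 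This is where the $\tau^{-\kappa}$ in~\eqref{eq:propo_u_delta_tau_2} actually arises---not, as you suggest, from absorbing the $(-A)^{1/(2p)}$ loss of~\eqref{eq:estim_sigma_1}. Without this symmetry step the single duality leaves a non-summable $\sum_j\lambda_j^{-1/2+\kappa}$, and your outline gives no alternative mechanism to close the estimate.
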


Observe that the right-hand sides of~\eqref{eq:propo_u_delta_tau_1} and~\eqref{eq:propo_u_delta_tau_2} contain a singular factor $\tau^{-\kappa}$. It is important to note that the exponent $\kappa$ is positive but arbitrarily small. Proofs of Theorems~\ref{theo:D1} and~\ref{theo:D2} require the use of an interpolation argument to get rid of the parameter $\tau$, indeed passing to the limit $\tau\to 0$ is not sufficient. Details are provided in Section~\ref{sec:proof_tau}.

In Section~\ref{sec:derivatives} and~\ref{sec:control} below, the analysis is performed with fixed parameters $\tau>0$ and $\Delta t>0$, {\it i.e.} at a discrete time level.

\subsection{{ New expressions for derivatives in the discrete-time framework}}\label{sec:derivatives}

{ The goal of this section is to derive new expressions for $Du^{\delta,\tau,\Delta t}(t,x).h$ and $D^2u^{\delta,\tau,\Delta t}(t,x).(h,k)$. This is done by repeating the discussion of Section~\ref{sec:formal}, and the formal {formulas} are turned into rigorous ones for the discret objects.


Thanks to the regularity properties of $G=F_1+BF_2$, $\sigma$ and $\varphi$, see Properties~\ref{ass:F},~\ref{ass:sigma} and Assumption ~\ref{ass:phi}, for every $n\in\left\{0,1,\ldots,N\right\}$, $x\in L^2\mapsto u^{\delta,\tau,\Delta t}(t_n,x)$ is of class $\mathcal{C}^2$, and it is straightforward to prove recursively that:
\begin{itemize}
\item the first order derivative satisfies
\begin{equation}\label{eq:Du}
Du^{\delta,\tau,\Delta t}(t_n,x).h=\E\bigl[D\varphi_\delta(X_n(x)).\eta_{n}^{h}\bigr]
\end{equation}
with $\eta_0^h=h$ and, for $n\in\left\{0,\ldots,N-1\right\}$,
\begin{equation}\label{eq:eta}
\eta_{n+1}^{h}=S_{\Delta t}\eta_n^h+\Delta tS_{\Delta t}G_\delta'(X_n).\eta_n^h+S_{\Delta t}e^{\tau A}\bigl(\sigma_\delta'(X_n).\eta_n^h\bigr)\Delta W_n.
\end{equation}
Recall that $S_{\Delta t}$ is defined in \eqref{e32}

\item the second order derivative satisfies
\begin{equation}\label{eq:D2u}
D^2 u^{\delta,\tau,\Delta t}(t_n,x).(h,k)=\E\bigl[D^2\varphi_\delta\bigl(X_{n}(x)\bigr).\bigl(\eta_{n}^{h},\eta_{n}^{k}\bigr)\bigr]+\E\bigl[D\varphi_\delta\bigl(X_{n}(x)\bigr).\zeta_{n}^{h,k}\bigr],
\end{equation}
with $\zeta_0^{h,k}=0$ and, for $n\in\left\{0,\ldots,N-1\right\}$,
\begin{equation}\label{eq:zeta}
\begin{aligned}
\zeta_{n+1}^{h,k}&=S_{\Delta t}\zeta_{n}^{h,k}+\Delta tS_{\Delta t}G_\delta'(X_n).\zeta_{n}^{h,k}+S_{\Delta t}e^{\tau A}\bigl(\sigma_\delta'(X_n).\zeta_{n}^{h,k}\bigr)\Delta W_n\\
&~+\Delta tS_{\Delta t}G_\delta''(X_n).(\eta_n^h,\eta_n^k)+S_{\Delta t}e^{\tau A}\bigl(\sigma_\delta''(X_n).(\eta_n^h,\eta_n^k)\bigr)\Delta W_n.
\end{aligned}
\end{equation}

\end{itemize}

Define the auxiliary process $\bigl(\tilde{\eta}_n^h\bigr)_{0\le n\le N}$, by
\begin{equation}\label{eq:eta_tilde}
\tilde{\eta}_n^{h}=\eta_n^h-S_{\Delta t}^{n}h \quad,\quad \tilde{\eta}_0^h=0.
\end{equation}

Again, in order to simplify the notation, most of the time we do not mention the parameters $\delta,\tau,\Delta t$.

\bigskip

Our objective is to obtain the following estimates, with arbitrarily small $\kappa\in(0,1)$:
\begin{equation}\label{eq:D_discrete}
\begin{gathered}
\big|Du^{\delta,\tau,\Delta t}(T,x).h\big|\le \frac{C_{\beta,\kappa}(T)}{T^\beta\tau^\kappa}(1+|x|_{L^{\max(p,2q)}})^{K+1}|(-A)^{-\beta}h|_{L^{2q}}, \quad \beta\in[0,1),\\
\big|D^2u^{\delta,\tau,\Delta t}(T,x).(h,k)\big|\le \frac{C_{\beta,\gamma,\kappa}(T)}{T^{\beta+\gamma}\tau^\kappa}(1+|x|_{L^{\max(p,2q)}})^{K+1}|(-A)^{-\beta}h|_{L^{4q}}|(-A)^{-\gamma}k|_{L^{4q}}, \quad \beta,\gamma\in[0,\frac12).
\end{gathered}
\end{equation}
Note that the right-hand sides do not depend on $\Delta t$ and on $\delta$. Passing to the limit when these parameters go to $0$ is straightforward. 

\bigskip

{We now do perform similar computations as in section ~\ref{sec:formal} but on the discrete processes so that we do not have to manipulate anticipative 
integrals.}

Define random linear operators $\bigl(\Pi_n\bigr)_{0\le n\le N-1}$ as follows: for every $n\in \left\{0,\ldots,N-1\right\}$ and every $z\in H$
\begin{equation}\label{eq:Pi_n}
\Pi_{n}z=S_{\Delta t}z+\Delta tS_{\Delta t}Be^{\tau A}G'(X_n).z+S_{\Delta t}e^{\tau A}\bigl(\sigma'(X_n).z\bigr)\Delta W_n.
\end{equation}
Note that $\Pi_{n}=\Pi(X_n,\Delta W_n)$ with the deterministic linear operators $\Pi(x,w)$ defined by
\begin{equation*}
\Pi(x,w)z=S_{\Delta t}z+\Delta tS_{\Delta t}Be^{\tau A}F'(x).z+S_{\Delta t}e^{\tau A}\bigl(\sigma'(x).z\bigr)w.
\end{equation*}
We emphasize on the following key observation: $\Pi_n$ depends on the Wiener increments $\Delta W_0,\ldots,\Delta W_{n-1}$ only through the first variable of $\Pi(\cdot,\cdot)$, and depends on $\Delta W_n$ only through its second variable.

Introduce the notation $\Pi_{n-1:\ell}=\Pi_{n-1}\ldots\Pi_{\ell}$ for $\ell\in\left\{0,\ldots,n-1\right\}$, and by convention $\Pi_{n-1:n}=I$. These operators are the discrete versions of the evolution operators $\Pi(t,s)$  introduced in Section~\ref{sec:formal}.

\bigskip

Recursion formulas for ${\eta}_{\cdot}^h$, $\tilde{\eta}_{\cdot}^h$ and $\zeta_{\cdot}^{h,k}$, are rewritten in the following forms:
\begin{equation}\label{eq:recursions_Pi}
\begin{gathered}
\eta_{n+1}^{h}=\Pi_n\eta_n^h \quad,\quad \eta_0^h=h,\\
\tilde{\eta}_{n+1}^{h}=\Pi_n\tilde{\eta}_n^h+\Delta tS_{\Delta t}G'(X_n).S_{\Delta t}^n h+S_{\Delta t}e^{\tau A}\bigl(\sigma'(X_n).S_{\Delta t}^nh)\Delta W_n,\\
\zeta_{n+1}^{h,k}=\Pi_n\zeta_{n}^{h,k}+\Delta tS_{\Delta t}G''(X_n).(\eta_n^h,\eta_n^k)+S_{\Delta t}e^{\tau A}\bigl(\sigma''(X_n).(\eta_n^h,\eta_n^k)\bigr)\Delta W_n
\end{gathered}
\end{equation}

A straightforward consequence of the first equality in~\eqref{eq:recursions_Pi} is the equality
\begin{equation}\label{eq:eta_Pi}
\eta_n^h=\Pi_{n-1:0}h,
\end{equation}
for every $n\in\left\{0,\ldots,N\right\}$. Moreover, we get the following discrete-time analogs of~\eqref{eq:two-sided}, now taking into account also nonlinear drift terms:
\begin{equation}\label{eq:two-sided_discrete}
\begin{gathered}
\tilde{\eta}_n^h=\Delta t\sum_{\ell=0}^{n-1}\Pi_{n-1:\ell+1}S_{\Delta t}G'(X_\ell).S_{\Delta t}^\ell h+\sum_{\ell=0}^{n-1}\Pi_{n-1:\ell+1}S_{\Delta t}e^{\tau A}\bigl(\sigma'(X_\ell).S_{\Delta t}^\ell h)\Delta W_\ell,\\
\zeta_n^{h,k}=\Delta t\sum_{\ell=0}^{n-1}\Pi_{n-1:\ell+1}S_{\Delta t}G''(X_\ell).(\eta_\ell^h,\eta_\ell^k)+\sum_{\ell=0}^{n-1}\Pi_{n-1:\ell+1}S_{\Delta t}e^{\tau A}\bigl(\sigma''(X_\ell).(\eta_\ell^h,\eta_\ell^k)\bigr)\Delta W_\ell.
\end{gathered}
\end{equation}

We treat separately the contributions of the drift and  diffusion terms and introduce
\begin{equation}\label{eq:eta_zeta}
\begin{gathered}
\tilde{\eta}_n^{h,1}=\Delta t\sum_{\ell=0}^{n-1}\Pi_{n-1:\ell+1}S_{\Delta t}G'(X_\ell).S_{\Delta t}^\ell h \quad,\quad
\tilde{\eta}_n^{h,2}=\sum_{\ell=0}^{n-1}\Pi_{n-1:\ell+1}S_{\Delta t}e^{\tau A}\bigl(\sigma'(X_\ell).S_{\Delta t}^\ell h)\Delta W_\ell;\\
\zeta_n^{h,k,1}=\Delta t\sum_{\ell=0}^{n-1}\Pi_{n-1:\ell+1}S_{\Delta t}G''(X_\ell).(\eta_\ell^h,\eta_\ell^k) \quad,\quad \zeta_n^{h,k,2}=\sum_{\ell=0}^{n-1}\Pi_{n-1:\ell+1}S_{\Delta t}e^{\tau A}\bigl(\sigma''(X_\ell).(\eta_\ell^h,\eta_\ell^k)\bigr)\Delta W_\ell.
\end{gathered}
\end{equation}


Using $\eta_n^h=S_{\Delta t}^{n}h+\tilde{\eta}_n^{h,1}+\tilde{\eta}_n^{h,2}$, we obtain the decomposition
\begin{equation}\label{eq:du_discrete}
\begin{aligned}
Du^{\delta,\tau,\Delta t}(T,x).h&=\E\bigl[D\varphi(X_N).\bigl(S_{\Delta t}^N h\bigr)\bigr]+\E\bigl[D\varphi(X_N).\tilde{\eta}_{N}^{h,1}\bigr]+\E\bigl[D\varphi(X_N).\tilde{\eta}_{N}^{h,2}\bigr]\\
&=\mathcal{D}_N^{h,0}+\mathcal{D}_N^{h,1}+\mathcal{D}_N^{h,2},
\end{aligned}
\end{equation}
where to simplify the notation we write $\varphi$ instead of $\varphi_\delta$.

We also obtain the following decomposition for the second-order derivative:
\begin{equation}\label{eq:d2u_discrete}
\begin{aligned}
D^2u^{\delta,\tau,\Delta t}(T,x).(h,k)&=\E\bigl[D^2\varphi\bigl(X_{N}\bigr).\bigl(\eta_{N}^{h},\eta_{N}^{k}\bigr)\bigr]+\E\bigl[D\varphi\bigl(X_{N}\bigr).\zeta_{N}^{h,k,1}\bigr]+\E\bigl[D\varphi\bigl(X_{N}\bigr).\zeta_{N}^{h,k,2}\bigr]\\
&=\mathcal{E}_N^{h,k,0}+\mathcal{E}_N^{h,k,1}+\mathcal{E}_N^{h,k,2}.
\end{aligned}
\end{equation}

The term $\mathcal{D}_N^{h,0}$ is straightforward to estimate using Lemma~\ref{lem:S_1}. The terms $\mathcal{E}_N^{h,k,0}$, $\mathcal{D}_N^{h,1}$ and $\mathcal{E}_N^{h,k,1}$ are not very difficult thanks to Lemma~\ref{lem:Pi} stated below.


Finally, the terms $\mathcal{D}_N^{h,2}$ and $\mathcal{E}_N^{h,k,2}$  contain the discretized two-sided stochastic integrals and are treated using { the Malliavin calculus duality formula}.
Note that in the discrete time setting, this {formula} can simply be considered as a standard integration by parts formula in the weighted $L^2$ space corresponding with Gaussian density.

Let us first consider the first order derivative term $\mathcal{D}_N^{h,2}$. Introducing the adjoint $\Pi_{N-1:\ell+1}^\star$ of the operator $\Pi_{N-1:\ell+1}$, we get
\begin{align*}
\mathcal{D}_{N}^{h,2}&=\E\bigl[\langle D\varphi(X_N),\sum_{\ell=0}^{N-1}\Pi_{N-1:\ell+1}S_{\Delta t}e^{\tau A}\bigl(\sigma'(X_\ell).S_{\Delta t}^\ell h\bigr)\Delta W_\ell\rangle\bigr]\\
&=\sum_{\ell=0}^{N-1}\E\bigl[\langle \Pi_{N-1:\ell+1}^{\star} D\varphi\bigl(X_{N}\bigr),\int_{\ell \Delta t}^{(\ell+1)\Delta t}  S_{\Delta t}e^{\tau A}\bigl(\sigma'(X_\ell).S_{\Delta t}^\ell h\bigr)dW(s) \rangle\bigr]\\
&=\sum_{\ell=0}^{N-1}\mathcal{D}_{N,\ell}^{h,2}.
\end{align*}
We now { apply the Malliavin calculus duality formula}, and we get for every $\ell\in\left\{0,\ldots,N-1\right\}$
\begin{align*}
\mathcal{D}_{N,\ell}^{h,2}&=\sum_{i\in\N^\star}\E\int_{\ell \Delta t}^{(\ell+1)\Delta t}\langle \D_s^{e_i}\bigl(\Pi_{N-1:\ell+1}^{\star}D\varphi(X_N)\bigr),e^{\tau A}S_{\Delta t}\bigl(\sigma'(X_\ell).S_{\Delta t}^\ell h\bigr)e_i\rangle ds\\
&=\sum_{i\in\N^\star}\E\int_{\ell \Delta t}^{(\ell+1)\Delta t} \D_s^{e_i}\langle D\varphi(X_N),\Pi_{N-1:\ell+1}e^{\tau A}S_{\Delta t}\bigl(\sigma'(X_\ell).S_{\Delta t}^\ell h\bigr)e_i\rangle ds\\
&=\mathcal{D}_{N,\ell}^{h,2,1}+\mathcal{D}_{N,\ell}^{h,2,2},
\end{align*}
where, thanks to the chain rule,
\begin{equation}\label{eq:decompD}
\begin{gathered}
\mathcal{D}_{N,\ell}^{h,2,1}=\sum_{i\in\N^\star}\E\int_{\ell \Delta t}^{(\ell+1)\Delta t} D^2\varphi(X_n).\Bigl(\D_s^{e_i}X_N,\Pi_{N-1:\ell+1}e^{\tau A}S_{\Delta t}\bigl(\sigma'(X_\ell).S_{\Delta t}^\ell h\bigr)e_i\Bigr) ds,\\
\mathcal{D}_{N,\ell}^{h,2,2}=\sum_{i\in \N^\star}\E\int_{\ell \Delta t}^{(\ell+1)\Delta t}\langle D\varphi(X_N),\D_s^{e_i}\Bigl(\Pi_{N-1:\ell+1}e^{\tau A}S_{\Delta t}\bigl(\sigma'(X_\ell).S_{\Delta t}^\ell h\bigr)e_i\Bigr)\rangle ds.
\end{gathered}
\end{equation}
Similarly, for the second order derivative term $\mathcal{E}_N^{h,k,2}$, we write
\begin{align*}
\mathcal{E}_{N}^{h,k,2}&=\sum_{\ell=0}^{N-1}\E\bigl[\langle D\varphi\bigl(X_{N}\bigr),\Pi_{N-1:\ell+1}S_{\Delta t}e^{\tau A}\bigl(\sigma''(X_\ell).(\eta_\ell^h,\eta_\ell^k)\bigr)\Delta W_\ell \rangle\bigr]\\
&=\sum_{\ell=0}^{N-1}\E\bigl[\langle \Pi_{N-1:\ell+1}^{\star} D\varphi\bigl(X_{N}\bigr),\int_{\ell \Delta t}^{(\ell+1)\Delta t}  S_{\Delta t}e^{\tau A}\bigl(\sigma''(X_\ell).(\eta_\ell^h,\eta_\ell^k)\bigr)dW(s) \rangle\bigr]\\
&=\sum_{\ell=0}^{N-1}\bigl(\mathcal{E}_{N,\ell}^{h,k,2,1}+\mathcal{E}_{N,\ell}^{h,k,2,2}\bigr),
\end{align*}
with
\begin{equation}\label{eq:decompE}
\begin{gathered}
\mathcal{E}_{N,\ell}^{h,k,2,1}=\sum_i\E\int_{\ell \Delta t}^{(\ell+1)\Delta t}D^2\varphi(X_N).\Bigl(\D_s^{e_i}X_N,\Pi_{N-1:\ell+1}e^{\tau A}S_{\Delta t}\bigl(\sigma''(X_{\ell}).(\eta_{\ell}^h,\eta_{\ell}^k)\bigr)e_i\Bigr)ds,\\
\mathcal{E}_{N,\ell}^{h,k,2,2}=\sum_i\E\int_{\ell \Delta t}^{(\ell+1)\Delta t} \langle D\varphi(X_N),\D_s^{e_i}\Bigl(\Pi_{N-1:\ell+1}e^{\tau A}S_{\Delta t}\bigl(\sigma''(X_{\ell}).(\eta_{\ell}^h,\eta_{\ell}^k)\bigr)e_i\Bigr)\rangle ds.
\end{gathered}
\end{equation}

{ For completeness, the new expressions for the first and second order derivatives of $u^{\delta,\tau,\Delta t}$, obtained by our original strategy, are rewritten in the following proposition. They are obtained by inserting~\eqref{eq:decompD} and~\eqref{eq:decompE} in~\eqref{eq:du_discrete} and~\eqref{eq:d2u_discrete}, and using~\eqref{eq:zeta}.
\begin{propo}\label{propo:expressions}
The first and second order derivatives of $u^{\delta,\tau,\Delta t}$ have the following expressions:
\begin{align*}
Du^{\delta,\tau,\Delta t}(T,x).h&=\E\bigl[D\varphi(X_N).\bigl(S_{\Delta t}^N h\bigr)\bigr]+\E\Bigl[D\varphi(X_N).\bigl(\Delta t\sum_{\ell=0}^{n-1}\Pi_{n-1:\ell+1}S_{\Delta t}G'(X_\ell).S_{\Delta t}^\ell h\bigr)\Bigr]\\
&+\sum_{\ell=0}^{N-1}\sum_{i\in\N^\star}\E\int_{\ell \Delta t}^{(\ell+1)\Delta t} D^2\varphi(X_n).\Bigl(\D_s^{e_i}X_N,\Pi_{N-1:\ell+1}e^{\tau A}S_{\Delta t}\bigl(\sigma'(X_\ell).S_{\Delta t}^\ell h\bigr)e_i\Bigr) ds\\
&+\sum_{\ell=0}^{N-1}\sum_{i\in \N^\star}\E\int_{\ell \Delta t}^{(\ell+1)\Delta t}\langle D\varphi(X_N),\D_s^{e_i}\Bigl(\Pi_{N-1:\ell+1}e^{\tau A}S_{\Delta t}\bigl(\sigma'(X_\ell).S_{\Delta t}^\ell h\bigr)e_i\Bigr)\rangle ds,
\end{align*}

\begin{align*}
D^2u^{\delta,\tau,\Delta t}(T,x).(h,k)&=\E\bigl[D^2\varphi\bigl(X_{N}\bigr).\bigl(\eta_{N}^{h},\eta_{N}^{k}\bigr)\bigr]+\E\Bigl[D\varphi\bigl(X_{N}\bigr).\bigl(\Delta t\sum_{\ell=0}^{n-1}\Pi_{n-1:\ell+1}S_{\Delta t}G''(X_\ell).(\eta_\ell^h,\eta_\ell^k)\bigr)\Bigr]\\
&+\sum_{\ell=0}^{N-1}\sum_i\E\int_{\ell \Delta t}^{(\ell+1)\Delta t}D^2\varphi(X_N).\Bigl(\D_s^{e_i}X_N,\Pi_{N-1:\ell+1}e^{\tau A}S_{\Delta t}\bigl(\sigma''(X_{\ell}).(\eta_{\ell}^h,\eta_{\ell}^k)\bigr)e_i\Bigr)ds\\
&+\sum_{\ell=0}^{N-1}\sum_i\E\int_{\ell \Delta t}^{(\ell+1)\Delta t} \langle D\varphi(X_N),\D_s^{e_i}\Bigl(\Pi_{N-1:\ell+1}e^{\tau A}S_{\Delta t}\bigl(\sigma''(X_{\ell}).(\eta_{\ell}^h,\eta_{\ell}^k)\bigr)e_i\Bigr)\rangle ds.
\end{align*}

\end{propo}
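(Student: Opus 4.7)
The plan is to derive the two formulas by a systematic differentiation-plus-duality argument, already essentially laid out between equations~\eqref{eq:Du} and~\eqref{eq:decompE} in the excerpt. The proposition is really a collation of the identities established in that discussion, so the task is to verify that each step is rigorous in the discrete-time setting where no anticipating integrals appear.

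First I would verify the chain-rule identities~\eqref{eq:Du}--\eqref{eq:zeta}. Since $\varphi_\delta\in\mathcal{C}^3_b(H)$ and the coefficients $G_\delta$ and $\sigma_\delta$ are smooth with bounded derivatives for fixed $\delta>0$ (and $S_{\Delta t},e^{\tau A}\in\mathcal{L}(H)$), the map $x\mapsto X_n(x)$ is $\mathcal{C}^2$ from $H$ to $L^2(\Omega;H)$; this is proved by induction on $n$ using~\eqref{eq:scheme}, and yields~\eqref{eq:eta} and~\eqref{eq:zeta} by differentiating the recursion. Standard bounds then allow differentiation under the expectation sign in $u^{\delta,\tau,\Delta t}(t_n,x)=\E[\varphi_\delta(X_n(x))]$, giving~\eqref{eq:Du} and~\eqref{eq:D2u}. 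Rewriting~\eqref{eq:eta}, \eqref{eq:zeta} in the form~\eqref{eq:recursions_Pi} using the operator $\Pi_n$ defined in~\eqref{eq:Pi_n} is algebraic, as is the derivation of~\eqref{eq:eta_Pi} and the discrete Duhamel representations~\eqref{eq:two-sided_discrete} by telescoping. Splitting the drift and diffusion pieces as in~\eqref{eq:eta_zeta} then produces the decompositions~\eqref{eq:du_discrete} and~\eqref{eq:d2u_discrete}.

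The heart of the proof is the Malliavin duality step applied to $\mathcal{D}_N^{h,2}$ and $\mathcal{E}_N^{h,k,2}$. For each fixed $\ell$, the process $s\mapsto S_{\Delta t}e^{\tau A}\bigl(\sigma'(X_\ell).S_{\Delta t}^\ell h\bigr)$ is $\mathcal{F}_{\ell\Delta t}$-measurable and constant on $[\ell\Delta t,(\ell+1)\Delta t]$, so $S_{\Delta t}e^{\tau A}(\sigma'(X_\ell).S_{\Delta t}^\ell h)\Delta W_\ell=\int_{\ell\Delta t}^{(\ell+1)\Delta t} S_{\Delta t}e^{\tau A}(\sigma'(X_\ell).S_{\Delta t}^\ell h)\,dW(s)$ is a genuine It\^o integral. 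Passing the adjoint $\Pi_{N-1:\ell+1}^\star$ inside the inner product and applying the Malliavin duality formula recalled at the end of Section~\ref{sec:setting}, one obtains an integral of $\D_s^{e_i}\bigl\langle D\varphi(X_N),\Pi_{N-1:\ell+1}e^{\tau A}S_{\Delta t}(\sigma'(X_\ell).S_{\Delta t}^\ell h)e_i\bigr\rangle$. Applying the chain rule for Malliavin derivatives separates this into the two contributions: the one where $\D_s^{e_i}$ falls on $X_N$ in $D\varphi(X_N)$ yields $\mathcal{D}_{N,\ell}^{h,2,1}$, while the one where it falls on the operator product $\Pi_{N-1:\ell+1}$ yields $\mathcal{D}_{N,\ell}^{h,2,2}$. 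The argument for $\mathcal{E}_N^{h,k,2}$ is entirely parallel, with $\sigma'(X_\ell).S_{\Delta t}^\ell h$ replaced by $\sigma''(X_\ell).(\eta_\ell^h,\eta_\ell^k)$, which is $\mathcal{F}_{\ell\Delta t}$-measurable so that the same conversion to a Wiener integral over $[\ell\Delta t,(\ell+1)\Delta t]$ is valid.

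The main technical point to watch is the justification of the Malliavin duality formula at this level of generality: we need $\Pi_{N-1:\ell+1}^\star D\varphi(X_N)$ to lie in $\mathbb{D}^{1,2}(H)$, and we need the summation over $i\in\N^\star$ to converge. Both follow because, for fixed $\delta,\tau,\Delta t>0$, all coefficients and $\varphi_\delta$ are $\mathcal{C}^2_b$, so induction on $n$ shows $X_n\in\mathbb{D}^{1,2}(H)$ with Malliavin derivatives uniformly bounded in $L^2(\Omega)$, and the smoothing operators $e^{\tau A}S_{\Delta t}$ render the operator $\Pi_{N-1:\ell+1}$ Hilbert--Schmidt, so that the series defining $\mathcal{D}_{N,\ell}^{h,2,1}$, $\mathcal{D}_{N,\ell}^{h,2,2}$ and their second-order analogues converge absolutely. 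This boundedness of the series is only needed qualitatively here, since quantitative control uniform in $\delta,\tau,\Delta t$ is the subject of Section~\ref{sec:control}. Assembling the four pieces into~\eqref{eq:du_discrete} and~\eqref{eq:d2u_discrete} gives exactly the formulas stated in Proposition~\ref{propo:expressions}.
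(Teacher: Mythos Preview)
Your proposal is correct and follows essentially the same approach as the paper: the proposition is indeed just the collation of the identities \eqref{eq:Du}--\eqref{eq:decompE}, obtained by differentiating the scheme, rewriting the recursions via the random operators $\Pi_n$, telescoping to get the discrete Duhamel formulas~\eqref{eq:two-sided_discrete}, and then applying the Malliavin duality formula to the stochastic-integral pieces $\mathcal{D}_N^{h,2}$ and $\mathcal{E}_N^{h,k,2}$. Your added remarks on the $\mathbb{D}^{1,2}$ membership and the absolute convergence of the $i$-series are sensible qualitative justifications; note only that it is the presence of $e^{\tau A}$ (which is Hilbert--Schmidt for $\tau>0$) in the composite $\Pi_{N-1:\ell+1}e^{\tau A}S_{\Delta t}$, rather than $\Pi_{N-1:\ell+1}$ itself, that ensures this convergence.
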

}

\subsection{Estimate of the derivatives}\label{sec:control}

{
\subsubsection{{Auxiliary results}}

To {control  the terms appearing in the expressions of $Du^{\delta,\tau,\Delta t}(T,x).h$ and $D^2u^{\delta,\tau,\Delta t}(T,x).(h,k)$ in Proposition~\ref{propo:expressions}}, we see that estimates on the random operators $\Pi_{N-1:\ell+1}$, and on the Malliavin derivatives $\D_sX_N$ and $\D_s\Pi_{N-1:\ell+1}$, for $s\in\bigl(\ell\Delta t,(\ell+1)\Delta t\bigr)$ are needed.

\begin{lemma}\label{lem:Pi}
For any $q\in[2,\infty)$, $M\in\N^\star$, $T\in(0,\infty)$, and $\beta\in[0,\frac12)$, $\gamma\in [0,\frac14)$ if $q=2$ and $\gamma\in [0,\frac14-\frac1{2q})$ for $q\ne 2$, there exists $C_{\beta,\gamma}(M,q,T)$, such that for any $0\le \ell<n\le N$, and any $\sigma\bigl(\Delta W_0,\ldots,\Delta W_{\ell-1}\bigr)$-measurable random vector $z_\ell$, then
\begin{equation}\label{eq:lem_Pi}
\left(\E|(-A)^\gamma \Pi_{n-1:\ell}z_{\ell}|_{L^q}^{2M}\right)^{\frac1{2M}}\leq C_{\beta,\gamma}(M,p,T)t_{n-\ell}^{-\beta-\gamma}\left(\E|(-A)^{-\beta}z_{\ell}|_{L^q}^{2M}\right)^{\frac1{2M}}.
\end{equation}
\end{lemma}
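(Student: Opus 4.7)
The plan is to set $Y_m:=\Pi_{m-1:\ell}z_\ell$ with $Y_\ell:=z_\ell$, so that $Y_n=\Pi_{n-1:\ell}z_\ell$, and to iterate the one-step recursion defining $\Pi_m$ to obtain the mild representation
$$
Y_n=S_{\Delta t}^{\,n-\ell}z_\ell+\Delta t\sum_{m=\ell}^{n-1}S_{\Delta t}^{\,n-m}G'(X_m).Y_m+\sum_{m=\ell}^{n-1}S_{\Delta t}^{\,n-m}e^{\tau A}\bigl(\sigma'(X_m).Y_m\bigr)\Delta W_m.
$$
The strategy is to estimate each of the three terms in $L^{2M}(\Omega;L^q)$ after applying $(-A)^\gamma$, and to close the resulting singular integral inequality by a discrete Gronwall lemma. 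Since $X_m$ and $Y_m$ are $\mathcal F_{t_m}$-measurable, the last sum is a discrete martingale and the Burkholder--Davis--Gundy inequality in $L^q$ applies. I would first carry out the analysis with $\gamma=0$ to produce a bound on $\bigl(\E|Y_m|_{L^q}^{2M}\bigr)^{1/(2M)}$, and then feed that bound into the estimates for arbitrary $\gamma>0$.

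For the deterministic initial piece, Lemma~\ref{lem:S_1} directly yields $|(-A)^\gamma S_{\Delta t}^{\,n-\ell}z_\ell|_{L^q}\le Ct_{n-\ell}^{-\beta-\gamma}|(-A)^{-\beta}z_\ell|_{L^q}$ (the hypothesis $\beta+\gamma<1$ is used here). For the drift term, I split $G'(X_m).Y_m=F_1'(X_m).Y_m+B\bigl(F_2'(X_m).Y_m\bigr)$; the $F_1$ part follows from Lemma~\ref{lem:S_1} and Property~\ref{ass:F}, while the Burgers part uses the factorisation $(-A)^\gamma S_{\Delta t}^{\,n-m}B=(-A)^{\gamma+\frac12+\kappa}S_{\Delta t}^{\,n-m}\cdot(-A)^{-\frac12-\kappa}B$ together with~\eqref{eq:norm_AB}, Lemma~\ref{lem:S_1} and Property~\ref{ass:F}, producing the singularity $t_{n-m}^{-\gamma-1/2-\kappa}|Y_m|_{L^q}$, integrable in $m$ for $\gamma<1/2$ and $\kappa$ small.

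The core of the argument is the stochastic sum. BDG combined with~\eqref{eq:Ito_gamma} gives
$$
\E\Bigl|(-A)^\gamma\!\sum_{m=\ell}^{n-1}\!S_{\Delta t}^{\,n-m}e^{\tau A}\sigma'(X_m).Y_m\,\Delta W_m\Bigr|_{L^q}^{2M}\!\!\le C\,\E\Bigl(\Delta t\!\sum_{m=\ell}^{n-1}\!\bigl\|(-A)^\gamma S_{\Delta t}^{\,n-m}e^{\tau A}\sigma'(X_m).Y_m\bigr\|_{R(L^2,L^q)}^{2}\Bigr)^{\!M}\!.
$$
For $q\in(2,\infty)$, inserting $(-A)^{1/(2q)}(-A)^{-1/(2q)}$, applying the ideal property~\eqref{eq:ideal}, estimate~\eqref{eq:estim_sigma_1} and Lemma~\ref{lem:S_2} with exponent $\gamma+1/(2q)$, the integrand is bounded by $Ct_{n-m}^{-1/2-2\gamma-1/q-2\kappa}|Y_m|_{L^q}^{2}$; time-integrability in $m$ forces $\gamma+1/(2q)<1/4$, which is exactly the stated restriction. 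The case $q=2$ is handled by expanding the Hilbert--Schmidt norm in the eigenbasis $(e_j)$ of $A$ and using the equiboundedness of $(e_j)$ in $L^\infty$ (Property~\ref{ass:A}) to bound $|(\sigma'(X_m).Y_m)e_j|_{L^2}\le C|Y_m|_{L^2}$, concluding via $\sum_j\lambda_j^{2\gamma}(1+\lambda_j\Delta t)^{-2(n-m)}\le Ct_{n-m}^{-1/2-2\gamma-2\kappa}$ (integrable for $\gamma<1/4$).

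Collecting the three bounds and invoking Jensen's inequality to pull $\E|Y_m|_{L^q}^{2M}$ out of the $M$-th power yields a singular integral inequality of the form
$$
\bigl(\E|(-A)^\gamma Y_n|_{L^q}^{2M}\bigr)^{1/M}\le Ct_{n-\ell}^{-2(\beta+\gamma)}\bigl(\E|(-A)^{-\beta}z_\ell|_{L^q}^{2M}\bigr)^{1/M}+C\Delta t\!\!\sum_{m=\ell+1}^{n-1}\!t_{n-m}^{-\alpha(\gamma)}\bigl(\E|Y_m|_{L^q}^{2M}\bigr)^{1/M}
$$
with $\alpha(\gamma)<1$. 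A discrete singular Gronwall lemma closes the case $\gamma=0$, and substituting the resulting bound on $\E|Y_m|_{L^q}^{2M}$ back into the right-hand side for general $\gamma$ and evaluating the convolution $\Delta t\sum_mt_{n-m}^{-\alpha(\gamma)}t_{m-\ell}^{-2\beta}$ by a standard discrete Beta-function estimate produces~\eqref{eq:lem_Pi}. The main obstacle I foresee is the bookkeeping of exponents, in particular verifying that the threshold $\gamma<1/4-1/(2q)$ (resp.~$\gamma<1/4$ for $q=2$) arises exactly from pairing the $\gamma$-radonifying bound of $S_{\Delta t}^{\,n-m}$ with the $1/(2q)$-loss needed to absorb $\sigma'(X_m).Y_m$ through~\eqref{eq:estim_sigma_1}.
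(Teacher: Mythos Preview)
Your proposal is correct and follows essentially the same two-stage strategy as the paper: first close the case $\gamma=0$ via the mild representation and a discrete singular Gronwall lemma, then feed that bound into the mild representation with $(-A)^\gamma$ applied and evaluate the resulting convolution. The identification of the threshold $\gamma<\tfrac14-\tfrac1{2q}$ as arising from the pairing of Lemma~\ref{lem:S_2} with the $\tfrac1{2q}$ loss in~\eqref{eq:estim_sigma_1} is exactly right, and your separate treatment of $q=2$ via eigenbasis expansion and the equiboundedness of $(e_j)$ in $L^\infty$ matches the paper's remark.

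There is one genuine technical difference worth noting. For the Burgers drift term $BF_2'(X_m).Y_m$ in the $\gamma=0$ step, the paper does \emph{not} use the mild estimate $|S_{\Delta t}^{n-m}B|_{\mathcal L(L^q)}\le Ct_{n-m}^{-1/2-\kappa}$ that you propose. Instead it isolates all terms carrying $B$ into a separate piece $Y_n^{2,\ell}$ and controls it by an energy method: multiplying the one-step difference equation by $(Y_n^{2,\ell})^{q-1}$, integrating in space, and using an integration by parts that produces no time singularity at all (Lemma~\ref{lem:Pi_mild}). Your route is more uniform---the same factorisation works for every $\gamma$---and it does close, since the kernel $t_{n-m}^{-1/2-\kappa}$ is still integrable in the Gronwall sense. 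The paper's energy argument buys a non-singular kernel for that piece, at the cost of a more involved computation; it is also packaged into the auxiliary Lemma~\ref{lem:Pi_mild}, which is reused for Lemma~\ref{lem:Mall_2}. Two small corrections: the inequality you invoke to pull the expectation through the $M$-th power of a sum is Minkowski, not Jensen; and the stochastic contribution appears under a square root of the sum rather than as a linear sum, though after substituting the $\gamma=0$ bound this makes no difference to the final convolution estimate.
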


\begin{lemma}\label{lem:Mall_1}
Let $\ell\in\left\{0,\ldots,n-1\right\}$, and $s\in\bigl(\ell\Delta t,(\ell+1)\Delta t\bigr)$. Then
\begin{equation}\label{eq:lem_Mall_1}
\D_sX_n=\Pi_{n-1:\ell+1}S_{\Delta t}e^{\tau A}\sigma(X_\ell).
\end{equation}
\end{lemma}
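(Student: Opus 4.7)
\medskip

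\noindent\textbf{Proof plan for Lemma~\ref{lem:Mall_1}.} The statement is essentially a discrete-time chain-rule computation, so the plan is to prove it by induction on $n\ge \ell+1$, exploiting two key observations: first, for $s\in(\ell\Delta t,(\ell+1)\Delta t)$, the random variable $X_\ell$ is $\mathcal{F}_{\ell\Delta t}$-measurable, hence $\mathcal{D}_s X_\ell=0$; second, for $m\ne \ell$ the increment $\Delta W_m$ is built from Brownian increments outside $(\ell\Delta t,(\ell+1)\Delta t)$, so $\mathcal{D}_s\Delta W_m=0$, while $\mathcal{D}_s\Delta W_\ell=I$ (acting as the identity on $H$ via the usual identification).

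For the base case $n=\ell+1$, I would apply $\mathcal{D}_s$ to the recursion~\eqref{eq:scheme}, using the chain rule from Section~\ref{sec:setting} together with the two observations above:
\begin{equation*}
\mathcal{D}_s X_{\ell+1}= S_{\Delta t}\mathcal{D}_s X_\ell+\Delta t S_{\Delta t}G'(X_\ell).\mathcal{D}_s X_\ell+S_{\Delta t}e^{\tau A}\bigl(\sigma'(X_\ell).\mathcal{D}_s X_\ell\bigr)\Delta W_\ell+S_{\Delta t}e^{\tau A}\sigma(X_\ell).
\end{equation*}
Since $\mathcal{D}_s X_\ell=0$, only the last term survives, giving $\mathcal{D}_s X_{\ell+1}=S_{\Delta t}e^{\tau A}\sigma(X_\ell)$, which matches the claim because $\Pi_{\ell:\ell+1}=I$ by the convention $\Pi_{n-1:n}=I$.

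For the inductive step, assume the formula holds for some $n\ge \ell+1$. Applying $\mathcal{D}_s$ to
\begin{equation*}
X_{n+1}= S_{\Delta t}X_n+\Delta tS_{\Delta t}G(X_n)+S_{\Delta t}e^{\tau A}\sigma(X_n)\Delta W_n,
\end{equation*}
and observing that $s<(\ell+1)\Delta t\le n\Delta t$ so $\mathcal{D}_s\Delta W_n=0$, I obtain
\begin{equation*}
\mathcal{D}_s X_{n+1}=S_{\Delta t}\mathcal{D}_s X_n+\Delta t S_{\Delta t}G'(X_n).\mathcal{D}_s X_n+S_{\Delta t}e^{\tau A}\bigl(\sigma'(X_n).\mathcal{D}_s X_n\bigr)\Delta W_n=\Pi_n \mathcal{D}_s X_n,
\end{equation*}
by the definition~\eqref{eq:Pi_n} of $\Pi_n$. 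Combining with the inductive hypothesis gives $\mathcal{D}_s X_{n+1}=\Pi_n\Pi_{n-1:\ell+1}S_{\Delta t}e^{\tau A}\sigma(X_\ell)=\Pi_{n:\ell+1}S_{\Delta t}e^{\tau A}\sigma(X_\ell)$, completing the induction.

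There is no real obstacle; the only point requiring slight care is justifying the application of the chain rule term by term (so that the Nemytskii coefficients $G$ and $\sigma$ and the increment $\Delta W_n$ can be differentiated in the Malliavin sense) and checking that $X_n\in\mathbb{D}^{1,2}$ for every $n$, which follows by the same induction since each $X_n$ is a smooth function of the finitely many increments $\Delta W_0,\dots,\Delta W_{n-1}$ and the regularized coefficients $G_\delta,\sigma_\delta$ are of class $\mathcal{C}^3_b$ on $H$.
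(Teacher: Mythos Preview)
Your proof is correct and follows essentially the same approach as the paper's: both arguments rest on the observations that $\mathcal{D}_s X_\ell=0$, that $\mathcal{D}_s\Delta W_m=0$ for $m\neq\ell$ while $\mathcal{D}_s\Delta W_\ell=I$, and then use the chain rule to obtain the recursion $\mathcal{D}_s X_{n+1}=\Pi_n\mathcal{D}_s X_n$ together with the base case $\mathcal{D}_s X_{\ell+1}=S_{\Delta t}e^{\tau A}\sigma(X_\ell)$. The paper phrases the argument by first writing out the discrete mild formulation of $X_n$ from time $\ell$, whereas you proceed by a clean induction on $n$; these are cosmetic differences only.
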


\begin{lemma}\label{lem:Mall_2}
For any $q\in(2,\infty)$, $\kappa\in(0,\frac12)$, and $T\in(0,\infty)$, there exists $C_\kappa(q,T)\in(0,\infty)$, such that for any $0\le \ell<N-1$, any $s\in\bigl(\ell\Delta t,(\ell+1)\Delta t\bigr)$, any $z\in L^{2q}$ and any $\sigma\bigl(\Delta W_0,\ldots,\Delta W_{\ell-1}\bigr)$-measurable random vector $\theta_\ell$, then
\begin{equation}\label{eq:lem:Mall_2}
\E|\D_s^{\theta_\ell}\Pi_{n-1:\ell+1}z|_{L^q}^{2}\le C_\kappa(q,T)\bigl(\E|\theta_{\ell}|_{L^{2q}}^{4}\bigr)^{\frac12}\Bigl(\Delta t\bigl(1+\frac{1}{t_{n-\ell-1}^{\frac{1}{2}+\frac{1}{q}+\kappa}}\bigr)|z|_{L^{2q}}^{2}+\mathds{1}_{n>\ell+2}\bigl(1+ \frac{1}{t_{n-\ell-2}^{\frac{1}{2}+\frac{1}{q}+\kappa}}\bigr)\big|(-A)^{-\frac12+\kappa}z\big|_{L^{2q}}^{2}\Bigr).
\end{equation}
Moreover, when $\ell=N-1$, $\D_s\Pi_{N-1:\ell+1}z=0$.
\end{lemma}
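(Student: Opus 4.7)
The plan is to derive an explicit formula for $\D_s^{\theta_\ell}\Pi_{n-1:\ell+1}z$ by differentiating the recursion $V_{m+1}=\Pi_m V_m$ (writing $V_m=\Pi_{m-1:\ell+1}z$, with $V_{\ell+1}=z$), and then estimating the resulting sum termwise via Lemma~\ref{lem:Pi}. Since $s\in(\ell\Delta t,(\ell+1)\Delta t)$ implies $\D_s\Delta W_k=0$ for $k\ge \ell+1$, only the $X_m$-dependence of $\Pi_m$ produces a new inhomogeneity; combining this with the formula $\D_s^{\theta_\ell}X_m=\Pi_{m-1:\ell+1}S_{\Delta t}e^{\tau A}\sigma(X_\ell)\theta_\ell$ supplied by Lemma~\ref{lem:Mall_1}, a discrete variation-of-constants argument yields
$$
\D_s^{\theta_\ell}V_n=A_{\text{drift}}+A_{\text{diff}},
$$
with
$$
A_{\text{drift}}=\Delta t\sum_{m=\ell+1}^{n-1}\Pi_{n-1:m+1}S_{\Delta t}Be^{\tau A}\bigl(G''(X_m).(\D_s^{\theta_\ell}X_m,V_m)\bigr),
$$
$$
A_{\text{diff}}=\sum_{m=\ell+1}^{n-1}\Pi_{n-1:m+1}S_{\Delta t}e^{\tau A}\bigl(\sigma''(X_m).(\D_s^{\theta_\ell}X_m,V_m)\bigr)\Delta W_m.
$$
When $n=\ell+1$ both sums are empty and $\D_s\Pi_{N-1:\ell+1}z=0$ in the case $\ell=N-1$, which is the second assertion of the lemma.

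For $A_{\text{drift}}$, I would apply the triangle inequality in $L^q$ and control each $\Pi_{n-1:m+1}(\cdots)$ by Lemma~\ref{lem:Pi} (with the starting index $m+1$, valid since the integrand at stage $m$ is $\sigma(\Delta W_0,\dots,\Delta W_m)$-measurable), absorb the spatial derivative $B$ through the operator bound~\eqref{eq:norm_AB}, and use Property~\ref{ass:F} together with the moment bounds on $V_m$ and $\D_s^{\theta_\ell}X_m$ in $L^{2q}$. The latter follow by applying Lemma~\ref{lem:Pi} with $\beta=\gamma=0$ to $V_m=\Pi_{m-1:\ell+1}z$ and to $\D_s^{\theta_\ell}X_m$, combined with Lemma~\ref{lem:moments_X} and Property~\ref{ass:sigma}. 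The explicit $\Delta t$ prefactor and the resulting $|z|_{L^{2q}}^2$-dependence make $A_{\text{drift}}$ fit entirely into the first term of the right-hand side of~\eqref{eq:lem:Mall_2}.

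The main obstacle is the diffusion term $A_{\text{diff}}$: the sum is not a discrete-time martingale, since $\Pi_{n-1:m+1}$ depends on the future increments $\Delta W_{m+1},\dots,\Delta W_{n-1}$, so no discrete It\^o isometry can be invoked directly on the whole sum. To recover the two-term structure of the bound I would split off the endpoint $m=n-1$, for which $\Pi_{n-1:n}=I$ by convention. The corresponding single increment $S_{\Delta t}e^{\tau A}\bigl(\sigma''(X_{n-1}).(\D_s^{\theta_\ell}X_{n-1},V_{n-1})\bigr)\Delta W_{n-1}$ is $\mathcal{F}_{(n-1)\Delta t}$-adapted, so the $\gamma$-radonifying It\^o inequality~\eqref{eq:Ito_gamma} applies; coupled with estimate~\eqref{eq:estim_sigma_2}, Lemma~\ref{lem:S_2} for the $R(H,L^q)$-norm of $S_{\Delta t}e^{\tau A}$, and the moment bounds on $V_{n-1}$ and $\D_s^{\theta_\ell}X_{n-1}$, this endpoint contribution produces precisely the $\Delta t\bigl(1+t_{n-\ell-1}^{-(1/2+1/q+\kappa)}\bigr)|z|_{L^{2q}}^2$ part of~\eqref{eq:lem:Mall_2}.

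For the remaining indices $m\in\{\ell+1,\dots,n-2\}$ (which appear only when $n>\ell+2$), I would apply Lemma~\ref{lem:Pi} termwise to each $\Pi_{n-1:m+1}(\cdots)\Delta W_m$, paying a singularity $t_{n-1-m}^{-2\beta}$ with $\beta$ chosen close to $1/2$; the increment $\Delta W_m$ is again treated via~\eqref{eq:Ito_gamma} and yields a $\Delta t$ weight. The fractional weight $(-A)^{-\beta}$ generated by the lemma is transferred, by the ideal property~\eqref{eq:ideal} and the product estimate~\eqref{eq:product_3}, from the $\sigma''$ factor onto $V_m=\Pi_{m-1:\ell+1}z$, and one more application of Lemma~\ref{lem:Pi} to $V_m$ with a shifted exponent converts this into the $\bigl|(-A)^{-1/2+\kappa}z\bigr|_{L^{2q}}$ dependence. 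Summing the resulting $\Delta t\,t_{n-1-m}^{-2\beta}$ in $m$ produces the singularity $t_{n-\ell-2}^{-(1/2+1/q+\kappa)}$, and tracking the linear dependence of the whole expression in $\theta_\ell$ and applying Cauchy-Schwarz yields the prefactor $(\E|\theta_\ell|_{L^{2q}}^4)^{1/2}$, completing~\eqref{eq:lem:Mall_2}.
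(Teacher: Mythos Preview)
Your variation-of-constants formula for $\D_s^{\theta_\ell}\Pi_{n-1:\ell+1}z$ is correct, and you rightly identify the anticipating nature of the diffusion sum as the crux. However, your proposed treatment of $A_{\text{diff}}$ has a genuine gap. When you apply Lemma~\ref{lem:Pi} to $\Pi_{n-1:m+1}$ with $\beta$ close to $\tfrac12$ and then try to ``transfer'' the resulting weight $(-A)^{-\beta}$ onto $V_m$ via the product estimate~\eqref{eq:product_3}, that estimate forces a \emph{positive} power $(-A)^{\beta+\epsilon}$ onto the complementary factor $\tilde\sigma''(X_m)\,\D_s^{\theta_\ell}X_m$. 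For $\beta$ near $\tfrac12$ this requires essentially $H^1$-type regularity of $X_m$ and of $\D_s^{\theta_\ell}X_m$, but Lemma~\ref{lem:moments_X} and Lemma~\ref{lem:Pi} only give control in $D((-A)^\alpha)$ for $\alpha<\tfrac14$. Taking $\beta$ small instead avoids this but then the termwise triangle inequality on the sum over $m$ loses a factor of order $N$, since without martingale structure you cannot recover the square-summability. Either way the termwise route does not close. (A secondary point: the $|z|_{L^{2q}}^2$ contribution in~\eqref{eq:lem:Mall_2} actually comes from the \emph{early} index $m=\ell+1$, where $V_{\ell+1}=z$, not from the late index $m=n-1$ you isolate.)

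The paper circumvents the anticipating-sum problem by not using the $\Pi$-based variation-of-constants formula at all. Instead it observes that $\D_s^{\theta_\ell}Y_n^\ell$ itself satisfies the recursion
\[
\D_s^{\theta_\ell}Y_n^\ell=\Pi_{n-1}\D_s^{\theta_\ell}Y_{n-1}^\ell
+\Delta t\,S_{\Delta t}G''(X_{n-1}).(\D_s^{\theta_\ell}X_{n-1},Y_{n-1}^\ell)
+S_{\Delta t}e^{\tau A}\bigl(\sigma''(X_{n-1}).(\D_s^{\theta_\ell}X_{n-1},Y_{n-1}^\ell)\bigr)\Delta W_{n-1},
\]
with zero initial value at $n=\ell+1$, which is exactly the structure of Lemma~\ref{lem:Pi_mild} with $Z_m^1=\D_s^{\theta_\ell}X_m$ and $Z_m^2=Y_m^\ell$. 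That lemma estimates the recursion through its \emph{mild form} built on the deterministic propagator $S_{\Delta t}^{n-m}$ (not $\Pi_{n-1:m+1}$), so the stochastic sum is a genuine discrete martingale and the $\gamma$-radonifying It\^o inequality applies directly, producing the kernel $t_{n-m}^{-\frac12-\frac1q-\kappa}$. One then inserts the bounds $\E|\D_s^{\theta_\ell}X_m|_{L^{2q}}^4\le C\,\E|\theta_\ell|_{L^{2q}}^4$ and $\E|Y_m^\ell|_{L^{2q}}^4\le C\,t_{m-\ell-1}^{-2+4\kappa}|(-A)^{-\frac12+\kappa}z|_{L^{2q}}^4$ for $m>\ell+1$ (with the $m=\ell+1$ term kept as $|z|_{L^{2q}}^4$), and closes with the discrete Gronwall Lemma~\ref{lem:Gronwall}. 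No product-estimate transfer is needed.
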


In~\eqref{eq:lem:Mall_2}, the quantity $\D_s^{\theta_\ell} \Pi_{N-1:\ell+1}z$ is interpreted as the image of $\theta_\ell$ by the linear operator $\D_s \Pi_{N-1:\ell+1}z$. The assumption that the random vector $\theta_\ell$ is $\sigma\bigl(\Delta W_0,\ldots,\Delta W_{\ell-1}\bigr)$-measurable is crucial.

The proofs of Lemmas~\ref{lem:Pi},~\ref{lem:Mall_1}, and~\ref{lem:Mall_2} are very technical and are postponed to Section~\ref{sec:proof_aux_lemmas}.
}

\subsubsection{Estimate of $\mathcal{D}_{N}^{h,1}$ and of $\mathcal{E}_{N}^{h,k,1}$}

Using the Cauchy-Schwarz inequality, Lemma~\ref{lem:moments_X}, and Assumption~\ref{ass:phi} on $\varphi$, we have
\[
\big|\mathcal{D}_N^{h,1}\big|\le C\bigl(1+|x|_{L^p}\bigr)^{K}\bigl(\E|\tilde{\eta}_N^{h,1}|_{L^q}^2\bigr)^{\frac12} \quad,\quad \big|\mathcal{E}_{N}^{h,k,1}\big|\le C\bigl(1+|x|_{L^p}\bigr)^{K}\bigl(\E|\tilde{\zeta}_N^{h,k,1}|_{L^q}^2\bigr)^{\frac12},
\]
and below we control the moments of $\tilde{\eta}_n^{h,1}$ and $\zeta_n^{h,k,1}$, for every $n\le N$.

We treat $\mathcal{D}_{N}^{h,1}$ first. Thanks to~\eqref{eq:eta_zeta}, applying Lemma~\ref{lem:Pi} gives, for  $\kappa\in(0,\frac12)$,
\begin{align*}
\bigl(\E|\tilde{\eta}_n^{h,1}|_{L^q}^2\bigr)^{\frac12}&\le C\Delta t\sum_{\ell=0}^{n-1}\bigl(\E\big|\Pi_{n-1:\ell+1}S_{\Delta t}G'(X_\ell).S_{\Delta t}^\ell h\big|_{L^q}^{2}\bigr)^{\frac12}\\
&\le C_\kappa\Delta t\sum_{\ell=0}^{n-1}t_{n-\ell}^{-\frac12+\kappa}\bigl(\E\big|(-A)^{-\frac12+\kappa}S_{\Delta t}G'(X_\ell).S_{\Delta t}^\ell h\big|_{L^q}^{2}\bigr)^{\frac12}\\
&\le C_\kappa\Delta t\sum_{\ell=0}^{n-1}t_{n-\ell}^{-\frac12+\kappa}\bigl(\E\big|F_1'(X_\ell).S_{\Delta t}^\ell h\big|_{L^q}^{2}\bigr)^{\frac12}\\
&~+C_\kappa\Delta t\sum_{\ell=0}^{n-1}t_{n-\ell}^{-\frac12+\kappa}\bigl(\E\big|(-A)^{-\frac12+\kappa}BF_2'(X_\ell).S_{\Delta t}^\ell h\big|_{L^q}^{2}\bigr)^{\frac12}.
\end{align*}
By Property~\ref{ass:F}, we get
\[
\bigl(\E\big|F_1'(X_\ell).S_{\Delta t}^\ell h\big|_{L^q}^{2}\bigr)^{\frac12}\le C|S_{\Delta t}^{\ell}h|_{L^q}\le C\mathds{1}_{\ell\neq 0}t_{\ell}^{-\beta}|(-A)^{-\beta}h|_{L^q}+\mathds{1}_{\ell=0}|h|_{L^q}.
\]

Using succesively~\eqref{eq:norm_AB},~\eqref{eq:product_2},~\eqref{eq:Sobolev-domain} and~\eqref{eq:Sobolev-Lipschitz}, and recalling that $F_2'(x).h=F_2'(x)h$ is a product,
\begin{align*}
\bigl(\E\big|(-A)^{-\frac12+\kappa}BF_2'(X_\ell).S_{\Delta t}^\ell h\big|_{L^q}^{2}\bigr)^{\frac12}&\le C\bigl(\E\big|(-A)^{2\kappa}F_2'(X_\ell).S_{\Delta t}^\ell h\big|_{L^q}^{2}\bigr)^{\frac12}\\
&\le C \E\bigl(\big|(-A)^{3\kappa}F_2'(X_\ell)\big|_{L^{2q}}^2\big|(-A)^{3\kappa}S_{\Delta t}^\ell h\big|_{L^{2q}}^{2}\bigr)^{\frac12}\\
&\le C \bigl(|1+\E|(-A)^{4\kappa}X_\ell|_{L^{2q}}^2\bigr)^{\frac12}|(-A)^{3\kappa}S_{\Delta t}^{\ell}h|_{L^{2q}}.
\end{align*}

Let $\kappa>0$ be such that $\beta+7\kappa<1$. Then, thanks to Lemmas~\ref{lem:moments_X} and~\ref{lem:S_1},
\begin{align*}
\bigl(\E|\tilde{\eta}_n^{h,1}|_{L^q}^2\bigr)^{\frac12}&\le Ct_n^{-\frac12+\kappa}\Delta t |h|_{L^q}+Ct_n^{\frac12+\kappa-\beta}|(-A)^{-\beta} h|_{L^q}\\
&~+Ct_n^{-\frac12+\kappa}\Delta t(1+|(-A)^{4\kappa}x|_{L^{2q}})|(-A)^{3\kappa}h|_{L^{2q}} +Ct_n^{\frac12-6\kappa-\beta}(1+|x|_{L^{2q}})|(-A)^{-\beta} h|_{L^{2q}}\\
&\le C\Delta t^{\frac12+\kappa}(1+|(-A)^{4\kappa}x|_{L^{2q}})|(-A)^{3\kappa}h|_{L^{2q}} +Ct_n^{\frac12-6\kappa-\beta}(1+|x|_{L^{2q}})|(-A)^{-\beta} h|_{L^{2q}}.
\end{align*}
and we conclude that
\begin{align*}
\big|\mathcal{D}_N^{h,1}\big|&\le C\Delta t^{\frac12+\kappa}\bigl(1+|x|_{L^p}\bigr)^{K}(1+|(-A)^{4\kappa}x|_{L^{2q}})|(-A)^{3\kappa}h|_{L^{2q}} \\
&~+Ct_N^{\frac12-6\kappa-\beta}\bigl(1+|x|_{L^p}\bigr)^{K}(1+|x|_{L^{2q}})|(-A)^{-\beta}h|_{L^{2q}}.
\end{align*}

We now treat $\mathcal{E}_{N}^{h,k,1}$ with similar arguments. Thanks to~\eqref{eq:eta_zeta}, applying Lemma~\ref{lem:Pi} gives, for $\kappa\in(0,\frac12)$,
\begin{align*}
\bigl(\E|\tilde{\zeta}_{n}^{h,k,1}|_{L^q}^2\bigr)^{\frac12}&\le C\Delta t\sum_{\ell=0}^{n-1}\bigl(\E\big|\Pi_{n-1:\ell+1}S_{\Delta t}G''(X_\ell).(\eta_\ell^h,\eta_\ell^k)\big|_{L^q}^{2}\bigr)^{\frac12}\\
&\le C_\kappa\Delta t\sum_{\ell=0}^{n-1}t_{n-\ell}^{-\frac12+\kappa}\bigl(\E\big|F_1''(X_\ell).(\eta_\ell^h,\eta_\ell^k)\big|_{L^q}^{2}\bigr)^{\frac12}\\
&~+C_\kappa\Delta t\sum_{\ell=0}^{n-1}t_{n-\ell}^{-\frac12+\kappa}\bigl(\E\big|(-A)^{-\frac12+\kappa}BF_2''(X_\ell).(\eta_\ell^h,\eta_\ell^k)\big|_{L^q}^{2}\bigr)^{\frac12}.
\end{align*}

Recall from~\eqref{eq:eta_Pi} that $\eta_\ell^h=\Pi_{\ell-1:0}h$. Property~\ref{ass:F} then gives
\[
\bigl(\E\big|F_1''(X_\ell).(\eta_\ell^h,\eta_\ell^k)\big|_{L^q}^{2}\bigr)^{\frac12}\le C\mathds{1}_{\ell\neq 0}t_{\ell}^{-\beta-\gamma}|(-A)^{-\beta}h|_{L^{2q}}|(-A)^{-\gamma}k|_{L^{2q}}+C\mathds{1}_{\ell=0}|h|_{L^{2q}}|k|_{L^{2q}}.
\]
The remaining term is treated similarly to the one in $\mathcal{D}_N^{h,1}$. Using successively~\eqref{eq:norm_AB},~\eqref{eq:product_2},~\eqref{eq:Sobolev-domain}, and~\eqref{eq:Sobolev-Lipschitz}:
\begin{align*}
\bigl(\E\big|(-A)^{-\frac12+\kappa}BF_2''(X_\ell).&(\eta_\ell^h,\eta_\ell^k)\big|_{L^q}^{2}\bigr)^{\frac12}\le C\bigl(\E\big|(-A)^{2\kappa}F_2''(X_\ell).(\eta_\ell^h,\eta_\ell^k)\big|_{L^q}^{2}\bigr)^{\frac12}\\
&\le C \E\bigl(\big|(-A)^{3\kappa}F_2''(X_\ell)\big|_{L^{2q}}^2\big|(-A)^{4\kappa}\eta_\ell^h\big|_{L^{4q}}^{2}\big|(-A)^{4\kappa}\eta_\ell^k\big|_{L^{4q}}^{2}\bigr)^{\frac12}\\
&\le C \bigl(|1+\E|(-A)^{4\kappa}X_\ell|_{L^{2q}}^6\bigr)^{\frac16}\left(\E\big|(-A)^{4\kappa}\eta_\ell^h\big|_{L^{4q}}^{6}\right)^{\frac16}\left(\E\big|(-A)^{4\kappa}\eta_\ell^k\big|_{L^{4q}}^{6}\right)^{\frac16}\\
&\le C\mathds{1}_{\ell\neq 0}t_\ell^{-12\kappa-\beta-\gamma}(1+|x|_{L^{2q}})|(-A)^{-\beta} h|_{L^{4q}}|(-A)^{-\gamma} k|_{L^{4q}}\\
&~+C\mathds{1}_{\ell=0}\bigl(|1+|(-A)^{4\kappa}x|_{L^{2q}}\bigr)\big|(-A)^{4\kappa}h\big|_{L^{4q}}\big|(-A)^{4\kappa}k\big|_{L^{4q}},
\end{align*}
thanks to Lemma~\ref{lem:Pi}, for $\kappa>0$ chosen sufficiently small to have $4\kappa<\frac{1}{4}-\frac{1}{8q}$.

We thus obtain, if $\beta+\gamma+12\kappa<1$,
\begin{align*}
\bigl(\E|\tilde{\zeta}_{N}^{h,k,1}|_{L^q}^{2}\bigr)^{\frac12}&\le C\Delta t^{\frac12+\kappa}|h|_{L^{2q}}|k|_{L^{2q}}+C\Delta t^{\frac12+\kappa}\bigl(1+|(-A)^{4\kappa}x|_{L^{2q}}\bigr)|(-A)^{4\kappa}h|_{L^{4q}}|(-A)^{4\kappa}k|_{L^{4q}}\\
&~+Ct_{N}^{\frac12-11\kappa-\beta-\gamma}(1+|x|_{L^{2q}})|(-A)^{-\beta}h|_{L^{4q}}|(-A)^{-\gamma}k|_{L^{4q}}\\
&\le C\Delta t^{\frac12+\kappa}\bigl(1+|(-A)^{4\kappa}x|_{L^{2q}}\bigr)|(-A)^{4\kappa}h|_{L^{4q}}|(-A)^{4\kappa}k|_{L^{4q}}\\
&~+Ct_{N}^{\frac12-11\kappa-\beta-\gamma}(1+|x|_{L^{2q}})|(-A)^{-\beta}h|_{L^{4q}}|(-A)^{-\gamma}k|_{L^{4q}}.
\end{align*}
and we conclude that
\begin{align*}
\big|\mathcal{E}_{N}^{h,k,1}\big|&\le C\bigl(1+|x|_{L^p}\bigr)^{K}\Delta t^{\frac12+\kappa}\bigl(1+|(-A)^{4\kappa}x|_{L^{2q}}\bigr)|(-A)^{4\kappa}h|_{L^{4q}}|(-A)^{4\kappa}k|_{L^{4q}}\\
&+Ct_{N}^{\frac12-11\kappa-\beta-\gamma}\bigl(1+|x|_{L^p}\bigr)^{K}(1+|x|_{L^{2q}})|(-A)^{-\beta}h|_{L^{4q}}|(-A)^{-\gamma}k|_{L^{4q}}.
\end{align*}

\subsubsection{Treatment of $\mathcal{D}_{N}^{h,2}$ and of $\mathcal{E}_{N}^{h,k,2}$}

We use the following  basic identities:
\begin{equation}\label{eq:transfo_ij}
\begin{gathered}
\bigl(\sigma'(X_\ell).S_{\Delta t}^{\ell}h\bigr)e_i=\sum_{j\in\N^\star}\langle \bigl(\sigma'(X_\ell).S_{\Delta t}^{\ell}h\bigr)e_i,e_j\rangle e_j=\sum_{j\in\N^\star}\langle \bigl(\sigma'(X_\ell).S_{\Delta t}^{\ell}h\bigr)e_j,e_i\rangle e_j\\
\bigl(\sigma''(X_\ell).(\eta_\ell^h,\eta_\ell^k)\bigr)e_i=\sum_{j\in \N^\star}\langle \bigl(\sigma''(X_\ell).(\eta_\ell^h,\eta_\ell^k)\bigr)e_i,e_j\rangle e_j=\sum_{j\in \N^\star}\langle \bigl(\sigma''(X_\ell).(\eta_\ell^h,\eta_\ell^k)\bigr)e_j,e_i\rangle e_j,
\end{gathered}
\end{equation}
thanks to~\eqref{eq:sigma_star}, from Property~\ref{ass:sigma}.

The parameter $\tau>0$ plays an important role in the estimates below, to ensure summability with respect to $j\in\N^\star$. Indeed, since Lemmas~\ref{lem:Pi} and~\ref{lem:Mall_2} are restricted to powers of $-A$ strictly less than $\frac12$,  the computations below for $\tau=0$ would only provide upper bounds in terms of $\sum_{j}\lambda_{j}^{-\frac12+\kappa}=+\infty$.

\subsubsection*{Control of $\mathcal{D}_{N,\ell}^{h,2,1}$}

From~\eqref{eq:decompD},~\eqref{eq:transfo_ij}, and Assumption~\ref{ass:phi}, we have
\begin{align*}
\big|\mathcal{D}_{N,\ell}^{h,2,1}\big|&=\Big|\sum_{j\in\N^\star}\E\int_{\ell\Delta t}^{(\ell+1)\Delta t}D^2\varphi(X_N).\bigl(\mathcal{D}_sX_N\bigl(\sigma'(X_\ell).S_{\Delta t}^{\ell}h\bigr)e_j,\Pi_{N-1:\ell+1}e^{\tau A}S_{\Delta t}e_j\bigr)ds\Big|\\
&\le C(1+|x|_{L^p})^K\sum_{j\in\N^\star}\int_{\ell\Delta t}^{(\ell+1)\Delta t}\bigl(\E\big|\Pi_{N-1:\ell+1}e^{\tau A}S_{\Delta t}e_j|_{L^q}^{2}\E\big|\mathcal{D}_sX_N\bigl(\sigma'(X_\ell).S_{\Delta t}^{\ell}h\bigr)e_j\big|_{L^q}^2\bigr)^{\frac12}ds.
\end{align*}

On the one hand, by Lemma~\ref{lem:Pi}, for $\ell\in\left\{0,\ldots,N-2\right\}$,
\[
\bigl(\E\big|\Pi_{N-1:\ell+1}e^{\tau A}S_{\Delta t}e_j|_{L^q}^{2}\bigr)^{\frac12}\le C t_{N-\ell-1}^{-\frac12+\kappa}\big|(-A)^{-\frac12+\kappa}e^{\tau A}S_{\Delta t}e_j\big|_{L^q} \le C t_{N-\ell-1}^{-\frac12+\kappa}\tau^{-2\kappa}\lambda_{j}^{-\frac12-\kappa}.
\]
When $\ell=N-1$, $\bigl(\E\big|\Pi_{N-1:\ell+1}e^{\tau A}S_{\Delta t}e_j|_{L^q}^{2}\bigr)^{\frac12}=\big|e^{\tau A}S_{\Delta t}e_j|_{L^q}\le C\Delta t^{-\frac12-\kappa}\lambda_{j}^{-\frac12-\kappa}$.

On the other hand, using Lemmas~\ref{lem:Mall_1} and~\ref{lem:Pi}, and then Properties~\ref{ass:A} and~\ref{ass:sigma},
\begin{align*}
\bigl(\E\big|\mathcal{D}_sX_N\bigl(\sigma'(X_\ell).S_{\Delta t}^{\ell}h\bigr)e_j\big|_{L^q}^2\bigr)^{\frac12}&=\bigl(\E\big|\Pi_{N-1:\ell+1}S_{\Delta t}e^{\tau A}\sigma(X_\ell)\bigl(\sigma'(X_\ell).S_{\Delta t}^{\ell}h\bigr)e_j\big|_{L^q}^2\bigr)^{\frac12}\\
&\le C\bigl(\E\big|\sigma(X_\ell)\bigl(\sigma'(X_\ell).S_{\Delta t}^{\ell}h\bigr)e_j\big|_{L^q}^2\bigr)^{\frac12}\\
&\le C\big|S_{\Delta t}^{\ell}h\big|_{L^q}\\
&\le C\mathds{1}_{\ell\neq 0}t_{\ell}^{-\beta}|(-A)^{-\beta}h|_{L^q}+C\mathds{1}_{\ell=0}|h|_{L^q}.
\end{align*}

Recall that $\sum_{j\in\N^\star}\lambda_{j}^{-\frac12-\kappa}<\infty$ by Property~\ref{ass:A}. This yields
\[
\sum_{\ell=0}^{N-1}\big|\mathcal{D}_{N,\ell}^{h,2,1}\big|\le \frac{C(1+|x|_{L^p})^K}{\tau^{2\kappa}}\bigl(t_{N-1}^{-\frac12+\kappa}\Delta t |h|_{L^q}+t_{N-1}^{\frac12+\kappa-\beta}\big|(-A)^{-\beta}h\big|_{L^q}\bigr).
\]

\subsubsection*{Control of $\mathcal{D}_{N,\ell}^{h,2,2}$}

Thanks to~\eqref{eq:decompD}, \eqref{eq:transfo_ij}, and Assumption~\ref{ass:phi}, we get
\begin{align*}
\big|\mathcal{D}_{N,\ell}^{h,2,2}\big|&=\Big|\sum_{j\in\N^\star}\E\int_{\ell\Delta t}^{(\ell+1)\Delta t}\langle D\varphi(X_N),\D_s^{(\sigma'(X_\ell).S_{\Delta t}^{\ell}h)e_j}\Pi_{N-1:\ell+1}e^{\tau A}S_{\Delta t}e_j\rangle ds\Big|\\
&\le C(1+|x|_{L^p})^{K}\sum_{j\in\N^\star}\int_{\ell\Delta t}^{(\ell+1)\Delta t}\bigl(\E\big|\D_s^{(\sigma'(X_\ell).S_{\Delta t}^{\ell}h)e_j}\Pi_{N-1:\ell+1}e^{\tau A}S_{\Delta t}e_j\big|_{L^q}^{2}\bigr)^{\frac12}ds.
\end{align*}

In addition, observe that $\mathcal{D}_{N,N-1}^{h,2,2}=0$, thanks to the second part of Lemma~\ref{lem:Mall_2}. Applying the estimate in Lemma~\ref{lem:Mall_2}, for $\ell\in\left\{0,\ldots,N-2\right\}$, one has
\begin{align*}
\bigl(\E\big|&\D_s^{(\sigma'(X_\ell).S_{\Delta t}^{\ell}h)e_j}\Pi_{N-1:\ell+1}e^{\tau A}S_{\Delta t}e_j|_{L^q}^{2}\bigr)^{\frac12}\\
&\le C\bigl(\E\big|(\sigma'(X_\ell).S_{\Delta t}^{\ell}h)e_j\big|_{L^{2q}}\big|^4\bigr)^{\frac14}
\Delta t^{\frac12}|S_{\Delta t}e^{\tau A}e_j|_{L^{2q}}\bigl(1+\frac{1}{t_{N-\ell-1}^{\frac{1}{4}+\frac{1}{2q}+\kappa}}\bigr)\\
&~+C\bigl(\E\big|(\sigma'(X_\ell).S_{\Delta t}^{\ell}h)e_j\big|_{L^{2q}}\big|^4\bigr)^{\frac14}\mathds{1}_{\ell<N-2}|(-A)^{-\frac12+\kappa}S_{\Delta t}e^{\tau A}e_j|_{L^{2q}}\bigl(1+\frac{1}{t_{N-\ell-2}^{\frac{1}{4}+\frac{1}{2q}+\kappa}}\bigr)\\
&\le C\Bigl(\mathds{1}_{\ell\neq 0}t_{\ell}^{-\beta}|(-A)^{-\beta}h|_{L^{2q}}+\mathds{1}_{\ell=0}|h|_{L^{2q}}\Bigr)\tau^{-2\kappa}\lambda_{j}^{-\frac12-\kappa}\bigl(1+\frac{\Delta t^\kappa}{t_{N-\ell-1}^{\frac{1}{4}+\frac{1}{2q}+\kappa}}+
\frac{\mathds{1}_{\ell<N-2}}{t_{N-\ell-2}^{\frac{1}{4}+\frac{1}{2q}+\kappa}}\bigr).
\end{align*}

This yields
\[
\sum_{\ell=0}^{N-1}\big|\mathcal{D}_{N,\ell}^{h,2,2}\big|\le \frac{C(1+|x|_{L^p})^{K}}{\tau^{2\kappa}}\bigl(1+t_{N-1}^{\frac{1}{2}-\frac{1}{2q}-\kappa-\beta}\bigr)\Bigl(|(-A)^{-\beta}h|_{L^{2q}}+\frac{\Delta t}{t_{N-1}}|h|_{L^{2q}}\Bigr).
\]

\subsubsection*{Control of $\mathcal{E}_{N,\ell}^{h,k,2,1}$}

Thanks to \eqref{eq:decompE} and Assumption~\ref{ass:phi} we get
\begin{align*}
\big|\mathcal{E}_{N,\ell}^{h,k,2,1}\big|&=\Big|\sum_{j\in\N^\star}\E\int_{\ell\Delta t}^{(\ell+1)\Delta t}D^2\varphi(X_N).\bigl(\mathcal{D}_sX_N\bigl(\sigma''(X_{\ell}).(\eta_{\ell}^h,\eta_{\ell}^k)\bigr)e_j,\Pi_{N-1:\ell+1}e^{\tau A}S_{\Delta t}e_j\bigr)ds\Big|\\
&\le C(1+|x|_{L^p})^K\sum_{j\in\N^\star}\int_{\ell\Delta t}^{(\ell+1)\Delta t}\bigl(\E\big|\Pi_{N-1:\ell+1}e^{\tau A}S_{\Delta t}e_j|_{L^q}^{2}\E\big|\mathcal{D}_sX_N\bigl(\sigma''(X_{\ell}).(\eta_{\ell}^h,\eta_{\ell}^k)\bigr)e_j\big|_{L^q}^2\bigr)^{\frac12}ds.
\end{align*}

On the one hand, by Lemma~\ref{lem:Pi}, for $\ell\in\left\{0,\ldots,N-2\right\}$,
\[
\bigl(\E\big|\Pi_{N-1:\ell+1}e^{\tau A}S_{\Delta t}e_j|_{L^q}^{2}\bigr)^{\frac12}\le C t_{N-\ell-1}^{-\frac12+\kappa}\tau^{-2\kappa}\lambda_{j}^{-\frac12-\kappa}.
\]
When $\ell=N-1$, $\bigl(\E\big|\Pi_{N-1:\ell+1}e^{\tau A}S_{\Delta t}e_j|_{L^q}^{2}\bigr)^{\frac12}=\big|e^{\tau A}S_{\Delta t}e_j|_{L^q}\le C\Delta t^{-\frac12-\kappa}\lambda_{j}^{-\frac12-\kappa}$.

On the other hand, using Lemmas~\ref{lem:Mall_1} and~\ref{lem:Pi}, and Property~\ref{ass:sigma},
\begin{align*}
\bigl(\E\big|\mathcal{D}_sX_N\bigl(\sigma''(X_{\ell}).(\eta_{\ell}^h,\eta_{\ell}^k)\bigr)e_j\big|_{L^q}^2\bigr)^{\frac12}&=\bigl(\E\big|\Pi_{N-1:\ell+1}S_{\Delta t}e^{\tau A}\sigma(X_\ell)\bigl(\sigma''(X_{\ell}).(\eta_{\ell}^h,\eta_{\ell}^k)\bigr)e_j\big|_{L^q}^2\bigr)^{\frac12}\\
&\le C\bigl(\E\big|\sigma(X_\ell)\bigl(\sigma''(X_{\ell}).(\eta_{\ell}^h,\eta_{\ell}^k)\bigr)e_j\big|_{L^q}^2\bigr)^{\frac12}\\
&\le C\bigl(\E|\eta_{\ell}^{h}|_{L^{2q}}^{4}\bigr)^{\frac14}\bigl(\E|\eta_{\ell}^{k}|_{L^{2q}}^{4}\bigr)^{\frac14}\\
&\le C\mathds{1}_{\ell\neq 0}t_{\ell}^{-\beta-\gamma}|(-A)^{-\beta}h|_{L^{2q}}|(-A)^{-\gamma}k|_{L^{2q}}+C\mathds{1}_{\ell=0}|h|_{L^{2q}}|k|_{L^{2q}}.
\end{align*}

This yields
\[
\sum_{\ell=0}^{N-1}\big|\mathcal{E}_{N,\ell}^{h,k,2,1}\big|\le \frac{C(1+|x|_{L^p})^K}{\tau^{2\kappa}}\bigl(t_{N-1}^{-\frac12+\kappa}\Delta t |h|_{L^{2q}}|k|_{L^{2q}}+t_{N-1}^{\frac12+\kappa-\beta-\gamma}|(-A)^{-\beta}h|_{L^{2q}}|(-A)^{-\gamma}k|_{L^{2q}}\bigr).
\]

\subsubsection*{Control of $\mathcal{E}_{N,\ell}^{h,k,2,2}$}

Thanks to~\eqref{eq:decompE}, \eqref{eq:transfo_ij} and Assumption~\ref{ass:phi} we get
\begin{align*}
\big|\mathcal{E}_{N,\ell}^{h,k,2,2}\big|&=\Big|\sum_{j\in\N^\star}\E\int_{\ell\Delta t}^{(\ell+1)\Delta t}\langle D\varphi(X_N),\D_s^{(\sigma''(X_{\ell}).(\eta_{\ell}^h,\eta_{\ell}^k))e_j}\Pi_{N-1:\ell+1}e^{\tau A}S_{\Delta t}e_j\rangle ds\Big|\\
&\le C(1+|x|_{L^p})^{K}\sum_{j\in\N^\star}\int_{\ell\Delta t}^{(\ell+1)\Delta t}\bigl(\E\big|\D_s^{(\sigma''(X_{\ell}).(\eta_{\ell}^h,\eta_{\ell}^k))e_j}\Pi_{N-1:\ell+1}e^{\tau A}S_{\Delta t}e_j\big|_{L^q}^{2}\bigr)^{\frac12}ds.
\end{align*}

In addition, observe that $\mathcal{E}_{N,N-1}^{h,2,2}=0$, thanks to the second part of Lemma~\ref{lem:Mall_2}. Applying the estimate in Lemma~\ref{lem:Mall_2}, for $\ell\in\left\{1,\ldots,N-2\right\}$, one has
\begin{align*}
\bigl(\E\big|&\D_s^{(\sigma''(X_{\ell}).(\eta_{\ell}^h,\eta_{\ell}^k))e_j}\Pi_{N-1:\ell+1}e^{\tau A}S_{\Delta t}e_j|_{L^q}^{2}\bigr)^{\frac12}\\
&\le C\bigl(\E\big|\bigl(\sigma''(X_{\ell}).(\eta_{\ell}^h,\eta_{\ell}^k)\bigr)e_j\big|_{L^{2q}}\big|^4\bigr)^{\frac14}
\Delta t^{\frac12}|S_{\Delta t}e^{\tau A}e_j|_{L^{2q}}\bigl(1+\frac{1}{t_{N-\ell-1}^{\frac{1}{4}+\frac{1}{2q}+\kappa}}\bigr)\\
&~+C\bigl(\E\big|\bigl(\sigma''(X_{\ell}).(\eta_{\ell}^h,\eta_{\ell}^k)\bigr)e_j\big|_{L^{2q}}\big|^4\bigr)^{\frac14}\mathds{1}_{\ell<N-2}|(-A)^{-\frac12+\kappa}S_{\Delta t}e^{\tau A}e_j|_{L^{2q}}\bigl(1+\frac{1}{t_{N-\ell-2}^{\frac{1}{4}+\frac{1}{2q}+\kappa}}\bigr)\\
&\le C\bigl(\E|\eta_\ell^h|_{L^{2q}}^{8}\bigr)^{\frac18}\bigl(\E|\eta_\ell^k|_{L^{2q}}^{8}\bigr)^{\frac18} \tau^{-2\kappa}\lambda_{j}^{-\frac12-\kappa}\bigl(1+\frac{\Delta t^\kappa}{t_{N-\ell-1}^{\frac{1}{4}+\frac{1}{2q}+\kappa}}+
\frac{\mathds{1}_{\ell<N-2}}{t_{N-\ell-2}^{\frac{1}{4}+\frac{1}{2q}+\kappa}}\bigr)\\
&\le C\Bigl(\mathds{1}_{\ell\neq 0}t_{\ell}^{-\beta-\gamma} |(-A)^{-\beta}h|_{L^{2q}}|(-A)^{-\gamma}k|_{L^{2q}}+\mathds{1}_{\ell=0}|h|_{L^{2q}}|k|_{L^{2q}}\Bigr) \tau^{-2\kappa}\lambda_{j}^{-\frac12-\kappa}\bigl(1+\frac{\Delta t^\kappa}{t_{N-\ell-1}^{\frac{1}{4}+\frac{1}{2q}+\kappa}}+
\frac{\mathds{1}_{\ell<N-2}}{t_{N-\ell-2}^{\frac{1}{4}+\frac{1}{2q}+\kappa}}\bigr)
\end{align*}
This yields
\[
\sum_{\ell=0}^{N-1}\big|\mathcal{E}_{N,\ell}^{h,k,2,2}\big|\le \frac{C(1+|x|_{L^p})^{K}}{\tau^{2\kappa}}\bigl(1+t_{N-1}^{\frac{1}{2}-\frac{1}{2q}-\kappa-\beta-\gamma}\bigr)\Bigl(|(-A)^{-\beta}h|_{L^{2q}}|(-A)^{-\gamma}k|_{L^{2q}}+\frac{\Delta t}{t_{N}}|h|_{L^{2q}}|k|_{L^{2q}}\Bigr).
\]

\subsubsection{Estimate with $\tau>0$}

Gathering all above estimates, we have - recall that $t_N=T$ - for $\beta\in [0,1)$:
\begin{align*}
\Big|Du^{\delta,\tau,\Delta t}(T,x).h\Big|&=\Big|\mathcal{D}_N^{h,0}+\mathcal{D}_N^{h,1}+\mathcal{D}_N^{h,2}\Big|\\
&\le CT^{-\beta}\bigl(1+|x|_{L^p}\bigr)^{K}|(-A)^{-\beta}h|_{L^{q}}\\
&+ C\Delta t^{\frac12+\kappa}\bigl(1+|x|_{L^p}\bigr)^{K}(1+|(-A)^{4\kappa}x|_{L^{2q}})|(-A)^{3\kappa}h|_{L^{2q}} \\
&+CT^{\frac12-6\kappa-\beta}\bigl(1+|x|_{L^p}\bigr)^{K}(1+|x|_{L^{2q}})|(-A)^{-\beta}h|_{L^{2q}}\\
&+\frac{C(1+|x|_{L^p})^K}{\tau^{2\kappa}}\bigl(T^{-\frac12+\kappa}\Delta t |h|_{L^q}+t_{N-1}^{\frac12+\kappa-\beta}\big|(-A)^{-\beta}h\big|_{L^q}\bigr)\\
&+\frac{C(1+|x|_{L^p})^{K}}{\tau^{2\kappa}}\bigl(1+T^{\frac{1}{2}-\frac{1}{2q}-\kappa-\beta}\bigr)\bigl(|(-A)^{-\beta}h|_{L^{2q}}+\frac{\Delta t}{T}|h|_{L^{2q}}\bigr).
\end{align*}

{Letting $\Delta t\to 0$ yields Proposition~\ref{propo:u_delta_tau_1}. Similarly, gathering all above estimates and using the identity $D^2u^{\delta,\tau,\Delta t}(T,x).(h,k)
=\mathcal{E}_N^{h,k,0}+\mathcal{E}_N^{h,k,1}
+\mathcal{E}_N^{h,k,2}$, gives a similar estimate for $D^2u^{\delta,\tau,\Delta t}(T,x).(h,k)$. Letting $\Delta t\to 0$ then yields Proposition~\ref{propo:u_delta_tau_2}.
}

\subsection{Conclusion of the proof}\label{sec:proof_tau}

{To deduce Theorems~\ref{theo:D1} and~\ref{theo:D2} from Propositions~\ref{propo:u_delta_tau_1} and~\ref{propo:u_delta_tau_2}, it remains to explain how to get rid of the singular factor $\tau^{-\kappa}$ in~\eqref{eq:propo_u_delta_tau_1} and~\eqref{eq:propo_u_delta_tau_2}. This is done thanks to an interpolation argument.} We need the following result, which is not optimal -- we expect an order $\frac14$ in~\eqref{eq:propo_aux_3} as in~\eqref{eq:propo_aux_1} --
but sufficient for our purpose.
\begin{propo}\label{propo:aux}
For every  $\kappa\in(0,1)$, $T>0$, there exists $C_{\kappa,\epsilon}(T)\in(0,\infty)$, such that for every $\delta,\tau\in(0,1)$, $x\in L^p$ and $h,k\in L^{2q}$
\begin{align}
&|u^{\delta,\tau}(T,x)-u_\delta(T,x)|\le  C_\kappa(T)\tau^{\frac14-\kappa}\bigl(1+|x|_{L^{p}}^{K}\bigr)\label{eq:propo_aux_1}\\
&\big|\bigl(Du^{\delta,\tau}(T,x)-Du_\delta(T,x)\bigr).h\big|\le C_{\kappa}(T)\tau^{\frac14-\kappa}\bigl(1+|x|_{L^{p}}^{K}\bigr)|h_1|_{L^{q}}\label{eq:propo_aux_2}\\
&\big|\bigl(D^2u^{\delta,\tau}(T,x)-D^2u_\delta(T,x)\bigr).\bigl(h,k\bigr)\big|\le C_{\kappa}(T)\tau^{\frac1{8}-\kappa}\bigl(1+|x|_{L^{p}}^{K}\bigr)|h|_{L^{3q}}|k|_{L^{3q}}.\label{eq:propo_aux_3}
\end{align}
\end{propo}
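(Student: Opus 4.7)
The plan is to first establish strong convergence rates for $X^{\delta,\tau}\to X^\delta$ as $\tau\to 0$, and then to bootstrap these to the first- and second-variation processes $\eta$ and $\zeta$. Writing the mild formulation of $X^{\delta,\tau}-X^\delta$, the only source of discrepancy is the term $e^{\tau A}\sigma_\delta(X^{\delta,\tau})-\sigma_\delta(X^\delta)$, which I would decompose as $e^{\tau A}\bigl(\sigma_\delta(X^{\delta,\tau})-\sigma_\delta(X^\delta)\bigr)+(e^{\tau A}-I)\sigma_\delta(X^\delta)$. The first piece is absorbed by a Gronwall argument using the Lipschitz continuity of $\sigma_\delta$ from Property~\ref{ass:sigma}. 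The second piece is the driving term: using It\^o isometry (or its $\gamma$-radonifying analogue \eqref{eq:Ito_gamma}) together with the elementary bound $\|(I-e^{\tau A})(-A)^{-\alpha}\|_{\mathcal{L}(L^2)}\le C\tau^{\alpha}$ for $\alpha\in[0,1]$ and the smoothing estimate $\sum_i e^{-2(t-s)\lambda_i}\lesssim (t-s)^{-1/2-\epsilon}$, a direct computation shows that $\E\|\int_0^t e^{(t-s)A}(e^{\tau A}-I)\sigma_\delta(X^\delta(s))dW(s)\|^2\lesssim \tau^{\frac12-2\kappa}$, hence the rate $\tau^{\frac14-\kappa}$ in strong $L^{2M}(\Omega;L^r)$ for any $M\ge 1$ and suitable $r\in[2,\infty)$. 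Analogous estimates would then be derived for $\eta^{h,\delta,\tau}-\eta^{h,\delta}$ and $\zeta^{h,k,\delta,\tau}-\zeta^{h,k,\delta}$, combining the same decomposition idea with moment bounds on $\eta,\zeta$ of the type proved for $\Pi_{n-1:\ell}$ in Lemma~\ref{lem:Pi}.

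Once these strong-convergence results are secured, the first two inequalities follow from standard chain-rule decompositions. For \eqref{eq:propo_aux_1}, one writes $u^{\delta,\tau}(T,x)-u_\delta(T,x)=\E[\varphi_\delta(X^{\delta,\tau})-\varphi_\delta(X^\delta)]$, applies the mean value theorem together with \eqref{e20.1}, and concludes by Cauchy--Schwarz combined with the moment bounds from Lemma~\ref{lem:moments_X}. For \eqref{eq:propo_aux_2}, I would differentiate to get $Du^{\delta,\tau}(T,x).h=\E[D\varphi_\delta(X^{\delta,\tau}).\eta^{h,\delta,\tau}]$ and, after subtracting the $\tau=0$ counterpart, split the difference as
\begin{equation*}
\E\bigl[(D\varphi_\delta(X^{\delta,\tau})-D\varphi_\delta(X^\delta)).\eta^{h,\delta,\tau}\bigr]+\E\bigl[D\varphi_\delta(X^\delta).(\eta^{h,\delta,\tau}-\eta^{h,\delta})\bigr],
\end{equation*}
controlling the first summand via \eqref{e20.2} and the strong convergence of $X^{\delta,\tau}$, and the second via \eqref{e20.1} and the strong convergence of $\eta^{h,\delta,\tau}$.

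The main obstacle will be \eqref{eq:propo_aux_3}. Following the same pattern, I would decompose $D^2u^{\delta,\tau}(T,x).(h,k)$ into terms involving $D^2\varphi_\delta.(\eta^{h,\delta,\tau},\eta^{k,\delta,\tau})$ and $D\varphi_\delta.\zeta^{h,k,\delta,\tau}$, and subtract the analogous expressions for $\tau=0$. The delicate piece is the term involving $\zeta^{h,k,\delta,\tau}-\zeta^{h,k,\delta}$: the equation for $\zeta$ contains the quadratic source $\sigma''(X).(\eta^h,\eta^k)$, whose difference involves products of first-variation processes that accumulate singularities near $t=0$ and therefore interact badly with the $\tau$-dependent noise regularization. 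Rather than chase the optimal rate $\tau^{\frac14-\kappa}$ --- which would demand a much more intricate control of these products --- I would apply a Cauchy--Schwarz interpolation between a crude uniform moment bound for $\zeta^{h,k,\delta,\tau}-\zeta^{h,k,\delta}$ (obtained with no $\tau$-gain) and the strong $\tau^{\frac14-\kappa}$ rate available in a weaker norm, yielding the stated exponent $\tau^{\frac18-\kappa}$. The additional regularity $h,k\in L^{3q}$ on the right-hand side is exactly what is needed to accommodate the third-order differential \eqref{e20.3} appearing after the mean value theorem on $D^2\varphi_\delta$, combined with H\"older inequality between $L^q$-factors. As noted in the statement, the resulting exponent $\frac18$ is not sharp, but it is sufficient for the interpolation step carried out in Section~\ref{sec:proof_tau}.
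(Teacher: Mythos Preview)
Your strategy for \eqref{eq:propo_aux_1} and \eqref{eq:propo_aux_2} matches the paper's: decompose the diffusion difference as $e^{\tau A}\bigl(\sigma_\delta(X^{\delta,\tau})-\sigma_\delta(X^\delta)\bigr)+(e^{\tau A}-I)\sigma_\delta(X^\delta)$, use the smoothing of $e^{(t-s)A}$ to extract $\tau^{\frac14-\kappa}$ from the second piece, and close by Gronwall. One point you pass over: the drift $G_\delta=F_{1,\delta}+BF_{2,\delta}$ contains the unbounded operator $B=\partial_\xi$, so the mild-formulation Gronwall does not close uniformly in $\delta$. The paper splits $e_\tau=X^{\delta,\tau}-X^\delta$ as $e_\tau^1+e_\tau^2$, where $e_\tau^1$ carries $F_1$ and $\sigma$ (handled via the mild formulation exactly as you outline) and $e_\tau^2$ carries $BF_2$ (handled by an $L^q$ energy estimate: multiply its evolution equation by $(e_\tau^2)^{q-1}$ and integrate by parts to kill the derivative). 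This is the same device as in Lemma~\ref{lem:Pi_mild}, so your later reference to Lemma~\ref{lem:Pi} suggests you have it in mind, but it deserves to be explicit.

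For \eqref{eq:propo_aux_3} the paper takes a different and shorter route than yours. Rather than interpolating at the level of the process $\zeta$ between a crude $L^q$ bound and a $\tau^{\frac14}$ bound in an unspecified weaker norm --- an approach that is plausible but whose details you leave open (which weaker norm, and how does the pairing with $D\varphi_\delta$ survive the interpolation?) --- the paper interpolates at the level of the function $u$. It combines the already-established first-order bound \eqref{eq:propo_aux_2} with the uniform third-derivative estimate of Proposition~\ref{theo:D3}, so that an interpolation between $Du^{\delta,\tau}-Du_\delta$ (of size $\tau^{\frac14-\kappa}$) and $D^3u^{\delta,\tau}-D^3u_\delta$ (uniformly bounded) yields the intermediate $D^2$ bound with exponent $\tau^{\frac18-\kappa}$. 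This explains both the exponent $\frac18$ and the appearance of the $L^{3q}$ norms --- they come directly from Proposition~\ref{theo:D3} --- without ever writing the $\zeta$-equation. The paper notes in passing that the direct approach you sketch would in fact deliver the sharper $\tau^{\frac14-\kappa}$, but the interpolation shortcut already suffices for Section~\ref{sec:proof_tau}.
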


{The proof of Proposition~\ref{propo:aux} is postponed to the end of the section.}

{We are now in position to conclude the proof of Theorem~\ref{theo:D1}, as a consequence of Propositions~\ref{propo:u_delta_tau_1} and~\ref{propo:aux}.} Identifying the first order derivative with the gradient, and letting $\frac1r+\frac1{2q}=1$, we may rewrite~\eqref{eq:propo_u_delta_tau_1} and~\eqref{eq:propo_aux_2} as
\begin{align*}
\big|(-A)^{\beta}Du^{\delta,\tau}(T,x)\big|_{L^r}\le \frac{C_{\beta,\kappa}(T)}{\tau^{\kappa} T^{\beta}}\bigl(1+|x|_{L^{p}}\bigr)^{K}(1+|x|_{L^{2q}}),\\
\big|Du^{\delta,\tau}(T,x)-Du_\delta(T,x)\big|_{L^r}\le C_\kappa(T)\tau^{\frac14-\kappa}\bigl(1+|x|_{L^{p}}\bigr)^{K}.
\end{align*}
for $\beta \in [0,1)$. Take $\tau_k=2^{-k}$, $0<\beta<\tilde\beta<1$, $\lambda=\frac\beta{\tilde\beta}$ and 
$\kappa<\frac14(1-\lambda)$. Then we may write:
\begin{align*}
\big|(-A)^{\beta}Du_{\delta}(T,x)\big|_{L^r}&\le\sum_{k\in\N}\big|(-A)^\beta\left(Du^{\delta,\tau_{k+1}}(T,x)-Du^{\delta,\tau_{k}}(T,x)\right)\big|_{L^r}\\
&\le\sum_{k\in\N}\big|(-A)^{\tilde\beta}\left(Du^{\delta,\tau_{k+1}}(T,x)-Du^{\delta,\tau_{k}}(T,x)\right)\big|_{L^r}^{\lambda}\big|Du^{\delta,\tau_{k+1}}(T,x)-Du^{\delta,\tau_{k}}(T,x)\big|_{L^r}^{1-\lambda}\\
&\le \frac{C_{\beta,\kappa}(T)}{T^{\beta}}\bigl(1+|x|_{L^{p}}\bigr)^{K}(1+|x|_{L^{2q}})\sum_{k\in\N} 2^{k(-\kappa\lambda+(\frac14-\kappa)(1-\lambda))}\\
&\le \frac{C_{\beta,\kappa}(T)}{T^{\beta}}\bigl(1+|x|_{L^{p}}\bigr)^{K}(1+|x|_{L^{2q}}).
\end{align*}
This yields~\eqref{eq:theo_D13_1}, and concludes the proof of Theorem~\ref{theo:D1}.

We proceed similarly for the proof of Theorem \ref{theo:D2}, and thus we will not provide all the details. Identifying the second order derivative with the Hessian, and letting $\frac{1}{r}+\frac{1}{4q}=1$, we may rewrite~\eqref{eq:propo_u_delta_tau_2} and~\eqref{eq:propo_aux_3} as
\begin{align*}
\big|(-A)^\gamma D^2u^{\delta,\tau}(T,x)(-A)^\beta h\big|_{L^r}\le \frac{C_{\beta,\gamma,\kappa}(T)}{\tau^{\kappa} T^{\beta+\gamma}}\bigl(1+|x|_{L^{p}}^{K}\bigr)(1+|x|_{L^{2q}})|h|_{L^{4q}}\\
\big|\left(D^2u^{\delta,\tau}(T,x)-D^2u_\delta(T,x)\right)h\big|_{L^r}\le C_{\kappa,\epsilon}(T)\tau^{\frac1{8}-\kappa}\bigl(1+|x|_{L^{p}}^{K}\bigr)|h|_{L^{4q}}.
\end{align*}
Let us first take $\beta=0$ and take $\gamma<\tilde\gamma<\frac12$, $\lambda=\frac\gamma{\tilde\gamma}$ and $\kappa<\frac18(1-\lambda)$;
then, for $\tau_1\le \tau_2$,
\begin{align*}
&\big|(-A)^\gamma \left(D^2u^{\delta_1,\tau}(T,x)-D^2u^{\delta_2,\tau}(T,x)\right)h\big|_{L^r}\\
&\le \big|(-A)^{\tilde\gamma} \left(D^2u^{\delta_1,\tau}(T,x)-D^2u^{\delta_2,\tau}(T,x)\right)h\big|_{L^r}^\lambda
\big| \left(D^2u^{\delta_1,\tau}(T,x)-D^2u^{\delta_2,\tau}(T,x)\right)h\big|_{L^r}^{1-\lambda}\\
&\le\frac{C_{\gamma,\kappa}(T)}{T^{\gamma}}\tau_2^{\frac18(1-\lambda)-\kappa} \bigl(1+|x|_{L^{p}}\bigr)^{K}(1+|x|_{L^{2q}})|h|_{L^{4q}}.
\end{align*}
Since $D^2u$ is symmetric, it follows replacing $\gamma$ by $\beta\in [0,\frac12)$:
$$
\big| \left(D^2u^{\delta_1,\tau}(T,x)-D^2u^{\delta_2,\tau}(T,x)\right)(-A)^\beta h\big|_{L^r}\le\frac{C_{\beta,\alpha_0}(T)}{T^{\beta}}\tau_2^{\alpha_0} \bigl(1+|x|_{L^{p}}\bigr)^{K}(1+|x|_{L^{2q}})|h|_{L^{4q}},
$$
for $\alpha_0<\frac18(1-\beta)$. We then repeat the argument to conclude the proof of Theorem~\ref{theo:D2}.

{To conclude this section, we give a proof of Proposition~\ref{propo:aux}.}
\begin{proof}[Proof of Proposition~\ref{propo:aux}]
Again, we omit to write the dependence on $\delta$, for instance we write $u^{\tau}$ and $u$ instead of $u^{\delta,\tau}$ and $u_\delta$. Also, we only treat the case $q>2$. 
For every $\tau\in[0,1)$, let $\bigl(X_t^\tau\bigr)_{0\le t\le T}$ denote the solution of
\begin{equation*}
dX_t^\tau=AX_t^\tau dt+G(X_t^\tau)dt+e^{\tau A}\sigma(X_t^\tau)dW(t) , \quad X_0^\tau=x,
\end{equation*}
so that $u^\tau(T,x)=\E\bigl[\varphi(X_t^\tau)\bigr]$, $X^0=X$ and $u^0=u$.

We first prove~\eqref{eq:propo_aux_1}. Due to the regularity conditions on the test functions $\varphi$, see Assumption~\ref{ass:phi}, it is sufficient to prove the following bounds: for every $M\in\N^\star$ and every $p,q\in[2,\infty)$, for every $\gamma\in[0,\frac12)$ and $\kappa>0$ sufficiently small, there exists $C_{M,p,q}(T)\in(0,\infty)$, such that for every $0<t\le T$ and every $x\in L^p$, we have
\begin{equation}\label{eq:upper_tau}
\begin{gathered}
\bigl(\E|X^\tau(t)|_{L^p}^{2M}\bigr)^{\frac{1}{2M}}\le C_{\gamma,\kappa,p,q,M}(T)(1+|x|_{L^p}),\\
\bigl(\E|X^\tau(t)-X^0(t)|_{L^q}^{2M}\bigr)^{\frac{1}{2M}}\le C_{\gamma,\kappa,p,q,M}(T)\tau^{\frac14-\kappa}\\
\end{gathered}
\end{equation}

For simplicity, we treat only the case $M=1$. The first inequality is easy because $F_1$, $F_2$,
and $\sigma$ are bounded.

Since $\bigl(e^{tA}\bigr)_{t\ge 0}$ is an analytic semi-group on $L^p$ for every $p\in[2,\infty)$, it is standard that for $\alpha\in[0,1)$, there exists $C(p,\alpha)\in(0,\infty)$ such that
\begin{equation}
\big|(-A)^{-\alpha}(e^{\tau A}-I)\big|_{\mathcal{L}(L^p,L^p)}\le C(p,\alpha)\tau^\alpha.
\end{equation}

Let us write $e_\tau= X^\tau-X^0$, $e_\tau=e_\tau^1+e_\tau^2$ with 
\begin{align*}
e^1_\tau&=\int_{0}^{t}e^{(t-s)A}\bigl(F_1(X_{s}^{\tau})-F_1(X_{s}^{0})\bigr)ds+( \int_{0}^{t}e^{(t-s+\tau)A}\bigl(\sigma(X_{s}^{\tau})-\sigma(X_{s}^{0})\bigr)dW(s)\\
& +\int_{0}^{t}\bigl(e^{\tau A}-I\bigr)e^{(t-s)A}\sigma(X_{s}^{0})dW(s),
\end{align*}
which yields, thanks to Properties \ref{ass:F} and \ref{ass:sigma},
\begin{align*}
\E|e_\tau^1(t)|_{L^q}^2&\le C\int_0^t \E|X_s^\tau-X_s^0|_{L^q}^2ds\\
&+C\int_{0}^{t}\big|(-A)^{\frac{1}{2q}}e^{(t-s+\tau)A}\big|_{R(L^2,L^q)}^2\E|X_s^\tau-X_s^0|_{L^q}^2ds\\
&+C\int_{0}^{t}\big|(-A)^{-\frac14+\kappa}(e^{\tau A}-I)\big|_{\mathcal{L}(L^q,L^q)}^2 \big|(-A)^{\frac14-\kappa}e^{(t-s+\tau)A}\big|_{R(L^2,L^q)}^2 ds\\
&\le C\int_{0}^{t}\bigl(\frac1{(t-s)^{\frac12+\frac1q+\kappa}}+1\bigr)\E|X_s^\tau-X_s^0|_{L^q}^2ds + C \tau^{\frac12-2\kappa},
\end{align*}
using a continuous time version of Lemma~\ref{lem:S_2}.

The equation for $e^2$ is
$$
\frac{d}{dt}e_\tau^2=Ae_\tau^2+\left(BF_2(X^\tau)- BF_2(X^0)\right), \quad e_\tau^2(0)=0.
$$
We estimate $e_\tau^2$ by an energy method. Recall that we work in fact with regularized coefficients, $G_\delta= B e^{\delta A}F_2(e^\delta \cdot) + e^{\delta A}F_1(e^\delta \cdot)$, so that both $X^\tau$ and $X$ are sufficiently regular to justify all the computations. Multiply the equation by $(e_\tau^2)^{q-1}$, integrate in space to get thanks to standard manipulations as in the proof of Lemma 
\ref{lem:Pi_mild}:
$$
\frac{d}{dt}|e_\tau^2|_{L^q}^q\le c  |X^\tau-X_0|_{L^q}^2|e_\tau^2|_{L^q}^{q-2}
$$
and 
$$
\frac{d}{dt}|e_\tau^2|_{L^q}^2\le c  |X^\tau-X_0|_{L^q}^2.
$$
Integrating in time and adding with the inequality above yields: 
$$
\E|X_t^\tau-X_t^0|_{L^q}^2\le C\int_{0}^{t}\left(\frac1{(t-s)^{\frac12+\frac1q+\kappa}}+1\right)\E|X_s^\tau-X_s^0|_{L^q}^2ds + C \tau^{\frac12-2\kappa}
$$
and~\eqref{eq:propo_aux_1} follows from Gronwall Lemma.

The proof of~\eqref{eq:propo_aux_2} is similar but longer; details are left to the reader. Finally, instead of proving~\eqref{eq:propo_aux_3} with similar long but straightforward arguments (and a better estimate with $\tau^{\frac14-\kappa}$ is obtained), it is simpler to use
Proposition~\ref{theo:D3} for $k_1,k_2,k_3\in L^{3q}$:
$$
\big|D^3u_\delta(T,x).(k_1,k_2,k_3)\big|\le C_\beta(T)(1+|x|_{L^p})^K|k_1|_{L^{3q}}|k_2|_{L^{3q}}|k_3|_{L^{3q}}.
$$
and get the result by an interpolation argument. {This concludes the proof of Proposition~\ref{propo:aux}.}
\end{proof}

\subsection{Proof of the auxiliary lemmas}\label{sec:proof_aux_lemmas}

\begin{proof}[Proof of Lemma~\ref{lem:Mall_1}]
Thanks to~\eqref{eq:scheme}, we obtain
\begin{align*}
X_n&=S_{\Delta t}^{n-\ell}X_{\ell}+\Delta tS_{\Delta t}^{n-\ell}G(X_\ell)+S_{\Delta t}^{n-\ell}e^{\tau A}\sigma(X_\ell)\Delta W_\ell\\\
&~+\Delta t\sum_{m=\ell+1}^{n-1}S_{\Delta t}^{n-m}G(X_m)+\sum_{m=\ell+1}^{n-1}S_{\Delta t}^{n-m}e^{\tau A}\sigma(X_m)\Delta W_m,
\end{align*}
where $X_{\ell}$ is $\sigma\bigl(\Delta W_0,\ldots,\Delta W_{\ell-1}\bigr)$-measurable. Thus $\mathcal{D}_sX_{\ell}=0$ for $s>\ell\Delta t$.

Moreover,  $\D_s^\theta \Delta W_{\ell}=\theta$ and, for $m>\ell$, $\D_s^\theta \Delta W_{m}=0$ for $s\in\bigl(\ell\Delta t,(\ell+1)\Delta t\bigr)$. Using the chain rule, we thus obtain, for $n>\ell$ and any $\theta\in H$,
\[
\D_s^\theta X_n=S_{\Delta t}^{n-\ell}e^{\tau A}\sigma(X_\ell)\theta+\Delta t\sum_{m=\ell+1}^{n-1}S_{\Delta t}^{n-m}G'(X_m).\D_s^{\theta}X_m+\sum_{m=\ell+1}^{n-1}S_{\Delta t}^{n-m}e^{\tau A}\bigl(\sigma'(X_m)\D_s^\theta X_m\bigr)\Delta W_m,
\]
which in turn gives $\D_s^\theta X_n=\Pi_{n-1}\D_s^\theta X_{n-1}$ by definition~\eqref{eq:Pi_n}. Since $\D_s^\theta X_{\ell+1}=S_{\Delta t}e^{\tau A}\sigma(X_\ell)$, equality~\eqref{eq:lem_Mall_1} is satisfied, and this concludes the proof of Lemma~\ref{lem:Mall_1}.
\end{proof}

Lemmas~\ref{lem:Pi} and~\ref{lem:Mall_2} are both consequences of the following technical result.

\begin{lemma}\label{lem:Pi_mild}
Let $q\in[2,\infty)$, $M\in\N^\star$, $T\in(0,\infty)$, and $\beta\in[0,\frac12)$. There exists $C_\beta(M,q,T)$ such that the following holds true.

Let $\ell\in\left\{0,\ldots,N-1\right\}$ and consider a $\sigma\bigl(\Delta W_0,\ldots,\Delta W_{\ell-1}\bigr)$-measurable random vector $z_\ell$, and two sequences $\bigl(Z_{n}^{j}\bigr)_{n\ge \ell, j\in\left\{1,2\right\}}$, such that $Z_n^j$ is $\sigma\bigl(\Delta W_0,\ldots,\Delta W_{n-1}\bigr)$-measurable.

Define the sequence $\bigl(Y_n^\ell\bigr)_{\ell\le n\le N}$ by $Y_{\ell}^{\ell}=z_\ell$, and for $n>\ell$
\[
Y_{n}^{\ell}=\Pi_{n-1}Y_{n-1}^\ell+\Delta t S_{\Delta t}G_{n-1}+S_{\Delta t}e^{\tau A} \bigl(\sigma''(X_{n-1}).(Z_{n-1}^{1},Z_{n-1}^2)\bigr)\Delta W_{n-1},
\]
with $G_{n-1}=G''(X_{n-1}).(Z_{n-1}^1,Z_{n-1}^2)$.

Then, when $q>2$, and every $n\ge \ell+1$,
\begin{align*}
\left(\E|Y_n^\ell|_{L^q}^{2M}\right)^{\frac1{M}}&\leq C_\beta(M,q,T)\bigg(t_{n-\ell}^{-2\beta}(\E|(-A)^{-\beta}z_{\ell}|_{L^q}^{2M})^{\frac1{M}}
+\Delta t \sum_{m=\ell}^{n-1} \bigl(1+\frac{1}{t_{n-m}^{\frac{1}{2}+\frac{1}{q}+\kappa}}\bigr)
\E\left(|Y_{m}^{\ell}|_{L^q}^{2M}\right)^\frac1M\\
&+\Delta t\sum_{m=\ell}^{n-1}\bigl(1+\frac{1}{t_{n-m}^{\frac{1}{2}+\frac{1}{q}+\kappa}}\bigr)(\E|Z_{n}^1|_{L^{2q}}^{4M})^{\frac1{2M}}(\E|Z_{n}^2|_{L^{2q}}^{4M})^{\frac1{2M}}
\bigg).
\end{align*}
When $q=2$, for every $n\ge \ell+1$
\begin{align*}
\left(\E|Y_n^\ell|^{2M}\right)^{\frac1{M}}&\leq C_\beta(M,\kappa,T)\bigg(t_{n-\ell}^{-2\beta}(\E|(-A)^{-\beta}z_{\ell}|^{2M})^{\frac1{M}}
+\Delta t \sum_{m=\ell}^{n-1} \bigl(1+\frac{1}{t_{n-m}^{\frac{1}{2}+\kappa}}\bigr)
\E\left(|Y_{m}^{\ell}|_{L^2}^{2M}\right)^\frac1M\\
&+\Delta t\sum_{m=\ell}^{n-1}\bigl(1+\frac{1}{t_{n-m}^{\frac{1}{2}+\kappa}}\bigr)(\E|Z_{n}^1|_{L^{4}}^{4M})^{\frac1{2M}}(\E|Z_{n}^2|_{L^{4}}^{4M})^{\frac1{2M}}
\bigg).
\end{align*}

\end{lemma}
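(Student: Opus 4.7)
The plan is to derive a discrete variation-of-constants formula for $Y_n^\ell$ and then estimate each resulting term in $L^{2M}(\Omega;L^q)$ using Lemmas~\ref{lem:S_1} and~\ref{lem:S_2} together with Properties~\ref{ass:F} and~\ref{ass:sigma}. Expanding $\Pi_{n-1}Y_{n-1}^\ell$ according to its definition and iterating from $m=\ell$ to $n-1$ yields the identity
\begin{align*}
Y_n^\ell &= S_{\Delta t}^{n-\ell} z_\ell + \Delta t \sum_{m=\ell}^{n-1} S_{\Delta t}^{n-m}\bigl(G'(X_m).Y_m^\ell + G''(X_m).(Z_m^1, Z_m^2)\bigr) \\
&\quad + \sum_{m=\ell}^{n-1} S_{\Delta t}^{n-m} e^{\tau A}\bigl(\sigma'(X_m).Y_m^\ell + \sigma''(X_m).(Z_m^1, Z_m^2)\bigr) \Delta W_m.
\end{align*}
The $L^{2M}(\Omega;L^q)$ norm is then bounded term by term, using Minkowski in the discrete sums.

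The deterministic term is controlled by writing $S_{\Delta t}^{n-\ell} z_\ell = \bigl[(-A)^{\beta} S_{\Delta t}^{n-\ell}\bigr](-A)^{-\beta} z_\ell$ and invoking Lemma~\ref{lem:S_1}, which gives the factor $t_{n-\ell}^{-\beta}$. For the drift contribution, split $G = F_1 + BF_2$: the $F_1'$ piece is controlled in $L^q$ via Property~\ref{ass:F} and Lemma~\ref{lem:S_1} at $\beta = 0$, while the Burgers piece $S_{\Delta t}^{n-m} B F_2'(X_m).Y_m^\ell$ is estimated by factoring $S_{\Delta t}^{n-m} B = \bigl[(-A)^{1/2+\kappa} S_{\Delta t}^{n-m}\bigr]\bigl[(-A)^{-1/2-\kappa} B\bigr]$, combining~\eqref{eq:norm_AB} with Lemma~\ref{lem:S_1} to extract $C t_{n-m}^{-1/2-\kappa} |Y_m^\ell|_{L^q}$. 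The source term $G''(X_m).(Z_m^1,Z_m^2)$ is treated identically, with the product $|Z_m^1|_{L^{2q}} |Z_m^2|_{L^{2q}}$ emerging from H\"older applied to Property~\ref{ass:F} and Cauchy-Schwarz in $\Omega$.

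The stochastic convolution is the crucial part. A Burkholder-Davis-Gundy inequality for $L^q$-valued martingales, together with Minkowski, reduces the estimate to controlling the $R(L^2,L^q)$ norm of $\Phi_m := S_{\Delta t}^{n-m} e^{\tau A}\bigl(\sigma'(X_m).Y_m^\ell + \sigma''(X_m).(Z_m^1,Z_m^2)\bigr)$. Factor
\[
\Phi_m = \bigl[(-A)^{1/(2q)+\kappa} S_{\Delta t}^{n-m}\bigr] \bigl[(-A)^{-1/(2q)-\kappa} e^{\tau A}\bigl(\sigma'(X_m).Y_m^\ell + \sigma''(X_m).(Z_m^1,Z_m^2)\bigr)\bigr],
\]
so that the ideal property~\eqref{eq:ideal} applies: by Lemma~\ref{lem:S_2} the first bracket is bounded in $R(L^2,L^q)$ by $C t_{n-m}^{-1/4-1/(2q)-\kappa}$, while~\eqref{eq:estim_sigma_1} and~\eqref{eq:estim_sigma_2} bound the second bracket in $\mathcal{L}(L^2)$ by $C\bigl(|Y_m^\ell|_{L^q} + |Z_m^1|_{L^{2q}}|Z_m^2|_{L^{2q}}\bigr)$. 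Squaring, summing, and applying Cauchy-Schwarz in $\Omega$ to decouple $Z^1$ from $Z^2$ produces exactly the claimed singular exponent $1/2 + 1/q + \kappa$ of $t_{n-m}$, after absorbing the extra powers of $\kappa$ into a single parameter.

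The case $q = 2$ is analogous but simpler: the standard Hilbert-Schmidt It\^o isometry replaces~\eqref{eq:Ito_gamma}, the same factorization works without the $1/(2q)$ correction, and one obtains the improved singular exponent $1/2 + \kappa$. The main technical obstacle will be the bookkeeping of the fractional powers $(-A)^\alpha$ so that the exponents extracted from Lemma~\ref{lem:S_2} combine correctly with those in Property~\ref{ass:sigma}; the parameter $\kappa$ must be chosen small enough that every singular exponent in $t_{n-m}$ stays strictly below $1$, ensuring that the resulting discrete convolutions are integrable and that the discrete Gronwall arguments used downstream in Lemmas~\ref{lem:Pi} and~\ref{lem:Mall_2} can be closed.
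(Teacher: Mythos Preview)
Your approach is correct and reaches the stated inequality, but your treatment of the Burgers-type drift differs from the paper's. You handle $S_{\Delta t}^{n-m}BF_2'(X_m).Y_m^\ell$ entirely within the mild formulation, factoring $S_{\Delta t}^{n-m}B=\bigl[(-A)^{1/2+\kappa}S_{\Delta t}^{n-m}\bigr]\bigl[(-A)^{-1/2-\kappa}B\bigr]$ and invoking~\eqref{eq:norm_AB} together with Lemma~\ref{lem:S_1}; this produces an extra singularity $t_{n-m}^{-1/2-\kappa}$ which is harmless since it is dominated by the $t_{n-m}^{-1/2-1/q-\kappa}$ already present from the stochastic convolution. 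The paper instead isolates all terms carrying the operator $B$ into a separate piece $Y_n^{2,\ell}$ and bounds it by a genuine $L^q$ energy estimate: one multiplies the one-step recursion for $Y_n^{2,\ell}$ by $(Y_n^{2,\ell})^{q-1}$, integrates in space, and uses that $A=\partial_{\xi\xi}$, $B=\partial_\xi$ with Dirichlet boundary conditions to integrate by parts and absorb the derivative; after H\"older and a sup-in-time argument this yields the pathwise bound $|Y_n^{2,\ell}|_{L^q}^2\le C\Delta t\sum_{m}(|Y_m^\ell|_{L^q}^2+|Z_m^1|_{L^{2q}}^2|Z_m^2|_{L^{2q}}^2)$ with no time singularity at all. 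Your semigroup route is more uniform and avoids the need to justify the energy computation via the $\delta$-regularization, whereas the paper's energy method gives a sharper (pathwise, non-singular) control of the Burgers contribution and mirrors the argument used later in the proof of Proposition~\ref{propo:aux}. For the initial term, the $F_1$ drift, and the stochastic convolution your argument coincides with the paper's.
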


Before we give the proof of this result, let us mention that it will be useful when combined with the following discrete Gronwall Lemma, see for instance Lemma~7.1 in~\cite{Elliott_Larsson:92} for details. Lemma~\ref{lem:Gronwall} will also be used repeatedly in Section~\ref{sec:proof_num}.

\begin{lemma}\label{lem:Gronwall}
Let $\mu,\nu\in(0,1)$, and $T\in(0,\infty)$. Assume that $\Delta t=\frac{T}N$, for some $N\in\mathbb{N}^\star$; for $1\le n\le N$, let $t_n=n\Delta t$.

Assume that the sequence $\bigl(\phi_n\bigr)_{0\le n\le N}$, with values in $(0,\infty)$, satisfies the following condition: there exists $C_1,C_2$ such that for every $1\le n\le N$
\[
\phi_n\le C_1\bigl(1+t_n^{-1+\mu}\bigr)+C_2\Delta t\sum_{j=0}^{n-1}t_{n-j}^{-1+\nu}\phi_j.
\]
Then there exists $C$ such that $\phi_n\le C(1+t_{n}^{-1+\mu})$ for every $1\le n\le N$.
\end{lemma}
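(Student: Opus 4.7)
The strategy is to reduce the weakly-singular discrete Gronwall inequality to a non-singular one by iterating the hypothesis enough times to regularize the kernel $t_{n-j}^{-1+\nu}$, and then apply the classical discrete Gronwall lemma with non-singular kernel. The technical core of the argument is a discrete Beta-type convolution estimate: for $\alpha, \beta \in (0, 1)$,
\[
\Delta t \sum_{j=1}^{n-1} t_{n-j}^{-1+\alpha} t_j^{-1+\beta} \le C(\alpha, \beta, T) \bigl(1 + t_n^{-1+\alpha+\beta}\bigr),
\]
together with $\Delta t \sum_{j=1}^{n-1} t_{n-j}^{-1+\alpha} \le C(\alpha, T) (1 + t_n^{\alpha})$. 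Both follow from comparing the Riemann sums with the continuous Beta integral $\int_0^t (t-s)^{-1+\alpha} s^{-1+\beta} ds = B(\alpha, \beta) t^{-1+\alpha+\beta}$, treating the endpoint summands at $j=0$ and $j=n-1$ (each of order $\Delta t^{\alpha+\beta}$) separately and absorbing them into the constants.

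Next I would iterate the hypothesis $K$ times, with $K$ chosen so that $K\nu > 1$. At each substitution, the kernel is convolved with itself---the Beta estimate yields a new kernel dominated by $C_k t_{n-j}^{-1+k\nu}$---while the inhomogeneous term picks up additional contributions of the form $t_n^{-1+\mu+k\nu}$. Since $k\nu \ge 0$ and $t_n \in (0,T]$, one has $t_n^{-1+\mu+k\nu} \le C(T)(1 + t_n^{-1+\mu})$ (the extra non-negative exponent weakens the singularity at $0$ and produces at most a bounded factor for $t_n \ge 1$), so every inhomogeneous contribution along the way remains controlled by $C(1 + t_n^{-1+\mu})$. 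After $K$ iterations, the kernel $t_{n-j}^{-1+K\nu}$ is uniformly bounded in $n,j,\Delta t$, and the inequality reduces to
\[
\phi_n \le \widetilde C_1 \bigl(1 + t_n^{-1+\mu}\bigr) + \widetilde C_2 \, \Delta t \sum_{j=0}^{n-1} \phi_j.
\]

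The classical discrete Gronwall lemma applied to this non-singular inequality yields $\phi_n \le \widetilde C_1(1 + t_n^{-1+\mu}) + \widetilde C_3$, where $\widetilde C_3$ arises from the convolution of the bounded kernel with the envelope $1 + t_j^{-1+\mu}$ and is finite because $\Delta t \sum_j (1 + t_j^{-1+\mu}) \le C(\mu, T)$. Absorbing $\widetilde C_3$ into the coefficient of the positive term $1 + t_n^{-1+\mu} \ge 1$ yields the claim. The main obstacle is bookkeeping inside the iteration: one has to verify that each convolution preserves the envelope $1 + t_n^{-1+\mu}$ on the inhomogeneous side (so that the singular exponent $\mu$ never worsens) and that the accumulating constants $C_k$ depend only on $k, \mu, \nu$ and $T$, uniformly in $\Delta t$; this is where the Beta estimate, combined with the elementary bound $t_n^{-1+\mu+k\nu} \le C(T)(1 + t_n^{-1+\mu})$, plays the decisive role.
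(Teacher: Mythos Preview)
The paper does not actually prove this lemma: it simply cites Lemma~7.1 in Elliott--Larsson (1992) for the details. Your proposal is correct and follows exactly the standard route used in that reference and throughout the literature on weakly-singular Gronwall inequalities---iterate the hypothesis $K$ times with $K\nu>1$, using the discrete Beta-convolution estimate to track how the kernel exponent improves from $-1+\nu$ to $-1+K\nu$, then apply the ordinary discrete Gronwall lemma once the kernel is bounded. The bookkeeping you describe (endpoint terms in the Riemann-sum comparison, absorbing $t_n^{-1+\mu+k\nu}\le C(T)(1+t_n^{-1+\mu})$, handling the $j=0$ term via $\phi_0$) is exactly what is needed and presents no difficulty.
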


We now give a detailed proof of Lemma~\ref{lem:Pi_mild}. We only consider the case $q\in(2,\infty)$; the case $q=2$ is treated with similar arguments, but with a slightly different treatment of the stochastic integral.
\begin{proof}[Proof of Lemma~\ref{lem:Pi_mild}]
Note that $Y_{n}^{\ell}=Y_n^{1,\ell}+Y_{n}^{2,\ell}$, where
\begin{align*}
Y_{n}^{1,\ell}&=S_{\Delta t}^{n-\ell}z_\ell+\Delta t\sum_{m=\ell}^{n-1}S_{\Delta t}^{n-m}F_1'(X_m).Y_{m}^{\ell}+ \sum_{m=\ell}^{n-1}S_{\Delta t}^{n-m}e^{\tau A}\bigl(\sigma'(X_m).Y_{m}^{\ell}\bigr)\Delta W_m\\
&+\Delta t\sum_{m=\ell}^{n-1}F_{n,1}+\sum_{m=\ell}^{n-1}S_{\Delta t}^{n-m}e^{\tau A}\sigma''(X_m).(Z_m^1,Z_m^2) \Delta W_m
\end{align*}
and
\[
Y_{n}^{2,\ell}=\Delta t\sum_{m=\ell}^{n-1}S_{\Delta t}^{n-m}BF_2'(X_m).Y_m^\ell+\Delta t\sum_{m=\ell}^{n-1}S_{\Delta t}^{n-m}BF_{n,2},
\]
where $F_{n,j}=F_j''(X_m).(Z_m^1,Z_m^2)$, $j\in\left\{1,2\right\}$, are such that $G_{n-1}=F_{n-1,1}+BF_{n-1,2}$. By Property~\ref{ass:F},
\[
|F_{n,j}|_{L^q}^{2M}\le C|Z_{n}^1|_{L^{2q}}^{2M}\E|Z_{n}^2|_{L^{2q}}^{2M}.
\]

The quantity $Y_n^{1,\ell}$ is treated using properties of $S_{\Delta t}^n$ whereas energy inequalities are used for $Y_n^{2,\ell}$, which contains all the terms where the linear operator $B$ appears.

Using a discrete time version of formula~\eqref{eq:Ito_gamma} and the corresponding Burkh\"older-Davies-Gundy inequality, as well as the ideal property~\eqref{eq:ideal}, we get
\begin{align*}
\E|Y_{n}^{1,\ell}|_{L^q}^{2M}&\le C\E|S_{\Delta t}^{n-\ell}z_\ell|_{L^q}^{2M}+C\E\left(\Delta t\sum_{m=\ell}^{n-1}\bigl(\big|S_{\Delta t}^{n-m}F_1'(X_m).Y_m^{\ell}|_{L^q}^{2}+\big|S_{\Delta t}^{n-m}e^{\tau A}\bigl(\sigma'(X_m).Y_m^\ell\bigr)\big|_{R(L^2,L^q)}^{2}\bigr)\right)^M\\
&~+C\E\left(\Delta t\sum_{m=\ell}^{n-1}\bigl(\big|S_{\Delta t}^{n-m}F_{m,1}\big|_{L^q}^{2}+\big|S_{\Delta t}^{n-m}e^{\tau A}\sigma''(X_{m}).(Z_{m}^1,Z_{m}^2)\big|_{R(L^2,L^q)}^{2}\bigr)\right)^M\\
&\le Ct_{n-\ell}^{-2\beta M}\E|(-A)^{-\beta}z_\ell|_{L^q}^{2M}+C\E\left(\Delta t\sum_{m=\ell}^{n-1}(1+|(-A)^{\frac{1}{2q}}S_{\Delta t}^{n-m}|_{R(L^2,L^q)}^{2}\bigr)\E|Y_{m}^{\ell}|_{L^q}^{2} \right)^M\\
~&+C\E\left(\Delta t\sum_{m=\ell}^{n-1}(1+|(-A)^{\frac{1}{2q}}S_{\Delta t}^{n-m}|_{R(L^2,L^q)}^{2}\bigr)\E\bigl[|Z_{m}^{1}|_{L^{2q}}^{2}|Z_{m}^{2}|_{L^{2q}}^{2}\bigr]\right)^M,
\end{align*}
thanks to Lemma~\ref{lem:S_1} and Properties~\ref{ass:F} and~\ref{ass:sigma}. Thanks to Lemma~\ref{lem:S_2} and Minkowskii inequality, we obtain
\begin{align*}
\left(\E|Y_{n}^{1,\ell}|_{L^q}^{2M}\right)^{\frac1{M}}&\le Ct_{n-\ell}^{-2\beta}\E\left(|(-A)^{-\beta}z_\ell|^{2M}\right)^{\frac1{M}}+C\Delta t\sum_{m=\ell}^{n-1}\bigl(1+\frac{1}{t_{n-m}^{\frac{1}{2}+\frac{1}{q}+\kappa}}\bigr)\E\left(|Y_m^\ell|_{L^q}^{2M}\right)^{\frac1{M}}\\
&~+C\Delta t\sum_{m=\ell}^{n-1}\bigl(1+\frac{1}{t_{n-m}^{\frac{1}{2}+\frac{1}{q}+\kappa}}\bigr)\bigl(\E\left[|Z_m^1|_{L^{2q}}^{4M}\right]\E\left[|Z_m^2|_{L^{2q}}^{4M}\right]\bigr)^{\frac1{2M}}.
\end{align*}

We then estimate $Y_n^{2,\ell}$ with an energy inequality. First, note that 
$$
Y_n^{2,\ell}-Y_{n-1}^{2,\ell}= \Delta t \left( A Y_n^{2,\ell} + BF_2'(X_{n-1})  Y_{n-1}^{\ell}+Be^{\tau A} F_{n-1,2}\right).
$$
Then, multiply the above equation by $(Y_{n}^{2,\ell})^{q-1}$ and integrate in space. Recall that $A=\partial_{\xi\xi},\; B=\partial_\xi$, and that homogeneous Dirichlet boundary conditions are imposed. Standard manipulations, including using H\"older inequality and integration by parts, yield the following inequalities:
\begin{align*}
\frac1{q}\left( |Y_n^{2,\ell}|_{L^q}^q-|Y_{n-1}^{2,\ell}|_{L^q}^q\right)
&\le  \Delta t \int_0^1\bigl( (Y_n^{2,\ell})^{q-1}AY_n^{2,\ell} + B(F_2'(X_{n-1})Y_{n-1}^{\ell} +F_{n-1,2})(Y_n^{2,\ell})^{q-1}\bigr)d\xi \\
&\le -(q-1) \Delta t \int_0^1 (Y_n^{2,\ell})^{q-2} |\partial_\xi Y_n^{2,\ell}|^2 d\xi\\
&~+(q-1)\Delta t\int_0^1( F'(X_{n-1})Y_{n-1}^{\ell} +F_{n-1,2})(Y_n^{2,\ell})^{q-2}\partial_\xi Y_n^{2,\ell} d\xi\\
&\le -(q-1) \Delta t \int_0^1 (Y_n^{2,\ell})^{q-2} |\partial_\xi Y_n^{2,\ell}|^2 d\xi\\
&~+C\Delta t \int_0^1  ((Y_{n-1}^{\ell})^2 +(F_{n-1,2})^2)(Y_n^{2,\ell})^{q-2}d\xi+\Delta t\int_0^1 (Y_n^{2,\ell})^{q-2} |\partial_\xi Y_n^{2,\ell}|^2 d\xi\\
&\le C\Delta t \int_0^1  ((Y_{n-1}^{\ell})^2 +(F_{n-1,2})^2)(Y_n^{2,\ell})^{q-2}d\xi.
\end{align*}
Recall that we work with regularized coefficients (with $\delta>0$), so that $Y_n^{2,\ell}$ and $Y_n^{\ell}$ are sufficienctly regular so that the computations above are rigorous.

Applying H\"older inequality, then Lemma~\ref{lem:Gronwall}, we obtain
$$
 |Y_n^{2,\ell}|_{L^q}^q\le c \Delta t \sum_{m=\ell}^{n-1} (|Y_{m}^{\ell}|_{L^q}^2+|F_{m,2}|_{L^q}^2) |Y_{m+1}^{2,\ell}|_{L^q}^{q-2}.
$$
Define $\bar Y_n^{2,\ell}=\sup_{m=\ell,\dots,n} |Y_m^{2,\ell}|_{L^q}$; then
$$
|Y_{n}^{2,\ell}|_{L^q}^2\le ( \bar Y_n^{2,\ell})^2\le C \Delta t \sum_{m=\ell}^{n-1}\bigl(|Y_{m}^{\ell}|_{L^q}^2+|Z_m^{1}|_{L^{2q}}^2|Z_m^2|_{L^{2q}}^2\bigr).
$$
Finally, taking expectation and using Minkowskii inequality yield
\[
\E\left(|Y_n^{2,\ell}|_{L^q}^{2M}\right)^{\frac1M}\le C \Delta t \sum_{m=\ell}^{n-1}\Bigl(
\E\left(|Y_{m}^{\ell}|_{L^q}^{2M}\right)^\frac1M+\bigl(\E\left(|Z_{m}^1|_{L^{2q}}^{4M}\right)\E\left(|Z_{m}^2|_{L^{2q}}^{4M}\right)\bigr)^\frac{1}{2M}\Bigr).
\]
Gathering the estimates on $Y_n^{1,\ell}$ and $Y_n^{2,\ell}$ concludes the proof of Lemma~\ref{lem:Pi_mild}.

\end{proof}
\begin{rem}
For the case $q=2$; the contribution of the stochastic integral needs to be treated differently. We have for instance, for any $\kappa\in(0,\frac12)$,
\begin{align*}
\big|S_{\Delta t}^{n-m}e^{\tau A}\bigl(\sigma'(X_m).Y_m^\ell\bigr)\big|_{\mathcal L(L^2)}^{2}
&={\rm Tr}\Bigl(\bigl(\sigma'(X_m).Y_{m}^{\ell}\bigr)^2 S_{\Delta t}^{2(n-m)}e^{2\tau A}\Bigr)\\
&=\sum_{i}\big|\bigl(\sigma'(X_m).Y_{m}^{\ell}\bigr)^2\frac{e^{-\tau\lambda_i}}{(1+\Delta t\lambda_i)^{n-m}}e_i\big|_{L^2}\\
&\le C|Y_{m}^{\ell}|_{L^2}^{2}\sum_{i} \frac{1}{(1+\Delta t\lambda_i)^{2(n-m)}}|e_i|_{L^{\infty}}^{2}\\
&\le C_\kappa |Y_{m}^{\ell}|_{L^2}^{2} t_{n-m}^{-\frac12-\kappa}.
\end{align*} 
\end{rem}

\begin{proof}[Proof of Lemma~\ref{lem:Pi}]
For $\gamma=0$, Lemma~\ref{lem:Pi} is a straightforward consequence of Lemma~\ref{lem:Pi_mild} with $Z_n^1=Z_n^2=0$ and of the discrete Gronwall lemma, Lemma~\ref{lem:Gronwall}.

For $\gamma>0$, we write, with $Y_n^\ell=\Pi_{n-1:\ell}z_{\ell}$, 
$$
Y_{n}^{\ell}=S_{\Delta t}^{n-\ell}z_\ell+\Delta t\sum_{m=\ell}^{n-1}S_{\Delta t}^{n-m}G'(X_m).Y_{m}^{\ell}+ \sum_{m=\ell}^{n-1}S_{\Delta t}^{n-m}e^{\tau A}\bigl(\sigma'(X_m).Y_{m}^{\ell}\bigr)\Delta W_m,
$$
and, thanks to Lemmas~\ref{lem:S_1} and~\ref{lem:S_2}, and~\eqref{eq:norm_AB},
\begin{align*}
\left(\E(|(-A)^\gamma Y_n^\ell|_{L^q}^{2M}\right)^{\frac1{2M}}&\le c t_{n-\ell}^{-\beta-\gamma}\E\left(|(-A)^{-\beta}z_{\ell}|_{L^q}^{2M}\right)^{\frac1{2M}}
+c \Delta t \sum_{m=\ell}^{n-1} (t_{n-m}^{-\frac12-\kappa-\gamma}+1) \left(\E| Y_m^\ell|_{L^q}^{2M}\right)^{\frac1{2M}}\\
&+ c \left(\Delta t \sum_{m=\ell}^{n-1} t_{n-m}^{-\frac12-\frac1q-2\gamma-\kappa} \left(\E| Y_m^\ell|_{L^q}^{2M}\right)^{\frac1{M}}\right)^{\frac12}.
\end{align*}
Using the estimate obtained for $\gamma=0$, and the condition $\frac{1}{2}+\frac{1}{q}+2\gamma+\kappa<1$, for sufficiently small $\kappa>0$, then concludes the proof.
\end{proof}

\begin{proof}[Proof of Lemma~\ref{lem:Mall_2}]

Again, we only treat the case $q\in(2,\infty)$.

Define $Y_{n}^{\ell}=\Pi_{n-1:\ell+1}z$, where $0\le \ell\le n-1$.
If $\ell=n-1$, $Y_{n}^{\ell}=z$, and thus for $s\in\bigl(\ell\Delta t,(\ell+1)\Delta t\bigr)$ one has $\D_sY_{\ell+1}^{\ell}=0$.
If $n>\ell+1$,
\[
Y_{n}^{\ell}=\Pi_{n-1}Y_{n-1}^{\ell}=S_{\Delta t}Y_{n-1}^{\ell}+\Delta tS_{\Delta t}G'(X_{n-1}).Y_{n-1}^{\ell}+S_{\Delta t}e^{\tau A}\bigl(\sigma'(X_{n-1}).Y_{n-1}^{\ell}\bigr)\Delta W_{n-1}.
\]
Using the chain rule and the identity $\D_s\Delta W_{n-1}=0$ for $s<(\ell+1)\Delta t\le (n-1)\Delta t$, for every $\theta\in H$
\begin{align*}
\D_s^\theta Y_{n}^{\ell}&=S_{\Delta t}\D_s^\theta Y_{n-1}^{\ell}+\Delta tS_{\Delta t}G'(X_{n-1}).\D_s^\theta Y_{n-1}^{\ell}+S_{\Delta t}e^{\tau A}\bigl(\sigma'(X_{n-1}).\D_s^\theta Y_{n-1}^{\ell}\bigr)\Delta W_{n-1}\\
&~+\Delta tS_{\Delta t}G''(X_{n-1}).(\D_s^\theta X_{n-1},Y_{n-1}^{\ell})+S_{\Delta t}e^{\tau A}\bigl(\sigma''(X_{n-1}).(\D_s^\theta X_{n-1},Y_{n-1}^{\ell})\bigr)\Delta W_{n-1},
\end{align*}
We apply Lemma~\ref{lem:Pi_mild}, with $\ell$ replaced by $\ell +1$, and $z_{\ell+1}=0$, $Z_m^1=\D_s^\theta X_{m}$, $Z_m^2=Y_{m}^{\ell})$, and $M=1$. This gives, for $n\ge \ell+2$,
\begin{align*}
\E|\D_s^{\theta_\ell} Y_{n}^{\ell}|_{L^q}^{2}&\leq C\Delta t \sum_{m=\ell+1}^{n-1} \bigl(t_{n-m}^{-\frac{1}{2}-\frac{1}{q}-\kappa}+1\bigr)
\E|\D_s^{\theta_\ell} Y_{m}^{\ell}|_{L^q}^{2}\\
&~+C\Delta t\sum_{m=\ell+1}^{n-1} \bigl(t_{n-m}^{-\frac{1}{2}-\frac{1}{q}-\kappa}+1\bigr)\bigl(\E|\D_s^{\theta_\ell}X_m|_{L^{2q}}^{4}\bigr)^{\frac12}\bigl(\E|Y_m^\ell|_{L^{2q}}^{4}\bigr)^{\frac{1}{2}}.
\end{align*}


Thanks to Lemma~\ref{lem:Mall_1} and Lemma~\ref{lem:Pi},  when $m>\ell+1$,
\[
\E|\D_s^{\theta_\ell}X_m|_{L^{2q}}^{4}\le C\E|\theta_{\ell}|_{L^{2q}}^{4};
\]
and
\[
\E|Y_m^\ell|_{L^{2q}}^{4}=\E|\Pi_{m-1:\ell+1}z|_{L^{2q}}^{4}\le C \, t_{m-\ell-1}^{-4(\frac12-\kappa)}\big|(-A)^{-\frac12+\kappa}z\big|_{L^{2q}}^{4}.
\]
For $m=\ell+1$, we use $\E|Y_{\ell+1}^\ell|_{L^{2q}}^{4}=|z|_{L^{2q}}^{4}$.

Thus
\begin{align*}
C\Delta t\sum_{m=\ell+1}^{n-1}\bigl(1+&\frac{1}{t_{n-m}^{\frac{1}{2}+\frac{1}{q}+\kappa}}\bigr)\bigl(\E|\D_s^{\theta_\ell}X_m|_{L^{2q}}^{4}\bigr)^{\frac12}\bigl(\E|Y_m^\ell|_{L^{2q}}^{4}\bigr)^{\frac12}\\
&\le C \Delta t\bigl(1+\frac{1}{t_{n-l-1}^{\frac{1}{2}+\frac{1}{q}+\kappa}}\bigr)
\bigl(\E|\theta_{\ell}|_{L^{2q}}^{4}\bigr)^{\frac12}|z|_{L^{2q}}^{2}\\
&~+C\Delta t \mathds{1}_{n>\ell+2} \sum_{m=\ell+2}^{n-1}\bigl(1+\frac{1}{t_{n-m}^{\frac{1}{2}+\frac{1}{q}+\kappa}}\bigr)\frac{1}{t_{m-\ell-1}^{1-2\kappa}}\bigl(\E|\theta_{\ell}|_{L^{2q}}^{4}\bigr)^{\frac12}\big|(-A)^{-\frac12+\kappa}z\big|_{L^{2q}}^{2}\\
&\le C\Delta t\bigl(1+\frac{1}{t_{n-l-1}^{\frac{1}{2}+\frac{1}{q}+\kappa}}\bigr)\bigl(\E|\theta_{\ell}|_{L^{2q}}^{4}\bigr)^{\frac12}|z|_{L^{2q}}^{2}\\
&~+C\mathds{1}_{n>\ell+2}\bigl(1+\frac{1}{t_{n-\ell-2}^{\frac{1}{2}+\frac{1}{q}-\kappa}}\bigr)\bigl(\E|\theta_{\ell}|_{L^{2q}}^{4}\bigr)^{\frac12}\big|(-A)^{-\frac12+\kappa}z\big|_{L^{2q}}^{2},
\end{align*}
using a straightforward comparison between the series and an integral. Applying Lemma~\ref{lem:Gronwall} concludes the proof.
\end{proof}

\section{Proof of Theorem~\ref{theo:num}}\label{sec:proof_num}

Recall the definition of the scheme, see~\eqref{eq:scheme_result}: for $n\in\left\{0,\ldots,N-1\right\}$,
\begin{equation}\label{eq:scheme_num}
X_{n+1}=S_{\Delta t}X_n+\Delta tS_{\Delta t}G(X_n)+S_{\Delta t} \sigma(X_n) \Delta W_n,
\end{equation}
with the initial condition $X_0=x$, the condition $N\Delta t=T$, and the Wiener increments $\Delta W_n=W\bigl((n+1)\Delta t\bigr)-W\bigl(n\Delta t\bigr)$. Recall the notation $S_{\Delta t}=\bigl(I-\Delta t A)^{-1}$.

Let $\varphi$ be a function satisfying Assumption~\ref{ass:phi}, and $u(t,x)=\E\bigl[\varphi\bigl(X_t(x)\bigr)\bigr]$ be defined by~\eqref{eq:u}.

In order to justify all computations below, it is convenient to replace $G$ and $\sigma$ in~\eqref{eq:scheme_num} with the regularized coefficients $G_\delta$ and $\sigma_\delta$ introduced in Section~\ref{sec:results}, and to consider $u_\delta$ defined by~\eqref{eq:u_N} instead of $u$. Since all upper bounds hold true uniformly with respect to $\delta$, passing to the limit $\delta\to 0$ allows us to remove this regularization parameter. To simplify the notation, we do not mention $\delta$ in the computations.

Associated with the scheme~\eqref{eq:scheme_num}, we introduce an auxiliary, continuous-time, process $\bigl(\tilde{X}(t)\bigr)_{t\in[0,T]}$, defined on each interval $[t_n,t_{n+1}]$ by
\begin{equation}\label{eq:tilde_num_1}
\tilde{X}(t)=X_n+(t-t_n)S_{\Delta t}AX_n+(t-t_n)S_{\Delta t}G(X_n)+S_{\Delta t}\sigma(X_n)\bigl(W(t)-W(t_n)\bigr).
\end{equation}
Equivalently, $\tilde{X}(t_n)=X_n$, and for $t\in[t_n,t_{n+1}]$,
\begin{equation}\label{eq:tilde_num_2}
d\tilde{X}(t)=S_{\Delta t}AX_ndt+S_{\Delta t}G(X_n)dt+S_{\Delta t}\sigma(X_n)dW(t).
\end{equation}

Note that Lemma \ref{lem:moments_X} is still true for $\delta=\tau=0$ so that we have bounds on the moments of $X_n$ in $D((-A)^{\alpha})$, $\alpha<\frac14$. Moreover
\begin{equation}\label{eq:tilde_num_alpha}
|(-A)^{\alpha}\tilde X(t)|_{L^p}\le c |(-A)^{\alpha}X_n|_{L^p}, \; t\in [t_n,t_{n+1}).
\end{equation}

Using the notation $\ell_s=\ell$ if $s\in[t_{\ell},t_{\ell+1})$, for $s\in[0,T]$, we have the formulation
\begin{equation}
X_k=S_{\Delta t}^kx+\Delta t\sum_{\ell=0}^{k-1}S_{\Delta t}^{k-\ell}G(X_\ell)+\int_{0}^{t_k}S_{\Delta t}^{k-\ell_s}\sigma(X_{\ell_s})dW(s).
\end{equation}

Following the standard approach, introduced first in the SDE setting, see~\cite{Talay:86} and the monographs~\cite{Kloeden_Platen:92} and~\cite{Milstein_Tretyakov:04}, the weak error~\eqref{eq:theo_num} is decomposed as follows:
\begin{equation}\label{e0}
\begin{aligned}
\E\varphi\bigl(X(T)\bigr)-\E\varphi\bigl(X_N\bigr)&
=\E\bigl[u(T,x)-u(0,X_N)\bigr]\\
&=\sum_{k=0}^{N-1}\E\bigl[u(T-t_k,X_k)-u(T-t_{k+1},X_{k+1})\bigr]\\
&=\E\bigl[u(T-\Delta t,X_1)-u(T,x)\bigr]+\sum_{k=1}^{N-1}\bigl(a_k+b_k+c_k\bigr),
\end{aligned}
\end{equation}
with
\[
\begin{gathered}
a_k=\int_{t_{k}}^{t_{k+1}}\E\langle A\tilde{X}(t)-AS_{\Delta t}X_k,D u\bigl(T-t,\tilde{X}(t)\bigr)\rangle dt,\\
b_k=\int_{t_{k}}^{t_{k+1}}\E\langle G(\tilde{X}(t))-S_{\Delta t}G(X_k),D u\bigl(T-t,\tilde{X}(t)\bigr)\rangle dt,\\
c_k=\frac{1}{2}\int_{t_{k}}^{t_{k+1}}\E {\rm Tr}\Bigl(\bigl[\sigma(\tilde{X}(t))^2-S_{\Delta t}\sigma(X_k)^2S_{\Delta t}\bigr]D^2 u\bigl(T-t,\tilde{X}(t)\bigr)\Bigr)dt,
\end{gathered}
\]
where in $c_k$ we have used the property $\sigma(\cdot)^\star=\sigma(\cdot)$, see~\eqref{eq:sigma_star}, Property~\ref{ass:sigma}. 

In the following sections, we successively treat the terms $\E\bigl[u(T-\Delta t,X_1)-u(T,x)\bigr]$, $a_k$, $b_k$ and $c_k$. A technical result is given in Section~\ref{sec:aux_num}.

We will control the error terms, in terms of $\Delta t^{\frac12-\kappa}$ with positive, arbitrarily small $\kappa$. We do not try to obtain optimal constants. The value of $\kappa$ may change from line to line. At the end of the proof, gathering the estimates and choosing an appropriate $\kappa$ gives the result.

\subsection{Control of $\E\bigl[u(T-\Delta t,X_1)-u(T,x)\bigr]$}

We note that
\begin{align*}
\big|\E\bigl[u(T-\Delta t,X_1)-u(T,x)\bigr]\big|&\le \big|\E\bigl[u(T-\Delta t,X_1)-u(T-\Delta t,X(\Delta t))\bigr]\big|\\
&\le \frac{C}{(T-\Delta t)^{1-\kappa}}(1+|x|_{L^{\max(p,2q)}})^{K+1}\bigl(\E\big|(-A)^{-1+\kappa}\bigl(X_1-X(\Delta t)\bigr)\big|_{L^q}^2\bigr)^{\frac12},
\end{align*}
using Theorem~\ref{theo:D1}.

We then write $X_1-X(\Delta t)=\bigl(X_1-x\bigr)-\bigl(X(\Delta t)-x\bigr)$, and note that
\begin{align*}
\E|(-A)^{-1+\kappa}(X_1-x)|_{L^q}^{2}&\le C|(-A)^{-1+\kappa}(S_{\Delta t}-I)x|_{L^{q}}^{2}\\
&~+\Delta t|(-A)^{-1+\kappa}S_{\Delta t}G(x)|_{L^{q}}^{2}+C\Delta t|(-A)^{-1+\kappa}|_{R(L^2,L^q)}^2|S_{\Delta t}\sigma(x)|_{\mathcal{L}(L^2)}^2\\
&\le C\Delta t^{1-2\kappa}|x|_{L^q}^{2}+C\Delta t.
\end{align*}
We have used the two following inequalities. First, for every $\beta\in[0,1)$ and $q\in[2,\infty)$, there exists $C_{\beta,q}$ such that
\begin{equation}\label{eq:error_S_Deltat}
|(-A)^{-\beta}(S_{\Delta t}-I)|_{\mathcal{L}(L^q)}= \Delta t|(-A)^{1-\beta}S_{\Delta t}|_{\mathcal{L}(L^q)} \le C_{\beta,q}\Delta t^\beta,
\end{equation}
using the identity $S_{\Delta t}-I=\Delta t A S_{\Delta t}$ and Lemma~\ref{lem:S_1} (with $n=1$).

Second, adapting the proof of Lemma~\ref{lem:S_2}, for $\alpha>\frac14$, 
\[
|(-A)^{-\alpha}|_{R(L^2,L^q)}^2<\infty.
\]

Similarly,
\[
\E|(-A)^{-1+\kappa}(X(\Delta t)-x)|_{L^q}^{2}\le C\Delta t^{1-2\kappa}|x|_{L^q}^{2}+C\Delta t.
\]

We thus obtain
\begin{equation}\label{e72}
\big|\E\bigl[u(T-\Delta t,X_1)-u(T,x)\bigr]\big|\le C(T)(1+|x|_{L^{\max(p,2q)}})^{K+1}\Delta t^{\frac12-\kappa}.
\end{equation}

\subsection{Control of $a_k$}

\subsubsection{Decompositions}

For each $k\in\left\{1,\ldots,N-1\right\}$, $a_k$ is decomposed into the following terms:
\begin{equation}\label{e73}
a_k=a_k^1+a_k^2=\bigl(a_{k}^{1,1}+a_{k}^{1,2}+a_{k}^{1,3}\bigr)
+\bigl(a_{k}^{2,1}+a_{k}^{2,2}+a_{k}^{2,3}\bigr),
\end{equation}
where
\begin{gather*}
a_{k}^{1}=\E\int_{t_{k}}^{t_{k+1}}\langle A(I-S_{\Delta t})X_k,Du(T-t,\tilde{X}(t))\rangle dt,\\
a_{k}^{2}=\E\int_{t_{k}}^{t_{k+1}}\langle A(\tilde{X}(t)-X_k),Du(T-t,\tilde{X}(t))\rangle dt,
\end{gather*}
and $a_{k}^{1}$ and $a_{k}^{2}$ are further decomposed into
\begin{align*}
a_{k}^{1,1}&=-\Delta t\E\int_{t_{k}}^{t_{k+1}}\langle A^2 S_{\Delta t}^{k+1}x,Du(T-t,\tilde{X}(t))\rangle dt,\\
a_{k}^{1,2}&=-\Delta t\E\int_{t_{k}}^{t_{k+1}}\langle \Delta t\sum_{\ell=0}^{k-1}A^2S_{\Delta t}^{k-\ell+1}G(X_{\ell}),Du(T-t,\tilde{X}(t))\rangle dt,\\
a_{k}^{1,3}&=-\Delta t\E\int_{t_{k}}^{t_{k+1}}\langle \sum_{\ell=0}^{k-1}A^2S_{\Delta t}^{k-\ell+1}\sigma(X_\ell)\Delta W_\ell,Du(T-t,\tilde{X}(t))\rangle dt,
\end{align*}
and, using~\eqref{eq:tilde_num_1},
\begin{align*}
a_{k}^{2,1}&=\E\int_{t_{k}}^{t_{k+1}}(t-t_k)\langle S_{\Delta t}A^2 X_k,Du(T-t,\tilde{X}(t))\rangle dt,\\
a_{k}^{2,2}&=\E\int_{t_{k}}^{t_{k+1}}(t-t_k)\langle AS_{\Delta t}G(X_k),Du(T-t,\tilde{X}(t))\rangle dt,\\
a_{k}^{2,3}&=\E\int_{t_{k}}^{t_{k+1}}\langle AS_{\Delta t}\sigma(X_k)\bigl(W(t)-W(t_k)\bigr),Du(T-t,\tilde{X}(t))\rangle dt.
\end{align*}

Indeed, $I-S_{\Delta t}=-\Delta t AS_{\Delta t}$, and
\begin{equation}\label{eq:mild_sol_num}
X_k=S_{\Delta t}^{k}x+\Delta t\sum_{\ell=0}^{k-1}S_{\Delta t}^{k-\ell}G(X_{\ell})+\sum_{\ell=0}^{k-1}S_{\Delta t}^{k-\ell}\sigma(X_\ell)\Delta W_\ell.
\end{equation}

\subsubsection{Treatment of $a_{k}^1$}

We treat succesively the terms $a_{k}^{1,1}$, $a_{k}^{1,2}$ and $a_{k}^{1,3}$. The first quantity only needs elementary arguments and Theorem~\ref{theo:D1}, with $\beta\in[0,\frac12)$. The second quantity requires the stronger version of Theorem~\ref{theo:D1}, with $\beta\in[0,1)$, contrary to~\cite{Debussche:11}, due to the Burgers type nonlinearity. The third quantity requires the use of {the Malliavin calculus duality formula}, and of Theorem~\ref{theo:D2} with $\beta,\gamma\in[0,\frac12)$.

We also use repeatedly \eqref{eq:tilde_num_alpha} combined with Cauchy-Schwarz inequality.

\subsubsection*{Treatment of $a_{k}^{1,1}$}

Using Theorem~\ref{theo:D1}, with $\beta=\frac12-\kappa$, we get for $k\in\left\{1,\ldots,N-1\right\}$,
\begin{align*}
|a_k^{1,1}|&\le C_\kappa(1+|x|_{L^{\max(p,2q)}})^{K+1}\Delta t\int_{t_k}^{t_{k+1}}\frac{1}{(T-t)^{\frac12-\kappa}}\big| (-A)^{-\frac12+\kappa}A^2S_{\Delta t}^{k+1}x\big|_{L^{2q}}dt\\
&\le C_\kappa(1+|x|_{L^{\max(p,2q)}})^{K+1}\Delta t\int_{t_k}^{t_{k+1}}\frac{1}{(T-t)^{\frac12-\kappa}}\big| (-A)^{\frac12+2\kappa}S_{\Delta t}(-A)^{1-\kappa}S_{\Delta t}^{k}x\big|_{L^{2q}}dt\\
&\le C_\kappa(1+|x|_{L^{\max(p,2q)}})^{K+1}|x|_{L^q}\frac{\Delta t^{\frac12-2\kappa}}{t_{k}^{1-\kappa}}\int_{t_k}^{t_{k+1}}\frac{1}{(T-t)^{\frac12-\kappa}}dt,
\end{align*}
using Lemma~\ref{lem:S_1}.

\subsubsection*{Treatment of $a_{k}^{1,2}$}

Similarly, thanks to~\eqref{eq:norm_AB}, the boundedness of the mappings $F_1$ and $F_2$ from $L^q$ to $L^q$, thanks to Property~\ref{ass:F}, using Theorem~\ref{theo:D1} with $\beta=1-\kappa$,
\begin{align*}
|a_k^{1,2}|
&\le C_\kappa(1+|x|_{L^{\max(p,2q)}})^{K+1}\Delta t\int_{t_k}^{t_{k+1}}\frac{1}{(T-t)^{1-\kappa}}\Delta t\sum_{\ell=0}^{k-1}\big| (-A)^{\frac12+3\kappa}S_{\Delta t}(-A)^{1-\kappa}S_{\Delta t}^{k-\ell}\big|_{\mathcal{L}(L^{2q})}dt\\
& \le C_\kappa(1+|x|_{L^{\max(p,2q)}})^{K+1}|x|_{L^q}\Delta t^{\frac12-3\kappa}\int_{t_k}^{t_{k+1}}\frac{1}{(T-t)^{1-\kappa}}dt \Delta t\sum_{\ell=0}^{k-1}\frac{1}{t_{k-\ell}^{1-\kappa}}\\
&\le C_\kappa(1+|x|_{L^{\max(p,2q)}})^{K+1}|x|_{L^q}\Delta t^{\frac12-3\kappa}\int_{t_k}^{t_{k+1}}\frac{1}{(T-t)^{1-\kappa}}dt.
\end{align*}

\subsubsection*{Treatment of $a_{k}^{1,3}$}

Let $k\in\left\{1,\ldots,N-1\right\}$. For technical reasons, we decompose $a_{k}^{1,3}=a_{k}^{1,3,1}+a_{k}^{1,3,2}$ where
\begin{align*}
a_{k}^{1,3,1}&=-\Delta t\E\int_{t_{k}}^{t_{k+1}}\langle \sum_{\ell=0}^{k-2}A^2S_{\Delta t}^{k-\ell+1}\sigma(X_\ell)\Delta W_\ell,Du(T-t,\tilde{X}(t))\rangle dt\\
&=-\Delta t\E\int_{t_{k}}^{t_{k+1}}\langle \int_{0}^{t_{k-1}}A^2 S_{\Delta t}^{k-\ell_s+1}\sigma(X_{\ell_s})dW(s),Du(T-t,\tilde{X}(t))\rangle dt\\
a_{k}^{1,3,2}&=-\Delta t\E\int_{t_{k}}^{t_{k+1}}\langle A^2S_{\Delta t}^{2}\sigma(X_{k-1})\Delta W_{k-1},Du(T-t,\tilde{X}(t))\rangle dt,
\end{align*}
with the convention that $a_{1}^{1,3,1}=0$.

We first treat $a_{k}^{1,3,1}$. Using {the Malliavin calculus duality formula,}
\begin{align*}
a_{k}^{1,3,1}&=-\Delta t\E\int_{t_{k}}^{t_{k+1}}\langle \int_{0}^{t_{k-1}}A^2 S_{\Delta t}^{k-\ell_s+1}\sigma(X_{\ell_s})dW(s),Du(T-t,\tilde{X}(t))\rangle dt\\
&=-\Delta t\E\int_{t_k}^{t_{k+1}}\int_{0}^{t_{k-1}}{\rm Tr}\Bigl(\sigma(X_{\ell_s})^\star A^2S_{\Delta t}^{k-\ell_s+1}D^2u(T-t,\tilde{X}(t))\D_s\tilde{X}(t)\Bigr)dsdt\\
&=-\Delta t\E\int_{t_k}^{t_{k+1}}\int_{0}^{t_{k-1}}{\rm Tr}\Bigl(\sigma(X_{\ell_s})^\star A^2S_{\Delta t}^{k-\ell_s+1}D^2u(T-t,\tilde{X}(t))U(t,s)S_{\Delta t}\sigma(X_{\ell_s})\Bigr)dsdt\\
&=-\Delta t\E\int_{t_k}^{t_{k+1}}\int_{0}^{t_{k-1}}\sum_{i} D^2u(T-t,\tilde{X}(t)).\Bigl(A^2S_{\Delta t}^{k-\ell_s+1}\sigma(X_{\ell_s})^2 e_i,U(t,s)S_{\Delta t}e_i\Bigr)dsdt,
\end{align*}
where we use that $\sigma(x)^\star=\sigma(x)$, and we have introduced the linear operator $U(t,s)$ such that $\D_s\tilde{X}(t)=U(t,s)S_{\Delta t}\sigma(X_{\ell_s})$. We then apply Theorem~\ref{theo:D2}.

On the one hand, using Property~\ref{ass:sigma},
\begin{align*}
\bigl(\E\big|A^{-\frac12+\kappa+2}S_{\Delta t}^{k-\ell_s+1}&\sigma(X_{\ell_s})^2e_i\big|_{L^{4q}}^2\bigr)^{\frac12}\\
&\le \big|(-A)^{1-\kappa}S_{\Delta t}^{k-\ell_s}\big|_{\mathcal{L}(L^{4q})}\big|(-A)^{\frac12+2\kappa}S_{\Delta t}\big|_{\mathcal{L}(L^{4q})}\bigl(\E\big|\sigma(X_{\ell_s})^2e_i\big|_{L^{4q}}^2\bigr)^{\frac12}\\
&\le \frac{C\Delta t^{-\frac12-2\kappa}}{t_{k-\ell_s}^{1-\kappa}}|e_i|_{L^{4q}},
\end{align*}
thanks to Lemma~\ref{lem:S_1}, under the condition that $\ell_s<k-1$.

On the other hand, we use Lemma~\ref{lem:U_num}, see Section~\ref{sec:aux_num}. Thanks to Theorem~\ref{theo:D2}, we thus have
\begin{align*}
|a_{k}^{1,3,1}|&\le C\Delta t(1+|x|_{L^{\max(p,2q)}})^{K+1}\int_{t_k}^{t_{k+1}}\int_{0}^{t_{k-1}}\frac{C\Delta t^{-\frac12-2\kappa}}{(T-t)^{1-\kappa}t_{k-\ell_s}^{1-\kappa}}\sum_i \bigl(\E|(-A)^{-\frac12+\kappa}U(t,s)S_{\Delta t}e_i|_{L^{4q}}^{4}\bigr)^{\frac14}dsdt\\
&\le C\Delta t^{\frac12-3\kappa}(1+|x|_{L^{\max(p,2q)}})^{K+1}\int_{t_k}^{t_{k+1}}\frac{C}{(T-t)^{1-\kappa}}dt \bigl(\Delta t\sum_{\ell=0}^{k-2}\frac{1}{t_{k-\ell}^{1-\kappa}}\bigr)\sum_i\bigl(|(-A)^{-\frac12+2\kappa}S_{\Delta t}e_i|_{L^{4q}}\\
&\hspace{12cm}+C\Delta t^{\frac12}|S_{\Delta t}e_i|_{L^{4q}}\bigr)dt\\
&\le C\Delta t^{\frac12-5\kappa}(1+|x|_{L^{\max(p,2q)}})^{K+1}\int_{t_k}^{t_{k+1}}\frac{C}{(T-t)^{1-\kappa}}dt
\end{align*}

Indeed, $\sum_i\bigl(|(-A)^{-\frac12+2\kappa}S_{\Delta t}e_i|_{L^{4q}}+C\Delta t^{\frac12}|S_{\Delta t}e_i|_{L^{4q}}\bigr)\le C\Delta t^{-3\kappa}$.

It remains to treat $a_{k}^{1,3,2}$. This is done with much simpler arguments: using Theorem~\ref{theo:D1},
\begin{align*}
|a_{k}^{1,3,2}|&\le C\Delta t(1+|x|_{L^{\max(p,2q)}})^{K+1}\int_{t_k}^{t_{k+1}}\frac{1}{(T-t)^{\frac12-\kappa}} \bigl(\Delta t\E|(-A)^{-\frac12+\kappa}A^2S_{\Delta t}^2\sigma(X_{k-1})|_{R(L^2,L^{2q})}^2\bigr)^{\frac12}dt\\
&\le C\Delta t(1+|x|_{L^{\max(p,2q)}})^{K+1}\int_{t_k}^{t_{k+1}}\frac{1}{(T-t)^{\frac12-\kappa}}dt  \Delta t^{\frac12-1+\kappa},
\end{align*}
using $|(-A)^{-\frac12+3\kappa}|_{R(L^2,L^2)}<\infty$ and $|(-A)^{1-\kappa}S_{\Delta t}|_{\mathcal{L}(L^2)}\le C\Delta t^{-1+\kappa}$.

\subsubsection*{Conclusion}

Gathering the estimates on $a_{k}^{1,1}$, $a_{k}^{1,2}$ and $a_{k}^{1,3}$, and summing for $k\in\left\{1,\ldots,N-1\right\}$, we obtain
\begin{equation}\label{eq:num_a^1}
\sum_{k=1}^{N}\big|a_{k}^1\big|\le C\Delta^{\frac12-5\kappa}(1+|x|_{L^{\max(p,2q)}})^{K+1} \int_{0}^{T}\frac{1}{(T-t)^{1-\kappa}}\bigl(1+\frac{1}{t^{1-\kappa}}\bigr)dt.
\end{equation}

\subsubsection{Treatment of $a_k^2$}

\subsubsection*{Treatment of $a_{k}^{2,1}$}

Since $AS_{\Delta t}=-\frac{1}{\Delta t}(I-S_{\Delta t})$, we rewrite
\[a_{k}^{2,1}=-\E\int_{t_{k}}^{t_{k+1}}\frac{(t-t_k)}{\Delta t}\langle A(I-S_{\Delta t}) X_k,Du(T-t,\tilde{X}(t))\rangle dt\]
and observe that the right-hand side has the same structure as $a_k^1$. Using the straightforward inequality $t-t_k\le \Delta t$ when $t_k\le t\le t_{k+1}$, we thus directly obtain that $\sum_{k=1}^{N-1}|a_k^{2,1}|$ is bounded from above by the right-hand side of~\eqref{eq:num_a^1}.

\subsubsection*{Treatment of $a_{k}^{2,2}$}

We again use Theorem~\ref{theo:D1} (with $\beta=1-\kappa$), inequality~\eqref{eq:norm_AB} with Proposition~\ref{ass:F}, and obtain
\begin{align*}
|a_{k}^{2,2}|&=\big|\E\int_{t_{k}}^{t_{k+1}}(t-t_k)\langle AS_{\Delta t}G(X_k),Du(T-t,\tilde{X}(t))\rangle dt\big|\\
&\le C\Delta t(1+|x|_{L^{\max(p,2q)}})^{K+1}\int_{t_{k}}^{t_{k+1}}\frac{1}{(T-t)^{1-\kappa}}\big|(-A)^{\frac12+2\kappa}S_{\Delta t}(-A)^{-\frac12-\kappa}G(X_k)\big|_{L^{2q}} dt\\
&\le C\Delta t^{\frac12-2\kappa}(1+|x|_{L^{\max(p,2q)}})^{K+1}\int_{t_{k}}^{t_{k+1}}\frac{1}{(T-t)^{1-\kappa}}dt,
\end{align*}
thanks to Lemma~\ref{lem:S_1}.

\subsubsection*{Treatment of $a_{k}^{2,3}$}

To treat this term, we again use {the Malliavin calculus duality formula}. Writing the Wiener increment as a stochastic integral, we obtain
\begin{align*}
a_{k}^{2,3}&=\E\int_{t_{k}}^{t_{k+1}}\langle \int_{t_k}^{t}AS_{\Delta t}\sigma(X_k)dW(s),Du(T-t,\tilde{X}(t))\rangle dt\\
&=\E\int_{t_k}^{t_{k+1}}\E\int_{t_k}^{t}{\rm Tr}\Bigl(\sigma(X_k)^\star S_{\Delta t}A D^2u(T-t,\tilde{X}(t))\D_s\tilde{X}(t)\Bigr)dsdt\\
&=\E\int_{t_k}^{t_{k+1}}\E\int_{t_k}^{t}{\rm Tr}\Bigl(\sigma(X_k)^\star S_{\Delta t}A D^2u(T-t,\tilde{X}(t))S_{\Delta t}\sigma(X_k)\Bigr)dsdt\\
&=\E\int_{t_k}^{t_{k+1}}(t-t_k)\sum_i D^2u(T-t,\tilde{X}(t)).\bigl(S_{\Delta t}e_i,AS_{\Delta t}\sigma(X_k)^2 e_i\bigr)dt,
\end{align*}
where we have used $\sigma(x)^\star=\sigma(x)$, and the equality $\D_s\tilde{X}(t)=S_{\Delta t}\sigma(X_k)$ for $t_k\le s<t\le t_{k+1}$, obtained from~\eqref{eq:tilde_num_1}. We then use Theorem~\ref{theo:D2}
and  obtain
\begin{align*}
|a_{k}^{2,3}|&\le C\Delta t(1+|x|_{L^{\max(p,2q)}})^{K+1} \int_{t_k}^{t_{k+1}}\frac{1}{(T-t)^{1-\kappa}}dt\sum_i\big|(-A)^{-\frac12+\kappa}S_{\Delta t}e_i\big|_{L^{4q}}\big|(-A)^{\frac12+\kappa}S_{\Delta t}\sigma(X_k)^2e_i\big|_{L^{4q}}\\
&\le C\Delta t(1+|x|_{L^{\max(p,2q)}})^{K+1} \int_{t_k}^{t_{k+1}}\frac{1}{(T-t)^{1-\kappa}}dt \bigl(\Delta t^{-2\kappa}\sum_i\frac{1}{\lambda_i^{\frac12+\kappa}}\bigr)\Delta t^{-\frac12-\kappa}.
\end{align*}

\subsubsection*{Conclusion}

Gathering the estimates on $a_{k}^{2,1}$, $a_{k}^{2,2}$ and $a_{k}^{2,3}$, and summing for $k\in\left\{1,\ldots,N-1\right\}$, we obtain
\begin{equation}\label{eq:num_a^2}
\sum_{k=1}^{N}\big|a_{k}^2\big|\le C\Delta^{\frac12-2\kappa}(1+|x|_{L^{\max(p,2q)}})^{K+1} \int_{0}^{T}\frac{1}{(T-t)^{1-\kappa}}\bigl(1+\frac{1}{t^{1-\kappa}}\bigr)dt.
\end{equation}

\subsection{Control of $b_k$}

\subsubsection{Decompositions}

For each $k\in\left\{1,\ldots,N-1\right\}$, $b_k$ is decomposed into the following terms:
\begin{equation}\label{e77}
b_k=b_k^1+b_k^2=b_k^1+\bigl(b_{k}^{2,1}+b_{k}^{2,2}+b_{k}^{2,3}+b_k^{2,4}\bigr),
\end{equation}
where
\begin{align*}
b_k^1&=\int_{t_{k}}^{t_{k+1}}\E\langle (I-S_{\Delta t})G(X_k),D u\bigl(T-t,\tilde{X}(t)\bigr)\rangle dt,\\
b_k^2&=\int_{t_{k}}^{t_{k+1}}\E\langle G(\tilde{X}(t))-G(X_k),D u\bigl(T-t,\tilde{X}(t)\bigr)\rangle dt\\
&=\int_{t_{k}}^{t_{k+1}}\E\sum_i \bigl[G_i(\tilde{X}(t))-G_i(X_k)\bigr] \partial_iu\bigl(T-t,\tilde{X}(t)\bigr)dt,
\end{align*}
where $G_i(\cdot)=\langle G(\cdot),e_i\rangle$ and $\partial_i u(\cdot,\cdot)=\langle Du(\cdot,\cdot),e_i\rangle$.

In addition, $b_k^2$ is further decomposed with
\begin{align*}
b_k^{2,1}&=\E\int_{t_{k}}^{t_{k+1}}\int_{t_k}^{t}\sum_i \partial_iu\bigl(T-t,\tilde{X}(t)\bigr){\rm Tr}\Bigl(S_{\Delta t}\sigma(X_k)^2 S_{\Delta t}D^2G_i(\tilde{X}(s))\Bigr)dsdt\\
b_k^{2,2}&=\E\int_{t_{k}}^{t_{k+1}}\int_{t_k}^{t}\sum_i \partial_iu\bigl(T-t,\tilde{X}(t)\bigr)\langle S_{\Delta t}AX_k,DG_i(\tilde{X}(s))\rangle dsdt\\
b_k^{2,3}&=\E\int_{t_{k}}^{t_{k+1}}\int_{t_k}^{t}\sum_i \partial_iu\bigl(T-t,\tilde{X}(t)\bigr)\langle S_{\Delta t}F(X_k),DG_i(\tilde{X}(s))\rangle dsdt\\
b_k^{2,4}&=\E\int_{t_{k}}^{t_{k+1}}\sum_i \partial_iu\bigl(T-t,\tilde{X}(t)\bigr)\int_{t_k}^{t}\langle DG_i(\tilde{X}(s)),S_{\Delta t}\sigma(X_k)dW(s)\rangle dt,
\end{align*}
thanks to It\^o formula, and using $\sigma(\cdot)^\star=\sigma(\cdot)$.

\subsubsection{Treatment of $b_k^1$}

We directly apply Theorem~\ref{theo:D1}, with $\beta=1-\kappa$, and thanks to~\eqref{eq:error_S_Deltat}, Property~\ref{ass:F}, and inequality~\eqref{eq:norm_AB}, we get
\begin{align*}
|b_k^1|&\le C(1+|x|_{L^{\max(p,2q)}})^{K+1} \int_{t_{k}}^{t_{k+1}}\frac{1}{(T-t)^{1-\kappa}}\big|(-A)^{\frac12+2\kappa}S_{\Delta t}(-A)^{-\frac12-\kappa}G(X_k)  \big|_{\mathcal{L}(L^q)}dt\\
&\le C\Delta t^{\frac12-2\kappa}(1+|x|_{L^{\max(p,2q)}})^{K+1} \int_{t_{k}}^{t_{k+1}}\frac{1}{(T-t)^{1-\kappa}}dt,
\end{align*}
As a consequence,
\begin{equation}\label{e_alpha}
\sum_{k=1}^{N-1}|b_k^1|\le C\Delta t^{\frac12-2\kappa}(1+|x|_{L^{\max(p,2q)}})^{K+1}.
\end{equation}

\subsubsection{Treatment of $b_k^2$}

\subsubsection*{Control of $b_{k}^{2,1}$}

To treat the term $b_k^{2,1}$, we expand the trace, using the orthonormal system $\bigl(e_i\bigr)_{i\in\N^\star}$, and with straightforward calculations we write
\begin{align*}
b_k^{2,1}&=\E\int_{t_{k}}^{t_{k+1}}\int_{t_k}^{t}\sum_i \partial_iu\bigl(T-t,\tilde{X}(t)\bigr){\rm Tr}\Bigl(S_{\Delta t}\sigma(X_k)^2 S_{\Delta t}D^2G_i(\tilde{X}(s))\Bigr)dsdt\\
&=\E\int_{t_{k}}^{t_{k+1}}\int_{t_k}^{t}\sum_{i,n} \partial_iu\bigl(T-t,\tilde{X}(t)\bigr)D^2G_i(\tilde{X}(s)).\bigl(S_{\Delta t}e_n,S_{\Delta t}\sigma(X_k)^2e_n\bigr)dsdt\\
&=\E\int_{t_{k}}^{t_{k+1}}\int_{t_k}^{t}\sum_{n} \langle Du\bigl(T-t,\tilde{X}(t)\bigr),D^2G(\tilde{X}(s)).\bigl(S_{\Delta t}e_n,S_{\Delta t}\sigma(X_k)^2e_n\bigr)\rangle dsdt
\end{align*}

Using Theorem~\ref{theo:D1}, with $\beta=\frac12+\kappa$, combined with inequality~\eqref{eq:norm_AB}, and Properties~\ref{ass:A} and~\ref{ass:F}, we get
\begin{align*}
|b_{k}^{2,1}|&\le C(1+|x|_{L^{\max(p,2q)}})^{K+1}\E\int_{t_{k}}^{t_{k+1}}\frac{1}{(T-t)^{\frac12+\kappa}}\\
&\hspace{5cm}\int_{t_k}^{t}
\sum_{j\in\left\{1,2\right\},n\in\N^\star}\bigl(\E|D^2F_j(\tilde{X}(s)).\bigl(S_{\Delta t}e_n,S_{\Delta t}\sigma(X_k)^2e_n\bigr)|_{L^{q}}^2\bigr)^{\frac12+\kappa}dsdt\\
&\le C\Delta t(1+|x|_{L^{\max(p,2q)}})^{K+1}\sum_n |S_{\Delta t}e_n|_{L^{2q}}\int_{t_k}^{t_{k+1}}\frac{1}{(T-t)^{\frac12+\kappa}}dt\\
&\le C\Delta t^{\frac12-\kappa}(1+|x|_{L^{\max(p,2q)}})^{K+1} \sum_{n}\frac{1}{\lambda_n^{\frac12+\kappa}}\int_{t_k}^{t_{k+1}}\frac{1}{(T-t)^{\frac12+\kappa}}dt.
\end{align*}

\subsubsection*{Control of $b_{k}^{2,2}$}

As for the term $a_{k}^{1,3}$, we need to further decompose 
\begin{align*}
b_k^{2,2}&=\E\int_{t_{k}}^{t_{k+1}}\int_{t_k}^{t}\langle Du\bigl(T-t,\tilde{X}(t)\bigr),DG(\tilde{X}(s)).\bigl(S_{\Delta t}AX_k\bigr)\rangle dsdt\\
&=b_k^{2,2,1}+b_k^{2,2,2}+b_k^{2,2,3},
\end{align*}
where, using~\eqref{eq:mild_sol_num},
\begin{align*}
b_k^{2,2,1}&=\E\int_{t_{k}}^{t_{k+1}}\int_{t_k}^{t}\langle Du\bigl(T-t,\tilde{X}(t)\bigr),DG(\tilde{X}(s)).\bigl(S_{\Delta t}^{k+1}Ax\bigr)\rangle dsdt\\
b_k^{2,2,2}&=\E\int_{t_{k}}^{t_{k+1}}\int_{t_k}^{t}\langle Du\bigl(T-t,\tilde{X}(t)\bigr),DG(\tilde{X}(s)).\bigl(\Delta t\sum_{\ell=0}^{k-1}AS_{\Delta t}^{k-\ell+1}G(X_{\ell})\bigr)\rangle dsdt\\
b_k^{2,2,3}&=\E\int_{t_{k}}^{t_{k+1}}\int_{t_k}^{t}\langle Du\bigl(T-t,\tilde{X}(t)\bigr),DG(\tilde{X}(s)).\bigl(\sum_{\ell=0}^{k-1}AS_{\Delta t}^{k-\ell+1}\sigma(X_\ell)\Delta W_\ell\bigr)\rangle dsdt.
\end{align*}

The terms $b_k^{2,2,1}$ and $b_{k}^{2,2,2}$ are estimated using Theorem~\ref{theo:D1} in a straightforward way..

On the one hand, thanks to \eqref{eq:norm_AB},
\begin{align*}
|b_k^{2,2,1}|&\le C\Delta t\int_{t_k}^{t_{k+1}}\frac{1}{(T-t)^{\frac12+\kappa}}dt (1+|x|_{L^{\max(p,2q)}})^{K+1} \big|S_{\Delta t}^{k+1}Ax|_{L^q}dt\\
&\le C\frac{\Delta t^{1-\kappa}}{t_k^{1-\kappa}}(1+|x|_{L^{\max(p,2q)}})^{K+1}\int_{t_k}^{t_{k+1}}\frac{1}{(T-t)^{\frac12+\kappa}}dt.
\end{align*}

On the other hand,
\begin{align*}
|b_k^{2,2,2}|&\le C\Delta t \int_{t_k}^{t_{k+1}}\frac{1}{(T-t)^{\frac12+\kappa}}dt(1+|x|_{L^{\max(p,2q)}})^{K+1} \Delta t\sum_{\ell=0}^{k-1}\bigl(\big|AS_{\Delta t}^{k-\ell+1}\big|_{\mathcal{L}(L^q)}+
\big|AS_{\Delta t}^{k-\ell+1}B\big|_{\mathcal{L}(L^q)}\bigr)\\
&\le C\Delta t^{\frac12-2\kappa}\int_{t_k}^{t_{k+1}}\frac{1}{(T-t)^{\frac12+\kappa}}dt (1+|x|_{L^{\max(p,2q)}})^{K+1} \bigl(\Delta t\sum_{\ell=0}^{k-1}\frac{1}{t_{k-\ell}^{1-\kappa}}\bigr).
\end{align*}

It remains to treat $b_k^{2,2,3}$. Writing
\[\sum_{\ell=0}^{k-1}AS_{\Delta t}^{k-\ell+1}\sigma(X_\ell)\Delta W_\ell=\int_{0}^{t_k}AS_{\Delta t}^{k-\ell_r+1}\sigma(X_{\ell_r})dW(r)
\]
as a stochastic integral, and subdividing the interval $[0,t_k]=[0,t_{k-1}]\cup [t_{k-1},t_k]$, we have the decomposition
\begin{align*}
b_{k}^{2,2,3}&=\E\int_{t_{k}}^{t_{k+1}}\int_{t_k}^{t}\langle Du\bigl(T-t,\tilde{X}(t)\bigr),DG(\tilde{X}(s)).\bigl(\int_{0}^{t_k}AS_{\Delta t}^{k-\ell_r+1}\sigma(X_{\ell_r})dW(r)\bigr)\rangle dsdt\\
&=\E\int_{t_{k}}^{t_{k+1}}\int_{t_k}^{t}\langle Du\bigl(T-t,\tilde{X}(t)\bigr),DG(\tilde{X}(s)).\bigl(\int_{0}^{t_{k-1}}AS_{\Delta t}^{k-\ell_r+1}\sigma(X_{\ell_r})dW(r)\bigr)\rangle dsdt\\
&~+\E\int_{t_{k}}^{t_{k+1}}\int_{t_k}^{t}\langle Du\bigl(T-t,\tilde{X}(t)\bigr),DG(\tilde{X}(s)).\bigl(\int_{t_{k-1}}^{t_k}AS_{\Delta t}^{k-\ell_r+1}\sigma(X_{\ell_r})dW(r)\bigr)\rangle dsdt\\
&=b_{k}^{2,2,3,1}+b_{k}^{2,2,3,2}.
\end{align*}
Using {the Malliavin calculus duality formula,}
\begin{align*}
&b_{k}^{2,2,3,1}\\
&=\E\int_{t_{k}}^{t_{k+1}}\int_{t_k}^{t}\langle Du\bigl(T-t,\tilde{X}(t)\bigr),DG(\tilde{X}(s)).\bigl(\int_{0}^{t_{k-1}}AS_{\Delta t}^{k-\ell_r+1}\sigma(X_{\ell_r})dW(r)\bigr)\rangle dsdt\\
&=\E\int_{t_{k}}^{t_{k+1}}\int_{t_k}^{t}\sum_n\langle Du\bigl(T-t,\tilde{X}(t)\bigr),DG(\tilde{X}(s)).\bigl(\int_{0}^{t_{k-1}}AS_{\Delta t}^{k-\ell_r+1}\sigma(X_{\ell_r})e_n d\beta_n(r)\bigr)\rangle dsdt\\
&=\E\int_{t_{k}}^{t_{k+1}}\int_{t_k}^{t}\sum_{n,m}\langle Du\bigl(T-t,\tilde{X}(t)\bigr),DG(\tilde{X}(s)).e_m\rangle \int_{0}^{t_{k-1}}\langle AS_{\Delta t}^{k-\ell_r+1}\sigma(X_{\ell_r})e_n,e_m\rangle d\beta_n(r) dsdt\\
&=\E\int_{t_{k}}^{t_{k+1}}\int_{t_k}^{t}\int_{0}^{t_{k-1}}
\sum_{n,m}\langle AS_{\Delta t}^{k-\ell_r+1}\sigma(X_{\ell_r})e_n,e_m\rangle D^2u(T-t,\tilde{X}(t)).\bigl(\D_r\tilde{X}(t)e_n,DG(\tilde{X}(s)).e_m\bigr)drdsdt\\
&~+\E\int_{t_{k}}^{t_{k+1}}\int_{t_k}^{t}\int_{0}^{t_{k-1}}
\sum_{n,m}\langle AS_{\Delta t}^{k-\ell_r+1}\sigma(X_{\ell_r})e_n,e_m\rangle \langle Du(T-t,\tilde{X}(t)),D^2G(\tilde{X}(s)).\bigl(e_m,\D_r\tilde{X}(s)e_n\bigr)\rangle drdsdt\\
&=\E\int_{t_{k}}^{t_{k+1}}\int_{t_k}^{t}\int_{0}^{t_{k-1}}\sum_nD^2u(T-t,\tilde{X}(t)).\bigl(U(t,r)S_{\Delta t}e_n,DG(\tilde{X}(s))AS_{\Delta t}^{k-\ell_r+1}\sigma(X_{\ell_r})^2e_n\bigr)drdtds\\
&~+\E\int_{t_{k}}^{t_{k+1}}\int_{t_k}^{t}\int_{0}^{t_{k-1}}
\sum_n \langle Du(T-t,\tilde{X}(t)),D^2G(\tilde{X}(s)).\bigl(AS_{\Delta t}^{k-\ell_r+1}\sigma(X_{\ell_r})^2e_n,U(s,r)S_{\Delta t}e_n\bigr)\rangle drdsdt,
\end{align*}
where we have used the identities $\D_r\tilde{X}(t)=U(t,r)S_{\Delta t}\sigma(X_{\ell_r})$ and $\D_r\tilde{X}(s)=U(s,r)S_{\Delta t}\sigma(X_{\ell_r})$ for $r<t_{k-1}\le s\le t\le t_k$. 

To estimate $b_{k}^{2,2,3,1}$ we first write
\begin{align*}
\E|(-A)^{-\frac12+\kappa}DG(\tilde{X}(s))AS_{\Delta t}^{k-\ell_r+1}\sigma(X_{\ell_r})^2e_n|_{L^{4q}}^2&\le c\E|F'_1(\tilde{X}(s))AS_{\Delta t}^{k-\ell_r+1}\sigma(X_{\ell_r})^2e_n|_{L^{4q}}^2\\
&+ c\E |(-A)^{2\kappa}F'_2(\tilde{X}(s))AS_{\Delta t}^{k-\ell_r+1}\sigma(X_{\ell_r})^2e_n|_{L^{4q}}^2.
\end{align*}
The treatment of the first term is straightforward, with upper bound given by $c\big|AS_{\Delta t}^{k-\ell_r+1}\big|_{\mathcal{L}(L^{4q})}$. For the second term, we use \eqref{eq:Sobolev-Lipschitz} and \eqref{eq:product_2}:
\begin{align*}
\E|(-A)^{2\kappa}F'_2(\tilde{X}(s))AS_{\Delta t}^{k-\ell_r+1}\sigma(X_{\ell_r})^2e_n|_{L^{4q}}^2&\le c \E\left(1+|(-A)^{3\kappa}\tilde X(s)|_{L^{8q}})|(-A)^{1+3\kappa}S_{\Delta t}^{k-\ell_r+1}\sigma(X_{\ell_r})^2e_n|_{L^{8q}}\right)^2\\
& \le c \E\left((1+|(-A)^{3\kappa}\tilde X(s)|_{L^{8q}})|(-A)^{1+3\kappa}S_{\Delta t}^{k-\ell_r+1}|_{\mathcal L(L^{8q)}}\right)^2.
\end{align*}
Therefore, using \eqref{eq:tilde_num_alpha}, we obtain:
$$
\left(\E|(-A)^{-\frac12+\kappa}DG(\tilde{X}(s))AS_{\Delta t}^{k-\ell_r+1}\sigma(X_{\ell_r})^2e_n|_{L^{4q}}^2\right)^{\frac12}
\le  c(1+\frac{1}{t_k^{3\kappa}}|x|_{L^{8q}}) \Delta t^{-4\kappa}\frac1{ t_{k-{\ell_r}}^{1-\kappa}}.
$$
Moreover by Lemma~\ref{lem:U_num}:
$$
\bigl(\E|(-A)^{-\frac12+\kappa}U(t,r)S_{\Delta t}e_n|_{L^{4q}}^2\bigr)^{\frac12}\le c \bigl(|(-A)^{-\frac12+\kappa}S_{\Delta t}e_n|_{L^{4q}}+\Delta t^{\frac12-\kappa}|S_{\Delta t}e_n|_{L^{4q}}\bigr),
$$
and using  Theorem \ref{theo:D2} we get 
\begin{align*}
&\Big|\E D^2u(T-t,\tilde{X}(t)).\bigl(U(t,r)S_{\Delta t}e_n,DG(\tilde{X}(s))AS_{\Delta t}^{k-\ell_r+1}\sigma(X_{\ell_r})^2e_n\bigr)\Big|\\
&\le  c (1+|x|_{L^{\max(p,2q)}})^{K+1} \frac{1}{(T-t)^{1-2\kappa}} (1+t_k^{-3\kappa}|x|_{L^{8q}}) \Delta t^{-4\kappa} \frac1{t_{k-{\ell_r}}^{1-\kappa}}\\
&\times \bigl(|(-A)^{-\frac12+\kappa}S_{\Delta t}e_n|_{L^{4q}}+\Delta t^{\frac12-\kappa}|S_{\Delta t}e_n|_{L^{4q}}\bigr)\\
&\le c (1+|x|_{L^{\max(p,2q)}})^{K+1} \frac{1}{(T-t)^{1-2\kappa}} (1+t_k^{-3\kappa}|x|_{L^{8q}}) \Delta t^{-6\kappa} \frac1{t_{k-{\ell_r}}^{1-\kappa}} \frac1{\lambda_n^{\frac12+\kappa}}.
\end{align*}
The second term of $b_{k}^{2,2,3,1}$ is treated similarly thanks to Theorem~\ref{theo:D1} (with $\beta=\frac12+\kappa$):
\begin{align*}
& \Big|\E\langle Du(T-t,\tilde{X}(t)),D^2G(\tilde{X}(s)).\bigl(AS_{\Delta t}^{k-{\ell_r}_r+1}\sigma(X_{\ell_r})^2e_n,U(s,r)S_{\Delta t}e_n\bigr)\rangle\Big|\\
 &\le c (1+|x|_{L^{\max(p,2q)}})^{K+1}\frac1{(T-t)^{\frac12+\kappa}} \E\bigl|(AS_{\Delta t}^{k-\ell_r}\sigma(X_{\ell_r})^2e_n)(U(s,r)S_{\Delta t}e_n)\bigr|_{L^q}\\
  &\le c(1+|x|_{L^{\max(p,2q)}})^{K+1} \frac1{(T-t)^{\frac12+\kappa}} \bigl|AS_{\Delta t}^{k-\ell_r}\bigr|_{\mathcal L(L^{2q})}\E\bigl|U(s,r)S_{\Delta t}e_n\bigr|_{L^{2q}}\\
  &\le c (1+|x|_{L^{\max(p,2q)}})^{K+1}\frac1{(T-t)^{\frac12+\kappa}}\Delta t^{-\kappa}\frac1{t_{k-{\ell_r}}^{1-\kappa}}\bigl|S_{\Delta t}e_n\bigr|_{L^{2q}}\\
  &\le c (1+|x|_{L^{\max(p,2q)}})^{K+1}\frac1{(T-t)^{\frac12+\kappa}}\Delta t^{-\frac12-2\kappa}\frac1{t_{k-{\ell_r}}^{1-\kappa}}\frac1{\lambda_n^{\frac12+\kappa}}.
\end{align*}
We deduce
\begin{align*}
|b_{k}^{2,2,3,1}|&\le C(1+|x|_{L^{\max(p,8q)}})^{K+2}\int_{t_{k}}^{t_{k+1}}\int_{t_k}^{t}\int_{0}^{t_{k-1}}\sum_n \frac{1}{(T-t)^{1-2\kappa}} (1+t_k^{-3\kappa}|x|_{L^{8q}}) \Delta t^{-\frac12-\kappa} \frac1{t_{k-{\ell_r}}^{1-\kappa}} \frac1{\lambda_n^{\frac12+\kappa}}drdsdt\\
&\le C\Delta t^{\frac12-4\kappa}(1+|x|_{L^{\max(p,8q)}})^{K+3}\int_{t_{k}}^{t_{k+1}}\frac1{t^{3\kappa}}
\frac{1}{(T-t)^{1-\kappa}}\bigl(\Delta t\sum_{{\ell}=0}^{k-1}\frac{1}{t_{k-\ell}^{1-\kappa}}\bigr)\bigl(\sum_n\frac{1}{\lambda_{n}^{\frac12+\kappa}}\bigr)dt\\
&\le C\Delta t^{\frac12-4\kappa}(1+|x|_{L^{\max(p,8q)}})^{K+3}\int_{t_{k}}^{t_{k+1}}
\frac1{t^{3\kappa}}\frac{1}{(T-t)^{1-\kappa}}dt,
\end{align*}
using similar arguments to the control of $a_{k}^{1,3}$.

To treat the remaining term $b_{k}^{2,2,3,2}$, we again use {the Malliavin calculus duality formula}. With the same arguments as for $b_k^{2,2,3,1}$, we get the identity
\begin{align*}
b_k^{2,2,3,2}&=\E\int_{t_{k}}^{t_{k+1}}\int_{t_k}^{t}\langle Du\bigl(T-t,\tilde{X}(t),DG(\tilde{X}(s)).\bigl(\int_{t_{k-1}}^{t_k}AS_{\Delta t}^{2}\sigma(X_{k-1})dW(r)\bigr)\rangle dsdt\\
&=\E\int_{t_{k}}^{t_{k+1}}\int_{t_k}^{t}\int_{t_{k-1}}^{t_{k}}\sum_nD^2u(T-t,\tilde{X}(t)).\bigl(U(t,r)S_{\Delta t}e_n,DG(\tilde{X}(s))AS_{\Delta t}^{2}\sigma(X_{k-1})^2e_n\bigr)drdtds\\
&~+\E\int_{t_{k}}^{t_{k+1}}\int_{t_k}^{t}\int_{t_{k-1}}^{t_k}
\sum_n \langle Du(T-t,\tilde{X}(t)),D^2G(\tilde{X}(s)).\bigl(AS_{\Delta t}^{2}\sigma(X_{k-1})^2e_n,U(s,r)S_{\Delta t}e_n\bigr)\rangle drdsdt,
\end{align*}
and we get, using Theorems~\ref{theo:D1} and~\ref{theo:D2}, and Lemma~\ref{lem:U_num},
\begin{align*}
|b_{k}^{2,2,3,2}|&\le C\Delta t^2(1+|x|_{L^{\max(p,2q)}})^{K+1}\int_{t_k}^{t_{k+1}}\frac{1}{(T-t)^{1-\kappa}}dt\\
&\hspace{4cm}\sum_n \left(|(-A)^{-\frac12+\kappa}S_{\Delta t}e_n|_{L^{4q}} +\Delta t^{\frac12-\kappa}|S_{\Delta t}e_n|_{L^{4q}} \right)|(-A)^{ 1+\kappa}S_{\Delta t}^2|_{\mathcal{L}(L^{4q})}\\
&\le C\Delta t^{\frac12-3\kappa}(1+|x|_{L^{\max(p,2q)}})^{K+1}\int_{t_k}^{t_{k+1}}\frac{1}{(T-t)^{1-\kappa}}dt \sum_{n}\frac{1}{\lambda_n^{\frac12+\kappa}}.
\end{align*}

\subsubsection*{Control of $b_{k}^{2,3}$}

The treatment of this term is straightforward, using Theorem~\ref{theo:D1}, with $\beta=\frac12+\kappa$, and Property~\ref{ass:F}. Indeed,
\begin{align*}
|b_k^{2,3}|&=\Big|\E\int_{t_{k}}^{t_{k+1}}\int_{t_k}^{t}\sum_i \partial_iu\bigl(T-t,\tilde{X}(t)\bigr)\langle S_{\Delta t}G(X_k),DG_i(\tilde{X}(s))\rangle dsdt\Big|\\
&=\Big|\E\int_{t_{k}}^{t_{k+1}}\int_{t_k}^{t}\langle Du\bigl(T-t,\tilde{X}(t)\bigr),DG(\tilde{X}(s)).\bigl(S_{\Delta t}G(X_k)\bigr)\rangle dsdt\Big|\\
&\le C(1+|x|_{L^{\max(p,2q)}})^{K+1}\int_{t_{k}}^{t_{k+1}}\frac{1}{(T-t)^{\frac12+\kappa}}\int_{t_k}^{t}\bigl(\E\big|(-A)^{\frac12+\kappa}S_{\Delta t}(-A)^{-\frac12-\kappa}G(X_k)\big|_{L^q}^2\bigr)^{\frac12}dsdt\\
&\le C\Delta t^{\frac12-\kappa}(1+|x|_{L^{\max(p,2q)}})^{K+1}\int_{t_{k}}^{t_{k+1}}\frac{1}{(T-t)^{\frac12+\kappa}}dt,
\end{align*}
using $\big|S_{\Delta t}B\big|_{\mathcal{L}(L^q)}\le C\Delta t^{-\frac12-\kappa}$ thanks to~\eqref{eq:norm_AB} and Lemma~\ref{lem:S_1}.

\subsubsection*{Control of $b_{k}^{2,4}$}

The term $b_k^{2,4}$ involves a stochastic integral. Similarly to the treatment of the term $a_k^{3,2}$, we use {the Malliavin calculus duality formula}, and the identity $\D_s\tilde{X}(t)=S_{\Delta t}\sigma(X_k)$ for $t_k\le s<t\le t_{k+1}$. We obtain
\begin{align*}
|b_k^{2,4}|&=\Big|\E\int_{t_{k}}^{t_{k+1}}\sum_i \partial_iu\bigl(T-t,\tilde{X}(t)\bigr)\langle \int_{t_k}^{t}\langle DG_i(\tilde{X}(s)),S_{\Delta t}\sigma(X_k)dW(s)\rangle dt\Big|\\
&=\Big|\E\int_{t_{k}}^{t_{k+1}}\int_{t_k}^{t}{\rm Tr}\Bigl(\bigl(\D_s\tilde{X}(t)\bigr)^\star D^2u\bigl(T-t,\tilde{X}(t)\bigr)DG(\tilde{X}(s))S_{\Delta t}\sigma(X_k)\Bigr)dsdt\Big|\\
&=\Big|\E\int_{t_{k}}^{t_{k+1}}\int_{t_k}^{t}\sum_n D^2u\bigl(T-t,\tilde{X}(t)\bigr).\bigl(S_{\Delta t}e_n,DG(\tilde{X}(s))S_{\Delta t}\sigma(X_k)^2e_n\bigr)dsdt\Big|\\
&\le C(1+|x|_{L^{\max(p,2q)}})^{K+1} \int_{t_{k}}^{t_{k+1}}\int_{t_k}^{t}\sum_n\frac1{(T-t)^{1-2\kappa}} |(-A)^{-\frac12+\kappa}S_{\Delta t}e_n|_{L^{4q}}\\
&\hspace{5cm}\bigl(\E\big|(-A)^{-\frac12+\kappa}DG(\tilde{X}(s))S_{\Delta t}\sigma(X_k)^2e_n\big|_{L^{4q}}^2\bigr)^{\frac12}dsdt\\
&\le C\Delta t^{\frac12-4\kappa}\sum_n\frac{1}{\lambda_n^{\frac12+\kappa}}  (1+|x|_{L^{\max(p,2q)}})^{K+1} (1+\frac{1}{t_k^{3\kappa}}|x|_{L^{8q}})\int_{t_k}^{t_{k+1}}\frac1{(T-t)^{1-2\kappa}}dt\\
&\le C\Delta t^{\frac12-4\kappa}\sum_n\frac{1}{\lambda_n^{\frac12+\kappa}}  (1+|x|_{L^{\max(p,8q)}})^{K+2}\int_{t_k}^{t_{k+1}}
\bigl(1+\frac1{t^{3\kappa}}\bigr)\frac1{(T-t)^{1-2\kappa}}dt,
\end{align*}
thanks to similar arguments as for the treatment of $b_k^{2,2,3}$.

\subsubsection*{Conclusion}
Gathering the estimates on $b_{k}^{2,1}$, $b_{k}^{2,2}$, $b_k^{2,3}$ and $b_{k}^{2,4}$, and summing for $k\in\left\{1,\ldots,N-1\right\}$, we obtain
\begin{equation}\label{eq:num_b^2}
\sum_{k=1}^{N}\big|b_{k}^2\big|\le C\Delta^{\frac12-\kappa}(1+|x|_{L^{\max(p,8q)}})^{K+2}\int_{0}^{T}
\bigl(1+\frac1{t^{3\kappa}}\bigr)\bigl(1+\frac{1}{(T-t)^{1-\kappa}}\bigr)dt.
\end{equation}

\subsection{Control of $c_k$}

\subsubsection{Decompositions}

For each $k\in\left\{1,\ldots,N-1\right\}$, $c_k$ is decomposed into the following terms:
\begin{equation}\label{e80}
c_k=c_k^1+c_k^2+c_k^3=c_k^1+c_k^2+\bigl(c_{k}^{3,\calA}+c_{k}^{3,\calB}+c_{k}^{3,\calC}+c_k^{3,\calD}\bigr),
\end{equation}
where (using the symmetry of $D^2u$)
\begin{align*}
c_k^1&=\frac{1}{2}\E\int_{t_k}^{t_{k+1}}{\rm Tr}\Bigl((I-S_{\Delta t})\sigma\bigl(\tilde{X}(t)\bigr)^2 (I-S_{\Delta t}) D^2u\bigl(T-t,\tilde{X}(t)\bigr)\Bigr)dt,\\
c_k^2&=\E\int_{t_k}^{t_{k+1}}{\rm Tr}\Bigl(S_{\Delta t} \sigma\bigl(\tilde{X}(t)\bigr)^2 (I-S_{\Delta t}) D^2u\bigl(T-t,\tilde{X}(t)\bigr)\Bigr)dt,\\
c_k^3&=\frac{1}{2}\E\int_{t_k}^{t_{k+1}}{\rm Tr}\Bigl( S_{\Delta t}\bigl[\sigma\bigl(\tilde{X}(t)\bigr)^2-\sigma(X_k)^2\bigr]S_{\Delta t} D^2u\bigl(T-t,\tilde{X}(t)\bigr)\Bigr)dt.
\end{align*}
In addition, $c_k^3$ is further decomposed as follows: for $\Lambda\in\left\{\calA,\calB,\calC,\calD\right\}$,
\begin{equation}\label{eq:decomp_ck3}
\begin{aligned}
c_k^{3,\Lambda}&=\frac{1}{2}\E\int_{t_k}^{t_{k+1}}{\rm Tr}\Bigl( S_{\Delta t} \Lambda S_{\Delta t} D^2u\bigl(T-t,\tilde{X}(t)\bigr)\Bigr)dt\\
&=\frac{1}{2}\E\int_{t_k}^{t_{k+1}}\sum_n D^2u(T-t,\tilde{X}(t)).\bigl(S_{\Delta t}\Lambda e_n,S_{\Delta t}e_n\bigr) dt,
\end{aligned}
\end{equation}
with the linear operators $\calA,\calB,\calC,\calD$ obtained by applying It\^o formula:
\begin{equation*}
\langle \bigl[\sigma\bigl(\tilde{X}(t)\bigr)^2-\sigma(X_k)^2\bigr]h_1,h_2 \rangle=\sum_{\Lambda\in\left\{\calA,\calB,\calC,\calD\right\}}\langle \Lambda h_1,h_2\rangle,
\end{equation*}
with
\begin{align*}
\langle \calA h_1,h_2\rangle&=\frac{1}{2}\int_{t_k}^{t}{\rm Tr}\Bigl(S_{\Delta t}\sigma(X_k)^2 S_{\Delta t} D^2\sigma_{h_1,h_2}^2(\tilde{X}(s))\Bigr)ds\\
\langle \calB h_1,h_2\rangle&=\int_{t_k}^{t}\langle S_{\Delta t}AX_k,D\sigma_{h_1,h_2}^2(\tilde{X}(s))\rangle ds\\
\langle \calC h_1,h_2\rangle&=\int_{t_k}^{t}\langle S_{\Delta t}G(X_k),D\sigma_{h_1,h_2}^2(\tilde{X}(s))\rangle   ds\\
\langle \calD h_1,h_2\rangle&=\int_{t_k}^{t} \langle D\sigma_{h_1,h_2}^2(\tilde{X}(s)),S_{\Delta t}\sigma(X_k)dW(s)\rangle ds,
\end{align*}
using the notation $\sigma_{h_1,h_2}^2=\langle \sigma(\cdot)^2 h_1,h_2\rangle$.

For future reference, note the following identity:
\begin{equation}\label{eq:dsigma}
\langle D\sigma_{e_n,e_m}^2(x),h\rangle=\int_{(0,1)}(\sigma^2)'\bigl(x(\xi)\bigr)e_n(\xi)e_m(\xi)h(\xi)d\xi=\langle D\sigma_{e_n,h}^2(x),e_m\rangle.
\end{equation}

\subsubsection{Treatment of $c_k^1$}

Since Theorem~\ref{theo:D2} is restricted to $\beta,\gamma<\frac12$, this term needs some work:
\begin{align*}
|c_k^1|&=\frac{1}{2}\left|\E\int_{t_k}^{t_{k+1}}{\rm Tr}\Bigl( (I-S_{\Delta t}) D^2u\bigl(T-t,\tilde{X}(t)\bigr)(I-S_{\Delta t})\sigma\bigl(\tilde{X}(t)\bigr)^2\Bigr)dt\right|\\
&\le \frac{1}{2}\E\int_{t_k}^{t_{k+1}}\sum_{n}\left| D^2u(T-t,\tilde{X}(t)).\bigl((I-S_{\Delta t})e_n,(I-S_{\Delta t}) \sigma(\tilde{X}(t))^2e_n\bigr)\right|dt\\
&\le C\bigl(1+|x|_{L^{\max(p,2q)}}\bigr)^{K+1}\sum_n \int_{t_k}^{t_{k+1}}\frac{1}{(T-t)^{1-2\kappa}}\big|(-A)^{-\frac12\kappa}(I-S_{\Delta t})e_n\big|_{L^{4q}}\\
&\hspace{7cm} \bigl(\E\big|(-A)^{-\frac12+\kappa}(I-S_{\Delta t})\sigma\bigl(\tilde{X}(t)\bigr)^2e_n\big|_{L^{4q}}^2\bigr)^{\frac12}dt\\
&\le C\bigl(1+|x|_{L^{\max(p,2q)}}\bigr)^{K+1}\Delta t^{\frac12-4\kappa}\sum_n \lambda_{n}^{-\frac12+\kappa} \int_{t_k}^{t_{k+1}}\frac{1}{(T-t)^{1-2\kappa}}\bigl(\E\big|(-A)^{-3\kappa}\sigma\bigl(\tilde{X}(t)\bigr)^2e_n\big|_{L^{4q}}^2\bigr)^{\frac12}dt.
\end{align*}
Using~\eqref{eq:Sobolev-Lipschitz}, \eqref{eq:product_3}, we get
\[
\big|(-A)^{-2\kappa}\sigma\bigl(\tilde{X}(t)\bigr)^2e_n\big|_{L^{4q}}\le C|(-A)^{-2\kappa}e_n|_{L^{8q}}\big|(-A)^{4\kappa}\sigma\bigl(\tilde{X}(t)\bigr)^2\big|_{L^{8q}}\le C\lambda_{n}^{-2\kappa}\bigl(1+\big|(-A)^{5\kappa}\tilde{X}(t)\big|_{L^{8q}}\bigr).
\]
As a consequence, using \eqref{eq:tilde_num_alpha},
\begin{equation}\label{e_beta}
|c_k^1|\le C\bigl(1+|x|_{L^{\max(p,8q)}}\bigr)^{K+2}\Delta t^{\frac12-5\kappa}\int_{t_k}^{t_{k+1}}\bigl(1+\frac1{t^{5\kappa}}\bigr)\frac{1}{(T-t)^{1-\kappa}}dt.
\end{equation}

\subsubsection{Treatment of $c_k^2$}

The treatment of $c_k^2$ is straightforward, using Theorem~\ref{theo:D2}:
\begin{equation}\label{e_gamma}
\begin{aligned}
|c_k^2|&\le \Big|\E\int_{t_k}^{t_{k+1}}\sum_n D^2u\bigl(T-t,\tilde{X}(t)\bigr).\bigl(S_{\Delta t}e_n,(I-S_{\Delta t})\sigma(\tilde{X}(t))\bigr)dt\Big|\\
&\le C(1+|x|_{L^{\max(p,2q)}})^{K+1} \int_{t_k}^{t_{k+1}}\frac{1}{(T-t)^{1-2\kappa}}\sum_n |(-A)^{-\frac12+\kappa}S_{\Delta t}e_n|_{L^{4q}} \big|(-A)^{-\frac12+\kappa}(I-S_{\Delta t})\big|_{\mathcal{L}(L^{4q})}dt\\
&\le C(1+|x|_{L^{\max(p,2q)}})^{K+1} \Delta t^{\frac12-3\kappa} \int_{t_k}^{t_{k+1}}\frac{1}{(T-t)^{1-2\kappa}}dt\sum_n\frac{1}{\lambda_{n}^{\frac12+\kappa}}.
\end{aligned}
\end{equation}

\subsubsection{Treatment of $c_k^3$}

\subsubsection*{Control of $c_k^{3,\calA}$}

We proceed similarly, and applying Theorem~\ref{theo:D2} we obtain:
\begin{align*}
|c_k^{3,\calA}|&\le C(1+|x|_{L^{\max(p,2q)}})^{K+1} \int_{t_k}^{t_{k+1}}\frac{1}{(T-t)^{1-2\kappa}}\sum_n |(-A)^{-\frac12+\kappa}S_{\Delta t}e_n|_{L^{4q}}\bigl(\E|(-A)^{-\frac12+\kappa}S_{\Delta t}\calA e_n|_{L^{4q}}^2\bigr)^{\frac12}dt\\
&\le C\Delta t^{-2\kappa} C(1+|x|_{L^{\max(p,2q)}})^{K+1} \int_{t_k}^{t_{k+1}}\frac{1}{(T-t)^{1-2\kappa}}dt \sum_n\frac{1}{\lambda_n^{\frac12+\kappa}}\bigl(\E|\calA e_n|_{L^{4q}}^2\bigr)^{\frac12}.
\end{align*}

To have a control on $\bigl(\E|\calA e_n|_{L^{4q}}^2\bigr)^{\frac12}$, let any $h_1\in L^{4q}$ and $h_2\in L^{r}$, with $\frac{1}{4q}+\frac{1}{r}=1$; then
\begin{align*}
\langle \calA h_1,h_2\rangle&=\frac{1}{2}\int_{t_k}^{t}{\rm Tr}\Bigl(S_{\Delta t}\sigma(X_k)^2 S_{\Delta t} D^2\sigma_{h_1,h_2}^2(\tilde{X}(s))\Bigr)ds\\
&=\frac{1}{2}\int_{t_k}^{t_{k+1}}\sum_n  D^2\sigma_{h_1,h_2}^2(\tilde{X}(s)).\bigl(S_{\Delta t}\sigma(X_k)^2e_n,S_{\Delta t}e_n\bigr)ds\\
&\le C\Delta t|h_1|_{L^{4q}}|h_2|_{L^r}\sum_n|S_{\Delta t}\sigma(X_k)^2e_n|_{L^{\infty}}|S_{\Delta t}e_n|_{L^\infty}\\
&\le C\Delta t^{\frac12-\kappa}|h_1|_{L^{4q}}|h_2|_{L^r}\sum_n\frac{1}{\lambda_n^{\frac12+\kappa}}.
\end{align*}
Thus $\bigl(\E|\calA e_n|_{L^{4q}}^2\bigr)^{\frac12}\le C\Delta t^{\frac12-\kappa}$, and we obtain
\begin{equation*}
|c_k^{3,\calA}|\le C\Delta t^{\frac12-3\kappa} C(1+|x|_{L^{\max(p,2q)}})^{K+1} \int_{t_k}^{t_{k+1}}\frac{1}{(T-t)^{1-2\kappa}}dt \sum_n\frac{1}{\lambda_n^{\frac12+\kappa}}.
\end{equation*}

\subsubsection*{Control of $c_k^{3,\calB}$}

Like $a_{k}^{1,3}$ and $b_{k}^{2,2}$, the term $c_k^{3,\calB}$ contains a bad term and require a careful analysis. We introduce the decomposition $\calB=\calB_1+\calB_2+\calB_3$, and the associated terms $c_k^{3,\calB_1}$, $c_k^{3,\calB_2}$ and $c_k^{3,\calB_3}$, with
\begin{align*}
\langle \calB_1 h_1,h_2\rangle&=\int_{t_k}^{t}\langle S_{\Delta t}^{k+1}Ax,D\sigma_{h_1,h_2}^2(\tilde{X}(s))\rangle ds\\
\langle \calB_2 h_1,h_2\rangle&=\int_{t_k}^{t}\langle \int_{0}^{t_k}S_{\Delta t}^{k-\ell_r+1}AG(X_{\ell_r})dr,D\sigma_{h_1,h_2}^2(\tilde{X}(s))\rangle ds\\
\langle \calB_3 h_1,h_2\rangle&=\int_{t_k}^{t}\langle \int_{0}^{t_k}AS_{\Delta t}^{k-\ell_r+1}\sigma(X_{\ell_r})dW(r),D\sigma_{h_1,h_2}^2(\tilde{X}(s))\rangle ds.
\end{align*}

The terms $c_k^{3,\calB_1}$ and $c_k^{3,\calB_2}$ do not present difficulties, 
using~\eqref{eq:dsigma} and standard arguments. Indeed, for any $h_1\in L^{4q}$ and $h_2\in L^{r}$, with $\frac{1}{4q}+\frac{1}{r}=1$,
\[\langle \calB_1 h_1,h_2\rangle \le C\Delta t|h_1|_{L^{4q}}|h_2|_{L^r}|AS_{\Delta t}^{k+1}x|_{L^\infty}\le C|x|_{L^p}\frac{\Delta t^{\frac34-3\kappa}}{t_{k}^{1-\kappa}}|h_1|_{L^{4q}}|h_2|_{L^r},\]
using $|(-A)^{\kappa}S_{\Delta t}x|_{L^\infty}\le |(-A)^{\kappa}S_{\Delta t}x|_{W^{\frac12+\frac{\kappa}{2}}}\le C|(-A)^{\frac14+2\kappa}S_{\Delta t}x|_{L^2}\le \frac{C}{\Delta t^{\frac14+2\kappa}}|x|_{L^{p}}$.

Similarly, we have for $\calB_2$
\[
\langle \calB_2 h_1,h_2\rangle\le C\Delta t|h_1|_{L^{4q}}|h_2|_{L^r}\int_{0}^{t_k}\big|S_{\Delta t}^{k-\ell_r+1}AG(X_{\ell_r})\big|_{L^{\infty}}dr.
\]
Moreover, using Property~\ref{ass:F} and $G=F_1+BF_2$,
\begin{align*}
\big|S_{\Delta t}^{k-\ell_r+1}AG(X_{\ell_r})\big|_{L^{\infty}}&\le \big|(-A)^{\frac12+2\kappa}S_{\Delta t}|_{\mathcal{L}(L^{\frac1\kappa},L^\infty)}|(-A)^{1-\kappa}S_{\Delta t}^{k-\ell_r}\big|_{\mathcal{L}(L^{\frac1\kappa})}
\left(1+\big|(-A)^{-\frac12-\kappa}B\big|_{\mathcal{L}(L^{\frac1\kappa})}\right)\\
&\le C\big|(-A)^{\frac12+4\kappa}S_{\Delta t}|_{\mathcal{L}(L^{\frac1\kappa})}\\
&\le \Delta t^{-\frac12-4\kappa}t_{k-\ell_r}^{-1+\kappa},
\end{align*}
using~\eqref{eq:norm_AB}, as well as the following inequalities, which are consequences of Sobolev inequalities and of~\eqref{eq:Sobolev-domain}: for any $x\in L^{\frac1\kappa}$,
\[
\big|(-A)^{\frac12+\kappa}S_{\Delta t}x\big|_{L^\infty}\le C\big|(-A)^{\frac12+\kappa}S_{\Delta t}x\big|_{W^{2\kappa,\frac1\kappa}}\le C\big|(-A)^{\frac12+4\kappa}S_{\Delta t}x\big|_{L^{\frac1\kappa}}.
\]

We thus obtain
\[
|c_{k}^{3,\calB_1}|+|c_k^{3,\calB_2}|\le C(1+|x|_{L^{\max(p,2q)}})^{K+1} \Delta t^{\frac12-5\kappa}\int_{t_k}^{t_{k+1}}\bigl(1+\frac{1}{t^{1-\kappa}}\bigr)\frac{1}{(T-t)^{1-2\kappa}}dt \sum_n\frac{1}{\lambda_n^{\frac12+\kappa}}.
\]

Finally,  $c_k^{\calB_3}$ requires a Malliavin integration. First, we write
\begin{align*}
c_k^{3,\calB_3}&=\frac{1}{2}\E\int_{t_k}^{t_{k+1}}\sum_n D^2u(T-t,\tilde{X}(t)).\bigl(S_{\Delta t}\calB_3 e_n,S_{\Delta t}e_n\bigr) dt\\
&=\frac{1}{2}\E\int_{t_k}^{t_{k+1}}\sum_{n,m} \langle \calB_3 e_n,e_m\rangle D^2u(T-t,\tilde{X}(t)).\bigl(S_{\Delta t}e_m,S_{\Delta t}e_n\bigr) dt\\
&=\frac{1}{2}\E\iiint
\sum_{n,m,j} \langle AS_{\Delta t}^{k-\ell_r+1}\sigma(X_{\ell_r})e_j,D\sigma_{e_n,e_m}^2(\tilde{X}(s))\rangle D^2u(T-t,\tilde{X}(t)).\bigl(S_{\Delta t}e_m,S_{\Delta t}e_n\bigr) d\beta_j(r) dsdt,
\end{align*}
where for simplicity we use the notation $\iiint\bigl(\ldots\bigr)d\beta_n(r)dsdt
=\int_{t_{k}}^{t_{k+1}}\int_{t_{k}}^{t}\int_{0}^{t_k}\bigl(\ldots\bigr)d\beta_n(r)dsdt$.

Using {the Malliavin calculus duality formula}, for $t\in[t_k,t_{k+1}]$ and $s\in[t_k,t]$, then
\begin{align*}
\E\Bigl[&\sum_{m,j} \int_{0}^{t_k}\langle AS_{\Delta t}^{k-\ell_r+1}\sigma(X_{\ell_r})e_j,D\sigma_{e_n,e_m}^2(\tilde{X}(s))\rangle D^2u(T-t,\tilde{X}(t)).\bigl(S_{\Delta t}e_m,S_{\Delta t}e_n\bigr) d\beta_j(r) \Bigr]\\
&=\E\Bigl[\sum_{m,j}\int_{0}^{t_k} D^2\sigma_{e_n,e_m}(\tilde{X}(s)).\bigl(AS_{\Delta t}^{k-\ell_r+1}\sigma(X_{\ell_r})e_j,\D_r\tilde{X}(s)e_j\bigr)  D^2u(T-t,\tilde{X}(t)).\bigl(S_{\Delta t}e_m,S_{\Delta t}e_n\bigr)dr\Bigr]\\
&~+\E\Bigl[\sum_{m,j}\int_{0}^{t_k}\langle AS_{\Delta t}^{k-\ell_r+1}\sigma(X_{\ell_r})e_j,D\sigma_{e_n,e_m}^2(\tilde{X}(s))\rangle D^3u(T-t,\tilde{X}(t)).\bigl(S_{\Delta t}e_m,S_{\Delta t}e_n,\D_r\tilde{X}(t)e_j\bigr) dr\Bigr]\\
&=\E\Bigl[\sum_{m,j}\int_{0}^{t_k} D^2\sigma_{e_n,e_m}(\tilde{X}(s)).\bigl(AS_{\Delta t}^{k-\ell_r+1}e_j,\D_r\tilde{X}(s)\sigma(X_{\ell_r})e_j\bigr)  D^2u(T-t,\tilde{X}(t)).\bigl(S_{\Delta t}e_m,S_{\Delta t}e_n\bigr)dr\Bigr]\\
&~+\E\Bigl[\sum_j\int_{0}^{t_k} D^3u(T-t,\tilde{X}(t)).\bigl(S_{\Delta t}e_n,S_{\Delta t}D\sigma_{e_n,AS_{\Delta t}^{k-\ell_r+1}e_j}^2(\tilde{X}(s)),\D_r\tilde{X}(t)\sigma(X_{\ell_r})e_j\bigr) dr\Bigr],
\end{align*}
using the identity $\sigma(\cdot)^\star=\sigma(\cdot)$ for both lines, and~\eqref{eq:dsigma} for the second line.  Moreover, 
\begin{gather*}
\Big|D^2\sigma_{e_n,e_m}(\tilde{X}(s)).\bigl(AS_{\Delta t}^{k-\ell_r+1}e_j,\D_r\tilde{X}(s)\sigma(X_{\ell_r})e_j\bigr)\Big|\le C|e_n|_{L^\infty}|e_m|_{L^\infty}|AS_{\Delta t}^{k-\ell_r}e_j|_{L^2}|\D_r\tilde{X}(s)\sigma(X_{\ell_r})e_j|_{L^2}\\
\Big|D\sigma_{e_n,AS_{\Delta t}^{k-\ell_r+1}e_j}^2(\tilde{X}(s))\Big|_{L^{4q}}\le C|e_n|_{L^\infty}|AS_{\Delta t}^{k-\ell_r+1}e_j|_{L^{\infty}},
\end{gather*}
and using Lemma~\ref{lem:U_num} we get
\[E|\D_r\tilde{X}(s)\sigma(X_{\ell_r})e_j|_{L^2}^2\le C\E|\sigma(X_{\ell_r})e_j|_{L^2}\le C.\]

Then, using Theorem~\ref{theo:D2} and Proposition~\ref{theo:D3}, we obtain
\begin{align*}
|c_k^{3,\calB_3}|&\le C(1+|x|_{L^{\max(p,2q)}})^{K+1} \int_{t_{k}}^{t_{k+1}}\int_{t_{k}}^{t}\int_{0}^{t_k}
\frac{\Delta t^{-\frac12-6\kappa}}{t_{k-\ell_r}^{1-\kappa}}\frac{1}{(T-t)^{1-2\kappa}}drdsdt\\
&~+C(1+|x|_{L^{\max(p,2q)}})^{K+1}\int_{t_{k}}^{t_{k+1}}\int_{t_{k}}^{t}\int_{0}^{t_k}
\frac{\Delta t^{-\frac12-4\kappa}}{t_{k-\ell_r}^{1-\kappa}}\frac{1}{(T-t)^{\frac12-\kappa}}drdsdt\\
&\le C\Delta t^{\frac12-6\kappa}(1+|x|_{L^{\max(p,2q)}})^{K+1} \int_{t_k}^{t_{k+1}}\frac{1}{(T-t)^{1-2\kappa}}dt\\
&+C\Delta t^{\frac12-6\kappa}(1+|x|_{L^{\max(p,2q)}})^{K+1} \int_{t_k}^{t_{k+1}}\frac{1}{(T-t)^{\frac12-\kappa}}dt.
\end{align*}

\subsubsection*{Control of $c_k^{3,\calC}$}

Using~\eqref{eq:decomp_ck3}, similarly to $c_{k}^{3,\calA}$, we get
\begin{equation*}
|c_{k}^{3,\calC}|\le C\Delta t^{-2\kappa}\int_{t_k}^{t_{k+1}}\frac{1}{(T-t)^{1-2\kappa}}dt \sum_n\frac{1}{\lambda_n^{\frac12+\kappa}}\bigl(\E|\calC e_n|_{L^{4q}}^2\bigr)^{\frac12}.
\end{equation*}

For any $h_1\in L^{4q}$ and $h_2\in L^{r}$, with $\frac{1}{4q}+\frac{1}{r}=1$, we get
\begin{equation*}
|\langle \calC h_1,h_2\rangle|\le C\Delta t|h_1|_{L^{4q}}|h_2|_{L^r}|S_{\Delta t}G(X_k)|_{L^\infty}\le C\Delta t^{-1/2-\kappa}|h_1|_{L^{4q}}|h_2|_{L^r}.
\end{equation*}

We thus obtain
\begin{equation*}
|c_{k}^{3,\calC}|\le C\Delta t^{\frac12-2\kappa}\int_{t_k}^{t_{k+1}}\frac{1}{(T-t)^{1-2\kappa}}dt \sum_n\frac{1}{\lambda_n^{\frac12+\kappa}}.
\end{equation*}

\subsubsection*{Control of $c_k^{3,\calD}$}

Using~\eqref{eq:decomp_ck3}, the definition of $\calD$, and {the Malliavin calculus duality formula},
\begin{align*}
c_k^{3,\calD}&=\frac{1}{2}\E\int_{t_k}^{t_{k+1}}\sum_{n,m} \langle \calD e_n,e_m\rangle D^2u(T-t,\tilde{X}(t)).\bigl(S_{\Delta t}e_m,S_{\Delta t}e_n\bigr) dt\\
&=\frac{1}{2}\E\int_{t_k}^{t_{k+1}}\int_{t_k}^{t}\sum_{j,n,m}\langle D\sigma_{e_n,e_m}^2(\tilde{X}(s)),S_{\Delta t}\sigma(X_k)e_j\rangle d\beta_j(s) D^2u(T-t,\tilde{X}(t)).\bigl(S_{\Delta t}e_m,S_{\Delta t}e_n\bigr) dt\\
&=\frac{1}{2}\E\int_{t_k}^{t_{k+1}}\int_{t_k}^{t}\sum_{j,n,m} D^3u(T-t,\tilde{X}(t)).\bigl(\D_s\tilde{X}(t)e_j,S_{\Delta t}e_m,S_{\Delta t}e_n\bigr)\langle D\sigma_{e_n,e_m}^2(\tilde{X}(s)),S_{\Delta t}\sigma(X_k)e_j\rangle ds dt\\
&=\frac{1}{2}\E\int_{t_k}^{t_{k+1}}\int_{t_k}^{t}\sum_{j,n,m} D^3u(T-t,\tilde{X}(t)).\bigl(S_{\Delta t}\sigma(X_k)e_j,S_{\Delta t}e_m,S_{\Delta t}e_n\bigr)\langle D\sigma_{e_n,e_m}^2(\tilde{X}(s)),S_{\Delta t}\sigma(X_k)e_j\rangle ds dt\\
&=\frac{1}{2}\E\int_{t_k}^{t_{k+1}}\int_{t_k}^{t}\sum_{j,n}D^3u(T-t,\tilde{X}(t)).\bigl(S_{\Delta t}\sigma(X_k)e_j,S_{\Delta t}D\sigma_{e_n,S_{\Delta t}\sigma(X_k)e_j}^2(\tilde{X}(s)),S_{\Delta t}e_n\bigr)dsdt\\
&=\frac{1}{2}\E\int_{t_k}^{t_{k+1}}\int_{t_k}^{t}\sum_{j,n}D^3u(T-t,\tilde{X}(t)).\bigl(S_{\Delta t}e_j,S_{\Delta t}D\sigma_{e_n,S_{\Delta t}\sigma(X_k)^2e_j}^2(\tilde{X}(s)),S_{\Delta t}e_n\bigr)dsdt,
\end{align*}
where we haved used~\eqref{eq:dsigma}, and then~\eqref{eq:sigma_star}.

We now use Proposition~\ref{theo:D3}. Note that
\[
|\langle D\sigma_{e_n,S_{\Delta t}\sigma(X_k)^2e_j}^2(\tilde{X}(s)),h\rangle| \le C|h|_{L^{r}}|e_n|_{L^{4q}}|e_j|_{L^\infty} \le C|h|_{L^{r}},
\]
for any $h\in L^{r}$, with $\frac{1}{4q}+\frac{1}{r}=1$; thus $\bigl(\E|D\sigma_{e_n,S_{\Delta t}\sigma(X_k)^2e_j}^2(\tilde{X}(s))|_{L^{4q}}^2\bigr)^{\frac12}\le C$.

We obtain
\begin{align*}
|c_k^{3,\calD}|&\le C(1+|x|_{L^{p}})^{K} \int_{t_k}^{t_{k+1}}\int_{t_k}^{t}\frac{1}{(T-t)^{\frac12-\kappa}}\sum_{j,n}|-A)^{-\frac12+\kappa}S_{\Delta t}e_j|_{L^{4q}}|S_{\Delta t}e_n|_{L^{4q}} dsdt\\
&\le C\Delta t^{\frac12-3\kappa}(1+|x|_{L^{p}})^{K}\int_{t_k}^{t_{k+1}}\frac{1}{(T-t)^{\frac12-\kappa}}dt \sum_{n}\frac{1}{\lambda_n^{\frac12+\kappa}}\sum_j\frac{1}{\lambda_{j}^{\frac12+\kappa}}.
\end{align*}

\subsubsection*{Conclusion}
Gathering the estimates on $c_{k}^{3,\calA}$, $c_{k}^{3,\calB}$, $c_k^{3,\calC}$ and $c_{k}^{3,\calD}$, and summing for $k\in\left\{1,\ldots,N-1\right\}$, we obtain
\begin{equation}\label{eq:num_c^3}
\sum_{k=1}^{N}\big|c_{k}^3\big|\le C\Delta t^{\frac12-\kappa}(1+|x|_{L^{\max(p,2q)}})^{K+1}\int_{0}^{T}\bigl(1+\frac{1}{(T-t)^{1-\kappa}}\bigr)dt.
\end{equation}

\subsection{Conclusion of the proof of Theorem \ref{theo:num}} 

Gathering \eqref{e73}, \eqref{eq:num_a^1} and \eqref{eq:num_a^2} allows to control the sum of  $a_k$. Similarly, $b_k$ is estimated thanks to 
\eqref{e77}, \eqref{e_alpha} and \eqref{eq:num_b^2} whereas $c_k$ is estimated by \eqref{e80}, \eqref{e_beta}, \eqref{e_gamma} and \eqref{eq:num_c^3}.

Then, \eqref{eq:theo_num} follows from 
 \eqref{e72} and  \eqref{e0}.
   
\subsection{An auxiliary result}\label{sec:aux_num}

We used the estimate below for the treatment of several terms, for instance $a_{k}^{1,3}$ and $b_{k}^{2,2,3,1}$. Recall that $\D_s\tilde{X}(t)=U(t,s)S_{\Delta t}\sigma(X_\ell)$ for $t\in[t_{k},t_{k+1})$, $s\in[t_{\ell},t_{\ell+1})$, and $\ell\le k-1$.
\begin{lemma}\label{lem:U_num}
For every $q\in[2,\infty)$, $T\in(0,\infty)$ and $\kappa>0$ sufficiently small, there exists  $C_{q,\kappa}(T)$ such that for every $h\in L^q$, $t\in[t_{k},t_{k+1})$, $s\in[t_{\ell},t_{\ell+1})$, with $1\le k\le N$,
\begin{equation}
\bigl(\E|(-A)^{-\frac12+\kappa}U(t,s)h|_{L^q}^{2K}\bigr)^{\frac1{2K}}\le C_{q,\kappa}(T)\left(|(-A)^{-\frac12+\kappa}h|_{L^q}+\Delta t^{\frac12-\kappa}|h|_{L^{q}}\right) \quad \text{if}~k>\ell+1.
\end{equation}
\end{lemma}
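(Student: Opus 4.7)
The strategy is to first obtain an explicit representation of $U(t,s)$ by Malliavin-differentiating the definition~\eqref{eq:tilde_num_1} of $\tilde{X}(t)$, then to decompose the estimate into a ``main'' part involving the discrete propagator $\Pi_{k-1:\ell+1}$ and ``correction'' parts of size $\Delta t^{\frac12-\kappa}$. For $s\in[t_\ell,t_{\ell+1})$ with $\ell\le k-1$ the Wiener increment $W(t)-W(t_k)$ does not depend on $W$ at time $s$, so taking $\mathcal D_s$ of \eqref{eq:tilde_num_1} and invoking Lemma~\ref{lem:Mall_1} yields
\[
U(t,s)h = \Pi_{k-1:\ell+1}h + (t-t_k)S_{\Delta t}A\,\Pi_{k-1:\ell+1}h + (t-t_k)S_{\Delta t}G'(X_k).\Pi_{k-1:\ell+1}h + S_{\Delta t}\bigl(\sigma'(X_k).\Pi_{k-1:\ell+1}h\bigr)\bigl(W(t)-W(t_k)\bigr).
\]
The three last terms will produce the $\Delta t^{\frac12-\kappa}|h|_{L^q}$ contribution, while the first term produces the $|(-A)^{-\frac12+\kappa}h|_{L^q}$ contribution.

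For the correction terms, I would apply $(-A)^{-\frac12+\kappa}$ and use that $(t-t_k)|(-A)^{\frac12+\kappa}S_{\Delta t}|_{\mathcal L(L^q)}\le C\Delta t^{\frac12-\kappa}$ by Lemma~\ref{lem:S_1}; a similar bound applies to $(t-t_k)(-A)^{-\frac12+\kappa}S_{\Delta t}G'(X_k)$ after splitting $G'=F_1'+BF_2'$ and using \eqref{eq:norm_AB} for the Burgers piece. The Wiener-increment term is handled through Burkholder–Davies–Gundy and Property~\ref{ass:sigma}, noting that $(W(t)-W(t_k))$ contributes a $\Delta t^{\frac12}$ factor after taking $L^{2K}$ moments. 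In each case, the factor $\Pi_{k-1:\ell+1}h$ is controlled in $L^q$ by $C|h|_{L^q}$ via Lemma~\ref{lem:Pi} applied with $\beta=\gamma=0$.

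For the main term $(-A)^{-\frac12+\kappa}\Pi_{k-1:\ell+1}h$, I would use the mild recursion
\[
\Pi_{k-1:\ell+1}h = S_{\Delta t}^{k-\ell-1}h + \Delta t\!\sum_{m=\ell+1}^{k-1}\!S_{\Delta t}^{k-m}G'(X_m)\Pi_{m-1:\ell+1}h + \sum_{m=\ell+1}^{k-1}S_{\Delta t}^{k-m}\bigl(\sigma'(X_m)\Pi_{m-1:\ell+1}h\bigr)\Delta W_m.
\]
Since $(-A)^{-\frac12+\kappa}$ commutes with $S_{\Delta t}^{k-m}$, the linear part yields directly $C|(-A)^{-\frac12+\kappa}h|_{L^q}$. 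The $F_1'$ part of the drift is absorbed by the boundedness of $(-A)^{-\frac12+\kappa}$ on $L^q$ and of $F_1'(X_m)$; the Burgers part $(-A)^{-\frac12+\kappa}BF_2'(X_m)\cdot$ is controlled by using $|(-A)^{-\frac12+\kappa}B(-A)^{-2\kappa}|_{\mathcal L(L^q)}<\infty$ from \eqref{eq:norm_AB}, reducing to a bound on $|(-A)^{2\kappa}(F_2'(X_m)\Pi_{m-1:\ell+1}h)|_{L^q}$ handled via \eqref{eq:Sobolev-Lipschitz} and \eqref{eq:product_2}. The stochastic part is treated with the discrete analog of~\eqref{eq:Ito_gamma}, the ideal property~\eqref{eq:ideal}, Property~\ref{ass:sigma}, and Lemma~\ref{lem:S_2}. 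Using the uniform moment bounds of Lemma~\ref{lem:moments_X} to dispose of the $X_m$-factors and Lemma~\ref{lem:Pi} to bound the intermediate $\Pi_{m-1:\ell+1}h$ terms, a self-referential inequality of the form
\[
\psi_k:=\bigl(\E|(-A)^{-\frac12+\kappa}\Pi_{k-1:\ell+1}h|_{L^q}^{2K}\bigr)^{\frac1{2K}} \le C|(-A)^{-\frac12+\kappa}h|_{L^q} + C\Delta t\!\sum_{m=\ell+1}^{k-1}\!\bigl(1+t_{k-m}^{-1+\mu}\bigr)\psi_m
\]
is obtained, with some $\mu\in(0,1)$; the discrete Gronwall lemma~\ref{lem:Gronwall} then closes the argument.

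The main obstacle will be the careful bookkeeping in step 3 for the Burgers nonlinearity: the product and Sobolev inequalities naturally push some intermediate estimates into $L^{2q}$ norms, and tuning the small parameter $\kappa$ (together with the interpolation identity $|(-A)^{-\frac12+\kappa}B(-A)^{-2\kappa}|_{\mathcal L(L^q)}<\infty$) is needed to keep the final right-hand side expressed in $L^q$ norms only, matching the statement.
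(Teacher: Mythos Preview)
Your decomposition of $U(t,s)h$ into $\Pi_{k-1:\ell+1}h$ plus correction terms, and your identification of Lemma~\ref{lem:Pi} as the key input, match the paper's proof. However, the final closure step is misdescribed and, as written, does not work. The self-referential inequality you claim for $\psi_k$ cannot be obtained: after applying $(-A)^{-\frac12+\kappa}$ to the drift and stochastic sums in the mild recursion, the multiplication operators $G'(X_m)\cdot$ and $\sigma'(X_m)\cdot$ do not commute with $(-A)^{-\frac12+\kappa}$, so the right-hand side naturally contains $\bigl(\E|\Pi_{m-1:\ell+1}h|_{L^q}^{2K}\bigr)^{1/2K}$, \emph{not} $\psi_m=\bigl(\E|(-A)^{-\frac12+\kappa}\Pi_{m-1:\ell+1}h|_{L^q}^{2K}\bigr)^{1/2K}$. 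Since $|z|_{L^q}$ cannot be dominated by $|(-A)^{-\frac12+\kappa}z|_{L^q}$, no Gronwall loop in $\psi_m$ closes.

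The paper's argument---which is essentially the step you already invoke when you write ``use Lemma~\ref{lem:Pi} to bound the intermediate $\Pi_{m-1:\ell+1}h$ terms''---dispenses with Gronwall entirely. One applies Lemma~\ref{lem:Pi} with $\beta=\frac12-\kappa$ to obtain, for $m>\ell+1$,
\[
\bigl(\E|\Pi_{m-1:\ell+1}h|_{L^q}^{2K}\bigr)^{\frac1{2K}}\le C\,t_{m-\ell-1}^{-\frac12+\kappa}\,|(-A)^{-\frac12+\kappa}h|_{L^q},
\]
and isolates the boundary index $m=\ell+1$ (where $\Pi_{\ell:\ell+1}h=h$), which is precisely what produces the $\Delta t^{\frac12}|h|_{L^q}$ contribution in the main term. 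Summing the resulting integrable singularities then gives the estimate directly. A second minor point: for the Burgers drift piece the paper does not use the product inequality \eqref{eq:product_2} (which would push you into $L^{2q}$ as you anticipate), but instead writes $(-A)^{-\frac12+\kappa}S_{\Delta t}^{k-m}BF_2'(X_m)\cdot$, uses \eqref{eq:norm_AB} to trade $(-A)^{-\frac12+\kappa}B$ for $(-A)^{2\kappa}$, and absorbs the latter into the smoothing of $S_{\Delta t}^{k-m}$ via Lemma~\ref{lem:S_1}; this keeps everything in $L^q$.
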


\begin{proof}[Proof of Lemma~\ref{lem:U_num}]
Let $s$ be fixed. It can be seen that $U_t=U(t,s)h$ satisfies:
\begin{align*}
U_t&=U_{t_k}+\int_{t_k}^{t}\bigl(AS_{\Delta t}U_{t_k}+S_{\Delta t}G'(X_k).U_{t_k}\bigr)dr+\int_{t_k}^{t}S_{\Delta t}\bigl(\sigma'(X_k).U_{t_k}\bigr)dW(r),\\
U_{t_{k+1}}&=S_{\Delta t}U_{t_k}+\Delta tS_{\Delta t}G'(X_k).U_{t_k}+S_{\Delta t}\bigl(\sigma'(X_k).U_{t_k}\bigr)\Delta W_k,\\
U_{t_{\ell+1}}&=h.
\end{align*}

First, for every $t\in[t_{k},t_{k+1})$,
\begin{align*}
\E|(-A)^{-\frac12+\kappa}U_t|_{L^q}^{2}&\le C\E|(-A)^{-\frac12+\kappa}U_{t_k}|_{L^q}^{2}+C\Delta t^2\big|AS_{\Delta t}\big|_{\mathcal{L}(L^q)}^2\E|(-A)^{-\frac12+\kappa}U_{t_k}|_{L^q}^{2}\\
&+C\Delta t^{1-2\kappa} \E|U_{t_k}|_{L^q}^{2}+C\Delta t\E\big|(-A)^{-\frac12+\kappa}S_{\Delta t}\bigl(\sigma'(X_k).U_{t_k}\bigr)\big|_{R(L^2,L^q)}^2\\
&\le C\E|(-A)^{-\frac12+\kappa}U_{t_k}|_{L^q}^{2}+C\Delta t^{1-2\kappa}\E|U_{t_k}|_{L^q}^{2}+C\Delta t|(-A)^{\frac{1}{2q}-\frac12+\kappa}|_{R(L^2,L^q)}^2\E|U_{t_k}|_{L^q}^{2}.
\end{align*}
Note that $|(-A)^{\frac{1}{2q}-\frac12+\kappa}|_{R(L^2,L^q)}^2<\infty$ when $\frac{1}{2q}-\frac{1}{2}+\kappa<-\frac{1}{4}$; this condition is satisfied when $q>2$ and $\kappa>0$ is chosen sufficiently small.

The result is clear for  $k=\ell+1$. For $k>\ell+1$,  since $U_{t_k}=\Pi_{k-1:\ell+1}h$, we get, by Lemma~\ref{lem:Pi},
\[
\Delta t^{1-2\kappa} \E|U_{t_k}|_{L^q}^{2}\le \frac{C\Delta t^{1-2\kappa}}{(k-\ell-1)^{1-2\kappa}\Delta t^{1-2\kappa}}|(-A)^{-\frac12+\kappa}h|_{L^q}^{2}\le C |(-A)^{-\frac12+\kappa}h|_{L^q}^{2}.
\]

Now,
\[
U_{t_k}=S_{\Delta t}^{k-\ell-1}h+\Delta t \sum_{m=\ell+1}^{k-1}S_{\Delta t}^{k-m}BF'(X_m).U_{t_m}+\sum_{m=\ell+1}^{k-1}S_{\Delta t}^{k-m}\bigl(\sigma'(X_m).U_{t_m}\bigr)\Delta W_m,
\]
and thus, with the condition $\frac{1}{2q}-\frac{1}{2}+\kappa<-\frac{1}{4}$ fulfilled for $\kappa>0$ sufficiently small,
\begin{align*}
\E\big|(-A)^{-\frac12+\kappa}U_{t_k}|_{L^{2q}}^{2}&\le |(-A)^{-\frac12+\kappa}h|_{L^q}^{2}+C\left(\Delta t\sum_{m=\ell+1}^{k-1}\frac1{t_{k-m}^{2\kappa}}\E|U_{t_m}|_{L^q}\right)^{2}+C\Delta t\sum_{m=\ell+1}^{k-1}\E|U_{t_m}|_{L^q}^{2}\\
&\le |(-A)^{-\frac12+\kappa}h|_{L^q}^{2}+C\Delta t^2 \frac1{t_{k-\ell-1}^{4\kappa}}|h|_{L^q}^{2}+C\Delta t|h|_{L^q}^{2}\\
&+C\left(\left(\Delta t\sum_{m=\ell+2}^{k-1}\frac1{t_{k-m}^{2\kappa}}\frac{1}{t_{m-\ell-1}^{\frac12-\kappa}}\right)^2+\Delta t\sum_{m=\ell+2}^{k-1}
\frac{1}{t_{m-\ell-1}^{1-2\kappa}}\right)|(-A)^{-\frac12+2\kappa}h|_{L^{q}}^{2}\\
&\le C|(-A)^{-\frac12+2\kappa}h|_{L^q}^{2}+C\Delta t|h|_{L^q}^{2}.
\end{align*}
This concludes the proof of Lemma~\ref{lem:U_num}.

\end{proof}

\def\cprime{$'$}


\end{document}